\newcommand{\R}{{\mathbb R}}       
\newcommand{\N}{{\mathbb N}}       %
\newcommand{\HH}{{\mathcal H}}
\newcommand{\RR}{{\mathcal R}}
\newcommand{\EE}{{\mathcal E}}
\newcommand{\diam}{\mathop{\rm diam}}
\newcommand{\dist}{{\rm dist}}
\newcommand{\rf}[1]{{(\ref{#1})}}
\newcommand{\supp}{\operatorname{supp}}
\newcommand{\vphi}{{\varphi}}
\newcommand{\ve}{{\varepsilon}}
\newcommand{\vv}{{\vspace{2mm}}}
\newcommand{\vvv}{\vspace{4mm}}
\newcommand{\wt}[1]{{\widetilde{#1}}}
\newcommand{\pom}{{\partial \Omega}}
\def\Xint#1{\mathchoice
{\XXint\displaystyle\textstyle{#1}}%
{\XXint\textstyle\scriptstyle{#1}}%
{\XXint\scriptstyle\scriptscriptstyle{#1}}%
{\XXint\scriptscriptstyle\scriptscriptstyle{#1}}%
\!\int}
\def\XXint#1#2#3{{\setbox0=\hbox{$#1{#2#3}{\int}$ }
\vcenter{\hbox{$#2#3$ }}\kern-.58\wd0}}
\def\avint{\Xint-}
\newtheorem{theorem}{Theorem}[section]
\newtheorem{lemma}[theorem]{Lemma}
\newtheorem{remark}[theorem]{Remark}
\newtheorem{corollary}[theorem]{Corollary}
\newtheorem*{lemma*}{Lemma}
\newtheorem*{theorem*}{Theorem}
\newtheorem*{theorema}{Theorem A}
\newtheorem*{theoremb}{Theorem B}
\theoremstyle{definition}
\newtheorem{definition}[theorem]{Definition}
\theoremstyle{remark}
\newtheorem{rem}[theorem]{\bf Remark}
\numberwithin{equation}{section}
\newcommand{\cnj}[1]{\overline{#1}}
\newcommand{\RRem}{\begin{rem}}
\newcommand{\erem}{\end{rem}}
\def\d{\partial}
\def\@tocline#1#2#3#4#5#6#7{\relax
  \ifnum #1>\c@tocdepth 
  \else
    \par \addpenalty\@secpenalty\addvspace{#2}%
    \begingroup \hyphenpenalty\@M
    \@ifempty{#4}{%
      \@tempdima\csname r@tocindent\number#1\endcsname\relax
    }{%
      \@tempdima#4\relax
    }%
    \parindent\z@ \leftskip#3\relax \advance\leftskip\@tempdima\relax
    \rightskip\@pnumwidth plus4em \parfillskip-\@pnumwidth
    #5\leavevmode\hskip-\@tempdima
      \ifcase #1
       \or\or \hskip 1em \or \hskip 2em \else \hskip 3em \fi%
      #6\nobreak\relax
    \dotfill\hbox to\@pnumwidth{\@tocpagenum{#7}}\par
    \nobreak
    \endgroup
  \fi}
\def\Cap{\mathop\mathrm{Cap}}
\def\cF{{\mathscr{F}}}
\def\cH{{\mathcal{H}}}
\def\cM{{\mathscr{M}}}
\def\bR{{\mathbb{R}}}
\def\bZ{{\mathbb{Z}}}
\def\bN{{\mathbb{N}}}
\newcommand{\ps}[1]{\left( #1 \right)}
\newcommand{\bk}[1]{\left[ #1 \right]}
\newcommand{\ck}[1]{\left\{#1 \right\}}
\newcommand{\av}[1]{\left| #1 \right|}
\newcommand{\divv}{{\text{{\rm div}}}}
\def\loc{\textrm{loc}}
\def\grad{\nabla}
\def\loc{\textrm{loc}}
\def\Tan{\textrm{Tan}}
\def\bR{\mathbb{R}}
\def\lec{\lesssim}
\def\ext{\textrm{ext}}
\newcommand\eqn[1]{\eqref{e:#1}}
\newcommand\Theorem[1]{Theorem \ref{t:#1}}
\newcommand\Lemma[1]{Lemma \ref{l:#1}}
\begin{document}

\title[Mutual absolute continuity of interior and exterior harmonic]{Mutual absolute continuity of interior and exterior harmonic measure implies
rectifiability}

\author[Azzam]{Jonas Azzam}
\address{Jonas Azzam
\\
Departament de Matem\`atiques
\\
Universitat Aut\`onoma de Barcelona
\\
Edifici C Facultat de Ci\`encies
\\
08193 Bellaterra (Barcelona), Catalonia
}
\email{jazzam@mat.uab.cat}

\newcommand{\jonas}[1]{\marginpar{\color{magenta} \scriptsize \textbf{Jonas:} #1}}

\author[Mourgoglou]{Mihalis Mourgoglou}

\address{Mihalis Mourgoglou
\\
Departament de Matem\`atiques\\
 Universitat Aut\`onoma de Barcelona
\\
Edifici C Facultat de Ci\`encies
\\
08193 Bellaterra (Barcelona), Catalonia
}
\email{mourgoglou@mat.uab.cat}

\author[Tolsa]{Xavier Tolsa}
\address{Xavier Tolsa
\\
ICREA and Departament de Matem\`atiques
\\
Universitat Aut\`onoma de Barcelona
\\
Edifici C Facultat de Ci\`encies
\\
08193 Bellaterra (Barcelona), Catalonia
}
\email{xtolsa@mat.uab.cat}

\subjclass[2010]{31A15,28A75,28A78}
\thanks{The three authors were supported by the ERC grant 320501 of the European Research Council (FP7/2007-2013). X.T. was also supported by 2014-SGR-75 (Catalonia), MTM2013-44304-P (Spain), and by the Marie Curie ITN MAnET (FP7-607647).}

\begin{abstract}
We show that, for disjoint domains in the Euclidean space whose boundaries satisfy a non-degeneracy condition, mutual absolute continuity of their harmonic measures implies  absolute continuity with respect to surface measure and rectifiability in the intersection of their boundaries.
\end{abstract}

\maketitle

\tableofcontents

\section{Introduction}
The relationship between the properties of harmonic measure and the geometry of its support has attracted the attention of many 
mathematicians. In this paper we study a two-phase problem in connection with this topic.
More precisely, we show that, for disjoint domains in $\R^{n+1}$ whose boundaries satisfy the so called $\Delta$-regularity condition, mutual absolute continuity of their harmonic measures implies  absolute continuity with respect to surface measure and $n$-rectifiability in the intersection of their boundaries.
This result solves a conjecture of Chris Bishop from 1990, under the $\Delta$-regularity 
assumption. See Conjecture 8 from
\cite{Bishop-questions} or Section 6 from \cite{Bishop-arkiv}.
 
To state our results in detail we need to introduce some notation. Given a domain (i.e., an open
and connected set) $\Omega\subset\R^{n+1}$, with $n\geq 2$, we denote by $\omega^x$ or $\omega_\Omega^x$ its harmonic measure with respect to a pole $x\in\Omega$. If the precise pole of the harmonic measure
is not relevant, we may also write just $\omega$ or $\omega_\Omega$.

For the precise notion of $\Delta$-regularity, we refer the reader to Definition 
\ref{defdelta} below. However, we mention that by a result of Ancona \cite{Anc86}
this is equivalent to the more known ``capacity density condition'' (CDC), and by a deep theorem of Lewis 
\cite{Lewis}, it follows that $\Omega$ is $\Delta$-regular
if and only if there exists some $\ve>0$ and some $R>0$ such that
$$\HH_\infty^{n-1+\ve}(B(x,r)\setminus \Omega) \approx r^{n-1+\ve}
\quad \mbox{ for all $x\in\partial\Omega$ and all $0<r\leq R$,}$$
where $\HH_\infty^s$ stands for the $s$-dimensional Hausdorff content.
We also remark that, in particular, the nontangentially accessible domains of Jerison and Kenig 
\cite{JK82} are examples of $\Delta$-regular domains. More generally, it is easy to check that a domain $\Omega$ is also $\Delta$-regular if it just satisfies a two sided corkscrew condition, that is, any ball $B(x,r)$ centered at $x\in\d\Omega$, $0<r\leq R$, contains
two balls $B_1\subset B(x,r)\cap\Omega$ and $B_2\subset B(x,r)\setminus\Omega$ with $r(B_1)=r(B_2)\approx r$.

Recall that a point $x\in \bR^{n}$ is an {\it $n$-dimensional tangent} for a set $E\subset \bR^{n+1}$ if there is an $n$-dimensional plane $V$ containing $x$ so that 
\[
\lim_{r\rightarrow 0}\sup_{\zeta\in B(x,r)\cap E} \frac{\dist(\zeta,V)}{r}=0.\]

Our main result is the following:

\begin{theorem}\label{t:main}
For $n\geq 2$, let $\Omega^+\subset \R^{n+1}$ be  open and let $\Omega^- = \bigl(\,\overline{\Omega^+}\,\bigr)^c$. Assume that $\Omega^+,\Omega^-$ are both connected and $\Delta$-regular and $\partial\Omega^+ = \partial\Omega^-$.
Let $\omega^\pm$ be the respective harmonic measures of $\Omega^\pm$.
Let $E\subset \d\Omega^+$ be a Borel set and let $T$ the set of tangent points for $\d\Omega^+$. Then $\omega^+\perp \omega^-$ on $E$ if and only if $\cH^{n}(E\cap T)=0$. Further, if $\omega^{+}\ll\omega^{-}\ll\omega^{+}$ on $E$, then $E$ contains an $n$-rectifiable subset $F$ upon which $\omega^\pm$ are mutually absolutely continuous with respect to $\cH^{n}$. 
\end{theorem}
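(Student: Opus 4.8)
The plan is to run everything through the Lebesgue decomposition of the pair $(\omega^+,\omega^-)$ on $E$: write $E=E_{ac}\sqcup E_s$ with $\omega^+\rest E_{ac}$, $\omega^-\rest E_{ac}$ mutually absolutely continuous and $\omega^+\rest E_s\perp\omega^-\rest E_s$. Since $\R^{n+1}$ has the Besicovitch covering property, $h:=d\omega^-/d\omega^+\in(0,\infty)$ $\omega^+$-a.e.\ on $E_{ac}$, with $\omega^-(B(\xi,r))/\omega^+(B(\xi,r))\to h(\xi)$ there. Everything then reduces to two assertions about $E_{ac}$: (i) $\omega^+\rest E_{ac}\ll\cH^n$ and $\omega^+$-a.e.\ point of $E_{ac}$ is a tangent point; (ii) if $\cH^n(E\cap T)>0$ then $\omega^+(E_{ac})>0$. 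Granting these: the hypothesis of the last statement is exactly $E=E_{ac}$, and (i), via Step~3 below, produces the required rectifiable $F$; if $\cH^n(E\cap T)=0$ then (i) gives $\omega^+(E_{ac})=\omega^+(E_{ac}\cap T)\le\omega^+\rest E_{ac}(E\cap T)=0$, so $\omega^+\perp\omega^-$ on $E$; and the reverse implication of the first statement is (ii).

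\emph{Step 1 (a density bound), then Step 2 (blow-up, à la Kenig--Preiss--Toro).} Apply the Alt--Caffarelli--Friedman monotonicity formula to the nonnegative subharmonic functions $w^\pm:=G_{\Omega^\pm}(\cdot,p^\pm)\,\chi_{\Omega^\pm}$, which have disjoint supports because $\Omega^+,\Omega^-$ are complementary. Monotonicity of the ACF functional, bounded by its value at a fixed large scale, combined with the comparison between Green's function and harmonic measure valid on $\Delta$-regular domains, gives $\omega^+(B(\xi,r))\,\omega^-(B(\xi,r))\lesssim r^{2n}$ for $\xi\in\partial\Omega^+$, $0<r\le r_0$; together with $\omega^-(B(\xi,r))\approx h(\xi)\,\omega^+(B(\xi,r))$ for small $r$ this yields $\Theta^{n,*}(\omega^+,\xi)<\infty$ at $\omega^+$-a.e.\ $\xi\in E_{ac}$, hence $\omega^+\rest E_{ac}\ll\cH^n$ (symmetrically $\omega^-\rest E_{ac}\ll\cH^n$). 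Now, discarding an $\omega^+$-null set, assume each $\xi\in E_{ac}$ is a density point of a Borel set on which $h\equiv h(\xi)$; set $\lambda=h(\xi)^{-1}$, $v:=w^+-\lambda w^-$, so $v=0$ on $\partial\Omega^+$, $v>0$ on $\Omega^+$, $v<0$ on $\Omega^-$, and $\Delta v=\omega^+-\lambda\omega^-$ has total variation $o(\omega^+(B(\xi,r)))$ on $B(\xi,r)$. Rescaling $v$ at scale $r$ by its natural size $\omega^+(B(\xi,r))/r^{n-1}$ — kept nondegenerate by Step~1 — and using interior elliptic estimates, extract along $r_j\to0$ a limit $v_\infty$ that is harmonic on $\R^{n+1}$, of polynomial growth, vanishing on the blow-up $\Sigma_\infty$ of $\partial\Omega^+$ and of opposite signs on the blow-ups of $\Omega^\pm$. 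A Liouville argument makes $v_\infty$ a harmonic polynomial whose zero set, separating $\R^{n+1}$ into exactly two pieces, must be a hyperplane; so $\Sigma_\infty$ is an $n$-plane. Thus every tangent of $\partial\Omega^+$ at $\omega^+$-a.e.\ $\xi\in E_{ac}$ is flat; upgrading this to ``$\xi\in T$'' — ruling out the tangent plane rotating across scales — uses the Kenig--Preiss--Toro estimate $\log h\in\mathrm{VMO}(\omega^+)$ (equivalently, a Carleson-type control of the flatness of $\partial\Omega^+$) together with a Vitali covering argument. This proves (i).

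\emph{Step 3 (rectifiability and the ``if'' direction).} The set $T$ is $n$-rectifiable: decompose it into countably many pieces on each of which the flatness scale $\rho$ and the direction of the tangent plane are essentially fixed, and check each such piece is trapped in a small-constant Lipschitz graph. Hence $\omega^+\rest E_{ac}$, carried by $T$ by Step~2 and absolutely continuous with respect to $\cH^n$ by Step~1, is an $n$-rectifiable measure. Picking a Borel $F_0\subset E_{ac}\cap T$ with $\omega^+(E_{ac}\setminus F_0)=0$, the measure $\cH^n\rest F_0$ is $\sigma$-finite, so $f:=d(\omega^+\rest F_0)/d(\cH^n\rest F_0)$ is a genuine Radon--Nikodym density, and $F:=F_0\cap\{f>0\}$ satisfies $\omega^+(F)=\omega^+(E_{ac})$ with $\omega^+\rest F$, $\cH^n\rest F$ mutually absolutely continuous; since $\omega^+,\omega^-$ are mutually absolutely continuous on $E\supset F$, so are $\omega^-$ and $\cH^n$ on $F$ — this is the rectifiable set required by the last statement (case $E=E_{ac}$). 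For (ii): if $\cH^n(E\cap T)>0$, then at $\cH^n$-a.e.\ $\xi\in E\cap T$ the flatness of $\partial\Omega^+$ together with the connectedness and $\Delta$-regularity of \emph{both} $\Omega^\pm$ (connectedness preventing either domain from pinching to a cusp on a positive-$\cH^n$-measure set) forces both domains to look like complementary half-spaces near $\xi$; barriers on each side give $\omega^\pm(B(\xi,r))\approx r^n$, so $\omega^+$ and $\omega^-$ are each comparable to $\cH^n$, hence mutually absolutely continuous with each other, on a positive-$\cH^n$-measure subset of $E\cap T$, whence $\omega^+(E_{ac})>0$.

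I expect the heart of the difficulty to be the blow-up of Step~2: making it quantitative when the only a priori control on $\omega^+$ is at the level of $\cH^n$ (so the normalization may degenerate at many scales), and above all the passage from ``every tangent is flat'' to ``genuine tangent point'', where the $\mathrm{VMO}$/Carleson input and the covering argument are indispensable. Establishing the ACF product estimate of Step~1 in the $\Delta$-regular (non-NTA) generality, and the half-space comparison at flat points used for (ii), are the other substantial technical points.
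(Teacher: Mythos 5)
There is a genuine gap, and it sits exactly at the heart of your Step 2, namely the upgrade from ``all blow-ups are flat'' to ``$\omega^+$-a.e.\ point of $E_{ac}$ is a tangent point'' (your assertion (i)). The blow-up analysis itself can be carried out (the paper does it, via tangent measures, the ACF functional, and a connectivity-of-cones argument), but what it yields is only that $\Tan(\omega^+,\xi)\subset\cF$ and that $\beta_{\d\Omega^+,\infty}(\xi,r)\to0$ and $\wt\beta_{\overline{\Omega^+},\infty}(\xi,r)\to0$ as $r\to0$: flatness at every small scale with approximating planes that may rotate as $r\to0$. This neither gives a genuine tangent point nor rectifiability, and Preiss's theorem is unavailable because the $n$-density of $\omega^+$ may be zero on a set of full measure. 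The tool you invoke to rule out rotation, $\log h\in\mathrm{VMO}(\omega^+)$, is simply not available: it is a \emph{hypothesis} of the Kenig--Toro free-boundary theory, not a consequence of mere mutual absolute continuity, under which $h$ is only a positive $L^1(\omega^+)$ density with no oscillation control. A second error inside Step 2: zero sets of harmonic polynomials of degree $>1$ can separate $\R^{n+1}$ into exactly two components when $n\geq2$ (this is precisely the phenomenon behind the set $\Gamma_b$ in Theorem B and Badger's work), so ``$v_\infty$ vanishes on a set separating space into two pieces, hence is linear'' is false as stated; the paper instead obtains linearity of the blow-up limit only at points where $\Tan(\omega^+,\xi)$ meets $\cF$, via the growth bound $F_{2^{\ell}}(\omega)\lesssim 2^{(n+1+\beta)\ell}$ with $\beta<1$ and Cauchy estimates (Lemma \ref{l:tanconnect}).

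Because of this, your argument never actually proves rectifiability of $\omega^+|_{E_{ac}}$, and nothing in the proposal substitutes for the paper's mechanism, which is genuinely different from a pure blow-up scheme: the paper deduces rectifiability from the Riesz transform criterion of Girela-Sarri\'on and Tolsa (Theorem \ref{teo1}), applied on $a$-$P_{\gamma,\omega^+}$-doubling balls. There, the asymptotic flatness of $\overline{\Omega^+}$ is used only to place points of $\Omega^-$ nearby so that $\RR_*(\chi_{2B}\omega^+)\lesssim\Theta_{\omega^+}(B)$ on a good set (Lemma \ref{lem:ver2}), and the key trick is that at points of zero density $\RR\omega^+(x)=K(x-x^+)$, so the oscillation of $\RR\omega^+$ over the zero-density set is controlled by $(r_0/|x^+-x_0|)^{1-\gamma}\Theta_{\omega^+}(B_0)$ (Lemma \ref{lem:mainl3}), verifying hypothesis (d) of Theorem \ref{teo1}; the positive-density part is handled by \cite{AHMMMTV15} or \cite{NToV-pubmat}. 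Your Step 1 (the ACF/Beurling product bound giving finite upper density a.e.\ on $E_{ac}$) and your Step 3 bookkeeping (rectifiability of the tangent-point set $T$, selecting $F$ where the density is positive, and the standard equivalence between perpendicularity and $\cH^n(E\cap T)=0$) are correct and parallel the paper, but the central implication --- from mutual absolute continuity to a rectifiable carrier / tangent points --- is exactly what your proposal leaves unproved.
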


From this result we derive other local versions for two sided $\Delta$-regular domains.
 We say that a domain $\Omega\subset\R^{n+1}$ is two sided $\Delta$-regular if both $\Omega $ and
 ${\rm ext}(\Omega) := \bigl(\,\overline{\Omega}\,\bigr)^c$ are connected and
$\Delta$-regular.

\begin{corollary}\label{coro1}
For $n\geq 2$, let $\Omega^1\subset \R^{n+1}$ be an open domain. Suppose that $\Omega^1$ is two sided $\Delta$-regular and that
$\partial\Omega^1 = \partial({\rm ext}(\Omega^1))$.
Let $\Omega^2\subset \R^{n+1}$ be a domain disjoint from $\Omega^1$.
For $i=1,2$, let $\omega^i$ be the respective harmonic measures of $\Omega^i$.
Let $E\subset \d\Omega^{1}\cap \d\Omega^{2}$ be a Borel set. If $\omega^1\ll\omega^2\ll\omega^1$ on $E$, then $E$ contains an $n$-rectifiable subset $F$ upon which $\omega^1$ and $\omega^2$ are mutually absolutely continuous with respect to $\cH^{n}$. 
\end{corollary}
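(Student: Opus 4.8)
The plan is to deduce Corollary~\ref{coro1} from \Theorem{main} by taking the exterior of $\Omega^{1}$ as the competing phase. Set $\Omega^{+}:=\Omega^{1}$ and $\Omega^{-}:={\rm ext}(\Omega^{1})=(\overline{\Omega^{1}})^{c}$; then the assumptions that $\Omega^{1}$ is two sided $\Delta$-regular and that $\d\Omega^{1}=\d({\rm ext}(\Omega^{1}))$ say precisely that $(\Omega^{+},\Omega^{-})$ satisfies all the hypotheses of \Theorem{main}. Since $\Omega^{2}$ is open and disjoint from $\Omega^{1}$, it is contained in ${\rm int}\bigl((\Omega^{1})^{c}\bigr)=(\overline{\Omega^{1}})^{c}=\Omega^{-}$, so $\d\Omega^{2}\subseteq\overline{\Omega^{-}}=\Omega^{-}\cup\d\Omega^{-}$; together with $\d\Omega^{1}=\d\Omega^{-}$ this gives $E\subseteq\d\Omega^{1}\cap\d\Omega^{2}=\d\Omega^{-}\cap\d\Omega^{2}$. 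We may also assume $\omega^{1}(E)>0$, since otherwise $\omega^{2}(E)=0$ by $\omega^{2}\ll\omega^{1}$ on $E$ and there is nothing to prove.

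The first substantive step is the one-sided comparison $\omega^{2}\ll\omega^{-}$ on $E$. This rests on the standard monotonicity of harmonic measure under domain inclusion: if $\Omega'\subseteq\Omega''$ and $A\subseteq\d\Omega'\cap\d\Omega''$ is Borel, then $\omega_{\Omega'}^{x}(A)\le\omega_{\Omega''}^{x}(A)$ for every $x\in\Omega'$ — one sees this by comparing exit times of Brownian motion from $x$, since the first exit time from $\Omega'$ is no larger than that from $\Omega''$ while a point of $A\subseteq\d\Omega''$ already lies outside $\Omega''$, so on the event of exiting $\Omega'$ through $A$ the two exit times and exit points coincide (in particular no regularity of $\Omega'=\Omega^{2}$ is needed, nor boundedness of $\Omega''=\Omega^{-}$). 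Applying this with $\Omega'=\Omega^{2}$, $\Omega''=\Omega^{-}$, and a pole $x\in\Omega^{2}\subseteq\Omega^{-}$: if $A\subseteq E$ and $\omega^{-}(A)=0$ then $\omega_{\Omega^{-}}^{x}(A)=0$, hence $\omega_{\Omega^{2}}^{x}(A)=0$; connectedness of $\Omega^{2}$ and $\Omega^{-}$ together with Harnack make this independent of the poles, so $\omega^{2}\ll\omega^{-}$ on $E$. Chaining with the hypothesis $\omega^{1}\ll\omega^{2}$ on $E$ gives $\omega^{+}=\omega^{1}\ll\omega^{-}$ on $E$.

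To apply the rectifiability part of \Theorem{main} we upgrade this to two-sided absolute continuity on a subset. Let $\omega^{-}|_{E}=\varphi\,\omega^{+}|_{E}+(\omega^{-}|_{E})_{s}$ be the Lebesgue decomposition with respect to $\omega^{+}|_{E}$, choose a Borel set $N\subseteq E$ with $\omega^{+}(N)=0$ and $(\omega^{-}|_{E})_{s}(E\setminus N)=0$, and put $E':=E\setminus N$, a Borel subset of $\d\Omega^{+}$ with $\omega^{+}(E')=\omega^{+}(E)>0$. On $E'$ the singular part of $\omega^{-}|_{E}$ vanishes, so $\omega^{-}\ll\omega^{+}$ there, while $\omega^{+}\ll\omega^{-}$ on $E'$ because this already holds on $E\supseteq E'$; hence $\omega^{+}\ll\omega^{-}\ll\omega^{+}$ on $E'$. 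Then \Theorem{main}, applied with $E'$ in place of $E$, yields an $n$-rectifiable set $F\subseteq E'\subseteq E$ on which $\omega^{+}=\omega^{1}$ and $\omega^{-}$ are each mutually absolutely continuous with respect to $\cH^{n}$. Finally $F\subseteq E$, so by hypothesis $\omega^{1}\ll\omega^{2}\ll\omega^{1}$ on $F$; combining with $\omega^{1}\ll\cH^{n}\ll\omega^{1}$ on $F$ gives $\omega^{2}\ll\cH^{n}\ll\omega^{2}$ on $F$, so $\omega^{1}$ and $\omega^{2}$ are both mutually absolutely continuous with respect to $\cH^{n}$ on $F$, which is the assertion of the corollary.

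I do not expect any serious obstacle here: Corollary~\ref{coro1} is essentially a soft consequence of \Theorem{main}. The only points needing a little care are the elementary inclusion $\Omega^{2}\subseteq\Omega^{-}$ (which is exactly what makes the intrinsic geometry of $\Omega^{2}$ irrelevant), the validity of the monotonicity lemma for the possibly irregular domain $\Omega^{2}$ and the possibly unbounded domain $\Omega^{-}$ (handled by the exit-time argument, which only compares events), and the routine Lebesgue decomposition used to recover two-sided absolute continuity from the one-sided statement obtained for free.
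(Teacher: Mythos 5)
Your proposal is correct and follows essentially the same route as the paper: pass to $\Omega^{\pm}=\Omega^{1},\,{\rm ext}(\Omega^{1})$, obtain $\omega^{2}\ll\omega^{-}$ on $E$ by domain monotonicity (the paper invokes the maximum principle, you use the equivalent Brownian exit-time comparison), upgrade to $\omega^{+}\approx\omega^{-}$ on a full-$\omega^{+}$-measure subset of $E$, apply Theorem \ref{t:main}, and transfer back to $\omega^{2}$ via the hypothesis. The only cosmetic difference is the bookkeeping: the paper discards the zero set of $d\omega^{2}/d\omega^{-}$, while you discard the singular part of $\omega^{-}$ with respect to $\omega^{+}$ — these are interchangeable.
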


\begin{corollary}\label{coro2}
For $n\geq 2$, let $\Omega^1\subset \R^{n+1}$ be an open domain. Suppose that $\Omega^1$ is two sided $\Delta$-regular and that
$\partial\Omega^1 = \partial({\rm ext}(\Omega^1))$.
Let $\Omega^2\subset \R^{n+1}$ be a domain disjoint with $\Omega^1$.
For $i=1,2$, let $\omega^i$ be the respective harmonic measures of $\Omega^i$.
Let $E\subset \d\Omega^{1}\cap \d\Omega^{2}$ be relatively open both in $\partial\Omega^1$ and
$\partial\Omega^2$. Let $T$ be the set of tangent points for $\d\Omega^1$. Then $\omega^1\perp \omega^2$ on $E$ if and only if $\cH^{n}(E\cap T)=0$.
\end{corollary}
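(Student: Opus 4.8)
The plan is to reduce the statement to \Theorem{main} applied to the two‑sided pair attached to $\Omega^1$, the only genuine work being a comparison of $\omega^2$ with the exterior harmonic measure of $\Omega^1$ along $E$.

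Set $\Omega^-:=\ext(\Omega^1)=(\overline{\Omega^1})^c$ and let $\omega^-$ be its harmonic measure. By hypothesis $\Omega^1$ and $\Omega^-$ are connected and $\Delta$‑regular and $\d\Omega^1=\d\Omega^-$, so the pair $(\Omega^+,\Omega^-)=(\Omega^1,\Omega^-)$ satisfies the hypotheses of \Theorem{main}; note also that the tangent set $T$ of $\d\Omega^1$ coincides with that of $\d\Omega^-$. Since $\Omega^2$ is open and disjoint from $\Omega^1$, we have $\Omega^2\subseteq\mathrm{int}\bigl((\Omega^1)^c\bigr)=(\overline{\Omega^1})^c=\Omega^-$. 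Thus, \emph{once we know that $\omega^2|_E$ and $\omega^-|_E$ are mutually absolutely continuous}, the corollary follows at once: for any Borel partition $E=A\cup B$ we may pass freely between ``$\omega^-(B)=0$'' and ``$\omega^2(B)=0$'', so $\omega^1\perp\omega^2$ on $E$ if and only if $\omega^1\perp\omega^-$ on $E$, which by \Theorem{main} holds if and only if $\cH^{n}(E\cap T)=0$.

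So everything reduces to the claim that $\omega^2|_E\approx\omega^-|_E$, and the first step I would take is to check that $\Omega^2$ and $\Omega^-$ \emph{coincide in a neighbourhood of $E$}. Since $E$ is relatively open in $\d\Omega^1=\d\Omega^-$ and in $\d\Omega^2$, for each $\xi\in E$ there is $r_\xi>0$ with $B(\xi,r_\xi)\cap\d\Omega^-\subseteq E$ and $B(\xi,r_\xi)\cap\d\Omega^2\subseteq E$; because $E\subseteq\d\Omega^-$ and $E\subseteq\d\Omega^2$, these two sets in fact agree, call them $\Gamma_\xi$. On the ball $B_\xi:=B(\xi,r_\xi)$ one has the two partitions $B_\xi\setminus\Gamma_\xi=(B_\xi\cap\Omega^-)\sqcup(B_\xi\cap\Omega^1)=(B_\xi\cap\Omega^2)\sqcup(B_\xi\cap\ext(\Omega^2))$, coming from $\R^{n+1}=\Omega^-\sqcup\d\Omega^-\sqcup\Omega^1$ and $\R^{n+1}=\Omega^2\sqcup\d\Omega^2\sqcup\ext(\Omega^2)$. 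Since $\Omega^2\subseteq\Omega^-$ and $\Omega^1\subseteq\ext(\Omega^2)$ (the latter because $\Omega^1$ is open and disjoint from $\Omega^2$), the two partitions of $B_\xi\setminus\Gamma_\xi$ must coincide, whence $B_\xi\cap\Omega^2=B_\xi\cap\Omega^-$. Setting $U:=\bigcup_{\xi\in E}B_\xi$ gives an open set with $E\subseteq U$, $U\cap\d\Omega^2=U\cap\d\Omega^-=E$ and $U\cap\Omega^2=U\cap\Omega^-$.

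With this local coincidence I would invoke the localization principle for harmonic measure to conclude $\omega^2|_E\approx\omega^-|_E$. One inclusion is elementary: from $\Omega^2\subseteq\Omega^-$ and the maximum principle (equivalently, Brownian motion exits $\Omega^2$ no later than $\Omega^-$, and reaching a point of $E\subseteq\d\Omega^-$ forces the two exit times to agree), one gets $\omega^2_x(N)\le\omega^-_x(N)$ for every Borel $N\subseteq\d\Omega^2\cap\d\Omega^-$ and every $x\in\Omega^2$, in particular $\omega^2\ll\omega^-$ on $E$. The reverse inclusion $\omega^-\ll\omega^2$ on $E$ is the substantive point, and I expect it to be the main obstacle: it uses that $\Omega^2$ and $\Omega^-$ agree on the neighbourhood $U$ of $E$, so that on the common piece $U\cap\Omega^2=U\cap\Omega^-$ the two harmonic measures of a set $N\subseteq E$ have comparable densities near $E$; here the $\Delta$‑regularity of $\Omega^-$ — which $\Omega^2$ inherits along $E$, the two domains being identical there — is what guarantees that every point of $E$ is regular for both domains and that harmonic measure is doubling near $E$, exactly the ingredients that make the localization lemma go through. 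This direction is also precisely where the hypothesis that $E$ be relatively open in $\d\Omega^2$ (rather than merely Borel) is indispensable: without it the larger domain $\Omega^-$ could ``see'' a part of $E$ that the smaller one $\Omega^2$ cannot. Once $\omega^2|_E\approx\omega^-|_E$ is in place, the conclusion is the elementary reduction carried out above.
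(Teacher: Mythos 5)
Your overall strategy is \emph{not} the paper's: the paper never tries to show $\omega^2|_E$ and $\omega^-|_E$ are mutually absolutely continuous on all of $E$. It only uses the easy direction $\omega^2\ll\omega^-$ on $E$ (maximum principle, as in Corollary \ref{coro1}), restricts to the set $G=\{d\omega^2/d\omega^->0\}\cap E$, where mutual absolute continuity of $\omega^2$ and $\omega^-$ is automatic, applies \Theorem{main} to $\Omega^1,\Omega^-$ on $G$, and then disposes of the leftover set $E\setminus G$ (where $\omega^2$ vanishes) by the cone property at tangent points of $\d\Omega^2$; the relative openness of $E$ in both boundaries is used only to identify the tangent points of $\d\Omega^1$ and of $\d\Omega^2$ on $E$. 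Your route instead hinges on the much stronger claim $\omega^-|_E\ll\omega^2|_E$, and this is where the proposal has genuine gaps.

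First, the ``local coincidence'' step is not proved: from $B_\xi\setminus\Gamma_\xi=(B_\xi\cap\Omega^-)\sqcup(B_\xi\cap\Omega^1)=(B_\xi\cap\Omega^2)\sqcup\bigl(B_\xi\cap(\cnj{\Omega^2})^c\bigr)$ together with the inclusions $B_\xi\cap\Omega^2\subseteq B_\xi\cap\Omega^-$ and $B_\xi\cap\Omega^1\subseteq B_\xi\cap(\cnj{\Omega^2})^c$ it does \emph{not} follow that the two partitions coincide; the inclusions are perfectly compatible with a nonempty open set $X:=B_\xi\cap\Omega^-\setminus\cnj{\Omega^2}$ (a ``third sheet'' of $\Omega^-$ abutting $E$, with $\d X\cap B_\xi\subseteq E$, a Wada-type configuration). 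Nothing in the hypotheses obviously excludes this, and your argument is exactly the point at issue restated. Second, even granting local coincidence, the reverse absolute continuity $\omega^-\ll\omega^2$ on $E$ is asserted via an unproved ``localization principle'' which you yourself flag as the main obstacle; it is a nontrivial lemma (this is precisely where one would have to run a $\Delta$-regularity iteration of the type $\omega^x_{B\cap\Omega}(\d B\cap\Omega)\leq\beta<1$), and it is not available among the results quoted in the paper. So as written the proof is incomplete at its central step, whereas the paper's decomposition $E=G\cup(E\setminus G)$ sidesteps both difficulties; if you want to salvage your approach you must either prove $X=\varnothing$ and the localization lemma, or switch to the paper's argument, which needs neither.
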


The referee shared with us an example that shows Corollary \ref{coro2} does not hold for general domains. Construct a domain in $\R^{3}$ as follows. Let $\ve_{k}\downarrow 0$ and 
\[
\Omega=\R^{3}_{+}\backslash \bigcup_{k=0}^{\infty} \bigcup_{\xi\in 2^{-k}(\bZ\times \bZ\times \{0\})} \cnj{(B(\xi,\ve_{k} 2^{-k})\cap \bR^{2})\times 2^{-k}}.\]
If $\ve_{k}\downarrow 0$ fast enough, then $\omega_{\Omega}$ and $\omega_{\cnj{\Omega}^{c}}$ is mutually absolutely continuous on $\d\Omega\cap \R^{2}$ but no point in $\d\Omega$ is a tangent point. Thus, a condition like $\Delta$-regularity is necessary for the result to hold. 

In the case of the plane ($n=1$), the above results are already known, and basically they follow from the
following nice theorem of Chris Bishop \cite{Bishop-arkiv}:

\begin{theorema}
Let $\Omega^1$ and $\Omega^2$ be disjoint domains in $\R^2$ and let $\omega^1$ and $\omega^2$ 
be their harmonic measures. Then $\omega^1\perp \omega^2$ if and only if the set of points in 
$\partial\Omega^1\cap \d\Omega^2$ satisfying
a weak double cone condition with respect to $\Omega^1$ and $\Omega^2$ has zero 1-dimensional
Hausdorff measure. Moreover, if $\omega^1$ and $\omega^2$ are mutually absolutely continuous on a 
Borel set $E\subset \partial\Omega^1\cap \d\Omega^2$, then $E$ contains a $1$-rectifiable subset $F$ upon which $\omega^1$ and $\omega^2$ are mutually absolutely continuous with respect to $\cH^{1}$. 
\end{theorema}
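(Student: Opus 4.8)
The plan is to combine McMillan's twist point theorem with a two-sided Carleman-type estimate for the pair $\Omega^1,\Omega^2$, and to finish with soft measure theory. Write $\Gamma:=\partial\Omega^1\cap\partial\Omega^2$ and let $D\subset\Gamma$ be the set of weak double cone points. Since $\partial\Omega^1\setminus\Gamma$ and $\partial\Omega^2\setminus\Gamma$ are disjoint, $\omega^1\perp\omega^2$ if and only if $\omega^1|_\Gamma\perp\omega^2|_\Gamma$, so only behaviour on $\Gamma$ matters; fix poles $x_i\in\Omega^i$ and set $R_0:=\min_i\dist(x_i,\partial\Omega^i)>0$, so that every ball $B(\zeta,r)$ with $\zeta\in\Gamma$, $r<R_0$, lies at definite distance from the poles. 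The one genuinely technical reduction is to a simply connected setting: near an arbitrary (possibly badly accessible) piece of $\Gamma$ one passes to universal covering maps $\pi_i:\bD\to\Omega^i$, or replaces $\Omega^i$ locally by a simply connected subdomain carrying comparable harmonic measure on the relevant boundary portion, so that McMillan's theorem becomes available.

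The first analytic input is McMillan's twist point theorem and its standard consequences: for each $i$, $\omega^i$-a.e.\ $\zeta\in\partial\Omega^i$ is either a \emph{cone point} of $\Omega^i$ (a truncated cone with vertex $\zeta$ lies in $\Omega^i$) or a \emph{twist point}; the set $C_i$ of cone points is $1$-rectifiable; and $\omega^i$ restricted to the twist points is mutually singular with $\cH^{1}$. Hence, if $A$ is Borel with $\omega^i|_A\ll\cH^{1}$, then $\omega^i(A\setminus C_i)=0$, so $\omega^i|_A$ is carried by a rectifiable set. The second analytic input is the two-sided estimate: for disjoint planar domains,
\[
\omega^1\bigl(B(\zeta,r)\bigr)\,\omega^2\bigl(B(\zeta,r)\bigr)\ \lesssim\ (r/R_0)^{2}\qquad\text{for }\zeta\in\Gamma,\ 0<r<R_0 .
\]
This comes from Carleman's method: writing $\theta_i(t)$ for the angular measure of $\Omega^i\cap\partial B(\zeta,t)$, disjointness gives $\theta_1(t)+\theta_2(t)\le2\pi$; the Beurling--Nevanlinna estimate bounds $\omega^i(B(\zeta,r))$ by $\exp\bigl(-\pi\int_r^{R_0}\frac{dt}{t\,\theta_i(t)}\bigr)$, and the convexity bound $\tfrac1{\theta_1}+\tfrac1{\theta_2}\ge\tfrac2\pi$ yields the product estimate.

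The heart of the argument is the following claim, which delivers both the ``moreover'' part and the implication $\cH^{1}(D)=0\Rightarrow\omega^1\perp\omega^2$: \emph{if $E\subset\Gamma$ is Borel with $\omega^1(E)>0$ and $\omega^1|_E,\omega^2|_E$ are mutually absolutely continuous, then $E$ contains a $1$-rectifiable Borel set $F$ with $\cH^{1}(F)>0$, consisting of points that are cone points of both domains, upon which $\omega^1|_F,\omega^2|_F,\cH^{1}|_F$ are pairwise mutually absolutely continuous.} To prove it, apply the Besicovitch differentiation theorem (all measures involved are Radon) to obtain a Borel $E_1\subset E$ with $\omega^1(E_1)>0$ and a $\delta>0$ such that $\omega^2(B(\zeta,r))\approx\omega^1(B(\zeta,r))$ for every $\zeta\in E_1$ and every $r<\delta$, with uniform constants; combined with the two-sided estimate this forces $\omega^i(B(\zeta,r))\lesssim r$ for all $\zeta\in E_1$, $r<\delta$, whence $\omega^i|_{E_1}\ll\cH^{1}$ by the standard density comparison lemma, and in particular $\cH^{1}(E_1)>0$. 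By the consequence of McMillan's theorem, $\omega^1|_{E_1}$ is carried by $C_1$ and $\omega^2|_{E_1}$ by $C_2$; transferring null sets between $\omega^1$ and $\omega^2$ via mutual absolute continuity shows $\omega^1\bigl(E_1\cap C_1\cap C_2\bigr)=\omega^1(E_1)>0$. Restricting this set further to where both Radon--Nikodym densities $d\omega^i/d\cH^{1}$ are positive (a set of full $\omega^1$-measure in it, hence of positive $\cH^{1}$-measure) produces $F$: it is rectifiable as a subset of $C_1$, its points are cone points of both domains (so $F\subset D$), and on it all three measures are pairwise mutually absolutely continuous by Radon--Nikodym accounting. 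Given the claim, if $\omega^1\not\perp\omega^2$ we restrict to the mutually absolutely continuous part to get such an $E$ with $\omega^1(E)>0$, hence an $F\subset D$ with $\cH^{1}(F)>0$, so $\cH^{1}(D)>0$; contrapositively $\cH^{1}(D)=0\Rightarrow\omega^1\perp\omega^2$, and taking $E$ as in the hypothesis of the theorem gives the ``moreover'' part. The reverse implication $\cH^{1}(D)>0\Rightarrow\omega^1\not\perp\omega^2$ is the more classical direction: $D$ is $1$-rectifiable (via the cone directions), and on a subset of $D$ of positive $\cH^{1}$-measure both $\omega^i$ are comparable to $\cH^{1}$ (McMillan's theorem together with Dahlberg-type estimates on the big Lipschitz pieces of $D$ furnished by the cones), hence mutually absolutely continuous there.

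I expect the main obstacle to be the two-sided product estimate: $\Omega^i\cap\partial B(\zeta,t)$ need not be a single arc and the domains may wind around $\zeta$, so the Beurling--Nevanlinna/Carleman estimate must be set up carefully, and one must check it survives the reductions. A close second is the reduction to a setting where McMillan's twist point theorem genuinely applies, i.e.\ relating $\omega^i_{\Omega^i}$ to harmonic measure of a simply connected domain near an arbitrary, possibly poorly accessible, piece of $\Gamma$; and a smaller point is matching Bishop's precise ``weak double cone'' definition with the set of mutual cone points (checking it is $1$-rectifiable and ``seen'' by both harmonic measures). Everything downstream of these is routine differentiation of measures and Radon--Nikodym bookkeeping.
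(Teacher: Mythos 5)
First, a point of order: the paper does not prove Theorem A. It is quoted from Bishop's paper \cite{Bishop-arkiv} (with the Jordan-arc case credited to \cite{BCGJ88}) as the known planar antecedent of Theorem \ref{t:main}, so there is no in-paper proof to compare your argument against. Judged on its own terms, your outline reproduces the strategy actually used in \cite{BCGJ88} and \cite{Bishop-arkiv}: the extremal-length/Carleman product estimate $\omega^1(B(\zeta,r))\,\omega^2(B(\zeta,r))\lesssim (r/R_0)^2$ coming from $\theta_1+\theta_2\le 2\pi$ (your worry about multiple arcs is unfounded here: the length--area argument with the metric $\rho=1/(t\,\theta(t))$ works with $\theta$ the \emph{total} angular measure, and disjointness bounds the sum of the totals), then Besicovitch differentiation plus Egorov to upgrade mutual absolute continuity to uniform comparability of $\omega^1(B(\zeta,r))$ and $\omega^2(B(\zeta,r))$ on a set of positive measure, hence linear growth and $\omega^i\ll\cH^1$ there, and finally a McMillan-type dichotomy to pass from $\omega^i\ll\cH^1$ to rectifiability. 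The differentiation step is the same device the present paper uses in Lemma \ref{lem:ver1}, and the converse direction via sawtooth subdomains and the Lavrentiev/F.\ and M.\ Riesz theorem is the standard one.

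The genuine gap is the step you flag but do not resolve: McMillan's twist-point theorem is a statement about simply connected domains, while $\Omega^1,\Omega^2$ are arbitrary disjoint planar domains. Of your two proposed reductions, the second is unavailable: for $\Omega'\subset\Omega$ the maximum principle gives only $\omega_{\Omega'}\le\omega_{\Omega}$ on the common boundary, and in general there is no simply connected subdomain whose harmonic measure is comparable \emph{from below} on a prescribed boundary portion. The universal-cover route can be made to work, but it is a substantial piece of analysis rather than a remark: $\pi:\bD\to\Omega^i$ is highly non-injective near $\partial\bD$, the boundary correspondence exists only through radial limits, and one must show that $\omega$-a.e.\ cone/twist behaviour of $\pi$ on $\partial\bD$ translates into cone/twist behaviour of $\partial\Omega^i$ at the limit point; this localization (via a decomposition into simply connected pieces and a local F.\ and M.\ Riesz theorem) is where most of the work in \cite{Bishop-arkiv} actually lives. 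A second, smaller gap is the identification of your set of mutual cone points with Bishop's weak double cone condition: a truncated cone in $\Omega^1$ and one in $\Omega^2$ with the same vertex need not be opposite, so the inclusion $F\subset D$ requires an additional flatness/blow-up argument (the analogue of what Section 5 of the present paper does in higher dimensions) rather than the one-line assertion you give. Everything downstream of these two points is correct routine measure theory.
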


For the definition of the weak double cone condition we refer the reader to the original 
paper of Bishop \cite{Bishop-arkiv}.
We also mention that in the particular case when $\partial\Omega^1\cap \d\Omega^2$ is a
Jordan arc in the plane, the preceding result is a direct consequence of a previous work
by Bishop, Carleson, Garnett and Jones \cite{BCGJ88}.

The main obstacle for the challenge of extending the aforementioned results of Bishop and Bishop, Carleson, Garnett and Jones 
to higher dimensions arises from the fact that the arguments in these works rely heavily on the use of complex analysis. Up to now, the main contribution on this objective was the work of Kenig, Preiss, and Toro \cite{KPT09}, whose result we paraphrase below.

\begin{theoremb} 
Let $\Omega^+$ and $\Omega^-={\rm ext}(\Omega^+)$ be two NTA domains in $\bR^{n+1}$, $n\geq 2$, and $\omega^{\pm}=\omega_{\Omega_{\pm}}^{x_{\pm}}$ their harmonic measures. Then $\d\Omega^+=\Gamma_{g}\cup \Gamma_{b}\cup N\cup S$, where
\begin{enumerate}
\item $\omega^{+}|_{S}\perp \omega^{-}|_{S}$,
\item $\omega^{\pm}(N)=0$,
\item $\dim_\HH (\Gamma_{b}\cup \Gamma_{g})=n$,
\item each $\xi\in \Gamma_{b}\cup \Gamma_{g}$ is an $n$-dimensional tangent point for $\d\Omega$,
\item $\omega^{+}|_{\Gamma_{g}} \ll \cH^{n}|_{\Gamma_{g}}\ll \omega^{-}|_{\Gamma_{g}}\ll \omega^{+}|_{\Gamma_{g}}$, and
\item if $E\subset \Gamma_{b}$ is Borel with $\omega^{\pm}(E)>0$, then $\cH^{n}(E)=\infty$.
\end{enumerate}
\end{theoremb}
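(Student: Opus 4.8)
The plan is to follow the tangent-measure and free-boundary-regularity strategy of Kenig and Toro. \textbf{Step 1: Decomposition.} Using Lebesgue's decomposition write $\omega^-=f\,\omega^++\omega^-_s$ with $\omega^-_s\perp\omega^+$ and, symmetrically, decompose $\omega^+$ against $\omega^-$. Let $S$ be a Borel set carrying both singular parts, so $\omega^+|_S\perp\omega^-|_S$, and let $N_0$ be the set where one of the two mutual densities vanishes, which is $\omega^\pm$-null. On $G:=\d\Omega^+\setminus(S\cup N_0)$ the measures $\omega^+$ and $\omega^-$ are mutually absolutely continuous with $0<f<\infty$ $\omega^+$-a.e.; since $\omega^\pm$ are doubling (Jerison--Kenig), the Lebesgue differentiation theorem shows that $f$ and $1/f$ are approximately continuous with respect to $\omega^+$ at $\omega^+$-a.e.\ point of $G$.

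\textbf{Step 2: Blow-up.} Fix such a point $\xi$ and a sequence $r_j\downarrow0$. Using the two-sided corkscrew property of the NTA domains, pass to a subsequence so that $r_j^{-1}(\Omega^\pm-\xi)$ converge in the local Hausdorff metric to unbounded NTA domains $\Omega^\pm_\infty$ with $\Omega^-_\infty={\rm ext}(\Omega^+_\infty)$ and common boundary $\Sigma_\infty$, and so that the rescaled and suitably normalized harmonic measures converge weakly to the harmonic measures $\omega^\pm_\infty$ of $\Omega^\pm_\infty$ with pole at infinity. The NTA hypothesis is what does the real work here: it yields uniform nondegeneracy, doubling, and the boundary Harnack principle along the sequence, so that Green functions, Poisson kernels, and harmonic measures pass to the limit. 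The approximate continuity of $f$ and $1/f$ at $\xi$ then forces $\omega^+_\infty=\lambda\,\omega^-_\infty$ for the constant $\lambda=f(\xi)^{-1}\in(0,\infty)$.

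\textbf{Step 3: Rigidity of the blow-up.} Let $u^\pm_\infty$ be the Green function of $\Omega^\pm_\infty$ with pole at infinity, extended by $0$ across $\Sigma_\infty$; each is nonnegative, continuous and subharmonic on $\bR^{n+1}$, with $\Delta u^+_\infty=\omega^+_\infty$, $\Delta u^-_\infty=\omega^-_\infty$ as measures (up to normalization) and $u^+_\infty\,u^-_\infty\equiv0$. Since $\omega^+_\infty=\lambda\,\omega^-_\infty$, the function $v:=u^+_\infty-\lambda\,u^-_\infty$ is harmonic on all of $\bR^{n+1}$; it is positive on $\Omega^+_\infty$, negative on $\Omega^-_\infty$, and its nodal set is exactly $\Sigma_\infty$. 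Polynomial growth of Green functions of NTA domains with pole at infinity together with a Liouville argument make $v$ a harmonic polynomial, and a further blow-up (tangent measures of tangent measures are tangent measures) reduces to the case where $v$ is a homogeneous harmonic polynomial $p$ with nodal set $\Sigma_\infty=p^{-1}(0)$ separating $\bR^{n+1}$ into its two nodal domains, still with proportional harmonic measures. Now feed $u^+_\infty,u^-_\infty$ into the Alt--Caffarelli--Friedman monotonicity formula: the monotone quantity $J_\xi(r)$ has a limit $J_\xi(0^+)\ge0$, and one shows that at $\omega^+$-a.e.\ point of $G$ this limit is strictly positive --- this is exactly where mutual absolute continuity ($\omega^+\approx\omega^-$ near $\xi$, hence $u^+_\infty\approx u^-_\infty$ near $\Sigma_\infty$) enters --- which by the rigidity case of the ACF formula forces $p$ to be affine. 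Hence $\Sigma_\infty$ is a hyperplane, $\xi$ is an $n$-dimensional tangent point for $\d\Omega^+$, and the flat blow-up of $\omega^\pm$ is a constant multiple of $\cH^n$ on the tangent plane. I expect this rigidity step to be the main obstacle: propagating the NTA constants so that harmonic measure is continuous under domain convergence, identifying the blow-ups with harmonic polynomials via boundary Harnack and Liouville, and extracting affineness from the monotonicity formula at $\omega^+$-a.e.\ point.

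\textbf{Step 4: From blow-ups to the stated decomposition.} Put into $N$ the set $N_0$ together with the ($\omega^\pm$-null, by Step 3) set of points of $G$ that are not tangent points; then (2) holds, and (1) holds by the construction of $S$. Split $G\setminus N=\Gamma_g\cup\Gamma_b$ according to the $n$-density of $\omega^+$: on $\Gamma_g$ it is finite and positive, on $\Gamma_b$ it degenerates (the density-zero set being $\omega^+$-null and absorbed into $N$). On $\Gamma_g$, flatness of the blow-ups plus finite positive density gives, via the Marstrand--Mattila rectifiability criterion and the comparison of a doubling harmonic measure with surface measure at flat points, that $\omega^+|_{\Gamma_g}$ is $n$-rectifiable and comparable to $\cH^n|_{\Gamma_g}$; combining this with the analogous statement for $\omega^-$ and with $\omega^+\approx\omega^-$ yields (5). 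On $\Gamma_b$, the degeneracy of the density together with the standard density theorem forces $\omega^\pm(E)=0$ for every Borel $E\subset\Gamma_b$ with $\cH^n(E)<\infty$, which is (6). Finally $\dim_\HH(\Gamma_g\cup\Gamma_b)=n$: the upper bound follows from the flat-blow-up structure via an efficient covering by thin slabs, and the lower bound from the fact that at a tangent point the harmonic measure sits on an essentially $n$-dimensional set, which is (3).
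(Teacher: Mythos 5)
You should first note that the paper contains no proof of this statement: Theorem B is quoted from Kenig--Preiss--Toro \cite{KPT09} (the paper only adapts pieces of that argument, in Section 5, to the $\Delta$-regular setting). So your proposal has to be measured against the KPT argument, and while its skeleton (Lebesgue decomposition, blow-up of harmonic measures and Green functions at points of mutual absolute continuity, harmonicity of $u^{+}_{\infty}-\lambda u^{-}_{\infty}$ on all of $\bR^{n+1}$, use of the ACF monotonicity formula) is the right one, the rigidity mechanism in your Step 3 has a genuine gap. You claim that mutual absolute continuity forces the ACF limit $J_{\xi}(0^{+})$ to be strictly positive at $\omega^{+}$-a.e.\ point of $G$, and you deduce affineness of the blow-up from the rigidity case of ACF. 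But by the Beurling-type estimates (the analogues of Lemmas \ref{l:beurling} and \ref{l:otherside} in this paper), positivity of $\gamma(\xi,0^{+})$ is essentially equivalent to positivity of the product of the $n$-densities of $\omega^{+}$ and $\omega^{-}$ at $\xi$, and mutual absolute continuity gives no lower bound on these densities: they may vanish on a set of positive harmonic measure. That set is exactly $\Gamma_{b}$, which Theorem B explicitly allows (item (6) encodes that its points have vanishing upper $n$-density), and proving it is negligible is the main new theorem of the present paper, obtained with Riesz-transform techniques, not with ACF rigidity. In \cite{KPT09} the tangency statement (4) is instead obtained by Preiss's tangent-measure machinery: ACF and the Beurling estimates are used only to show that blow-ups are harmonic polynomials of bounded degree; then tangent measures of tangent measures (\cite{Pr87}, Theorem \ref{t:ttt} here) and the smooth points of the polynomial zero set produce one flat tangent measure at a.e.\ point, and a connectivity argument in the metric $d_{1}$ (the analogue of Lemma \ref{l:tanconnect}) upgrades this to $\Tan(\omega^{+},\xi)\subset\cF$. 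Your route leaves precisely the degenerate points, where the ACF limit is zero, unhandled, and those may carry positive measure.

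A related inconsistency appears in Step 4. You absorb the density-zero set into $N$, asserting it is $\omega^{+}$-null; this is unjustified (it would essentially say $\Gamma_{b}=\varnothing$, which is the content of Theorem \ref{t:main}, not of Theorem B), and it leaves $\Gamma_{b}$ as a set of ``degenerate'' (infinite or nonexistent) density, for which item (6) then goes the wrong way: infinite upper density on $E$ forces $\cH^{n}(E)=0$, not $\cH^{n}(E)=\infty$. In the correct decomposition $\Gamma_{b}$ is (up to an $\omega^{\pm}$-null set) the set of flat points where the $n$-density of $\omega^{\pm}$ vanishes, and (6) follows from the standard comparison $\omega^{+}(E)\lesssim \bigl(\sup_{E}\limsup_{r\to0}\Theta_{\omega^{+}}(B(x,r))\bigr)\,\cH^{n}(E)$ applied on sets of finite $\cH^{n}$-measure; $\Gamma_{g}$ is where the density is positive and finite, and there the argument you sketch for (5) is fine.
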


Again, see \cite{JK82} for the definition of NTA domains. In (3), $\dim_\HH$ stands for the Hausdorff dimension. Let us remark that in \cite{KPT09} it was also proved that $\Gamma_{b}\cup \Gamma_{g}$
is $n$-rectifiable and $\Gamma_{b}=\varnothing$ under the assumption that $\d\Omega$ has locally finite $\HH^n$-measure (in fact, the whole boundary is $n$-rectifiable in this case by the Besicovitch-Federer projection theorem). An immediate consequence of Theorem \ref{t:main} is that in the preceding result of Kenig,
Preis and Toro we can assert that $\Gamma_b =\varnothing$, because the set where $\omega^+$ and
$\omega^-$ are mutually absolutely continuous satisfies the same property as $\Gamma_g$ in (5), up to
a set of null harmonic measure $\omega^\pm$.

The proof of Theorem B above is a beautiful marriage of techniques from partial differential equations and geometric measure theory. The crucial tools are the theory of nontangentially accessible domains introduced by Jerison and Kenig \cite{JK82}, the monotonicity formula of Alt, Caffarelli, and Friedman \cite{ACF84}, the theory of tangent measures introduced by Preiss \cite{Pr87}, and the blow up techniques for harmonic measures at infinity for unbounded NTA domains due to Kenig and Toro \cite{KT99,KT06}.

The authors used Theorem B to resolve a conjecture put forth by Lewis, Verchota, and Vogel. In \cite{Wol95}, Wolff showed that there are two-sided NTA domains in $\bR^{3}$ whose harmonic measures may have dimensions strictly bigger or smaller than $2$. In \cite{LVV05},  Lewis, Verchota, and Vogel generalized this to higher dimensions and showed that there are two-sided NTA domains in $\bR^{n+1}$ for any $n\geq 2$ whose interior and exterior harmonic measures can have dimensions either below or above $n$ (in any combination). They also conjectured that there should be such a two-sided NTA domain whose harmonic measures ---in addition to having fractional dimensions--- should be mutually absolutely continuous. However, a consequence of Theorem B is that the dimension of the harmonic measures is equal to $n$ if mutual absolute continuity occurs. Additionally, by Corollary \ref{coro2}, in this case the harmonic measures are concentrated on a countable union of Lipschitz graphs, and hence on a set of $\sigma$-finite $\cH^{n}$-measure.


Our arguments for the proof of Theorem \ref{t:main} improve on the techniques of the aforementioned 
work of Kenig, Preiss and Toro and include a new set of ideas involving the $n$-dimensional Riesz transform.
The connection between the Riesz transform and harmonic measure is due to the fact that the Riesz kernel is the gradient of the Newtonian potential, and the relationship between the Riesz transform and rectifiability is a subject that has been in constant development for the last twenty years
and has culminated in the solution of the David-Semmes conjecture by  Nazarov, Tolsa and Volberg
\cite{NToV1,NToV-pubmat}.
 For recent examples of Riesz transform techniques used to study harmonic measure, see for instance \cite{BH15,AHMMMTV15,MT15}. The arguments in the current paper use  a new recent result by
 Girela-Sarri\'on and Tolsa \cite{GT} on the connection between Riesz transforms and quantititative 
 rectifiability for general Radon measures (see Theorem \ref{teo0} below for more details).
 
One can view the works described above as sort of an endpoint case of a larger class of two phase problems where one is interested in studying the smoothness of $\partial\Omega$ in terms of the smoothness of $\frac{d\omega^+}{d\omega^-}$; in other words, better behavior of $\frac{d\omega^+}{d\omega^-}$ implies better regularity of $\d\Omega$. For example, most recently, Engelstein \cite{Engelstein} showed that for two-sided NTA domains in $\bR^{n+1}$ (Reifenberg flat if $n\geq 2$), if $\alpha\in (0,1)$, $k\geq 0$ is an integer, and $\log \frac{d\omega^+}{d\omega^-}\in C^{k,\alpha}$, then locally $\d\Omega$ is the graph of a $C^{k+1,\alpha}$ function. See also \cite{ACF84}, \cite{Badger13}, \cite{C87}, \cite{DFS}, as well as the references therein, for example.\\

The authors are very grateful to the anonymous referee for useful suggestions that improved the paper and for providing an example to show the tightness of the $\Delta$-regularity assumption.

\vv

\section{Preliminaries}

We will write $a\lesssim b$ if there is $C>0$ so that $a\leq Cb$ and $a\lesssim_{t} b$ if the constant $C$ depends on the parameter $t$. We write $a\approx b$ to mean $a\lesssim b\lesssim a$ and define $a\approx_{t}b$ similarly.

For sets $A,B\subset \bR^{n+1}$, let 
\[\dist(A,B)=\inf\{|x-y|:x\in A,y\in B\}, \;\; \dist(x,A)=\dist(\{x\},A),\]
and 
\[\diam A=\sup\{|x-y|:x,y\in A\}.\]
For a subset $A\subset\R^{n+1}$ and $0<\delta\leq \infty$ one sets
\[\cH^{n}_{\delta}(A)=\inf\ck{\sum \diam(A_i)^{n}: A\subset \bigcup A_i,\,\diam(A_i)\leq \delta}.\]
 The {\it $n$-dimensional Hausdorff measure} of $A$ is defined as
\[\cH^{n}(A)=\lim_{\delta\downarrow 0}\cH^{n}_{\delta}(A),\]
and $\cH^{n}_{\infty}(A)$ is called the {\it $n$-dimensional Hausdorff content} of $A$. See \cite[Chapter 4]{Mattila} for more details.

We recall now the notion of $n$-rectifiability and its quantitative analogue (uniform $n$-rectifiability).

\begin{definition}\label{def:rec-lip}
A Borel set $E\subset \R^{n+1}$ is {\it $n$-rectifiable} if there exist $E_{i}\subset \R^{n}$ and $f_{i}:E_{i}\rightarrow \R^{n+1}$ Lipschitz so that $\HH^n(E\backslash \bigcup_{i=1}^{\infty} f_{i}(E_{i}) )=0$. 
\end{definition}

\begin{definition}\label{def:ADR-UR}
A set $E\subset\R^{n+1}$ is {\it $n$-Ahlfors-David regular (or $n$-AD-regular)} if
\begin{equation}\label{eq:adr1}
C^{-1}r^n\leq\HH^n(E\cap B(x,r))\leq C\,r^n\quad \mbox{ for all $x\in E$ and $0<r\leq\diam(E)$}.
\end{equation}
A set $E\subset\R^{n+1}$ is {\it uniformly  $n$-rectifiable} if it is 
$n$-AD-regular and
there exist $\theta, M >0$ such that for all $x \in E$ and all $r>0$ 
there is a Lipschitz mapping $g: B_n(0,r) \subset \R^{n} \to \R^{n+1}$ with $\text{Lip}(g) \leq M$ such that$$
\HH^n (B(x,r)\cap g(B_{n}(0,r))\cap E)\geq \theta r^{n}.$$

In the case $n=1$, it is known that $E$ is uniformly $1$-rectifiable if and only if $E$ is contained in a $1$-AD-regular curve 
in $\R^{n+1}$. We will call the constants $M$, $\theta$ and $C$ in \rf{eq:adr1} the UR constants of $E$.
\end{definition}

\begin{definition}\label{def:BV}
A function $f \in L_{loc}^1(U)$ has {\it locally bounded variation} in an open set $U \subset \R^{n+1}$ and we write $f \in BV_{\loc}(U)$, if for each open set $V\Subset U$,
$$\sup \left\{ \int_V f \,\,\divv \phi \,\,d\mathcal L^{n+1}: \phi \in C^\infty_c(V;\R^{n+1}),\,\, |\phi| \leq 1 \right\} <\infty,$$
where $\mathcal L^{n+1}$ stands for the $(n+1)$-dimensional Lebesgue measure. An $\mathcal L^{n+1}$-measurable set $E \subset \R^{n+1}$ has {\it locally finite perimeter} in $U$ if $\chi_E \in BV_{loc}(U)$. Recall that the Radon measures in $\R^{n+1}$ are just the Borel measures which are locally finite (and they turn out to be inner regular).

\end{definition}
 We now state the Structure Theorem for $ BV_{loc}$ functions, whose proof can be found in \cite[p. 167]{EvansGariepy}.
\begin{theorem}\label{thm:BV-structure}
Let $f \in BV_{loc}(U)$. Then there exists a Radon measure $\mu$ on $U$ and a $\mu$-measurable function $\sigma : U  \to \R^{n+1}$ so that 
\begin{enumerate}
\item $|\sigma(x)|=1$, for $\mu$-a.e. $x \in U$ and
\item $\int_U f \,\,\divv \phi \,\,d\mathcal L^{n+1} = - \int \phi \cdot \sigma \,d\mu$, for all $\phi \in C^\infty_c(U, \R^{n+1})$.
\end{enumerate}
If $f=\chi_E$ and $E$ has locally finite perimeter in $U$, then we denote $\|\d E\|=\mu$ and $\nu_E=-\sigma $.
\end{theorem}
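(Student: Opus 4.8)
The plan is to recognize the functional on the left of (2) as a locally bounded linear functional on compactly supported continuous vector fields and then invoke the Riesz representation theorem for $\R^{n+1}$-valued functionals. First I would set $L(\phi)=-\int_U f\,\divv\phi\,d\mathcal L^{n+1}$ for $\phi\in C^\infty_c(U;\R^{n+1})$. The hypothesis $f\in BV_{loc}(U)$ says precisely that for every open $V\Subset U$ the constant $C_V:=\sup\{\int_V f\,\divv\phi:\phi\in C^\infty_c(V;\R^{n+1}),\ |\phi|\le1\}$ is finite; since $-\phi$ is admissible whenever $\phi$ is, this gives $|L(\phi)|\le C_V\|\phi\|_\infty$ for every $\phi\in C^\infty_c(U;\R^{n+1})$ supported in $V$. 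Thus $L$ is \emph{locally bounded}.

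Next I would extend $L$ to all of $C_c(U;\R^{n+1})$. Given $\phi\in C_c(U;\R^{n+1})$ with $\supp\phi\subset K$ compact, pick an open $V$ with $K\subset V\Subset U$ and mollify to obtain $\phi_j\in C^\infty_c(V;\R^{n+1})$ with $\phi_j\to\phi$ uniformly; the local bound forces $(L(\phi_j))_j$ to be Cauchy, so $L(\phi):=\lim_j L(\phi_j)$ is well defined, independent of the approximating sequence, linear, and still obeys $|L(\phi)|\le C_V\|\phi\|_\infty$. At this point $L$ is exactly the type of functional covered by the Riesz representation theorem for vector measures (see \cite[\S1.8]{EvansGariepy}), which produces a Radon measure $\mu$ on $U$ and a $\mu$-measurable $\sigma\colon U\to\R^{n+1}$ with $|\sigma(x)|=1$ for $\mu$-a.e.\ $x$ and $L(\phi)=\int_U\phi\cdot\sigma\,d\mu$ for all $\phi\in C_c(U;\R^{n+1})$. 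Restricting to $\phi\in C^\infty_c(U;\R^{n+1})$ and unwinding the definition of $L$ gives $\int_U f\,\divv\phi\,d\mathcal L^{n+1}=-\int\phi\cdot\sigma\,d\mu$, which is (2); assertion (1) is the normalization $|\sigma|=1$ just obtained, and the final sentence is merely the notational convention $\|\d E\|:=\mu$, $\nu_E:=-\sigma$ in the special case $f=\chi_E$, noting that $\chi_E\in BV_{loc}(U)$ by definition exactly when $E$ has locally finite perimeter in $U$.

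The one genuinely substantial ingredient — the step I would regard as the main obstacle in a self-contained argument — is the vector-valued Riesz representation theorem itself. Its proof runs as follows: use the local boundedness of $L$ to define $\mu(V)=\sup\{L(\phi):\phi\in C_c(U;\R^{n+1}),\ |\phi|\le1,\ \supp\phi\subset V\}$ for open $V\subset U$ and $\mu(A)=\inf\{\mu(V):A\subset V\ \text{open}\}$ otherwise, and verify by an outer-measure (Carathéodory-type) argument that $\mu$ is a Radon measure; then obtain $\sigma$ by a Radon–Nikodym argument, so that $L(\phi)=\int\phi\cdot\sigma\,d\mu$ with each component of $\sigma$ in $L^1_{loc}(\mu)$, the bound $|L(\phi)|\le\int|\phi|\,d\mu$ together with the very definition of $\mu$ forcing $|\sigma|=1$ $\mu$-a.e. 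Everything else above — mollification, density of $C^\infty_c$ in $C_c$ on compacta, and converting the $\sup$ in the definition of $BV_{loc}$ into an operator-norm estimate — is routine.
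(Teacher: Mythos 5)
Your proposal is correct, and it is essentially the proof the paper points to: the paper states this Structure Theorem without proof, citing Evans--Gariepy, and the argument there is exactly your reduction to the Riesz representation theorem for vector-valued functionals on $C_c(U;\R^{n+1})$ (after extending the locally bounded functional $\phi\mapsto -\int_U f\,\divv\phi\,d\mathcal L^{n+1}$ from $C^\infty_c$ by density). The sign bookkeeping and the final notational remark about $\|\d E\|$ and $\nu_E$ are handled correctly as well.
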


\begin{definition}
Let $E$ be a set of locally finite perimeter in $\R^{n+1}$ and $x\in \R^{n+1}$. The {\it reduced boundary} of $E$, which we denote by $\d^{*}E$, is the set of points $x\in \d E$ such that
\begin{enumerate}
\item  $\|\d E\|(B(x,r))>0$, for all $r>0$,
\item $\lim_{r \to 0} \frac{1}{\|\d E\|(B(x,r))}\int_{B(x,r)} \nu_E(y) \,d\|\d E\| = \nu_E(x)$, and
\item $|\nu_E(x)|=1$.
\end{enumerate}
\end{definition}

\begin{definition}
For each $x \in \d^* E$ we define the {\it hyperplane}
$$H(x)=\left\{ y \in \R^{n+1}: \nu_E(x) \cdot (y-x) =0  \right\}$$
and the {\it half-spaces}
\begin{align*}
H^+(x)&=\left\{ y \in \R^{n+1}: \nu_E(x) \cdot (y-x) \geq 0  \right\},\\
H^-(x)&=\left\{ y \in \R^{n+1}: \nu_E(x) \cdot (y-x) \leq 0  \right\}.
\end{align*}
A unit  vector $\nu_E(x)$ is called the {\it measure theoretic unit outer normal} to $E$ at $x$ if
$$
\lim_{r \to 0} \frac{\mathcal L^{n+1} ( B(x,r) \cap E \cap H^+(x))}{r^{n+1}}=0
$$
and
$$
\lim_{r \to 0} \frac{\mathcal L^{n+1} ( (B(x,r) \setminus E) \cap H^-(x))}{r^{n+1}}=0.
$$
\end{definition}

\begin{definition}
Let $x \in \R^{n+1}$. We say that $x \in \d_*E$, the {\it measure theoretic boundary} of $E$, if 
$$
\limsup_{r \to 0} \frac{\mathcal L^{n+1} ( B(x,r) \cap E)}{r^{n+1}}>0
$$
and
$$
\limsup_{r \to 0} \frac{\mathcal L^{n+1} ( B(x,r) \setminus E)}{r^{n+1}}>0.
$$
\end{definition}

\begin{remark}
Note that $\d^* E \subset \d_* E$ and $\HH^n(\d_* E \setminus \d^* E)=0$ (see \cite[p.\ 208]{EvansGariepy}). Moreover, if $E$ has locally finite perimeter, then $\|\d E\|= \HH^n|_{\d^* E}$ (see \cite[p.\ 205]{EvansGariepy}).
\end{remark}
A useful criterion that allows us to determine whether a set has  locally finite perimeter, whose proof can be found in \cite[p.\ 222]{EvansGariepy}, is the following:
\begin{theorem}\label{thm:BV-Criterion}
If $E \subset \R^{n+1}$ is $\mathcal L^{n+1}$--measurable, then it has locally finite perimeter if and only if $\HH^n(K \cap \d_* E)<\infty$, for each compact set $K \subset \R^{n+1}$.
\end{theorem}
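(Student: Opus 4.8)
The statement is Federer's criterion relating finite perimeter to the size of the measure theoretic boundary. For the ``only if'' direction, suppose $E$ has locally finite perimeter. By Theorem~\ref{thm:BV-structure}, $\|\d E\|$ is a Radon measure, hence finite on compact sets, and by the Remark recorded above we have $\|\d E\|=\HH^{n}|_{\d^{*}E}$ together with $\HH^{n}(\d_*E\setminus\d^{*}E)=0$. Thus, for every compact $K\subset\R^{n+1}$,
\[
\HH^{n}(K\cap\d_*E)=\HH^{n}(K\cap\d^{*}E)=\|\d E\|(K)<\infty .
\]

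For the ``if'' direction, assume $\HH^{n}(K\cap\d_*E)<\infty$ for every compact $K$. Since the measure theoretic boundary is a local notion, it suffices to fix a bounded open set $V$ and show that
\[
P(E;V):=\sup\Bigl\{\int_{V}\chi_{E}\,\divv\phi\,d\mathcal L^{n+1}:\phi\in C_{c}^{\infty}(V;\R^{n+1}),\ |\phi|\le1\Bigr\}<\infty ,
\]
for then $\chi_{E}\in BV_{\loc}(\R^{n+1})$, i.e.\ $E$ has locally finite perimeter. Let $\eta_{\varepsilon}$ be a standard mollifier supported in $B(0,\varepsilon)$ and set $f_{\varepsilon}=\eta_{\varepsilon}*\chi_{E}\in C^{\infty}$, so $f_{\varepsilon}\to\chi_{E}$ in $L^{1}_{\loc}$. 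Since $\int\nabla\eta_{\varepsilon}=0$, one can write $\nabla f_{\varepsilon}(x)$ both as $\int\chi_{E}(y)\,\nabla\eta_{\varepsilon}(x-y)\,dy$ and as $-\int(1-\chi_{E}(y))\,\nabla\eta_{\varepsilon}(x-y)\,dy$; as $|\nabla\eta_{\varepsilon}|\le C\varepsilon^{-n-2}$, this gives the pointwise bound
\[
|\nabla f_{\varepsilon}(x)|\le\frac{C}{\varepsilon^{\,n+2}}\min\bigl\{\mathcal L^{n+1}(E\cap B(x,\varepsilon)),\,\mathcal L^{n+1}(B(x,\varepsilon)\setminus E)\bigr\}\le\frac{C}{\varepsilon}.
\]
In particular $\nabla f_{\varepsilon}(x)=0$ unless $x\in A_{\varepsilon}:=\{x:0<\mathcal L^{n+1}(E\cap B(x,\varepsilon))<\mathcal L^{n+1}(B(x,\varepsilon))\}$, and hence
\[
\int_{V}|\nabla f_{\varepsilon}|\,d\mathcal L^{n+1}\le\frac{C}{\varepsilon}\,\mathcal L^{n+1}(A_{\varepsilon}\cap V).
\]
If one establishes the uniform bound $\mathcal L^{n+1}(A_{\varepsilon}\cap V)\le C\varepsilon$, with $C$ depending only on $\HH^{n}(\d_*E\cap V')$ for a slightly larger bounded $V'$, then $\sup_{\varepsilon>0}\int_{V}|\nabla f_{\varepsilon}|<\infty$; together with $f_{\varepsilon}\to\chi_{E}$ in $L^{1}(V)$ and the lower semicontinuity of the total variation under $L^{1}$-convergence (equivalently, weak-$*$ compactness of the finite Radon measures on $V$), this yields $P(E;V)<\infty$. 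Theorem~\ref{thm:BV-structure} and the Remark above then give $\|\d E\|=\HH^{n}|_{\d^{*}E}$, finishing the proof.

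The main obstacle is precisely the estimate $\mathcal L^{n+1}(A_{\varepsilon}\cap V)\lesssim\varepsilon\,\HH^{n}(\d_*E\cap V')$, a Minkowski-type control of $A_{\varepsilon}$ by the $\HH^{n}$-measure of $\d_*E$. The natural way to approach it is through the decomposition of $\R^{n+1}$, up to an $\mathcal L^{n+1}$-null set, as $E^{1}\sqcup E^{0}\sqcup\d_*E$, where $E^{1}$ and $E^{0}$ are the sets of Lebesgue density $1$ and $0$ for $E$ (one checks directly that $\R^{n+1}\setminus(E^{1}\cup E^{0})=\d_*E$ and $\mathcal L^{n+1}(\d_*E)=0$): one shows that every point of $A_{\varepsilon}$ lies within distance $O(\varepsilon)$ of $\d_*E$, and then runs a Vitali/Besicovitch covering argument on $\d_*E$ to turn $\HH^{n}(\d_*E)<\infty$ into the desired neighborhood estimate. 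Making this rigorous for a merely $\mathcal L^{n+1}$-measurable set $E$ is the delicate part; the full argument is carried out in \cite[p.\ 222]{EvansGariepy}.
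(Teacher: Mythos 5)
Your ``only if'' direction is correct and complete, and it is the standard one. For the rest, keep in mind that the paper itself gives no proof of this theorem: it is quoted as a known criterion with the proof cited to \cite[p.~222]{EvansGariepy}, so the only comparison available is with that reference, and the route you sketch for the ``if'' direction is neither the one taken there nor one that can be completed as stated. The step you isolate as the main obstacle, namely $\mathcal L^{n+1}(A_\varepsilon\cap V)\le C\varepsilon$ with $C$ controlled by $\HH^{n}(\d_*E\cap V')$, is false in general. Take $\{x_i\}_{i\ge1}$ dense in the unit cube $Q\subset\R^{n+1}$ and $E=\bigcup_i B(x_i,r_i)$ with $r_i\le 2^{-i}$ chosen so that $\sum_i r_i^{n}<\infty$ and $\mathcal L^{n+1}(E)\le\tfrac12\mathcal L^{n+1}(Q)$. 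By subadditivity of perimeter over finite unions and lower semicontinuity, $E$ has finite perimeter, so by your own ``only if'' argument $\HH^{n}(K\cap\d_*E)<\infty$ for every compact $K$. Yet for every $\varepsilon>0$ and every $x\in Q$ the ball $B(x,\varepsilon)$ contains some $B(x_i,r_i)$ entirely (the $x_i$ are dense and $r_i\to0$), so $\mathcal L^{n+1}(E\cap B(x,\varepsilon))>0$; and the set of $x$ with $\mathcal L^{n+1}(B(x,\varepsilon)\setminus E)=0$ consists of density-one points of $E$, hence has measure at most $\mathcal L^{n+1}(E)$. Therefore $\mathcal L^{n+1}(A_\varepsilon\cap Q)\ge\tfrac12\mathcal L^{n+1}(Q)$ for \emph{every} $\varepsilon$, while $\HH^{n}(\d_*E\cap \overline Q)$ is finite. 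The underlying reason is that finiteness of $\HH^{n}$ gives no control on the Lebesgue measure of $\varepsilon$-neighbourhoods (no Minkowski-content bound), so no Vitali/Besicovitch covering argument can produce your estimate; moreover, exploiting instead the finer pointwise bound with the minimum that you discard would require a relative isoperimetric inequality, which presupposes exactly the finite-perimeter property being proved. Even your preliminary claim that every point of $A_\varepsilon$ lies within $O(\varepsilon)$ of $\d_*E$ rests on the nontrivial fact that a ball disjoint from $\d_*E$ is a.e.\ contained in $E$ or in $E^{c}$, which is not available at this stage either.

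Deferring the delicate step to \cite[p.~222]{EvansGariepy} does not repair this, because the argument there is of a different nature: it does not mollify or estimate neighbourhoods of $\d_*E$. It proves $\chi_E\in BV_{\loc}$ by one-dimensional slicing: for a.e.\ line $L$ parallel to a coordinate axis, the essential variation of $\chi_E$ along $L$ inside a cube is bounded by the number of points of $\d_*E$ on $L$ (via a Fubini-type comparison of density points of $E$ with density points of its slices), and then one integrates over lines using an integral-geometric (Eilenberg-type) inequality of the form $\int^{*}\#\bigl(\d_*E\cap K\cap L_y\bigr)\,d\mathcal L^{n}(y)\lesssim\HH^{n}(\d_*E\cap K)$; summing over the $n+1$ coordinate directions gives the finite-variation bound, after which Theorem \ref{thm:BV-structure} applies as you say. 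So if you want a self-contained proof, it should follow that slicing scheme (or simply cite the criterion, as the paper does); the mollification reduction, as written, has a genuine gap.
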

 We now state the {\it generalized Gauss-Green theorem}. For a proof see \cite[p. 209]{EvansGariepy}.
\begin{theorem}\label{thm:Gauss-Green}
Let $E \subset \R^{n+1}$ have locally finite perimeter. Then for each $x \in \d^* E$ there exists a unique measure theoretic unit outer normal $\nu_E(x)$ such that
\begin{equation}\label{eq:Gauss-Green}
\int_E \,\,\divv \phi\,\, d\mathcal L^{n+1} = \int_{\d_* E} (\phi \cdot \nu_E) \,d\mathcal \HH^{n},
\end{equation}
for all $\phi \in C^1_c(\R^{n+1};\R^{n+1})$.
\end{theorem}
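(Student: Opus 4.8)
The plan is to deduce the Gauss--Green formula from the Structure Theorem for $BV_{\loc}$ functions (Theorem \ref{thm:BV-structure}) together with the facts recorded in the Remark, the only substantial point being the identification of the vector field produced by the Structure Theorem with the measure theoretic unit outer normal along the reduced boundary. Since $E$ has locally finite perimeter, $\chi_E\in BV_{\loc}(\R^{n+1})$, so Theorem \ref{thm:BV-structure} produces the Radon measure $\|\d E\|$ and the $\|\d E\|$-measurable field $\nu_E=-\sigma$ with $|\nu_E(x)|=1$ for $\|\d E\|$-a.e.\ $x$, such that
\[
\int_E \divv\phi\, d\mathcal L^{n+1} = \int \phi\cdot\nu_E\, d\|\d E\| \qquad \text{for all }\phi\in C^\infty_c(\R^{n+1};\R^{n+1}).
\]
A routine mollification and dominated convergence argument extends this identity to all $\phi\in C^1_c(\R^{n+1};\R^{n+1})$. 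By the definition of the reduced boundary, for every $x\in\d^*E$ the vector $\nu_E(x)$ is a unit vector and equals the $\|\d E\|$-Lebesgue limit of the averages $\frac1{\|\d E\|(B(x,r))}\int_{B(x,r)}\nu_E\, d\|\d E\|$.

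\emph{Identification of $\nu_E(x)$ as the measure theoretic outer normal (De Giorgi's blow-up).} Fix $x\in\d^*E$; after a translation assume $x=0$, and write $\nu:=\nu_E(0)$ and $H^\pm:=H^\pm(0)$. Consider the rescaled sets $E_r:=E/r$. Using a relative isoperimetric inequality one first establishes the density bounds $\|\d E\|(B(0,\rho))\approx\rho^n$ for all small $\rho>0$, so that the perimeters of $E_r$ in any fixed ball are uniformly bounded as $r\to0$. By the compactness theorem in $BV$ and lower semicontinuity of the perimeter, along a subsequence $\chi_{E_r}\to\chi_F$ in $L^1_{\loc}$ for some set $F$ of locally finite perimeter, with $\|\d E_r\|$ converging weakly as Radon measures to $\|\d F\|$. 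Property (2) in the definition of $\d^*E$ forces $\nu_F\equiv\nu$ $\|\d F\|$-a.e., and a set of locally finite perimeter with constant measure theoretic normal is, up to a Lebesgue-null set, a half-space; hence $F=H^-$ and the limit is independent of the subsequence. Unwinding the convergence $\chi_{E_r}\to\chi_{H^-}$ in $L^1_{\loc}$ gives
\[
\lim_{r\to0}\frac{\mathcal L^{n+1}(B(0,r)\cap E\cap H^+)}{r^{n+1}}=0,\qquad \lim_{r\to0}\frac{\mathcal L^{n+1}((B(0,r)\setminus E)\cap H^-)}{r^{n+1}}=0,
\]
which is exactly the statement that $\nu$ is the measure theoretic unit outer normal to $E$ at $0$. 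Uniqueness is immediate: if $\nu,\nu'$ were distinct unit vectors both satisfying these limits, the symmetric difference of the associated half-spaces would have strictly positive $(n+1)$-dimensional Lebesgue density at $0$, contradicting one of the two limits.

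\emph{Passing from $\d^*E$ to $\d_*E$, and the main difficulty.} By the Remark, $\|\d E\|=\HH^n|_{\d^*E}$, so the basic identity reads $\int_E\divv\phi\, d\mathcal L^{n+1}=\int_{\d^*E}\phi\cdot\nu_E\, d\HH^n$; since moreover $\d^*E\subset\d_*E$ and $\HH^n(\d_*E\setminus\d^*E)=0$, enlarging the domain of integration from $\d^*E$ to $\d_*E$ leaves the integral unchanged, which yields \eqref{eq:Gauss-Green}. The main obstacle is the blow-up step: proving the density estimate $\|\d E\|(B(0,\rho))\approx\rho^n$ through the relative isoperimetric inequality and, above all, classifying the blow-up limit as a half-space. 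This is precisely the content of De Giorgi's structure theorem; once it is available (or invoked, as the Remark effectively does in asserting $\|\d E\|=\HH^n|_{\d^*E}$), the remaining steps are only bookkeeping around Theorem \ref{thm:BV-structure}.
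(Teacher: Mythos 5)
Your outline is correct: the paper states this as a standard background result and simply cites \cite[p.~209]{EvansGariepy} for the proof, and your argument (Structure Theorem for $BV_{\loc}$ plus De Giorgi's blow-up classification of the reduced boundary, then passing from $\d^*E$ to $\d_*E$ via $\|\d E\|=\HH^n|_{\d^*E}$ and $\HH^n(\d_*E\setminus\d^*E)=0$) is exactly the argument in that reference. You correctly identify the blow-up/half-space classification as the only substantive step, so there is nothing to add.
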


\section{Riesz transform and rectifiability}

In this section we will state a theorem involving the relationship between Riesz transforms and rectifiability and derive a version of this which is better suited for our purposes. 

First we need to introduce some additional notation.
 Given a signed Radon measure $\nu$ in $\R^{n+1}$ we consider the $n$-dimensional Riesz
transform
$$\RR\nu(x) = \int \frac{x-y}{|x-y|^{n+1}}\,d\nu(y),$$
whenever the integral makes sense (for example, when $\nu$ has bounded support and $x\not\in \supp \nu$). For $\ve>0$, the $\ve$-truncated Riesz transform is given by 
$$\RR_\ve \nu(x) = \int_{|x-y|>\ve} \frac{x-y}{|x-y|^{n+1}}\,d\nu(y).$$
For $\delta\geq0$
 we set
$$\RR_{*,\delta} \nu(x)= \sup_{\ve>\delta} |\RR_\ve \nu(x)|.$$
In the case $\delta=0$ we write $\RR_{*} \nu(x):= \RR_{*,0} \nu(x)$.

If $\mu$ is a fixed Radon measure and $f\in L^1_{loc}(\mu)$, we also write
$$\RR_\mu f = \RR(f\mu),\quad \RR_{\mu,\ve} f = \RR_\ve(f\mu),\quad
\RR_{\mu,*,\delta} f= \RR_{*,\delta} (f\mu),\quad \RR_{\mu,*} f= \RR_{*} (f\mu),$$
whenever these notions make sense. We say that $\RR_\mu$ is bounded in $L^2(\mu)$ if the
operators $\RR_{\mu,\ve}$ are bounded in $L^2(\mu)$ uniformly on $\ve>0$.

Given a ball $B\subset\R^{n+1}$, we denote
$$\Theta_\mu(B) = \frac{\mu(B)}{r(B)^n},\qquad P_\mu(B) = \sum_{j\geq0} 2^{-j}\,\Theta_\mu(2^jB).$$
So $\Theta_\mu(B)$ is the $n$-dimensional density of $\mu$ on $B$ and $P_\mu(B)$ is some kind of smoothened version of this density.
For $f\in L^1_{loc}(\mu)$ and $A\subset \R^{n+1}$, we write
$$m_{\mu,A}(f) = \frac1{\mu(A)}\int_A f\,d\mu.$$
Given an $n$-plane $L\subset \R^{n+1}$, we also denote
$$\beta_{\mu,1}^L(B) = \frac1{r(B)^n}\int_B\frac{\dist(x,L)}{r(B)}\,d\mu(x).$$

The following theorem has been recently proved in \cite{GT}. This will be a fundamental tool for the proof of Theorem \ref{t:main}.

\begin{theorem}[{Girela-Sarri\'on, Tolsa}]\label{teo0}
Let $\mu$ be a Radon measure on $\R^{n+1}$ and $B\subset \R^{n+1}$ a ball so that the following conditions
hold:
\begin{itemize}

\item[(a)] For some constant $C_0>0$, $C_0^{-1}r(B)^n\leq \mu(B)\leq C_0\,r(B)^n$.

\item[(b)]  $P_\mu(B) \leq C_0$, and $\mu(B(x,r))\leq C_0\,r^n$ for all $x\in B$ and $0<r\leq r(B)$.

\item[(c)] There is some $n$-plane $L$ passing through the center of $B$ such that for some $0<\delta\ll 1$, it holds $\beta_{\mu,1}^L(B)\leq \delta$.

\item[(d)] $\RR_{\mu|_B}$ is bounded in $L^2(\mu|_{B})$ with $\|\RR_{\mu|_B}\|_{L^2(\mu|_{B})\to L^2(\mu|_{B})}\leq C_1$.

\item[(e)] For some constant $0<\tau\ll1$,
$$\int_{B} |\RR\mu(x) - m_{\mu,B}(\RR\mu)|^2\,d\mu(x) \leq \tau \,\mu(B).$$
\end{itemize}
Then there exists some constant $\theta>0$ such that if $\delta,\tau$ are small enough (depending on $C_0$ and $C_1$),
there is a uniformly $n$-rectifiable set $\Gamma\subset\R^{n+1}$ such that
$$\mu(B\cap \Gamma)\geq \theta\,\mu(B).$$
The UR constants of $\Gamma$ depend on all the constants above.
\end{theorem}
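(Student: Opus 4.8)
\textbf{Proof plan for Theorem \ref{teo0}.}
The plan is to establish a local version of the David--Semmes-type criterion by combining the $L^2$-boundedness of the Riesz transform on $\mu|_B$ with the flatness hypothesis (c) and the ``almost constant'' Riesz transform hypothesis (e) to produce a big piece of a uniformly rectifiable set inside $B$. First I would reduce to a convenient normalization: by rescaling and translating we may assume $B=B(0,1)$ and that the $n$-plane $L$ from (c) is $\R^n\times\{0\}$, with $\|\RR\mu\|_{L^2(\mu|_B)}\leq C_1$ and $\mu(B)\approx 1$ by (a). The heart of the matter is to show that, under the smallness of $\delta$ and $\tau$, the measure $\mu|_{\frac12 B}$ has a large portion lying on a Lipschitz graph over $L$ with small slope; once this is done, the standard fact that such graphs are uniformly rectifiable, together with an absorption of the ``bad'' part using that $\mu(\frac12 B)\gtrsim\mu(B)$ (a consequence of (a), (b), (c)), yields the conclusion with $\theta$ depending only on $C_0,C_1$.

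The key steps, in order, are as follows. \emph{Step 1:} Use hypothesis (e) and the triangle inequality to control the oscillation of $\RR\mu$ on $B$, and combine it with the boundedness hypothesis (d) to deduce that the ``variational'' quantity measuring how far $\mu|_B$ is from being flat --- in the spirit of the comparison between $\RR\mu$ and the Riesz transform of the projection of $\mu$ onto $L$ --- is small. \emph{Step 2:} Compute $\RR(\cH^n|_L)(x)$ for $x$ near but off $L$: this is an explicit, essentially constant vector field on each side of $L$, with a jump across $L$ in the normal direction. Comparing with Step 1, the smallness of the oscillation of $\RR\mu$ forces $\mu$ to put almost no mass on one side of (a thin neighborhood of) $L$, or more precisely forces the mass of $\mu$ to concentrate near $L$ in a quantitative way beyond what (c) alone gives. \emph{Step 3:} Run a stopping-time argument (or invoke the corresponding construction from \cite{GT}): starting from $B$, select a maximal family of stopping cubes where either the density $\Theta_\mu$ deviates too much from its value on $B$, or the local $\beta_{\mu,1}$ number exceeds a threshold, or the flat approximating plane tilts too much; on the complement of the stopping cubes one obtains a $(d)$-type Carleson packing bound for the $\beta$ numbers using (e), and hence a corona decomposition. \emph{Step 4:} Sum the packing estimate to conclude that the set of points never stopped has positive $\mu$-measure, at least $\theta\,\mu(B)$, and that this set is contained in a Lipschitz graph, hence in a uniformly rectifiable set $\Gamma$ with constants controlled by $C_0,C_1$.

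The main obstacle I expect is Step 3: converting the single-scale, single-ball hypotheses (a)--(e) into a genuinely multi-scale Carleson/corona estimate requires propagating the ``almost constant Riesz transform'' condition (e) to all subballs, which is delicate because (e) is not assumed at every scale --- one must instead \emph{derive} a substitute at smaller scales from the $L^2$ boundedness (d) together with the flatness, using a variational argument and the fact that the antisymmetric Riesz kernel interacts with the jump formula from Step 2. Controlling the error terms in this propagation, in particular ensuring that the truncations and the tails $P_\mu$ do not destroy the smallness, and keeping the dependence of all thresholds on $C_0,C_1$ (and not on $\mu$ itself), is where the technical weight of the argument lies. Since this is precisely the content of \cite{GT}, in our setting I would simply quote Theorem \ref{teo0} as a black box; the proposal above indicates how one would reconstruct it if needed.
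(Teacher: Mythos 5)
The paper does not prove Theorem \ref{teo0} at all: it is imported directly from Girela-Sarri\'on and Tolsa \cite{GT} and used as a black box, which is exactly what you conclude by quoting it as such, so your approach coincides with the paper's. Your preliminary sketch of how the argument in \cite{GT} might go is only heuristic and differs in detail from the actual proof there (which rests on a variational construction and the Nazarov--Tolsa--Volberg theorem rather than a jump-formula/one-sided mass argument), but since both you and the authors ultimately defer to \cite{GT}, nothing further is required.
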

\vv

In the statement (e), $\RR\mu(x)$ should be understood in the principal value sense. That is,
$$\RR\mu(x) = \lim_{\ve\to0}\RR_\ve\mu(x).$$
The fact that $\RR_{\mu|_B}$ is bounded in $L^2(\mu|_{B})$ 
guaranties the existence of the principal value for $\mu$-a.e. $x\in B$.
This follows easily from the results of \cite{NToV-pubmat}, arguing as in 
\cite[Chapter 8]{Tolsa} with the Cauchy transform replaced by the Riesz transform.

\vv

Note that, in particular, a remarkable consequence of the theorem above is that a big piece of $\mu|_B$ is mutually absolutely continuous with respect to (a big piece of) $\HH^n|_\Gamma$.\vv

By applying Theorem \ref{teo0} to the normalized measure $\frac{r(B)^n}{\mu(B)}\mu$, we obtain the following.

\begin{corollary}\label{teo0'}
Let $\mu$ be a Radon measure on $\R^{n+1}$ and $B\subset \R^{n+1}$ a ball with $\mu(B)>0$ so that the following conditions
hold:
\begin{itemize}

\item[(a)]  For some constant $C_0>0$, $P_\mu(B) \leq C_0\,\Theta_\mu(B)$ and $\Theta_\mu(B(x,r))\leq C_0\,\Theta_\mu(B)$ for all $x\in B$ and $0<r\leq r(B)$.

\item[(b)] There exists some $n$-plane $L$ passing through the center of $B$ such that for some $0<\delta\ll 1$, it holds $\beta_{\mu,1}^L(B)\leq \delta\,\Theta_\mu(B)$.

\item[(c)] $\RR_{\mu|_B}$ is bounded in $L^2(\mu|_{B})$ with $\|\RR_{\mu|_B}\|_{L^2(\mu|_{B})\to L^2(\mu|_{B})}\leq C_1\,\Theta_\mu(B)$.

\item[(d)] For some constant $0<\tau\ll1$,
$$\int_{B} |\RR\mu(x) - m_{\mu,B}(\RR\mu)|^2\,d\mu(x) \leq \tau \,\,\Theta_\mu(B)^2\,\mu(B).$$
\end{itemize}
Then there exists some constant $\theta>0$ such that if $\delta,\tau$ are small enough (depending on $C_0$ and $C_1$),
there is a uniformly $n$-rectifiable set $\Gamma\subset\R^{n+1}$ such that
$$\mu(B\cap \Gamma)\geq \theta\,\mu(B).$$
The UR constants of $\Gamma$ depend on all the constants above.
\end{corollary}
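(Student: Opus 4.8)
The plan is to derive Corollary~\ref{teo0'} from Theorem~\ref{teo0} purely by rescaling the measure, as the sentence preceding the statement already indicates. Write $\Theta := \Theta_\mu(B) = \mu(B)/r(B)^n > 0$, set $\lambda := 1/\Theta = r(B)^n/\mu(B)$, and consider $\wt\mu := \lambda\,\mu$. The whole argument consists in verifying that $\wt\mu$ satisfies hypotheses (a)--(e) of Theorem~\ref{teo0} (with the same balls $B$, $B(x,r)$ and the same $n$-plane $L$), with constants $C_0$, $C_1$, $\delta$, $\tau$ matching those in the corollary, and then transporting the conclusion back through the positive factor $\lambda$.

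First I would record the homogeneity of the auxiliary quantities: $\Theta_{\wt\mu}(B') = \lambda\,\Theta_\mu(B')$, $P_{\wt\mu}(B') = \lambda\,P_\mu(B')$, and $\beta_{\wt\mu,1}^L(B') = \lambda\,\beta_{\mu,1}^L(B')$ for every ball $B'$, since each of these is linear in the measure. With $\lambda = 1/\Theta$ this turns the ``relative'' bounds of the corollary into the ``absolute'' ones required by Theorem~\ref{teo0}: $\wt\mu(B) = r(B)^n$, so (a) holds; $P_{\wt\mu}(B) = P_\mu(B)/\Theta \le C_0$ and $\wt\mu(B(x,r)) = \mu(B(x,r))/\Theta \le C_0 r^n$, so (b) holds; $\beta_{\wt\mu,1}^L(B) = \beta_{\mu,1}^L(B)/\Theta \le \delta$, so (c) holds. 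For (d), I would use that $\RR(f\wt\mu) = \lambda\,\RR(f\mu)$ by linearity of the kernel, so that renormalizing the measure scales the $L^2(\wt\mu|_B)\to L^2(\wt\mu|_B)$ operator norm by exactly $\lambda$; hence $\|\RR_{\wt\mu|_B}\|_{L^2(\wt\mu|_B)\to L^2(\wt\mu|_B)} = \lambda\,\|\RR_{\mu|_B}\|_{L^2(\mu|_B)\to L^2(\mu|_B)} = \|\RR_{\mu|_B}\|_{L^2(\mu|_B)\to L^2(\mu|_B)}/\Theta \le C_1$. Finally, since $\RR\wt\mu = \lambda\,\RR\mu$ in the principal value sense (the p.v.\ exists $\wt\mu$-a.e., equivalently $\mu$-a.e., by (d)) and $m_{\wt\mu,B}(\RR\wt\mu) = \lambda\,m_{\mu,B}(\RR\mu)$, one computes
$$\int_B |\RR\wt\mu - m_{\wt\mu,B}(\RR\wt\mu)|^2\,d\wt\mu = \lambda^3\int_B |\RR\mu - m_{\mu,B}(\RR\mu)|^2\,d\mu \le \lambda^3\,\tau\,\Theta^2\,\mu(B) = \tau\,\wt\mu(B),$$
where the exponent $3 = 2+1$ collects one factor $\lambda^2$ from squaring $\RR\wt\mu$ and one factor $\lambda$ from $d\wt\mu$; this is precisely (e) of Theorem~\ref{teo0} with the same $\tau$.

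Once (a)--(e) are checked, Theorem~\ref{teo0} provides, for $\delta,\tau$ small enough depending only on $C_0$ and $C_1$, a uniformly $n$-rectifiable set $\Gamma$ with $\wt\mu(B\cap\Gamma) \ge \theta\,\wt\mu(B)$ and UR constants controlled by $C_0$, $C_1$, $\delta$, $\tau$; dividing by $\lambda>0$ gives $\mu(B\cap\Gamma)\ge\theta\,\mu(B)$, which is the claim. There is no genuine obstacle here: the argument is essentially bookkeeping, and the only points deserving a little care are the exponent count in the oscillation estimate (the $\lambda^3$ above) and the observation that the principal value commutes with multiplication by the constant $\lambda$, so that hypothesis (e) rescales cleanly.
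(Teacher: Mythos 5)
Your proposal is correct and follows exactly the paper's route: the paper obtains Corollary \ref{teo0'} precisely by applying Theorem \ref{teo0} to the normalized measure $\frac{r(B)^n}{\mu(B)}\,\mu$, and your scaling bookkeeping (including the $\lambda^3$ count in the oscillation term and the factor $\lambda$ in the operator norm) is accurate.
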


For our purposes in connection with harmonic measure, the following variant of the preceding result will be more appropriate.

\begin{theorem} \label{teo1}
Let $\mu$ be a Radon measure in $\R^{n+1}$ and $B\subset \R^{n+1}$ a ball with $\mu(B)>0$ so that the following conditions
hold:
\begin{itemize}
\item[(a)] For some constant $C_0>0$, $P_\mu(B) \leq C_0\,\Theta_\mu(B)$.

\item[(b)] There is some $n$-plane $L$ passing through the center of $B$ such that, for some constant $0<\delta\ll 1$, $\beta_{\mu,1}^L(B)\leq \delta\,\Theta_\mu(B)$.

\item[(c)] For some constant $C_1>0$, there is $G_B\subset B$
such that
$$\sup_{0<r\leq 2 r(B)} \frac{\mu(B(x,r))}{r^n} + \RR_*(\chi_{2B}\,\mu)(x)\leq 
C_1\,\Theta_\mu(B)\quad \mbox{ for all $x\in G_B$}$$
and
$$\mu(B\setminus G_B)\leq \delta \,\mu(B).$$

\item[(d)] For some constant $0<\tau\ll1$,
$$\int_{G_B} |\RR\mu(x) - m_{\mu,G_B}(\RR\mu)|^2\,d\mu(x) \leq \tau \,\Theta_\mu(B)^2\mu(B).$$
\end{itemize}

Then there exists some constant $\theta>0$ such that if $\delta,\tau$ are small enough (depending on $C_0$ and $C_1$),
then there is a uniformly $n$-rectifiable set $\Gamma\subset\R^{n+1}$ such that
$$\mu(G_B\cap \Gamma)\geq \theta\,\mu(B).$$
The UR constants of $\Gamma$ depend on all the constants above.
\end{theorem}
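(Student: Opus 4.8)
The plan is to derive Theorem \ref{teo1} from Corollary \ref{teo0'} by showing that the hypotheses of the latter hold for the restricted measure $\mu|_{G_B}$ (or a slight enlargement) on the ball $B$, at the cost of shrinking constants. The point of Theorem \ref{teo1} compared to Corollary \ref{teo0'} is that the pointwise control of the density and of the truncated Riesz transform, as well as the smallness of the oscillation of $\RR\mu$, is only assumed on a large subset $G_B$ rather than on all of $B$; what we gain in flexibility we must pay for by carefully tracking how the bad set $B\setminus G_B$ contributes. So the first step is to set $\wt\mu = \mu|_{G_B}$ and check each of the conditions (a)--(d) of Corollary \ref{teo0'} for $\wt\mu$. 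Since $\wt\mu\leq \mu$ we immediately get $\Theta_{\wt\mu}(B(x,r))\leq \Theta_\mu(B(x,r))$, but the lower regularity $\wt\mu(B)\geq (1-\delta)\mu(B)$ comes from hypothesis (c), and $P_{\wt\mu}(B)\leq P_\mu(B)\leq C_0\Theta_\mu(B)\approx C_0\Theta_{\wt\mu}(B)$; similarly $\beta_{\wt\mu,1}^L(B)\leq \beta_{\mu,1}^L(B)\leq \delta\Theta_\mu(B)\lesssim \delta\,\Theta_{\wt\mu}(B)$, so (a) and (b) transfer essentially for free.

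The substantive work is in verifying condition (c) of Corollary \ref{teo0'}, the $L^2(\wt\mu)$-boundedness of $\RR_{\wt\mu|_B}=\RR_{\wt\mu}$ with norm $\lesssim \Theta_\mu(B)$. Here the natural tool is a $Tb$- or $T1$-type theorem for the Riesz transform on non-doubling measures in the spirit of Nazarov--Treil--Volberg and Tolsa: it suffices to show that $\RR_{\wt\mu}\mathds{1}$ lies in $BMO(\wt\mu)$ (or that $\RR_{\wt\mu,*}\mathds{1}\in L^2(\wt\mu)$ or even $L^\infty$ on a large piece), together with the linear growth $\wt\mu(B(x,r))\lesssim \Theta_\mu(B)\,r^n$ which holds pointwise on $G_B=\supp\wt\mu$ by hypothesis (c). The key estimate is that for $x\in G_B$,
\[
\RR_{\wt\mu,*}\mathds{1}(x)=\RR_*(\chi_{G_B}\mu)(x)\leq \RR_*(\chi_{2B}\mu)(x)+\RR_*(\chi_{(2B)^c}\mu)(x)\lesssim C_1\Theta_\mu(B)+P_\mu(B)\lesssim \Theta_\mu(B),
\]
using hypothesis (c) for the local part and hypothesis (a) together with the standard estimate $\RR_*(\chi_{(2B)^c}\mu)(x)\lesssim P_\mu(B)$ for $x\in B$ for the tail. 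Combined with the linear growth on $G_B$, a suitable good-$\lambda$ / corona or $T1$ argument (again following \cite{NToV-pubmat} or \cite[Chapter 8]{Tolsa}) upgrades this to $L^2(\wt\mu)$-boundedness of $\RR_{\wt\mu}$ with the desired normalization; alternatively one may invoke directly the known equivalence, for measures of linear growth, between the $L^2$-boundedness of $\RR_\mu$ and finiteness of the maximal operator on a set of full measure. I expect this is where the main technical care is needed: making sure all constants scale correctly with $\Theta_\mu(B)$ rather than picking up hidden dependence on $\mu(B)$ or $r(B)$.

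Finally, condition (d) of Corollary \ref{teo0'} requires $\int_B |\RR\wt\mu - m_{\wt\mu,B}(\RR\wt\mu)|^2\,d\wt\mu \leq \tau'\,\Theta_\mu(B)^2\,\wt\mu(B)$, which I would obtain from hypothesis (d) of Theorem \ref{teo1} plus an error term controlling $\RR\mu - \RR\wt\mu = \RR(\chi_{B\setminus G_B}\mu) + \RR(\chi_{(2B)^c}\mu\cdot\text{stuff})$. On $G_B$ one writes $\RR\wt\mu = \RR\mu - \RR(\chi_{2B\setminus G_B}\mu) - \RR(\chi_{(2B)^c}\mu)$; the last term has oscillation on $B$ controlled by $P_\mu(B)\lesssim \Theta_\mu(B)$, which is not small, so one cannot simply absorb it — instead one uses that it is (essentially) Lipschitz on $B$ at scale $r(B)$, hence after subtracting its mean contributes $\lesssim \delta^2\,\Theta_\mu(B)^2\wt\mu(B)$ or can be folded into condition (b)-type smallness; more carefully, one only needs the oscillation, and the far-field Riesz transform $\RR(\chi_{(2B)^c}\mu)$ restricted to $B$ has gradient $\lesssim r(B)^{-1}P_\mu(B)\lesssim r(B)^{-1}\Theta_\mu(B)$, giving oscillation $\lesssim \Theta_\mu(B)$ over $B$ — still not small. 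The honest fix, which I believe is the intended one, is to absorb $\RR(\chi_{(2B)^c}\mu)$ into the ``$\RR\mu$'' of Corollary \ref{teo0'} by noting that Corollary \ref{teo0'}(d) is stated for $\RR\mu$ with $\mu$ the global measure, so the far field is already included there; thus one should rather apply Corollary \ref{teo0'} to $\mu$ itself after replacing it below scale $r(B)$ by its restriction to $G_B$, i.e. to the measure $\wt\mu + \chi_{(2B)^c}\mu$, checking that adding back the far field does not disturb (a)--(c) (it doesn't affect $\Theta$, $\beta$, or the $L^2$ bound on $B$) and makes (d) match hypothesis (d) here up to the term $\RR(\chi_{2B\setminus G_B}\mu)$, whose $L^2(\wt\mu|_B)$ norm is $\lesssim \delta^{1/2}\Theta_\mu(B)\mu(B)^{1/2}$ by the linear growth on $G_B$ and a standard estimate using $\mu(2B\setminus G_B)\leq \delta\mu(B)$ (here hypothesis (c) of Theorem \ref{teo1} should be read as controlling $\mu(B\setminus G_B)$, and a Vitali-type covering extends this to $2B\setminus G_B$, or one assumes $G_B$ chosen in $2B$). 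Once all four hypotheses of Corollary \ref{teo0'} are verified for this modified measure with $\delta,\tau$ replaced by $C(\delta+\tau)$-type quantities, Corollary \ref{teo0'} produces a uniformly $n$-rectifiable $\Gamma$ with $\wt\mu(B\cap\Gamma)\geq\theta\wt\mu(B)\geq \theta(1-\delta)\mu(B)$, and since $\supp\wt\mu\subset G_B$ this gives $\mu(G_B\cap\Gamma)\geq \theta'\mu(B)$, which is the conclusion. The main obstacle, as indicated, is bookkeeping the passage from $G_B$ to $2B$ and handling the non-small far-field oscillation cleanly; everything else is routine restriction and perturbation estimates.
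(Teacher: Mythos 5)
Your high-level strategy (reduce to Corollary \ref{teo0'} by modifying $\mu$ inside $B$ so that only the good set survives) is indeed the paper's strategy, but two of your key steps do not go through as written. First, your ``key estimate'' for the $L^2$-boundedness is flawed: since $G_B\subset B\subset 2B$, the correct decomposition is $\chi_{G_B}\mu=\chi_{2B}\mu-\chi_{2B\setminus G_B}\mu$, so the error term is $\RR_*(\chi_{2B\setminus G_B}\mu)$, not $\RR_*(\chi_{(2B)^c}\mu)$, and this term is \emph{not} pointwise bounded on $G_B$: the growth bound at $x\in G_B$ only gives $|\RR_\ve(\chi_{2B\setminus G_B}\mu)(x)|\lesssim \log(r(B)/\ve)\,\Theta_\mu(B)$, which diverges as $\ve\to0$. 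So one cannot verify a $T1$-type hypothesis for $\RR_{\mu|_{G_B}}$ by a pointwise bound on $\RR_{\mu|_{G_B},*}1$, and the ``known equivalence'' you invoke is exactly the nontrivial suppressed-kernel machinery. The paper handles this by introducing the exceptional open sets $H_1,H_2$ (where the maximal density, resp.\ maximal Riesz transform, of $\sigma=\mu|_{2B}$ exceed $p_i\Theta_\sigma(B)$), showing $2B\cap(H_1\cup H_2)\subset 2B\setminus G_B$, building the $1$-Lipschitz suppression function $\Phi=\dist(\cdot,H^c)$ and the suppressed kernel $K_\Phi$, proving $\RR_{\Phi,*,\sigma}1\lesssim\Theta_\sigma(B)$ everywhere via \cite[Lemma 5.5]{Tolsa}, and then applying the Nazarov--Treil--Volberg $Tb$ theorem for suppressed operators; since $\Phi\equiv0$ on $G_B$, this yields the $L^2$ bound for $\RR_{\mu|_{G_B}}$ with norm $\lesssim\Theta_\mu(B)$. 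Without some version of this construction your middle step is a genuine gap.

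Second, your choice of modified measure and the treatment of condition (d) are also problematic. Taking $\mu|_{G_B}+\chi_{(2B)^c}\mu$ leaves the annulus contribution $\RR(\chi_{2B\setminus B}\mu)$ unaccounted for, and your proposed fix rests on $\mu(2B\setminus G_B)\lesssim\delta\,\mu(B)$, which is false: hypothesis (c) only controls $\mu(B\setminus G_B)$, while $\mu(2B\setminus B)$ can be comparable to $\mu(B)$ (it is only bounded via $P_\mu(B)$), and no Vitali covering argument changes that. The paper instead takes $\wt\mu=\mu|_{B^c\cup G_B}$, i.e.\ it keeps \emph{all} of $\mu$ outside $B$, so the only error in condition (d) is $\RR(\mu|_{B\setminus G_B})$ on $G_B$; even then, smallness of mass alone is not enough --- the paper uses that the suppressed operator $\RR_{\Phi,\sigma}$ is bounded on $L^4(\sigma)$ (weak $(1,1)$ plus duality/interpolation), writes $\RR(\mu|_{B\setminus G_B})=\RR_\Phi(\mu|_{B\setminus G_B})$ on $G_B$, and applies Cauchy--Schwarz to get the factor $\delta^{1/2}\,\Theta_{\wt\mu}(B)^2\,\wt\mu(B)$. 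So both places where you flagged ``routine'' or ``standard'' estimates are precisely where the substantive work (suppression, $Tb$, and the $L^4$ bound) is required.
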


\vv

\begin{remark}\label{rempv}
The condition that
$$\sup_{0<r\leq 2 r(B)} \frac{\mu(B(x,r))}{r^n} + \RR_*(\chi_{2B}\,\mu)(x)\leq 
C_1\,\Theta_\mu(B)$$
for every $x\in G_B$ given by (c) ensures that the principal value
$$\RR\mu(x) = \lim_{\ve\to0}\RR_\ve\mu(x)$$
exists for $\mu$-a.e.\ $x\in G_B$. This is due to the fact that the assumption (d) implies the $L^2(\mu|_{G_B})$ boundedness of 
$\RR_{\mu|_{G_B}}$. This is shown in the proof below.
\end{remark}
\vv

\begin{proof}[Proof of Theorem \ref{teo1}] 
We will show that the assumptions of Theorem \ref{teo0'} hold for 
$$\wt \mu = \mu|_{B^c\cup G_B}.$$
The assumptions (a) and (b) are clearly satisfied (because $\delta\ll1$) and thus we only have to check (c) and (d).

Suppose that the assumptions of Theorem \ref{teo1} hold. 
Let $\sigma=\mu|_{2B}$ and let $p_1, p_2>0$ be two big constants to be chosen momentarily. 
Denote
$$\mathcal M_n\sigma(x)= \sup_{r>0}\frac{\sigma(B(x,r))}{r^n}.$$
Let us also set 
$$E^1_{p_1} = \{ x \in \R^{n+1}: \mathcal M_n \sigma(x)> p_1 \,\Theta_\sigma(B)\}$$ and
$$E^2_{p_2} = \{ x \in \R^{n+1}: \RR_* \sigma(x)>p_2 \,\Theta_\sigma(B)\}.$$

 For $x\in E^1_{p_1}$, we denote 
$$\rho_1(x) = \sup\bigl\{r>0 : \sigma(B(x,r))> p_1\Theta_\sigma(B)\, r^n\}
$$
and for $x\in E^2_{p_2}$,
$$\rho_2(x) =\sup\bigl\{r >0 : |\RR_r\sigma(x)|
> p_2\Theta_\sigma(B)\bigr\}.$$
Define
$$H_i = \bigcup_{x\in E^i_{p_i}} B(x,\rho_i(x)),\,\, i=1,2.$$
Note that $H_1$ and $H_2$ are open sets and for $p_1$ and $p_2$ big enough it not hard to show that $2B\cap (H_1 \cup H_2)\subset 2B
\setminus G_B$. Indeed, it is clear that every ball $B_r$ with $\sigma(B_r) > p_1\,\Theta_\sigma(B)  r^n$ satisfies $B_r \subset H_1$.
Notice that if $y \in B\cap H_1$, then there is $x \in  E^1_{p_1}$ so that $y \in B(x,\rho_1(x))$, and so 
$$\sigma(B(y, 2\rho_1(x))) \ge \sigma(B(x, \rho_1(x))) \ge p_1\,\Theta_\sigma(B) \rho_1(x)^n =p_1\,\Theta_\sigma(B)  2^{-n}[2\rho_1(x)]^n.$$
We conclude that
$2B\cap H_1  \subset 2B \setminus G_B$, if we choose $p_1$ so that $p_1 > 2^n C_1$. 

We turn our attention to $H_2$. If  $y \in B\cap H_2 \setminus H_1$, then there exists $x \in E^2_{p_2}$ so that $y \in B(x, \rho_2(x) )$. We shall show that 
\begin{equation}\label{e:diff.Riesz}
|\RR_{\rho_2(x)} \sigma(x) -\RR_{\rho_2(x)} \sigma(y) | \leq C p_1 \Theta_\sigma(B),
\end{equation} 
where $C >0$ is some absolute constant depending only on the dimension. Indeed, we have that  
\begin{align*}
|\RR_{\rho_2(x)} &\sigma(x) -\RR_{\rho_2(x)} \sigma(y) | \\
\leq &|\RR_{\rho_2(x)} ( \chi_{ B(y, 2 \rho_2(x) )}\sigma )(x) | + |\RR_{\rho_2(x)} ( \chi_{B(y, 2 \rho_2(x))}\sigma) (y)| \\
&+|\RR_{\rho_2(x)}( \chi_{\R^{n+1} \setminus B(y, 2 \rho_2(x))} \sigma)(x) -\RR_{\rho_2(x)} (\chi_{\R^{n+1} \setminus B(y, 2 \rho_2(x) )}\sigma)(y)| \\
& =:I_1+I_2+I_3.
\end{align*}
Notice now that 
$$ I_1+I_2 \leq C_n \frac{\sigma(B(y, 2 \rho_2(x)))}{\rho_2(x)^n} \leq  2^n p_1 \Theta_\sigma(2B)\leq  C\, p_1 \Theta_\sigma(B),$$
where the second inequality follows form the fact that $y \not \in H_1$. It just remains to handle $I_3$. To this end, 
\begin{align*}
I_3 &= |\RR( \chi_{\R^{n+1} \setminus B(y, 2 \rho_2(x))} \sigma)(x) -\RR(\chi_{\R^{n+1} \setminus B(y, 2 \rho_2(x) )}\sigma)(y)|\\
&\leq \wt C_n \int_{\R^{n+1} \setminus B(y, 2 \rho_2(x))} \frac{|x-y|}{|z-y|^{n+1}}\,d \sigma (z) \\
&\leq \wt C_n \sum_{j \geq 1} \frac{\rho_2(x)}{(2^j \rho_2(x))^{n+1}} \,\sigma(B(y, 2^{j+1}\rho_2(x))\\&\leq \wt C_n\, 2^n p_1 \Theta_\sigma(B),
\end{align*}
where in the last inequality we used that $y \not \in H_1$. This concludes the proof of \eqref{e:diff.Riesz}. Therefore, 
since $|\RR_{\rho_2(x)} \sigma(x)| > p_2  \Theta_\sigma(B)$, we have that $2B\cap H_2 \setminus H_1 \subset 
2B \setminus G_B$, if we choose $p_2$ so that $p_2 -C\, 2^n p_1> C_1$.

Let $H=H_1 \cup H_2$ and consider the $1$-Lipschitz function
$$\Phi(x) = \dist(x,H^c) \geq \max(\rho_1(x), \rho_2(x)),$$
and the associated ``suppressed kernel''
$$K_\Phi(x,y) = \frac{x-y}{\bigl(|x-y|^2 + \Phi(x)\,\Phi(y)\bigr)^{(n+1)/2}}.$$
We consider the operator $\RR_{\Phi,\sigma}$ defined by
$$\RR_{\Phi,\sigma}f(x) = \int K_\Phi(x,y)\,f(y)\,d\sigma(y),$$
and its $\ve$-truncated version (for $\ve>0$)
$$\RR_{\Phi,\ve,\sigma}f(x) = \int_{|x-y|>\ve} K_\Phi(x,y)\,f(y)\,d\sigma(y).$$
We also set
$$\RR_{\Phi,*,\sigma}f(x) = \sup_{\ve>0} \RR_{\Phi,\ve,\sigma}f(x).$$
We say that $\RR_{\Phi,\sigma}$ is bounded in $L^2(\sigma)$ if the operators $\RR_{\Phi,\ve,\sigma}$
are bounded in $L^2(\sigma)$ uniformly on $\ve>0$.

We now prove that
\begin{equation}\label{eq:suppRieszbound}
\RR_{\Phi,*,\sigma} 1(x) \leq C(p_1,p_2)\,\Theta_\sigma(B),
\end{equation}
 for all $x\in\R^{n+1}$. To do so, we need the following lemma which proof can be found in \cite[Lemma 5.5]{Tolsa}.
\begin{lemma}\label{lem:Tolsa-suppress}
 Let $x \in \R^{n+1}$ and $r_0 \geq 0$ so that $\sigma(B(x,r)) \leq A_1 r^n$ for $r \geq r_0$ and $|\RR_\ve \sigma (x)| \leq A_2$ for $\ve \geq r_0$. If $\Phi(x) \geq r_0$, then there exists $	C>0$, so that $|\RR_{\Phi,\ve, \sigma} 1(x)| \leq C\,A_1+  A_2$ for all $\ve>0$. 
 \end{lemma}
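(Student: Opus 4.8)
The plan is to establish the bound by a direct size estimate on the truncated suppressed integral, splitting the domain of integration at the scale $R:=\max(\ve,\Phi(x))$; since $R\ge\Phi(x)\ge r_0$, the two hypotheses on $\sigma$ (the growth bound and the bound on the truncated Riesz transform at $x$) are available at every scale $\ge R$. I would first record the elementary kernel bounds that drive everything: writing $t=|x-y|$ and $s=\Phi(x)\Phi(y)\ge0$ and comparing $(t^2+s)^{-(n+1)/2}$ with $t^{-(n+1)}$, one gets $|K_\Phi(x,y)|\le\min\bigl(|x-y|^{-n},\,|x-y|\,(\Phi(x)\Phi(y))^{-(n+1)/2}\bigr)$ and $\bigl|K_\Phi(x,y)-\tfrac{x-y}{|x-y|^{n+1}}\bigr|\lesssim \Phi(x)\Phi(y)\,|x-y|^{-(n+2)}$. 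Recall also that $\Phi$ is $1$-Lipschitz (being the distance to $H^c$). Then I split
$$\RR_{\Phi,\ve,\sigma}1(x)=\int_{\ve<|x-y|\le R}K_\Phi(x,y)\,d\sigma(y)+\int_{|x-y|>R}K_\Phi(x,y)\,d\sigma(y)=:\mathrm{I}+\mathrm{II}.$$

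For $\mathrm{I}$ (which is nonempty only when $\ve<\Phi(x)$, so $R=\Phi(x)$) the point is that the suppression makes the kernel integrable at small scales, precisely where $\sigma$ is not controlled. On $|x-y|\le\tfrac12\Phi(x)$ the Lipschitz property gives $\Phi(y)\ge\tfrac12\Phi(x)$, whence $|K_\Phi(x,y)|\lesssim |x-y|\,\Phi(x)^{-(n+1)}$; integrating and bounding $\sigma(B(x,\Phi(x)/2))\le\sigma(B(x,\Phi(x)))\le A_1\Phi(x)^n$ (growth at the admissible scale $\Phi(x)\ge r_0$), this portion is $\lesssim \Phi(x)^{-(n+1)}\cdot\Phi(x)\cdot A_1\Phi(x)^n=A_1$. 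On $\tfrac12\Phi(x)<|x-y|\le\Phi(x)$ one simply uses $|K_\Phi(x,y)|\le|x-y|^{-n}\lesssim\Phi(x)^{-n}$ together with the same growth bound, getting again $\lesssim A_1$. Hence $|\mathrm{I}|\lesssim A_1$.

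For $\mathrm{II}$ I compare with the genuine truncated Riesz transform. Since $|x-y|>R\ge\Phi(x)$, the Lipschitz bound gives $\Phi(y)\le\Phi(x)+|x-y|\lesssim|x-y|$, so the kernel difference is $\lesssim\Phi(x)\,|x-y|^{-(n+1)}\le R\,|x-y|^{-(n+1)}$; a dyadic decomposition together with $\sigma(B(x,2^kR))\le A_1(2^kR)^n$ gives $\int_{|x-y|>R}\bigl|K_\Phi(x,y)-\tfrac{x-y}{|x-y|^{n+1}}\bigr|\,d\sigma(y)\lesssim R\cdot A_1/R=A_1$. On the other hand $\int_{|x-y|>R}\tfrac{x-y}{|x-y|^{n+1}}\,d\sigma(y)=\RR_R\sigma(x)$ (as $R\ge\ve$), and $|\RR_R\sigma(x)|\le A_2$ because $R\ge r_0$. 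Thus $|\mathrm{II}|\le A_2+CA_1$, and adding the estimate for $\mathrm{I}$ yields $|\RR_{\Phi,\ve,\sigma}1(x)|\le CA_1+A_2$ for every $\ve>0$. The argument carries no genuine obstacle — it is purely a matter of size estimates — but the delicate points are (i) exploiting the suppression correctly on $\{|x-y|<\Phi(x)\}$, the one regime where $\sigma$ has no growth control and where the plain Riesz kernel would fail to be integrable against $\sigma$, and (ii) keeping the coefficient of $A_2$ equal to $1$, i.e.\ ensuring $A_2$ enters only through the single term $|\RR_R\sigma(x)|$, while checking that all constants are uniform in $\ve$ (in particular in the case analysis $\ve<\Phi(x)$ versus $\ve\ge\Phi(x)$).
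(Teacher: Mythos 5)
Your proof is correct: the split at $R=\max(\ve,\Phi(x))$, the use of the $1$-Lipschitz property of $\Phi$ to get $\Phi(y)\ge \Phi(x)/2$ when $|x-y|\le \Phi(x)/2$ (the only regime where the suppression is needed, since $\sigma$ has no growth control below $r_0$), the crude bound $|K_\Phi(x,y)|\le |x-y|^{-n}$ on the remaining annulus, and the comparison with the truncated kernel at scales $>R\ge r_0$ are all sound, and you correctly keep the coefficient of $A_2$ equal to $1$ by letting $A_2$ enter only through $\RR_R\sigma(x)$. Note that the paper itself gives no proof of this lemma --- it is quoted from \cite[Lemma 5.5]{Tolsa} --- and your argument is essentially the standard one found there, so it matches the cited source rather than offering a genuinely different route.
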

By Lemma \ref{lem:Tolsa-suppress} for $A_1=p_1 \Theta_\sigma(B)$, $A_2= p_2\Theta_\sigma(B)$ and $r_0=\max\{\rho_1(x), \rho_2(x)\}$, we obtain \eqref{eq:suppRieszbound}. We further apply the $Tb$ theorem for suppressed operators by Nazarov--Treil--Volberg \cite{NTV14-prep} (see also Corollary 5.33 in \cite{Tolsa}) and it follows that
$\RR_{\Phi,\sigma}:L^2(\sigma) \to L^2(\sigma)$ is bounded  with norm 
$$\|\RR_{\Phi,\sigma}\|_{L^2(\sigma)\to L^2(\sigma)}\lesssim\Theta_\sigma(B)= \Theta_\mu(B).$$
Since $\Phi$ vanishes on $G_B\subset H^c$, we have that $\RR_{\wt\mu|_B}:L^2(\wt \mu|_{B}) \to L^2(\wt \mu|_{B})$ is bounded and $\|\RR_{\wt\mu|_B}\|_{L^2(\wt\mu|_B)\to L^2(\wt\mu|_B)}\lesssim\Theta_\wt\mu(B)$.

To check that the condition (e) in Theorem \ref{teo0} holds, we write
\begin{align*}
\int_{B} |\RR\wt\mu(x) - m_{\wt\mu,B}(\RR\wt \mu)|^2\,d\wt\mu(x) & \lesssim
\int_{G_B} |\RR\mu(x) - m_{\mu,G_B}(\RR\mu)|^2\,d\mu(x) \\
&\quad +
\int_{G_B} |\RR (\mu-\wt \mu)|^2\,d\mu \\
& = I_1+ I_2.
\end{align*}
Concerning $I_1$, by assumption we have
$$I_1\leq \tau\,\Theta_\mu(B)^2\,\mu(B) \approx \tau\,\Theta_{\wt\mu}(B)^2\,\wt\mu(B).$$
For $I_2$, notice that $\mu-\wt \mu=\mu|_{B\setminus G_B}$ and, further, recall that $\Phi$ vanishes 
on $G_B$ because $G_B\subset H^c$ and so $\RR (\mu-\wt \mu)(x)=\RR_\Phi (\mu-\wt \mu) (x)$ for all
$x\in G_B$. Further, $\RR_{\Phi,\sigma}$ is bounded in $L^4(\sigma)$, by using the
boundedness of $\RR_{\Phi,\sigma}$
 from $L^{1}(\sigma)$ to $L^{1,\infty}(\sigma)$ (see Lemma 5.27 of \cite{Tolsa}, for example) and duality. So we have
\begin{align*}
\int_{G_B} |\RR (\mu-\wt \mu)|^2\,d\mu & \leq \mu(G_B)^{1/2}\,\|\RR_\Phi (\mu|_{B\setminus G_B})\|_{
L^4(\mu|_{G_B})}^2 \\
& \lesssim \Theta_{\sigma}(B)^2 \,\mu(G_B)^{1/2}\,\mu(B\setminus G_B)^{1/2}\lesssim
\delta^{1/2}\,\Theta_{\wt\mu}(B)^2 \,\wt\mu(B).
\end{align*}
Gathering the estimates obtained for $I_1$ and $I_2$ we get
$$\int_{B} \bigl|\RR\wt\mu(x) - m_{\wt\mu,B}(\RR\wt \mu)\bigr|^2\,d\wt\mu(x)\lesssim
(\tau + \delta^{1/2})\,\Theta_{\wt\mu}(B)^2 \,\wt\mu(B),$$
which shows that the assumption (d) of Theorem \ref{teo0'} holds.
\end{proof}

\vv

\section{Background on Harmonic Measure}\label{s:harmeasure}

Let us first recall some definitions and basic facts concerning harmonic measure and Green functions.

\subsection{Harmonic measure and Green function} For a (possibly unbounded) domain $\Omega \subset \R^{n+1}$ and $x \in \Omega$, one can construct the harmonic measure $\omega^x_\Omega$ (see e.g. \cite[p. 172]{ArmGard} or \cite[p. 217]{Hel}). In fact, for any continuous function $f$, the Perron solution  for the boundary function $f$ is given by 
$$ H_f(x) = \int_{\partial_\infty \Omega} f(y) \,d\omega_\Omega^x(y),$$
where $\d_\infty\Omega=\d\Omega$ if $\Omega$ is bounded and $\d_\infty\Omega=\d\Omega\cup\{\infty\}$ otherwise.
 Remark  that constant functions are continuous and  since $H_1(x)=1$, for any $x \in \Omega$, we have that $\omega_\Omega^x(\partial_\infty \Omega)=1$, for any $x \in \Omega$.

Let $\EE$ denote the fundamental solution for the Laplace equation in $\R^{n+1}$, so that $\mathcal{E}(x)=c_n\,|x|^{1-n}$ for $n\geq 2$, $c_n>0$. A {\it Green function} $G_{\Omega}:\Omega\times \Omega\rightarrow[0,\infty]$ for an open set $\Omega\subset \R^{n+1}$ is a function with the following properties: for each $x\in \Omega$, $G_{\Omega}(x,y)=\EE(x-y)+h_{x}(y)$ where $h_{x}$ is harmonic on $\Omega$, and whenever $v_{x}$ is a nonnegative superharmonic function that is the sum of $\EE(x-\cdot)$ and another superharmonic function, then  $v_{x}\geq G_{\Omega}(x,\cdot)$ (\cite[Definition 4.2.3]{Hel}). 

An open subset of $\R^{n+1}$ having a Green function is called a {\it Greenian} set. By \cite[Theorem 4.2.10]{Hel}, all open subsets of $\R^{n+1}$ are Greenian for $n\geq 2$. Moreover, Green function can be written as follows (see \cite[Lemma 6.8.1]{ArmGard}): for $x,y\in\Omega$, $x\neq y$, define
\begin{equation}\label{green}
G_\Omega(x,y) = \mathcal{E}(x-y) - \int_{\partial\Omega} \mathcal{E}(x-z)\,d\omega^y(z).
\end{equation}

For  $x\in\R^{n+1}\setminus \Omega$ and $y\in\Omega$, we will also set 
\begin{equation}\label{green2}
G_\Omega(x,y)=0.
\end{equation}

The kernel of the Riesz transform is
\begin{equation}\label{eqker}
K(x) = c_n\,\nabla \EE(x),
\end{equation}
for a suitable absolute constant $c_n$. 
For $x\in\R^{n+1}\setminus \overline\Omega$, since $K(x-\cdot)$ is harmonic in $\Omega$, we have
\begin{equation}\label{eqclau2}
\RR\omega^y(x) = \int K(x-z)\,d\omega^y(z) = K(x-y).
\end{equation}
For $x\in\Omega$, by \rf{eqker} and  \rf{green}  we get
\begin{align}\label{eqclau1}
\RR\omega^y(x) = 
c_n\nabla_x\int \EE(x-z)\,d\omega^y(z) & = c_n\,\nabla_x\bigl(\EE(x-z) - G_{\Omega}(x,y)\bigr) \nonumber\\
&= K(x-y) - c_n\,\nabla_x G_{\Omega}(x,y).
\end{align}

\vv
The following result is also standard. For the proof of the precise statements, see 
\cite{AHMMMTV15}, for example.

\begin{lemma}\label{l:w>G}
Let $n\ge 2$ and $\Omega\subset\R^{n+1}$ be a domain.
Let $B=B(x_0,r)$ be a closed ball with $x_0\in\pom$ and $0<r<\diam(\pom)$. Then, for all $a>0$,
\begin{equation}
 \omega_{\Omega}^{x}(aB)\gtrsim \inf_{z\in 2B\cap \Omega} \omega_{\Omega}^{z}(aB)\, r^{n-1}\, G_{\Omega}(x,y)\quad\mbox{
 for all $x\in \Omega\backslash  2B$ and $y\in B\cap\Omega$,}
 \end{equation}
 with the implicit constant independent of $a$. 
\end{lemma}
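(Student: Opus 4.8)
The plan is to exploit the defining relation \eqref{green} between the Green function and harmonic measure, together with the maximum principle and a Harnack-type chain, in order to compare $\omega_\Omega^x(aB)$ with $G_\Omega(x,y)$ for $x$ far from $B$ and $y\in B\cap\Omega$. First I would reduce to the case $a$ small (say $aB\subset 4B$), since once the estimate is proved for a fixed small $a_0$, monotonicity of $\omega_\Omega^x$ in the set argument gives $\omega_\Omega^x(aB)\geq \omega_\Omega^x(a_0B)$ for all larger $a$, and the implicit constant stays independent of $a$; for $a$ comparable to or smaller than $a_0$ one simply rescales the ball. So fix $x\in\Omega\setminus 2B$ and $y\in B\cap\Omega$.

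The main step is the following. Consider the function $u(z)=\omega_\Omega^z(aB)$, which is harmonic and bounded in $\Omega$, and compare it with $v(z)=\EE(z-z_0)$ (suitably truncated), where $z_0$ is the center of $B$. On the sphere $\partial(2B)$ one has $v\approx r^{1-n}$, while $u\geq \inf_{w\in 2B\cap\Omega}\omega_\Omega^w(aB)=:m$ is controlled from below only on $2B\cap\Omega$. The idea is to build a subsolution: on the domain $\Omega\setminus \overline{2B}$, the function $z\mapsto c\,m\,r^{n-1}\,\EE(z-z_0)$ is harmonic and, choosing $c$ a small dimensional constant, is $\leq u$ on $\partial(2B)\cap\Omega$ (using $\EE(z-z_0)\approx r^{1-n}$ there and $u\geq m$ there) and is $\leq 0\leq u$ on $\partial\Omega$ (since $\EE$ is bounded on $\partial\Omega\setminus 2B$ after we work instead with $\EE(z-z_0)-\sup_{\partial(2B)\cap\partial\Omega}\EE$, or more cleanly with $G_{2B}$-type barriers); the maximum principle on $\Omega\setminus\overline{2B}$ then yields $u(x)\geq c\,m\,r^{n-1}\,w(x)$ for the appropriate harmonic majorant $w$. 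The cleanest way to make "the appropriate harmonic majorant" precise is to recognize, from \eqref{green}, that $G_\Omega(\cdot,y)$ is exactly $\EE(\cdot-y)$ minus its harmonic extension of boundary data, and that for $y\in B\cap\Omega$ and $x$ outside $2B$ one has $G_\Omega(x,y)\lesssim \EE(x-z_0)\approx$ the Newtonian potential evaluated at $x$; hence comparing $u(x)$ to $G_\Omega(x,y)$ reduces to comparing $u(x)$ to $\EE(x-z_0)$ up to the factor $r^{n-1}$, and the barrier argument above does precisely that.

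Concretely, the steps in order: (1) normalize so that $aB\subset 4B$ and $B$ is the unit ball after scaling, reducing to a clean comparison; (2) on $\Omega\setminus\overline{2B}$ set up the comparison function $h(z)=\EE(z-z_0)$ (or a truncated/shifted version that is nonpositive on $\partial\Omega\setminus 2B$), note $h$ is harmonic there, and check the boundary inequalities against $u=\omega_\Omega^\cdot(aB)$: on $\partial(2B)\cap\Omega$ use $h\approx r^{1-n}$ and $u\geq \inf_{2B\cap\Omega}\omega^\cdot(aB)$, and on $\partial\Omega$ use $u\geq 0$ and $h\leq 0$; (3) apply the maximum principle for the (possibly unbounded) domain $\Omega\setminus\overline{2B}$ — here one must be slightly careful about the behavior at $\infty$ when $\Omega$ is unbounded, but $h\to 0$ at $\infty$ for $n\geq 2$ so the usual argument applies — to conclude $u(x)\gtrsim \inf_{2B\cap\Omega}\omega^\cdot(aB)\, r^{n-1}\, h(x)$; (4) finally invoke \eqref{green} and \eqref{green2} together with the estimate $\EE(x-y)\approx\EE(x-z_0)$ for $x\notin 2B$, $y\in B$, and $\int_{\partial\Omega}\EE(x-z)\,d\omega^y(z)$ being a harmonic function of $x$ bounded in a way that still leaves $G_\Omega(x,y)\lesssim h(x)$, to replace $h(x)$ by $G_\Omega(x,y)$ and obtain the claimed inequality with implicit constant independent of $a$.

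The main obstacle I anticipate is step (3)–(4): handling the maximum principle on the unbounded domain $\Omega\setminus\overline{2B}$ (the behavior at infinity, and ensuring the comparison function genuinely sits below $u$ on all of $\partial\Omega\setminus 2B$, not merely on a bounded portion), and then cleanly passing from the Newtonian-potential barrier $\EE(x-z_0)$ to the Green function $G_\Omega(x,y)$ without losing the factor $r^{n-1}$ or picking up an $a$-dependence. Since the lemma is stated as standard with a reference to \cite{AHMMMTV15}, I expect the actual write-up to be short and to cite that paper rather than reproduce the barrier argument in full.
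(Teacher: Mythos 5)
There is a genuine gap in your steps (2)--(3), i.e.\ precisely at the point you flag as the ``main obstacle''. The comparison function $c\,m\,r^{n-1}\EE(\cdot-x_0)$ (with $m=\inf_{2B\cap\Omega}\omega_\Omega^{\cdot}(aB)$ and $x_0$ the center of $B$) does not lie below $u(\cdot)=\omega_\Omega^{\cdot}(aB)$ on $\partial\Omega\setminus 2B$: the Newtonian potential is strictly positive there, while $u$ tends to $0$ at regular boundary points outside $\overline{aB}$. In fact the intermediate inequality you want the maximum principle to produce, $\omega_\Omega^x(aB)\gtrsim m\,r^{n-1}\,\EE(x-x_0)$ for all $x\in\Omega\setminus 2B$, is false in general: take $\Omega$ bounded and smooth with $\diam(\pom)\gg ar$ and $a$ large enough that $m>0$, and let $x$ approach a regular point $\xi\in\pom$ far from $B$; the left-hand side tends to $0$ while the right-hand side stays bounded below. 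Your proposed repairs do not rescue this: subtracting $\sup_{\pom\setminus 2B}\EE(\cdot-x_0)$, which can be comparable to $c_n(2r)^{1-n}$, i.e.\ to the value of $\EE(\cdot-x_0)$ on $\partial(2B)$, makes the shifted barrier nonpositive on all of $\R^{n+1}\setminus 2B$, so the resulting bound is vacuous; and a ``$G_{2B}$-type'' barrier vanishes outside $2B$, so it gives nothing in $\Omega\setminus\overline{2B}$ either. A smaller but real flaw is your opening reduction in $a$: monotonicity gives $\omega^x(aB)\ge\omega^x(a_0B)\gtrsim m_{a_0}r^{n-1}G_\Omega(x,y)$, but the statement for $a$ requires $m_a$ on the right, and $m_a\ge m_{a_0}$, so the reduction goes the wrong way; no reduction is needed, since the argument below is uniform in $a$.

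The paper itself does not reprove the lemma (it cites \cite{AHMMMTV15}, plus the remark that for unbounded $\Omega$, $n\ge2$, the point at infinity is Wiener regular), and the standard proof there is the one your sketch circles around but does not carry out: use $G_\Omega(\cdot,y)$ itself as the barrier, not $\EE(\cdot-x_0)$. Both $u$ and $G_\Omega(\cdot,y)$ are harmonic and bounded in $\Omega\setminus\overline{2B}$ (note $G_\Omega(z,y)\le\EE(z-y)\le c_n r^{1-n}$ there, by \rf{green} and $\EE\ge0$). On $\partial(2B)\cap\Omega$ one has $u\ge m$ by the very definition of the infimum (this is where the constant stays independent of $a$) and $G_\Omega(\cdot,y)\le c_n r^{1-n}$; on $\pom\setminus 2B$ the Green function vanishes at Wiener-regular points (the irregular ones form a polar set, absorbed by the extended maximum principle) while $u\ge0$; and at infinity $G_\Omega(\cdot,y)\to0$. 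The maximum principle in $\Omega\setminus\overline{2B}$ then gives $\omega_\Omega^x(aB)\ge c_n^{-1}\,m\,r^{n-1}G_\Omega(x,y)$ directly. Your step (4), $G_\Omega(x,y)\le\EE(x-y)\approx\EE(x-x_0)$ for $x\notin 2B$, $y\in B$, is correct but becomes unnecessary once the right barrier is used; as it stands it cannot compensate for the failure of the intermediate estimate.
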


The above lemma was originally stated in \cite{AHMMMTV15} for bounded domains, but it holds for unbounded domains with the same proof using the fact that, for $n\geq 2$, any domain $\Omega\subset \bR^{n+1}$ is Greenian and, if it is unbounded, $\infty$ is a Wiener regular point (see \cite[Theorem 6.7.1]{ArmGard}). 

\subsection{$\Delta$-regular domains}

\begin{definition} \label{defdelta}
A domain $\Omega\subsetneq \bR^{n+1}$ is {\it $(\beta,R)$-$\Delta$-regular} if there are 
$R,\beta>0$ so that 
\begin{equation}
\label{e:dregular}
 \sup_{\xi\in \d\Omega} \,\sup_{x\in \d B(\xi,r/2)\cap\Omega} \omega_{B\cap \Omega}^{x}(\d B(\xi,r)\cap \Omega)  \leq\beta<1 \mbox{ for }r\in (0,R).
 \end{equation}
We call a domain $\Omega$ {\it two-sided $\Delta$-regular} if $\ext(\Omega):=(\cnj{\Omega})^{c}$ is also a $\Delta$-regular domain. 
\end{definition}

If we want to specify the constants $\beta,R$ above, we will talk about $(\beta,R)$-$\Delta$-regularity. 
It can be shown that one obtains an equivalent definition if the second supremum above is taken over $x\in \d B(\xi,\delta r)\cap\Omega$,
for any fixed constant $0<\delta<1$.

\begin{definition}
 Let $n\geq 2$ and let $\Cap$ denote the Newtonian capacity. A domain $\Omega\subset \R^{n+1}$ satisfies the {\it capacity density condition} (or CDC) if there is $R_{\Omega}>0$ and $c_\Omega>0$ so that $\Cap(B\backslash \Omega)\geq c_\Omega \,r(B)^{n-1}$ for any ball $B$ centered on $\d\Omega$ of radius $r(B)\in (0,R_{\Omega})$. 
\label{def:cdc}
\end{definition}

Although this result will be not used in this paper, we recall that
the CDC is equivalent to $\Delta$-regularity for $n\geq 2$:

\begin{theorem}  \cite[Lemma 3]{Anc86} For $n\geq 2$, if  $\Omega\subset\bR^{n+1}$ and $B$ is centered on $\d\Omega$, then $\Cap(B\backslash \Omega)\gtrsim r(B)^{n-1}$ if and only if there is $\beta\in (0,1)$ so that $\omega_{B\cap \Omega}^{x}(\d B\cap \Omega)\leq \beta$ on $\d (\frac12B)\cap \Omega$. In particular, $\Omega$ is $\Delta$-regular if and only if it satisfies the CDC. 
\label{l:ancona}
\end{theorem}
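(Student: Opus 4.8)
The plan is to reduce everything to a single quantitative two‑sided estimate relating the harmonic measure of the outer sphere to the Newtonian capacity of the complement inside $B$, and then to run it over all scales. Fix $B=B(\xi,r)$ with $\xi\in\pom$, write $K_\rho:=\overline{B(\xi,\rho)}\setminus\Omega$ (so $\xi\in K_\rho$ for every $\rho>0$, as $\Omega$ is open). The key assertion is that, for all $x\in\d B(\xi,r/2)\cap\Omega$,
\begin{equation}\label{e:Ancmaster}
\min\!\Bigl(\tfrac12,\ c\,\frac{\Cap(K_{r/4})}{r^{n-1}}\Bigr)\ \le\ \omega^x_{\Omega\cap B}(\pom\cap\overline B)\ \le\ C\,\frac{\Cap(K_r)}{r^{n-1}},
\end{equation}
with $c,C$ depending only on $n$. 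Granting \eqref{e:Ancmaster} together with the identity $\omega^x_{\Omega\cap B}(\d B\cap\Omega)=1-\omega^x_{\Omega\cap B}(\pom\cap\overline B)$ (the pieces $\d B\cap\Omega$ and $\pom\cap\overline B$ exhaust $\d(\Omega\cap B)$ up to a subset of $\d B$, which is harmonic‑measure null), the equivalence follows: if the CDC holds then $\Cap(K_{r/4})\gtrsim r^{n-1}$ by scaling, so the left inequality forces $\omega^x_{\Omega\cap B}(\d B\cap\Omega)\le 1-\gamma<1$, i.e.\ $\Delta$‑regularity; conversely $\Delta$‑regularity gives $\omega^x_{\Omega\cap B}(\pom\cap\overline B)\ge 1-\beta$ on $\d(\tfrac12 B)\cap\Omega$, and the right inequality then yields $\Cap(K_r)\ge\tfrac{1-\beta}{C}\,r^{n-1}$, i.e.\ the CDC. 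In the strict ball‑by‑ball phrasing one must allow the geometric constants — the ratio of the two radii, the implicit constant in ``$\gtrsim$'', and $\beta$ — to be traded against one another; it is the equivalence of the \emph{global} conditions that comes out cleanly, and for the sharp bookkeeping I would refer to \cite[Lemma~3]{Anc86}.

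For the upper bound in \eqref{e:Ancmaster} I would use the Newtonian equilibrium potential $p_K(x)=\int\EE(x-y)\,d\mu_K(y)$ of $K:=K_r$, so $0\le p_K\le 1$, $p_K$ is harmonic on $\Omega\cap B$, $p_K=1$ quasi‑everywhere on $K\supset\pom\cap\overline B$, and $\mu_K(K)=\Cap(K)$. Since polar sets are null for harmonic measure, the maximum principle comparing $p_K$ with the Perron solution of $\chi_{\pom\cap\overline B}$ in $\Omega\cap B$ gives $\omega^x_{\Omega\cap B}(\pom\cap\overline B)\le p_K(x)$ there. This pointwise bound can be large where $K$ reaches the sphere $S':=\d B(\xi,r/2)$, so I would instead integrate it over $S'$: by Fubini and Newton's shell theorem (the Newtonian potential of the surface measure $\sigma$ of $S'$ is $\lesssim_n r$ everywhere) one gets $\tfrac1{\sigma(S')}\int_{S'}p_K\,d\sigma\lesssim_n\Cap(K)/r^{n-1}$. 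If $\sigma(S'\cap\Omega)\ge\tfrac12\sigma(S')$ this produces a point of $S'\cap\Omega$ at which $p_K\lesssim_n\Cap(K)/r^{n-1}$ — enough, since a single good point on the inner sphere suffices to contradict $\Delta$‑regularity; in the opposite case $K$ contains half of $S'$, and a $\gtrsim r^n$‑portion of an $(r/2)$‑sphere in $\R^{n+1}$ has Newtonian capacity $\gtrsim_n r^{n-1}$ (bound its energy directly), so $\Cap(K)\gtrsim r^{n-1}$ with nothing to prove.

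For the lower bound I would pass to $K_0:=K_{r/4}\Subset B$ and use its capacitary potential \emph{relative to} $B$, namely $\wt p_{K_0}(x)=\omega^x_{B\setminus K_0}(\d K_0)=\int_{K_0}G_B(x,y)\,d\wt\mu_{K_0}(y)$, whose Riesz mass $\wt\mu_{K_0}(K_0)$ is the condenser capacity of $(K_0,B)$, itself comparable to $\Cap(K_0)$ since $K_0$ sits proportionally inside $B$ (the standard cutoff argument). The maximum principle — $\wt p_{K_0}\le 1$ on $\pom\cap\overline B$, and $\wt p_{K_0}=0$ on $\d B\cap\Omega$, such points being regular for $B\setminus K_0$ and at positive distance from $K_0$ — gives $\omega^x_{\Omega\cap B}(\pom\cap\overline B)\ge\wt p_{K_0}(x)$ on $\Omega\cap B$. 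It then remains to bound $G_B$ from below: for $x\in\d B(\xi,r/2)$ and $y\in\overline{B(\xi,r/4)}$ the explicit ball Green function (fundamental solution minus image charge) satisfies $G_B(x,y)\gtrsim_n r^{1-n}$, since its leading term is $\gtrsim(3r/4)^{1-n}$ while its image term is $\lesssim(7r/8)^{1-n}$, and $(4/3)^{n-1}>(8/7)^{n-1}$ for $n\ge2$. Hence $\wt p_{K_0}(x)\gtrsim_n r^{1-n}\Cap(K_0)$ on $\d B(\xi,r/2)$, which (capped at $1$, as it must be) is the left half of \eqref{e:Ancmaster}.

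The hard part I expect is this lower bound: it rests on the classical but somewhat delicate comparison of condenser and Newtonian capacity for a set proportionally inside the ball, on the explicit lower estimate for the ball's Green function away from its pole, and, throughout, on the careful treatment of irregular boundary points via the vanishing of harmonic measure on polar sets; the upper bound, by contrast, is soft (only the equilibrium potential plus Fubini and the shell theorem). Note that $n\ge2$ enters at several points — in the decay of $\EE(x)=c_n|x|^{1-n}$ at infinity, in the positivity of $(4/3)^{n-1}-(8/7)^{n-1}$, and in the fact recalled in the excerpt that every open subset of $\R^{n+1}$ is Greenian.
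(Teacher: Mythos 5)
There is nothing to compare against inside the paper: the statement is quoted verbatim from Ancona \cite[Lemma 3]{Anc86}, and the authors even remark that it is not used elsewhere in the text, so no in-paper proof exists. Judged against the classical potential-theoretic argument, your reconstruction is essentially correct, and the route (equilibrium and condenser potentials, the maximum principle with polar sets discarded by harmonic measure, the explicit Green function of the ball, the exact splitting of $\partial(\Omega\cap B)$ into $\pom\cap\overline B$ and $\d B\cap\Omega$) is the standard one. One statement should be weakened, though: the upper half of your master inequality is asserted for all $x\in\d B(\xi,r/2)\cap\Omega$, and in that pointwise form it is false --- take $\overline B\setminus\Omega$ to be $\{\xi\}$ together with a ball of radius $\ve$ at distance $\ve$ from $x$; then the left-hand side is of order $1$ while $\Cap(\overline B\setminus\Omega)\,r^{1-n}\approx(\ve/r)^{n-1}$. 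What your Fubini/shell computation actually yields, and all you use, is the bound at one well-chosen point of $\d B(\xi,r/2)\cap\Omega$ (equivalently the averaged form), which indeed suffices to contradict $\Delta$-regularity. The lower half is fine, and the capacity comparison you flag as delicate is free in the direction you need: since $G_B\le\EE$, the condenser measure of $K_{r/4}$ has Newtonian potential $\ge 1$ quasi-everywhere on $K_{r/4}$, and pairing it against the equilibrium measure of $K_{r/4}$ gives mass at least $\Cap(K_{r/4})$.

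Your caveat about trading radii and constants is not mere caution but necessary, which is worth recording since the theorem is phrased ball-by-ball. The literal same-ball implication ``$\Cap(B\setminus\Omega)\ge c\,r^{n-1}$ implies $\omega^x_{B\cap\Omega}(\d B\cap\Omega)\le\beta(c,n)<1$ on $\d(\frac12 B)\cap\Omega$'' cannot hold with uniform constants: if $B\setminus\Omega$ consists of $\{\xi\}$ together with a well-separated family of balls of radius $\delta r$ centered at a $\delta^{(n-1)/n}r$-net of the sphere $\d B(\xi,(1-2\delta)r)$, then $\Cap(B\setminus\Omega)\gtrsim r^{n-1}$ with constant independent of $\delta$, while the probability of hitting this set from $\d B(\xi,r/2)$ before leaving $B$ is $\lesssim\delta$, so $\omega^x_{B\cap\Omega}(\d B\cap\Omega)\to1$ as $\delta\to0$. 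Hence the statement must be read, as its final sentence indicates, as an equivalence of the global conditions (CDC at all small scales versus $\Delta$-regularity at all small scales, with constants depending on each other), and your argument, with the shift from $B$ to $B(\xi,r/4)$ in the lower bound and the one-point upper bound, delivers exactly that.
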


Below we recall some estimates that are written in more generality but will be applied in the setting of $\Delta$-regular domains. The following result is well known and follows by standard techniques, see for example \cite[Lemma 2.3]{AM15}.
\vv

\begin{lemma}
\label{l:holder}
Let $\Omega\subset \bR^{n+1}$, $\delta\in (0,1)$, $\xi\in \d\Omega$ and suppose that 
\[\omega_{B(\xi,r)\cap \Omega}^{x}(\d B(\xi,r)\cap \Omega)  \leq \beta<1\; \mbox{ for }\; x\in  \d B(\xi,\delta r)\cap \Omega \;\mbox{ and }r\in (0,R).\] Then there is $\alpha=\alpha(\beta,\delta,n)$ so that for all $r\in (0,R)$
\begin{equation}\label{e:wholder}
 \omega_{\Omega}^{x}({B}(\xi,r)^{c})\lesssim_{\beta,\delta} \ps{\frac{|x-\xi|}{r}}^{\alpha} \mbox{ for } x\in \Omega\cap B(\xi,r).
 \end{equation}
In particular, $\xi$ is a regular point for $\d\Omega$. 
\end{lemma}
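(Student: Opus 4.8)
\textbf{Proof plan for Lemma \ref{l:holder}.}
The plan is to exploit the hypothesis, which says precisely that the harmonic measure of $\d B(\xi,r)\cap\Omega$ seen from the sphere $\d B(\xi,\delta r)\cap\Omega$ inside the truncated domain $B(\xi,r)\cap\Omega$ is uniformly less than $1$, and to iterate this estimate across dyadic (or $\delta$-adic) scales. First I would set up the notation $B_j = B(\xi, \delta^j r)$ for $j\ge 0$ and consider, for a point $x\in\Omega\cap B(\xi,r)$, the dyadic index $k$ with $x\in B_k\setminus B_{k+1}$ roughly, so that $|x-\xi|\approx \delta^k r$. The quantity to control is $\omega_\Omega^x(B(\xi,r)^c)$, and the idea is that for a Brownian motion (or in potential-theoretic terms, for the harmonic measure) to exit $B(\xi,r)$ starting from deep inside $B_k$, it must successively cross each annular shell between $B_{j+1}$ and $B_j$ for $j=0,1,\dots,k-1$.

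The key step is a one-scale estimate: by the maximum principle and the hypothesis applied at scale $\delta^{j}r$, if $u$ is a bounded harmonic function on $\Omega$ that vanishes on $\d\Omega\cap B(\xi,\delta^{j}r)$, then $\sup_{\Omega\cap \d B(\xi,\delta^{j+1}r)} u \le \beta \sup_{\Omega\cap \d B(\xi,\delta^{j}r)} u$; here one uses that $\omega_\Omega^x(B(\xi,\delta^j r)^c)$ is dominated by $\omega_{\Omega\cap B(\xi,\delta^j r)}^x(\d B(\xi,\delta^j r)\cap\Omega)$ on $\Omega\cap B(\xi,\delta^j r)$ via the maximum principle (the function $\omega_\Omega^\cdot(B(\xi,\delta^jr)^c)$ is harmonic in $\Omega\cap B(\xi,\delta^jr)$, vanishes on $\d\Omega\cap B(\xi,\delta^jr)$ in the Wiener-regular sense, and is $\le 1$ on $\d B(\xi,\delta^j r)\cap\Omega$). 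Then I would apply this with $u(\cdot) = \omega_\Omega^\cdot(B(\xi,r)^c)$, which is harmonic in $\Omega$, bounded by $1$, and vanishes on $\d\Omega$ away from $\d B(\xi,r)$; iterating the one-scale bound from $j=0$ down to $j=k-1$ gives $u(x)\lesssim \beta^{k} \lesssim (\delta^k)^\alpha \approx (|x-\xi|/r)^\alpha$ with $\alpha = \log\beta/\log\delta >0$, which depends only on $\beta,\delta$ (and $n$ enters only through the equivalent-definition remark allowing us to fix the particular $\delta$). The final sentence, that $\xi$ is a regular boundary point, follows immediately since \eqref{e:wholder} forces $\omega_\Omega^x(B(\xi,r)^c)\to 0$ as $x\to\xi$, and a barrier/Wiener-type criterion then gives regularity; alternatively one can cite that the CDC (equivalently $\Delta$-regularity) implies Wiener regularity.

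The main obstacle I anticipate is bookkeeping the maximum-principle comparisons carefully on the truncated domains $\Omega\cap B(\xi,\delta^j r)$ when $\Omega$ is unbounded or $\d\Omega$ is irregular: one must make sure the relevant harmonic functions are genuinely subsolutions up to the boundary $\d\Omega$, which is where the phrase ``in the Wiener-regular sense'' or ``$\omega$-a.e.'' needs to be handled, and one must check that the maximum-principle chain does not lose constants that blow up with $k$. This is, however, exactly the kind of standard argument referenced via \cite[Lemma 2.3]{AM15}, so I would state the one-scale lemma cleanly, verify the base case and the inductive step, and invoke the maximum principle for harmonic measure (valid for Greenian domains with the appropriate boundary behavior) without belaboring the potential-theoretic technicalities.
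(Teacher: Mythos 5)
Your proposal is correct and is exactly the standard argument the paper has in mind: the paper gives no proof of this lemma, simply calling it well known and citing \cite[Lemma 2.3]{AM15}, and the cited proof is the same $\delta$-adic scale iteration of the one-scale bound $\sup_{\Omega\cap\d B(\xi,\delta^{j+1}r)}u\le\beta\sup_{\Omega\cap\d B(\xi,\delta^{j}r)}u$ via the maximum principle, yielding the exponent $\alpha=\log\beta/\log\delta$. The only technicality worth noting is at the top scale $j=0$, where $u(\cdot)=\omega_\Omega^\cdot(B(\xi,r)^c)$ need not vanish on $\d\Omega\cap\d B(\xi,r)$; starting the iteration at $j=1$ (or working with $\overline{B(\xi,r)}^{\,c}$) fixes this at the cost of a harmless constant, which the $\lesssim_{\beta,\delta}$ in \eqref{e:wholder} absorbs.
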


\vv
By the maximum principle, this implies the following.

\begin{corollary}\label{c:holder}
Let $\Omega\subset  \bR^{n+1}$, $\delta\in (0,1)$, $\xi\in \d\Omega$ and suppose that $\omega_{B(\xi,r)\cap \Omega}^{x}(\d B(\xi,r)\cap \Omega)  \leq \beta<1$ for $x\in  \d B(\xi,\delta r)\cap \Omega$ and $r\in (0,R)$. Let $u$ be a nonnegative function which is continous in
$\overline{B(\xi,\delta r)\cap \Omega}$ and harmonic in $B(\xi,\delta r)\cap \Omega$, and vanishes continuously on $\cnj{B(\xi,r)}\cap\d\Omega$. Then there is $\alpha=\alpha(\beta,\delta,n)$ so that for all $r\in (0,R)$,
\begin{equation}
u(x)\lesssim_{\beta,\delta} \biggl(\,\sup_{B(\xi,r)\cap \Omega} u\biggr) \ps{\frac{|x-\xi|}{r}}^{\alpha} \mbox{ for } x\in \Omega\cap B(\xi,r).
\label{e:holder}
 \end{equation}
\end{corollary}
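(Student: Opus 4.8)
The plan is to derive the pointwise estimate \eqref{e:holder} from the estimate \eqref{e:wholder} for harmonic measure in Lemma \ref{l:holder} by a standard comparison argument via the maximum principle. First I would set $M := \sup_{B(\xi,r)\cap\Omega} u$, which we may assume is finite (otherwise there is nothing to prove), and consider the nonnegative harmonic function $u$ on the domain $U := B(\xi,\delta r)\cap\Omega$. The boundary $\partial U$ splits into two pieces: the part lying on $\partial\Omega$, namely $\overline{B(\xi,\delta r)}\cap\partial\Omega$, where $u$ vanishes continuously by hypothesis (since $\delta<1$ this is contained in $\overline{B(\xi,r)}\cap\partial\Omega$), and the part lying on the sphere $\partial B(\xi,\delta r)\cap\overline\Omega$, where we only know $u\leq M$.

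Next I would bound $u$ by $M$ times the harmonic measure of the "bad" part of the boundary. Precisely, the function $M\,\omega_{U}^{x}(\partial B(\xi,\delta r)\cap\overline\Omega)$ is harmonic in $U$, and on $\partial U$ it dominates $u$ (it equals $M\geq u$ on the spherical part and is $\geq 0 = u$ on the part on $\partial\Omega$, modulo the usual care at irregular boundary points, which here are absent since $\xi$ and the relevant boundary points are regular by Lemma \ref{l:holder}). Hence by the maximum principle
\[
u(x) \leq M\,\omega_{U}^{x}\bigl(\partial B(\xi,\delta r)\cap\overline\Omega\bigr)
\leq M\,\omega_{U}^{x}\bigl(B(\xi,\delta r)^{c}\bigr)
\qquad\text{for }x\in U.
\]
Now I would invoke \eqref{e:wholder}: applying Lemma \ref{l:holder} (its hypotheses are exactly those assumed here) with radius $\delta r$ in place of $r$, we get $\omega_{U}^{x}(B(\xi,\delta r)^{c}) \lesssim_{\beta,\delta}(|x-\xi|/(\delta r))^{\alpha}\approx_{\delta}(|x-\xi|/r)^{\alpha}$ for $x\in\Omega\cap B(\xi,\delta r)$, with $\alpha=\alpha(\beta,\delta,n)$. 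Wait — one must be a little careful: Lemma \ref{l:holder} as stated controls $\omega_{\Omega}$, the harmonic measure of the full domain $\Omega$, not of the truncated domain $U=B(\xi,\delta r)\cap\Omega$. So instead I would apply the maximum principle directly against $\omega_\Omega$: since $\omega_\Omega^x(B(\xi,r)^c)$ is a nonnegative superharmonic (in fact harmonic in $\Omega$) function which on $\partial U$ dominates $u/M$ (it equals $1\geq u/M$ on the spherical part inside $\Omega$ — using $\delta<1$ so that $\partial B(\xi,\delta r)\subset B(\xi,r)$, hmm, this gives the wrong inequality direction).

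The cleanest route, and the one I would actually write, is: apply \eqref{e:wholder} with the ambient domain being $U$ itself. The hypothesis of Lemma \ref{l:holder} is scale-invariant and only concerns $\omega_{B(\xi,\rho)\cap\Omega}$ for $\rho\in(0,R)$; replacing $\Omega$ by $U=B(\xi,\delta r)\cap\Omega$ changes $B(\xi,\rho)\cap\Omega$ to $B(\xi,\rho)\cap\Omega$ for $\rho\leq\delta r$ anyway, so the same $\beta$ works for $U$ at all scales $\rho\in(0,\delta r)$, and Lemma \ref{l:holder} yields $\omega_{U}^{x}(B(\xi,\delta r)^{c})\lesssim_{\beta,\delta}(|x-\xi|/(\delta r))^{\alpha}$ for $x\in U\cap B(\xi,\delta r)=U$. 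Combining with the maximum-principle bound above gives $u(x)\lesssim_{\beta,\delta} M\,(|x-\xi|/r)^{\alpha}$ for $x\in\Omega\cap B(\xi,\delta r)$; for $x\in\Omega\cap B(\xi,r)$ with $|x-\xi|\geq\delta r$ the estimate is trivial since $u(x)\leq M\leq M\,\delta^{-\alpha}(|x-\xi|/r)^{\alpha}$, which closes the gap. The main (and essentially only) obstacle is bookkeeping: making sure the harmonic-measure-comparison on $\partial U$ is justified at boundary points (regularity, supplied by the last sentence of Lemma \ref{l:holder}) and that the scale $\delta r$ is correctly carried through Lemma \ref{l:holder}; there is no deep content beyond the maximum principle and \eqref{e:wholder}.
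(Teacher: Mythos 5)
Your argument is correct and is essentially the paper's intended proof: the paper derives the corollary from Lemma \ref{l:holder} precisely by the maximum-principle comparison $u(x)\leq \bigl(\sup u\bigr)\,\omega_{B(\xi,\delta r)\cap\Omega}^{x}\bigl(\partial B(\xi,\delta r)\cap\overline\Omega\bigr)$ followed by the H\"older decay of that harmonic measure, which is what you do (your observation that the hypothesis transfers to the truncated domain $U=B(\xi,\delta r)\cap\Omega$, together with the trivial bound on the annulus $\delta r\leq|x-\xi|<r$, correctly handles the bookkeeping). No gap; the parenthetical appeal to boundary regularity is not even needed since $u$ is assumed continuous on $\overline{B(\xi,\delta r)\cap\Omega}$ and vanishes on the boundary portion, so the Perron-type comparison applies directly.
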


\vv

\begin{lemma}
\label{l:bourgain}
Let $\Omega\subset \bR^{n+1}$ be a $(\beta,R)$-$\Delta$-regular domain, for $\beta\in (0,1)$, $R>0$. Then there are $\delta_0\in(0,1)$ and  $\rho>0$, both depending on $\beta,n$, so that for all $r\in (0,R)$ and 
$\xi\in \d\Omega$, 
\begin{equation}
 \omega_{\Omega}^{x}(B(\xi,r))\geq 1/2 \;\; \mbox{  for all }x\in B(\xi,\delta_0r)\cap \Omega.
 \label{e:bourgain}
 \end{equation}
\end{lemma}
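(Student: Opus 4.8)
The plan is to prove this "Bourgain-type" lower bound on harmonic measure, a standard consequence of $\Delta$-regularity (equivalently the CDC) combined with the maximality of harmonic measure. The key point is that on a set of non-trivial capacity, harmonic measure from nearby interior points cannot be too small, and this propagates by the maximum principle.

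First I would set up the contrast function. Fix $\xi\in\d\Omega$ and $r\in(0,R)$, and let $u(x) = \omega_\Omega^x(B(\xi,r)^c) = 1 - \omega_\Omega^x(B(\xi,r))$, which is a nonnegative harmonic function on $\Omega$ bounded by $1$. By Lemma~\ref{l:holder} (which applies because a $(\beta,R)$-$\Delta$-regular domain satisfies the hypothesis of that lemma, with the second supremum taken over $\d B(\xi,\delta r)\cap\Omega$ for any fixed $\delta\in(0,1)$ after the equivalence remarked below Definition~\ref{defdelta}), we have
\[
 u(x) = \omega_\Omega^x(B(\xi,r)^c)\lesssim_{\beta,\delta}\Bigl(\frac{|x-\xi|}{r}\Bigr)^\alpha\quad\text{for }x\in\Omega\cap B(\xi,r),
\]
with $\alpha = \alpha(\beta,\delta,n)>0$. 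Here the implicit constant and $\alpha$ depend only on $\beta, \delta, n$; fixing, say, $\delta = 1/2$, they depend only on $\beta$ and $n$.

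Then I would simply choose the radius of the small ball. Let $C_0 = C_0(\beta,n)$ be the implicit constant in the displayed estimate. Pick $\delta_0\in(0,1)$ small enough (depending only on $\beta,n$) that $C_0\,\delta_0^\alpha\le 1/2$. Then for every $x\in B(\xi,\delta_0 r)\cap\Omega$ we have $|x-\xi|/r<\delta_0$, hence $u(x)\le C_0\delta_0^\alpha\le 1/2$, which gives $\omega_\Omega^x(B(\xi,r)) = 1 - u(x)\ge 1/2$. The parameter $\rho$ in the statement is just a name for the exponent $\alpha$ produced along the way (or can be taken to record the Hölder-type decay rate), and it depends on $\beta, n$ as required.

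There is essentially no serious obstacle here: the whole content has been pushed into Lemma~\ref{l:holder}. The only mild subtlety to address is the passage from the definition of $(\beta,R)$-$\Delta$-regularity --- where the inner supremum is over $x\in\d B(\xi,r/2)\cap\Omega$ --- to the hypothesis of Lemma~\ref{l:holder}, which is stated for $x\in\d B(\xi,\delta r)\cap\Omega$ with a general $\delta$; this is exactly the equivalence remarked on immediately after Definition~\ref{defdelta}, so I would just invoke that and fix a convenient $\delta$ (e.g. $\delta = 1/2$) to run the argument. One should also note that $\d B(\xi,r)\subset B(\xi,r)^c$ so that the Hölder estimate of Lemma~\ref{l:holder} bounds $\omega_\Omega^x(B(\xi,r)^c)$ as written, which is all we use.
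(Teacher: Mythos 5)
Your proposal is correct and is essentially the paper's own argument: apply the H\"older-type decay of Lemma \ref{l:holder} to $\omega_\Omega^x(B(\xi,r)^c)$ and choose $\delta_0$ so small that this quantity is at most $1/2$, whence $\omega_\Omega^x(B(\xi,r))\geq 1/2$. (One minor simplification: the definition of $(\beta,R)$-$\Delta$-regularity already gives the hypothesis of Lemma \ref{l:holder} with $\delta=1/2$, so the equivalence remark after Definition \ref{defdelta} is not even needed.)
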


\begin{proof} By Lemma \ref{l:holder},
if $|x-\xi|<\delta_0$ for some positive $\delta_0$ small enough depending on $\beta,n$, then
$ \omega_\Omega^{x}(\d_\infty\Omega\setminus B(\xi,r)^{c})\leq \frac12$ and thus
$ \omega_\Omega^{x}({B}(\xi,r))\geq \frac12$.
\end{proof}

If $\Omega$ is $\Delta$-regular, then by \eqn{bourgain} and  \Lemma{w>G}, we have
 \begin{equation}
 \label{e:w>g}
 \omega_{\Omega}^{x}(2\delta_0^{-1}B)\gtrsim  r^{n-1}\, G_{\Omega}(x,y)\quad\mbox{
 for all $x\in \Omega\backslash  2B$ and $y\in B\cap\Omega$, $0<r(B)<\dfrac{\delta_0R}2$.}
 \end{equation}


\vv

\subsection{Admissible domains and relevant estimates}

\begin{definition}\label{def:adm}
A domain $\Omega \subset \R^{n+1}$ is {\it admissible} if
\begin{enumerate}
\item $\Omega^+=\Omega$ and $\Omega^-=\ext(\Omega)$ are Wiener regular;
\item  $\partial \Omega^+=\partial \Omega^-=\partial \Omega$;
\item There exist $x^\pm \in \Omega^\pm$ such that if 

\begin{equation}
    u(x)=
    \begin{cases}
      G_{\Omega^{+}}(x,x^+), & \text{\text for}\,\, x \in \Omega^+, \\
      -G_{\Omega^{-}}(x,x^-), & \text{for}\,\, x \in \Omega^-,
    \end{cases}
\end{equation}
where $G_{\Omega^{\pm}}(x,x^\pm)$ is the Green function in $\Omega^\pm$ with pole at $x^\pm$,
and $\delta(x) :=\dist(x, \partial \Omega)$, then for every $\xi \in \partial \Omega$ there exists 
$R>0$ with $R < \min\{\delta(x^+), \delta(x^-)\}$ so that $u \in C(B(\xi,R)) \cap W^{1,2}(B(\xi, R))$. 
\end{enumerate}
\end{definition}

\vv

\begin{theorem} \label{t:ACF} \cite[Lemma 5.1]{ACF84} Let $\Omega \subset \R^{n+1}$ be an admissible domain and $u^{\pm}=G_{\Omega^{\pm}}(\cdot,x^{\pm})$. Then for $x\in \d\Omega$ there is $0<R<\min\{\dist(x^{+},\d\Omega),\dist(x^{-},\d\Omega)\}$ such that the quantity
\begin{equation}\label{eq*123}
\gamma(x,r) = \left(\frac{1}{r^{2}} \int_{B(x,r)} \frac{|\grad u^+(y)|^{2}}{|y-x|^{n-1}}dy\right)\cdot \left(\frac{1}{r^{2}} \int_{B(x,r)} \frac{|\grad u^{-}(y)|^{2}}{|y-x|^{n-1}}dy\right)
\end{equation}
is a non-decreasing function of $r\in (0,R)$ and $\gamma(x,R)<\infty$, that is,
\begin{equation}\label{e:gamma}
\gamma(x,r_{1})\leq \gamma(x,r_{2})<\infty \;\;  \mbox{ for } \;\; 0<r_{1}\leq r_{2}<R.
\end{equation}
\end{theorem}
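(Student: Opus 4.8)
The plan is to recognise $\gamma(x,\cdot)$ as the Alt--Caffarelli--Friedman functional of the pair $(u^+,u^-)$, to check that admissibility supplies exactly the hypotheses under which the ACF monotonicity formula \cite[Lemma 5.1]{ACF84} applies, and then to verify finiteness at the scale $R$. \textbf{Step 1 (reduction to the ACF setting).} Because the pole $x^+$ satisfies $\delta(x^+)>R$, the function $u^+=G_{\Omega^+}(\cdot,x^+)$ is positive and harmonic in $B(x,R)\cap\Omega^+$, equals $0$ on $B(x,R)\setminus\Omega^+$ by the convention \eqref{green2}, and by the admissibility requirement $u\in C(B(x,R))$ it is continuous on $B(x,R)$ with $u^+\equiv 0$ on $B(x,R)\cap\d\Omega$; the same holds for $u^-=G_{\Omega^-}(\cdot,x^-)$. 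Hence $u^\pm\ge 0$, $u^+(x)=u^-(x)=0$, and $u^+u^-\equiv 0$, since $\overline{\Omega^+}$ and $\overline{\Omega^-}$ meet only along $\d\Omega$, where both functions vanish. A standard comparison on small balls with the harmonic function having the same boundary values (using that $u^\pm$ vanishes continuously on $\d\Omega$, so that harmonic comparison function dominates $u^\pm$) shows that $u^\pm$ is subharmonic in $B(x,R)$, i.e.\ $\Delta u^\pm\ge 0$ in the distributional sense; and since $|\nabla u|^2=|\nabla u^+|^2+|\nabla u^-|^2$ a.e., the admissibility requirement $u\in W^{1,2}(B(x,R))$ gives $u^\pm\in W^{1,2}(B(x,R))$. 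Thus $(u^+,u^-)$ is a pair of nonnegative $W^{1,2}$ subharmonic functions on $B(x,R)\subset\R^{n+1}$ with disjoint supports, both vanishing at $x$, which is precisely the ACF configuration.

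\textbf{Step 2 (monotonicity).} Write $I_\pm(r)=\int_{B(x,r)}|\nabla u^\pm|^2\,|y-x|^{1-n}\,dy$, so that $\gamma(x,r)=r^{-4}\,I_+(r)\,I_-(r)$; a computation in polar coordinates centred at $x$ shows each $I_\pm$ is absolutely continuous with $I_{\pm}'(r)=r^{1-n}\int_{\d B(x,r)}|\nabla u^\pm|^2\,d\sigma$. The first ingredient is the spectral differential inequality
\[
\frac{r\,I_{\pm}'(r)}{2\,I_{\pm}(r)}\;\ge\;\alpha_\pm(r),
\]
where $\alpha_\pm(r)\ge 0$ is the characteristic exponent of the spherical open set $\omega_\pm(r)=\{\theta\in\mathbb{S}^{n}:u^\pm(x+r\theta)>0\}$, i.e.\ $\alpha_\pm(r)\bigl(\alpha_\pm(r)+n-1\bigr)$ is the first Dirichlet eigenvalue of $-\Delta_{\mathbb{S}^{n}}$ on $\omega_\pm(r)$; this comes from testing $\Delta u^\pm\ge 0$ against $u^\pm\,|y-x|^{1-n}$ on $B(x,r)$, decomposing $\nabla u^\pm$ into radial and tangential parts, and using the Rayleigh-quotient characterisation of the eigenvalue sphere by sphere. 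The second ingredient is the Friedland--Hayman inequality: since $u^+u^-\equiv 0$ the sets $\omega_+(r)$ and $\omega_-(r)$ are disjoint for a.e.\ $r$, hence $\alpha_+(r)+\alpha_-(r)\ge 2$, with equality for complementary hemispheres (where each eigenvalue is $n$ and each exponent is $1$). Combining,
\[
\frac{d}{dr}\log\gamma(x,r)=\frac1r\Bigl(-4+\frac{r\,I_{+}'(r)}{I_{+}(r)}+\frac{r\,I_{-}'(r)}{I_{-}(r)}\Bigr)\;\ge\;\frac2r\bigl(\alpha_+(r)+\alpha_-(r)-2\bigr)\;\ge\;0,
\]
so $\gamma(x,\cdot)$ is non-decreasing on $(0,R)$.

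\textbf{Step 3 (finiteness at $R$, and the main difficulty).} To bound $\gamma(x,R)$ it suffices to bound $I_\pm(R)$. For $t\in(0,R)$ and a ball $B$ of radius $\approx t$ at distance $\approx t$ from $x$ (so $2B\subset B(x,R)$ once $t$ is small), Caccioppoli's inequality for the nonnegative subharmonic $u^\pm$ gives $\int_B|\nabla u^\pm|^2\lesssim t^{-2}\int_{2B}(u^\pm)^2\lesssim t^{n-1}\bigl(\sup_{2B}u^\pm\bigr)^2$, while Corollary~\ref{c:holder} --- applicable because $u^\pm$ is harmonic in $B(x,R)\cap\Omega^\pm$, vanishes on $\overline{B(x,R)}\cap\d\Omega$, and (in the cases relevant to this paper) $\Omega^\pm$ is $\Delta$-regular --- gives $\sup_{2B}u^\pm\lesssim (t/R)^{\alpha}\sup_{B(x,R)}u^\pm$ for some $\alpha>0$. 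Hence $\int_{\{t/2<|y-x|<t\}}|\nabla u^\pm|^2\,|y-x|^{1-n}\,dy\lesssim t^{1-n}\,t^{\,n-1+2\alpha}=t^{2\alpha}$, and summing over $t=2^{-k}R$, $k\ge 0$, yields $I_\pm(R)<\infty$; thus $\gamma(x,R)<\infty$, and together with Step 2 this gives \eqref{e:gamma}. The genuinely hard input is the one used in Step 2: the Friedland--Hayman inequality $\alpha_+(r)+\alpha_-(r)\ge 2$ for disjoint spherical domains, together with the care needed to make the spectral differential inequality rigorous --- a.e.\ differentiability of $I_\pm$, measurability of $r\mapsto\alpha_\pm(r)$, and the integration by parts when $\Delta u^\pm$ is only a nonnegative measure carried by $\d\Omega$. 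These are precisely the points handled in \cite[Lemma 5.1]{ACF84}, which is what we ultimately invoke.
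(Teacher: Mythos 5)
Your proposal is correct and matches the paper's treatment: the paper offers no independent proof of this statement, citing it directly as \cite[Lemma 5.1]{ACF84}, and your argument amounts to verifying that admissibility (continuity, $W^{1,2}$ membership, nonnegativity, subharmonicity of the zero-extended Green functions, disjoint positivity sets, vanishing at $x$) places $(u^+,u^-)$ in exactly the configuration of that lemma and then invoking it, with a standard sketch of its proof (spectral inequality plus Friedland--Hayman) and of finiteness. The only caveat is that your Step 3 finiteness argument via Corollary \ref{c:holder} uses $\Delta$-regularity rather than bare admissibility, but since you ultimately defer the technical points to \cite[Lemma 5.1]{ACF84} (as the paper does), this does not affect correctness.
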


\vv

\begin{lemma}\label{l:beurling}
\cite[Theorem 3.3]{KPT09}
Let $\Omega\subset \R^{n+1}$ be an admissible domain and let $\omega^{\pm}=\omega_{\Omega^{\pm}}^{x_{\pm}}$. Let $0<R<\min\{\dist(x^{+},\d\Omega),\dist(x^{-},\d\Omega)\}$ be as in \Theorem{ACF}. Then for $0<r<R/4$ and $\xi\in \d\Omega$,
\begin{equation}\label{e:beurling0}
\frac{\omega^{\pm}(B(\xi,r))}{r^{n}}\lesssim  \left(\frac{1}{r^{2}} \int_{B(\xi,2r)} \frac{|\grad u^{\pm}(y)|^{2}}{|y-\xi|^{n-1}}dy\right)^{\frac{1}{2}} 
\lesssim \left(\frac{1}{r^{n+3}} \int_{B(\xi,4r)}(u^{\pm})^{2}\right)^{\frac{1}{2}}
\end{equation}
and in particular,
\begin{equation}\label{e:beurling}
\frac{\omega^{+}(B(\xi,r))}{r^{n}}\,\frac{\omega^{-}(B(\xi,r))}{r^{n}}\lec \gamma(\xi,2r)^{\frac{1}{2}},
\end{equation}
where $\gamma(\xi,2r)$ is defined by \rf{eq*123}.
\end{lemma}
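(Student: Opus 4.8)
The plan is to prove the chain of inequalities in \rf{e:beurling0}, after which \rf{e:beurling} follows immediately by multiplying the two instances for $u^+$ and $u^-$ and comparing with the definition \rf{eq*123} of $\gamma(\xi,2r)$ (the middle quantities in \rf{e:beurling0} for $\pm$ multiply to exactly $\gamma(\xi,2r)$, so taking square roots gives the claim). So the work is entirely in \rf{e:beurling0}, and by symmetry it suffices to treat $u^+$.

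For the \emph{first} inequality, I would exploit the relationship between harmonic measure and the Green function recorded in \rf{e:w>g}: applied to the ball $B=B(\xi,r)$ with a pole $y\in\Omega^+$ close to $\xi$ on scale $r$, it gives $\omega^+(B(\xi, Cr))\gtrsim r^{n-1}G_{\Omega^+}(x^+,y)$, and then by Harnack/maximum principle one transfers this to a genuine lower bound on $G_{\Omega^+}(x^+,\cdot)$ at a corkscrew-type point. The point is to relate the surface-density $\omega^+(B(\xi,r))/r^n$ to the $L^2$-average of $\grad u^+$ over $B(\xi,2r)$ weighted by $|y-\xi|^{1-n}$. Concretely, using that $u^+=G_{\Omega^+}(\cdot,x^+)$ is nonnegative, harmonic away from $x^+$, and vanishes on $\d\Omega$, together with Corollary \ref{c:holder} (Hölder decay of $u^+$ near the boundary) and interior Caccioppoli estimates, one bounds the sup of $u^+$ on a corkscrew ball of size $\approx r$ from below by the density and from above by the weighted gradient integral; a standard covering/dyadic-annuli argument reassembles this into the weighted integral over $B(\xi,2r)$.

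For the \emph{second} inequality, the mechanism is Caccioppoli combined with the Hölder vanishing. Split $B(\xi,2r)$ into dyadic annuli $A_k = B(\xi, 2^{-k}\cdot 2r)\setminus B(\xi,2^{-k-1}\cdot 2r)$ on which $|y-\xi|\approx 2^{-k}r$. On each annulus, cover by balls of radius $\approx 2^{-k}r$; for each such ball $B'$, the interior Caccioppoli inequality gives $\int_{B'}|\grad u^+|^2 \lesssim (2^{-k}r)^{-2}\int_{2B'}(u^+)^2$, and on the doubled ball, which still meets a fixed multiple of $B(\xi,4r)$, the Hölder estimate \rf{e:holder} bounds $u^+$ by a power of the distance times $\sup_{B(\xi,4r)\cap\Omega^+}u^+$. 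Summing the geometric series in $k$ (the exponent $\alpha>0$ from Hölder decay makes it converge) produces $\frac{1}{r^2}\int_{B(\xi,2r)}\frac{|\grad u^+|^2}{|y-\xi|^{n-1}}\,dy \lesssim \frac{1}{r^{n+3}}\int_{B(\xi,4r)}(u^+)^2$, as desired.

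The main obstacle I expect is the first inequality: making rigorous the passage from the geometric lower bound $\omega^+(B(\xi,r))\gtrsim r^{n-1}G_{\Omega^+}(x^+,y)$ to a lower bound on the \emph{weighted energy} of $\grad u^+$, since this requires carefully choosing the corkscrew point, invoking Harnack to move the Green function value there, and then a reverse-type estimate linking pointwise values of $u^+$ to its gradient energy on a slightly larger ball — all in the rough setting where $\Omega^+$ is only $\Delta$-regular (no interior corkscrew available a priori, only the capacity density condition on the complement). One has to be careful that every constant depends only on $\beta$, $n$ and not on the domain. Since all the analytic inputs (the Green-function/harmonic-measure comparison \rf{e:w>g}, Hölder continuity \ref{c:holder}, Caccioppoli) are already available, this is a matter of organizing the annular decomposition correctly, and I would cite \cite[Theorem 3.3]{KPT09} for the details rather than reproduce them in full.
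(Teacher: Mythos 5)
You are right that \rf{e:beurling} follows from \rf{e:beurling0} by multiplying the $+$ and $-$ instances (up to a harmless constant, since $\gamma(\xi,2r)$ is normalized by $(2r)^2$ rather than $r^2$), and your ultimate decision to cite \cite[Theorem 3.3]{KPT09} for the details is exactly what the paper does: the authors give no proof of this lemma, they quote it. The problem is the mechanism you sketch for the first inequality in \rf{e:beurling0}, which would not work as described. The estimate \rf{e:w>g} reads $G_{\Omega^\pm}(x^\pm,y)\lec \omega^\pm(2\delta_0^{-1}B)\,r(B)^{1-n}$, i.e.\ it bounds the Green function \emph{above} by harmonic measure; here you need the opposite direction, an upper bound on $\omega^\pm(B(\xi,r))$ in terms of $u^\pm$. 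Moreover the corkscrew/Harnack scheme you propose cannot even be set up, since (as you yourself note) an admissible or merely $\Delta$-regular domain need not contain interior corkscrew points. The actual argument in \cite{KPT09} (going back to \cite{ACF84}) is short and purely distributional: take $\phi\in C_c^\infty(B(\xi,2r))$ with $\phi=1$ on $B(\xi,r)$, $0\le\phi\le1$, $|\grad\phi|\lec 1/r$; since the pole lies outside $B(\xi,2r)$ (here $r<R/4$ is used), $\omega^\pm(B(\xi,r))\le\int\phi\,d\omega^\pm=\int u^\pm\,\Delta\phi\,dx=-\int\grad u^\pm\cdot\grad\phi\,dx$ --- the same identity the paper later records as \rf{e:phiw}, justified by admissibility ($u\in W^{1,2}$ locally and Wiener regularity) --- and Cauchy--Schwarz with the weights $|y-\xi|^{\mp(n-1)/2}$ gives precisely $\omega^\pm(B(\xi,r))\lec r^{n-1}\bigl(\int_{B(\xi,2r)}|\grad u^\pm|^2|y-\xi|^{1-n}\,dy\bigr)^{1/2}$, with no geometric hypotheses beyond admissibility.

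Your treatment of the second inequality also has two defects. It invokes the H\"older decay \rf{e:holder}, which requires $\Delta$-regularity, whereas the lemma is stated for admissible domains (in the paper's applications the domains are $\Delta$-regular, so this is a loss of generality rather than an outright error); and even granting \rf{e:holder}, the annular sum you describe yields $\frac1{r^2}\sup_{B(\xi,4r)}(u^\pm)^2$ on the right-hand side rather than the stated $L^2$-average $\frac1{r^{n+3}}\int_{B(\xi,4r)}(u^\pm)^2$. The latter can be repaired via the sub-mean value property of the nonnegative subharmonic zero-extension of $u^\pm$ (watching the radii), but the cleaner route, and the one behind \cite[Theorem 3.3]{KPT09}, uses only $u^\pm(\xi)=0$ and subharmonicity: since $\Delta\bigl((u^\pm)^2\bigr)\ge 2|\grad u^\pm|^2$ and $|y-\xi|^{1-n}$ is a multiple of the fundamental solution of the Laplacian in $\R^{n+1}$, integrate $\Delta((u^\pm)^2)$ against this weight over $B(\xi,2r)$, use that the point mass contributes $(u^\pm(\xi))^2=0$, and control the boundary terms on a well-chosen intermediate sphere by a Caccioppoli estimate. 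So: deferring to \cite{KPT09} is fine and matches the paper, but the internal sketch, particularly for the first inequality, relies on an inequality pointing the wrong way and on geometry that is not available.
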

\vv

\begin{lemma}
\label{l:otherside}
 Let $\Omega^+=\Omega\subset \R^{n+1}$ and $\Omega^-={\rm ext}( \Omega)$ be $\Delta$-regular domains. If $$0<R<\min\{\dist(x^{+},\d\Omega^{+}),\dist(x^{-},\d\Omega^{-})\},$$ then for $\xi\in \d\Omega$ and $r< \delta_0 R/4$,
\begin{equation}\label{eq:avgGreen-harm}
\left(\frac{1}{r^{n+1}} \int_{B(\xi,r) \cap \Omega^\pm}|\nabla u^{\pm}|^{2}\right)^{\frac{1}{2}}  \lesssim \left(\frac{1}{r^{n+3}} \int_{B(\xi, 2r)\cap \Omega^\pm}(u^{\pm})^{2}\right)^{\frac{1}{2}} \lesssim \frac{\omega^{\pm}(B(\xi,4 \delta_0^{-1}r))}{r^{n}}.
\end{equation}
In particular, 
\begin{equation}\label{e:otherside}
\gamma(\xi,r)^{\frac{1}{2}}\lec \frac{\omega^{+}(B(\xi,4\delta_0^{-1}r))}{r^{n}}
\frac{\omega^{-}(B(\xi,4\delta_0^{-1}r))}{r^{n}},
\end{equation}
where $\gamma(\xi,r)$ is defined by \rf{eq*123}.
\end{lemma}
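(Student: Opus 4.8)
The statement asserts two chains of inequalities. The plan is to prove them essentially in order, starting from the middle quantity and working outward, since the left inequality in \eqref{eq:avgGreen-harm} is a Caccioppoli-type estimate and the right inequality is where harmonic measure enters.

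First I would establish the left inequality
\[
\frac{1}{r^{n+1}}\int_{B(\xi,r)\cap\Omega^\pm}|\nabla u^\pm|^2 \lesssim \frac{1}{r^{n+3}}\int_{B(\xi,2r)\cap\Omega^\pm}(u^\pm)^2.
\]
This is the standard Caccioppoli inequality for the nonnegative harmonic function $u^\pm=G_{\Omega^\pm}(\cdot,x^\pm)$ on $B(\xi,2r)\cap\Omega^\pm$: since $r<\delta_0R/4<\dist(x^\pm,\d\Omega^\pm)$, the pole $x^\pm$ lies outside $B(\xi,2r)$, so $u^\pm$ is genuinely harmonic there; multiply by $\chi_E\,\varphi^2 u^\pm$ with a cutoff $\varphi$ adapted to $B(\xi,2r)$, integrate by parts, and use that $u^\pm$ vanishes continuously on $\cnj{B(\xi,r)}\cap\d\Omega$ (this is where $\Delta$-regularity, via Lemma \ref{l:holder}, guarantees that boundary points are regular so $u^\pm\in C(B(\xi,R))\cap W^{1,2}(B(\xi,R))$ as in Definition \ref{def:adm}(3)). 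The boundary term drops out and one is left with the claimed bound.

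Next I would prove the right inequality
\[
\frac{1}{r^{n+3}}\int_{B(\xi,2r)\cap\Omega^\pm}(u^\pm)^2 \lesssim \Bigl(\frac{\omega^\pm(B(\xi,4\delta_0^{-1}r))}{r^n}\Bigr)^2.
\]
The idea is to pointwise bound $u^\pm$ on $B(\xi,2r)\cap\Omega^\pm$ by harmonic measure. For $y\in B(\xi,2r)\cap\Omega^\pm$ with, say, $|y-\xi|\approx \rho\le 2r$, one compares $u^\pm(y)=G_{\Omega^\pm}(y,x^\pm)$ with $\omega^\pm$ using the CDC/$\Delta$-regularity estimates: by \eqref{e:w>g} applied with a ball of radius $\approx\rho$ centered at $\xi$ (legitimate since $\rho\lesssim r<\delta_0R/2$), together with the Hölder decay \eqref{e:wholder} of harmonic measure and a Harnack chain argument, one gets $r^{n-1}G_{\Omega^\pm}(y,x^\pm)\lesssim \omega^\pm(B(\xi,4\delta_0^{-1}r))$ on the relevant portion, hence $u^\pm(y)\lesssim r^{1-n}\omega^\pm(B(\xi,4\delta_0^{-1}r))$; squaring and integrating over $B(\xi,2r)$ (volume $\approx r^{n+1}$) gives the claim. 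In practice it is cleaner to invoke \eqref{e:beurling0} of Lemma \ref{l:beurling}, whose right-hand side is already $\bigl(r^{-n-3}\int_{B(\xi,4r)}(u^\pm)^2\bigr)^{1/2}$ and whose left-hand side is $\omega^\pm(B(\xi,r))/r^n$ — but note that goes the \emph{wrong} direction, so the genuine content here is the reverse pointwise estimate $u^\pm\lesssim r^{1-n}\omega^\pm(\cdot)$, which must be derived from the boundary Harnack-type consequences of $\Delta$-regularity. Finally, \eqref{e:otherside} follows immediately: multiply the $+$ and $-$ versions of \eqref{eq:avgGreen-harm} at radius $r$, recognize the product of the two averaged gradient integrals (over $B(\xi,r)\cap\Omega^+$ and $B(\xi,r)\cap\Omega^-$) as $\gamma(\xi,r)$ up to the weight $|y-\xi|^{1-n}$, and absorb that weight — since $|y-\xi|^{1-n}\gtrsim r^{1-n}$ on $B(\xi,r)$ one has $\gamma(\xi,r)\lesssim \bigl(r^{-n-1}\int_{B(\xi,r)\cap\Omega^+}|\nabla u^+|^2\bigr)\bigl(r^{-n-1}\int_{B(\xi,r)\cap\Omega^-}|\nabla u^-|^2\bigr)$, then apply the bound just proved to each factor.

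The main obstacle is the reverse estimate $u^\pm(y)\lesssim r^{1-n}\,\omega^\pm(B(\xi,4\delta_0^{-1}r))$: unlike the lower bound \eqref{e:w>g}, which is given to us, the upper bound for the Green function in terms of harmonic measure requires the boundary to be nondegenerate in the right way, and one must run a Harnack chain from $y$ out to a corkscrew-type point at scale $r$ while controlling the loss using the Hölder continuity \eqref{e:wholder} for points $y$ that are deep inside $\Omega^\pm$ near $\xi$. Getting the dilation factor $4\delta_0^{-1}$ correct and uniform is the delicate bookkeeping; everything else is routine Caccioppoli and Harnack.
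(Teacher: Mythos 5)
There is a genuine gap in your final step, the passage from \eqref{eq:avgGreen-harm} to \eqref{e:otherside}. You claim that since $|y-\xi|^{1-n}\gtrsim r^{1-n}$ on $B(\xi,r)$ one may ``absorb the weight'' and conclude $\gamma(\xi,r)\lesssim \bigl(r^{-n-1}\int_{B(\xi,r)\cap\Omega^+}|\nabla u^+|^2\bigr)\bigl(r^{-n-1}\int_{B(\xi,r)\cap\Omega^-}|\nabla u^-|^2\bigr)$. This inequality goes the wrong way: because $|y-\xi|^{1-n}\geq r^{1-n}$ on $B(\xi,r)$, the weighted integrals in \eqref{eq*123} are \emph{larger} than $r^{1-n}$ times the unweighted ones, so what you get is $\gamma(\xi,r)\gtrsim$ the right-hand side you wrote, not $\lesssim$. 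The singularity of the weight at $\xi$ cannot be handled by the crude comparison (nor by a dyadic decomposition using \eqref{eq:avgGreen-harm} at the scales $2^{-k}r$, since the factors $(2^{-k}r)^{2-2n}$ diverge). The correct route, which is why Lemma \ref{l:beurling} is stated just before, is to control each weighted gradient integral by the second inequality in \eqref{e:beurling0} (a weighted Caccioppoli estimate exploiting that $u^\pm$ vanishes at $\xi$ and that its zero extension is subharmonic), i.e.\ by $\bigl(r^{-n-3}\int_{B(\xi,2r)}(u^\pm)^2\bigr)^{1/2}$, and only then apply the second inequality of \eqref{eq:avgGreen-harm}. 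This is the intended meaning of ``\eqref{eq:avgGreen-harm} implies \eqref{e:otherside}''.

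Two further remarks. First, the ``main obstacle'' you identify --- the bound $u^\pm(y)\lesssim r^{1-n}\omega^\pm(B(\xi,4\delta_0^{-1}r))$ for $y\in B(\xi,2r)\cap\Omega^\pm$ --- is not an obstacle at all: it is literally \eqref{e:w>g} applied once, with the fixed ball $B(\xi,2r)$ (radius $2r<\delta_0 R/2$) and pole $x^\pm\in\Omega^\pm\setminus B(\xi,4r)$, since \eqref{e:w>g} reads $\omega^x(2\delta_0^{-1}B)\gtrsim r(B)^{n-1}G_\Omega(x,y)$ for all $y\in B\cap\Omega$; no Harnack chains are needed, and your proposed detour (applying \eqref{e:w>g} at radius $\rho\approx|y-\xi|$ and upgrading via \eqref{e:wholder}) does not work as described, because \eqref{e:wholder} controls the decay of $\omega^x_\Omega(B^c)$ as the pole $x$ approaches $\xi$, not the ratio $\omega^{x^\pm}(B(\xi,\rho))/\omega^{x^\pm}(B(\xi,r))$ at different scales, and without doubling the small-scale bound $\rho^{1-n}\omega^\pm(B(\xi,2\delta_0^{-1}\rho))$ does not imply the one at scale $r$. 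Second, for the Caccioppoli step you invoke Definition \ref{def:adm}(3) ($u^\pm\in C\cap W^{1,2}$ near the boundary); in this paper that membership is \emph{deduced} from the present lemma (see the proof of Lemma \ref{l:CDCadmissible}), so citing admissibility here is circular in the paper's logical order. The paper avoids this by extending $u^\pm$ by zero across $\d\Omega$ (legitimate since $\Delta$-regularity gives Wiener regularity, so $u^\pm$ vanishes continuously there), observing that the extension is nonnegative and subharmonic in all of $\R^{n+1}$, and applying the interior Caccioppoli inequality for subharmonic functions on $B(\xi,r)\subset B(\xi,2r)$, with no boundary terms to worry about.
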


\begin{proof}
We shall only deal with $\Omega^+$ since the result for $\Omega^-$ is identical.
Let $\xi\in \d\Omega^+$ and $4r< \delta_0 R$. Since $u^+$ vanishes continuously at the boundary of $\d \Omega^+$, we may extend it by zero in $\R^{n+1} \setminus \Omega^+$. Then, as the extended function (which we still denote it $u^+$) is non-negative and subharmonic in $\R^{n+1}$, by Caccioppoli's inequality, we infer 
$$
\left( \int_{B(\xi,r)}|\nabla u^+|^{2}\right)^{\frac{1}{2}}  \lesssim \left(\frac{1}{r^2} \int_{B(\xi, 2r)}(u^+)^{2}\right)^{\frac{1}{2}} \lesssim \omega^+(B(\xi,4 \delta_0^{-1}r)) \,r^{\frac{1-n}{2}},
$$
where the second inequality follows from \eqn{w>g}. 
 This shows \eqref{eq:avgGreen-harm}, which in turn implies \eqref{e:otherside}.
\end{proof}

\vv
\begin{lemma}\label{l:CDCadmissible}
If $\Omega^{+}=\Omega\subset \R^{n+1}$ and $\Omega^{-}$ are  $R_1$-$\Delta$-regular and $\partial \Omega^+=\partial \Omega^-$, then they are admissible domains.
\end{lemma}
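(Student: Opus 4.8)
The plan is to verify the three conditions in Definition \ref{def:adm} one at a time for $\Omega^+=\Omega$ and $\Omega^-=\ext(\Omega)$, given that both are $R_1$-$\Delta$-regular with $\partial\Omega^+=\partial\Omega^-$. Condition (2) is an assumption. Condition (1), Wiener regularity of $\Omega^+$ and $\Omega^-$, follows directly from Lemma \ref{l:holder}: $\Delta$-regularity (in the form of Definition \ref{defdelta}) gives the hypothesis of that lemma at every boundary point and every scale $r\in(0,R_1)$, and Lemma \ref{l:holder} concludes in particular that every such $\xi$ is a regular point for $\partial\Omega$; since $\partial\Omega^+=\partial\Omega^-$, the same applies to $\Omega^-$. (If $\Omega^\pm$ is unbounded, $\infty$ is automatically Wiener regular for $n\ge 2$, as recalled after Lemma \ref{l:w>G}.)

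The substantive point is condition (3). Fix $\xi\in\partial\Omega$. First I would pick poles $x^\pm\in\Omega^\pm$ and a radius $R$ with $R<\min\{\delta(x^+),\delta(x^-)\}$ and $R<R_1$ (shrinking $R$ as needed so that $B(\xi,R)$ avoids both poles and $4R/\delta_0<R_1$, with $\delta_0$ the constant from Lemma \ref{l:bourgain}). Set $u^+=G_{\Omega^+}(\cdot,x^+)$ on $\Omega^+$, $u^-=-G_{\Omega^-}(\cdot,x^-)$ on $\Omega^-$, and $u=0$ on $\partial\Omega$ — which is consistent since, by Corollary \ref{c:holder} applied with the nonnegative harmonic function $G_{\Omega^\pm}(\cdot,x^\pm)$ on $B(\zeta,\delta r)\cap\Omega^\pm$ (it vanishes continuously on $\partial\Omega$ because the boundary points are regular), one gets the Hölder decay $G_{\Omega^\pm}(x,x^\pm)\lesssim (\sup_{B(\zeta,r)\cap\Omega^\pm}G_{\Omega^\pm})\,(|x-\zeta|/r)^\alpha$ near each $\zeta\in\partial\Omega$. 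This gives $u\in C(B(\xi,R))$: continuity in the interiors of $\Omega^\pm$ is standard, and the Hölder estimate forces $u(x)\to 0$ as $x$ approaches $\partial\Omega$ from either side.

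It remains to show $u\in W^{1,2}(B(\xi,R))$. The derivative of $u$ away from $\partial\Omega$ is $\nabla u^\pm$ on $\Omega^\pm$, so I would bound $\int_{B(\xi,R)\cap\Omega^\pm}|\nabla u^\pm|^2$ using Lemma \ref{l:otherside}: indeed \eqref{eq:avgGreen-harm} (applied with $r$ of order $R$, legitimate since $4R<\delta_0 R_1$) gives $\int_{B(\xi,R)\cap\Omega^\pm}|\nabla u^\pm|^2\lesssim \omega^\pm(B(\xi,CR))^2\,R^{1-n}<\infty$, so $u\in W^{1,2}$ on each side separately, and moreover the extension of $u^\pm$ by $0$ across $\partial\Omega$ lies in $W^{1,2}(B(\xi,R))$ (this is exactly the extension-by-zero argument used inside the proof of Lemma \ref{l:otherside}, valid because $\partial\Omega$ is a closed set of $\mathcal L^{n+1}$-measure zero and $u^\pm$ vanishes continuously on it). One then checks that the distributional gradient of $u$ on all of $B(\xi,R)$ is the $L^2$ function equal to $\nabla u^+$ on $\Omega^+$, $\nabla u^-$ on $\Omega^-$, and $0$ on $\partial\Omega$: for $\phi\in C_c^\infty(B(\xi,R))$ one integrates by parts on $\Omega^+$ and on $\Omega^-$, and the boundary terms vanish because $u$ vanishes continuously on $\partial\Omega$ while $\partial\Omega$ carries no $\mathcal L^{n+1}$-mass; there is no contribution from a singular part because the one-sided extensions are already $W^{1,2}$. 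Hence $u\in C(B(\xi,R))\cap W^{1,2}(B(\xi,R))$, establishing (3).

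The main obstacle I anticipate is the $W^{1,2}$ claim, specifically confirming that gluing $u^+$, $u^-$ and $0$ produces a genuine Sobolev function with no singular boundary contribution to the gradient. The resolution rests on two facts already available in the excerpt: the Caccioppoli/extension-by-zero estimate of Lemma \ref{l:otherside}, which simultaneously bounds the one-sided energies and shows each one-sided extension is in $W^{1,2}$, and the Hölder continuity of the Green functions near $\partial\Omega$ from Corollary \ref{c:holder}, which guarantees $u$ is continuous across $\partial\Omega$. Everything else is a routine verification against Definition \ref{def:adm}.
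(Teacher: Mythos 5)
Your proposal is correct and follows the same skeleton as the paper's own proof: conditions (1)--(2) of Definition \ref{def:adm} come from the hypotheses together with Lemma \ref{l:holder}, continuity of $u$ across $\partial\Omega$ comes from the H\"older decay at Wiener regular boundary points, and the finiteness of the energy comes from Lemma \ref{l:otherside}. The one step you implement differently is the gluing across $\partial\Omega$. The paper multiplies by a cutoff $\vphi\in C^\infty$ equal to $1$ on $B(\xi,R)$ and vanishing off $B(\xi,2R)$, and notes that $\vphi\,u^{\pm}$ is continuous up to the boundary of $B(\xi,2R)\cap\Omega^{\pm}$, vanishes there, and lies in $W^{1,2}$, hence belongs to $W^{1,2}_0(B(\xi,2R)\cap\Omega^{\pm})$ by \cite[Theorem 9.17]{Brezis}; its zero extension is then automatically Sobolev, so $\vphi\,u\in W^{1,2}_0(B(\xi,2R))$ and there is no singular boundary contribution to discuss at all. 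You instead claim that the zero extension of $u^{\pm}$ is already in $W^{1,2}(B(\xi,R))$ and that the distributional gradient has no singular part, justifying this in part by the assertion that $\partial\Omega$ has $\mathcal L^{n+1}$-measure zero --- an assertion that is neither proved nor available from two-sided $\Delta$-regularity, and would not by itself suffice anyway --- and in part by an integration by parts over the rough sets $\Omega^{\pm}$ whose boundary terms you dismiss rather loosely. Your step is nevertheless salvageable exactly along the line you hint at: as in the proof of Lemma \ref{l:otherside}, the zero extension of $u^{\pm}$ is a nonnegative, locally bounded subharmonic function away from the pole, hence lies in $W^{1,2}_{\loc}$ with the Caccioppoli bound, and since $u=u^{+}_{\ext}-u^{-}_{\ext}$ pointwise on $B(\xi,R)$ this gives $u\in W^{1,2}(B(\xi,R))$ directly, with no need of the measure-zero claim or of any boundary-term computation. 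So the argument goes through, but the cutoff/$W^{1,2}_0$ route of the paper is the cleaner and fully justified way to handle the only delicate point.
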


\begin{proof}
Fix $x^\pm \in \Omega^\pm$ so that $\delta(x^\pm) >0$. The first two conditions of Definition \ref{def:adm} readily follow from our hypotheses. Fix now $\xi \in \partial \Omega$ and choose \[R< \min\{R_1, \delta(x^+), \delta(x^-)\}/ 4.\] 
Since $x^\pm \in \Omega^\pm \setminus B(\xi, 4 R)$, we have that $u^\pm$ is harmonic in $B(\xi, 2r) \cap \Omega^\pm$. Moreover, the common boundary $\partial \Omega$ is Wiener regular for $\Omega^\pm$ by \Lemma{holder}, which implies that $u^\pm$ vanishes continuously on $\partial \Omega$.  So $u \in C(B(\xi, R))$ and, by Lemma \ref{l:otherside}, it holds that $u \in W^{1,2}(B(\xi, 2R)\cap\Omega^\pm)$. Further, since $u^\pm$ is continuous in $B(\xi, 2R)\cap\Omega^\pm$ and vanishes on $\partial \Omega$, if we consider a function $\vphi\in C^\infty$ which equals $1$ on $B(\xi, R)$ and vanishes our of
$B(\xi, 2R)$, by standard arguments it turns out that $\vphi\,u^\pm\in W^{1,2}_0(B(\xi, 2R)\cap\Omega^\pm)$
(see \cite[Theorem 9.17]{Brezis}, for example). Hence, $\vphi\,u \in W^{1,2}_0(B(\xi, 2R))$,
and so $u \in W^{1,2}(B(\xi, R))$. This concludes our proof.
\end{proof}

\vv

\vv
\section{Blowups at points of mutual absolute continuity}

Given a set $G\subset \R^{n+1}$ and a ball $B\subset\R^{n+1}$, we denote
$$\beta_{G,\infty}(B)=  \inf_L \sup_{x\in G\cap B} \frac{\dist(x,L)}{r(B)},$$
where the infimum is taken over all $n$-planes $L$.
Also, we set
$$\wt\beta_{G,\infty}(B)=  \inf_S \frac{d_H(G\cap B,S\cap B)}{r(B)},$$
where $d_H$ stands for the Hausdorff distance and the infimum is taken over all half-spaces $S\subset\R^{n+1}$ whose boundary contains the center of $B$.
To shorten notation, for a ball $B(x,r)$, we also write
 $\beta_{G,\infty}(x,r)$ and $\wt\beta_{G,\infty}(x,r)$ intead of  $\beta_{G,\infty}(B(x,r))$ and $\wt\beta_{G,\infty}(B(x,r))$, respectively.
\vv

This entire section is devoted to proving the following theorem.

\begin{theorem}\label{t:alpha0}
Let $\Omega^+\subset \bR^{n+1}$ and $\Omega^- = (\cnj{\Omega^+})^c$ be two $\Delta$-regular domains, so that $\partial\Omega^+=\partial\Omega^-$. Let $\omega^\pm$ be the harmonic measures for $\Omega^\pm$ with poles $x^\pm\in \Omega^\pm$, and $u^\pm=G_{\Omega^\pm}(x^\pm,\cdot)$. Suppose there is $E\subset \d\Omega^+$ such that $\omega^+|_{E}\ll\omega^-|_{E}\ll \omega^+|_{E}$ and $\omega^+(E)>0$.  Then, for $\omega^+$-a.e. $\xi\in E$,
\[
\lim_{r\rightarrow 0}\beta_{\d\Omega^+,\infty}(\xi,r)=0 \quad \mbox{ and } \quad
\lim_{r\rightarrow 0}\wt\beta_{\overline{\Omega^+},\infty}(\xi,r)=0.
\]
\label{t:alpha0}
\end{theorem}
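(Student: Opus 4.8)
The plan is to show that at $\omega^{+}$-a.e. point $\xi\in E$, every tangent measure / blowup of the pair $(\Omega^{+},\Omega^{-})$ along $\xi$ is a half-space, which forces the two $\beta$-numbers to vanish. The key input is the ACF monotonicity formula (Theorem \ref{t:ACF}) together with the comparison between the ACF functional and densities of harmonic measure (Lemmas \ref{l:beurling} and \ref{l:otherside}). First I would record that, by Lemma \ref{l:CDCadmissible}, $\Omega^{+}$ (and $\Omega^{-}$) is an admissible domain, so $\gamma(\xi,r)$ is defined and non-decreasing in $r$ for $\omega^{+}$-a.e. $\xi$, and in particular $\gamma(\xi):=\lim_{r\to0}\gamma(\xi,r)$ exists. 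The strategy is then to prove:

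\textbf{Step 1: The ACF limit $\gamma(\xi)$ is positive on a large portion of $E$.} Combining \eqref{e:beurling} and \eqref{e:otherside} one gets, for $\xi\in\d\Omega$ and small $r$,
\[
\frac{\omega^{+}(B(\xi,r))}{r^{n}}\,\frac{\omega^{-}(B(\xi,r))}{r^{n}}
\;\lesssim\; \gamma(\xi,2r)^{1/2}
\;\lesssim\; \frac{\omega^{+}(B(\xi,8\delta_0^{-1}r))}{r^{n}}\,\frac{\omega^{-}(B(\xi,8\delta_0^{-1}r))}{r^{n}}.
\]
Since $\omega^{+}\ll\omega^{-}\ll\omega^{+}$ on $E$, the Radon--Nikodym derivative $h=\frac{d\omega^{+}}{d\omega^{-}}$ satisfies $0<h<\infty$ $\omega^{+}$-a.e. on $E$, and by the Lebesgue differentiation theorem (with respect to $\omega^{-}$, which is doubling? — not necessarily, so one uses the Besicovitch differentiation theorem for Radon measures) one has, for $\omega^{+}$-a.e. $\xi\in E$, $\frac{\omega^{+}(B(\xi,r))}{\omega^{-}(B(\xi,r))}\to h(\xi)\in(0,\infty)$. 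Meanwhile, along a subsequence $r_{k}\to0$, $\frac{\omega^{-}(B(\xi,r_{k}))}{r_{k}^{n}}$ is either bounded away from $0$ and $\infty$ on a positive-measure subset or we can at least extract the needed comparability; here I would invoke the standard fact (from \cite{KPT09}) that on $E$ the upper and lower $n$-densities of $\omega^{\pm}$ are, up to the factor $h$, comparable, so that $\gamma(\xi)>0$ for $\omega^{+}$-a.e. $\xi$ in a subset $E'\subset E$ with $\omega^{+}(E')>0$; exhausting $E$ by such subsets reduces to the case $\gamma(\xi)>0$ a.e. on $E$.

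\textbf{Step 2: Blowup analysis.} Fix a density/differentiability point $\xi\in E$ with $\gamma(\xi)\in(0,\infty)$, and translate $\xi$ to the origin. Consider the rescalings $u^{\pm}_{r}(x)=\frac{u^{\pm}(\xi+rx)}{(\text{suitable normalization})}$; by Lemma \ref{l:otherside} the natural normalization is $r^{n-1}/\omega^{\pm}(B(\xi,Cr))$, which by Step 1 and the density comparison is comparable to a single scalar $m(r)$ up to constants. Using Caccioppoli (as in the proof of Lemma \ref{l:otherside}), interior Harnack, and Hölder continuity at the boundary (Corollary \ref{c:holder}), the family $\{u^{\pm}_{r}\}$ is precompact in $C_{loc}$ and $W^{1,2}_{loc}$; pass to a blowup $u^{\pm}_{\infty}$. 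Each $u^{\pm}_{\infty}$ is nonnegative, harmonic where positive, continuous, vanishing on a common closed set $\Sigma\ni 0$, with $u^{+}_{\infty}$ supported in one blowup domain and $u^{-}_{\infty}$ in the complementary one. The monotonicity of $\gamma(\xi,\cdot)$ together with the existence of the finite positive limit $\gamma(\xi)$ forces the blowup ACF functional $\gamma_{\infty}(x,r)$ to be \emph{constant} in $r$; by the rigidity/equality case in the Alt--Caffarelli--Friedman theorem (see \cite{ACF84}, and the discussion in \cite{KPT09}), this constancy forces $u^{\pm}_{\infty}$ to be (a constant times) the positive/negative parts of a single linear function $\ell(x)=\alpha\cdot x$ with $|\alpha|>0$. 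Hence the zero set $\Sigma$ contains the hyperplane $\{\alpha\cdot x=0\}$, and since the two blowup domains $\{\ell>0\}$, $\{\ell<0\}$ are open and exhaust the complement, $\Sigma=\{\alpha\cdot x=0\}$ exactly, and the blowup of $\overline{\Omega^{+}}$ is the closed half-space $\{\alpha\cdot x\ge0\}$.

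\textbf{Step 3: From blowups to vanishing $\beta$-numbers.} Because \emph{every} blowup along $\xi$ (i.e. every subsequential limit as $r\to0$) is a half-space with boundary through $0$ — the argument in Step 2 used no particular subsequence — a standard compactness argument shows $\wt\beta_{\overline{\Omega^{+}},\infty}(\xi,r)\to0$: if not, there is a sequence $r_{k}\to0$ with $\wt\beta_{\overline{\Omega^{+}},\infty}(\xi,r_{k})\ge c>0$, but the rescaled sets $r_{k}^{-1}(\overline{\Omega^{+}}-\xi)$ subconverge in Hausdorff distance on $B(0,1)$ (using $\Delta$-regularity, i.e. the two-sided corkscrew-type non-degeneracy, to prevent the boundary from collapsing or the domain from disappearing) to a half-space, contradicting the lower bound. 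The statement for $\beta_{\d\Omega^{+},\infty}$ follows since $\d\Omega^{+}=\d\overline{\Omega^{+}}$ near such points and Hausdorff convergence of $\overline{\Omega^{+}}$ to a half-space implies Hausdorff convergence of $\d\Omega^{+}$ to the bounding hyperplane, whence $\beta_{\d\Omega^{+},\infty}(\xi,r)\to0$.

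\textbf{Main obstacle.} The delicate point is Step 2: justifying that the blowup of the \emph{pair} of Green functions is nondegenerate (neither $u^{+}_{\infty}$ nor $u^{-}_{\infty}$ identically zero, with compatible normalizations) and then extracting from the constancy of the ACF functional the full linear rigidity. This requires carefully choosing the normalization using $\gamma(\xi)>0$ from Step 1 (to keep both factors comparable to each other and bounded away from $0$ and $\infty$ after rescaling), controlling the behavior of $u^{\pm}$ near the boundary uniformly in scale via Corollary \ref{c:holder} and $\Delta$-regularity, and invoking the equality case of the ACF monotonicity formula — this is exactly where the argument parallels \cite{KPT09}, and I expect to cite their blowup machinery (tangent measures of harmonic measure, Kenig--Toro blowups) rather than reprove it. A secondary technical nuisance is that $\omega^{\pm}$ need not be doubling a priori, so all differentiation statements must be made via the Besicovitch differentiation theorem for Radon measures rather than Lebesgue's theorem.
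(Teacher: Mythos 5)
Your plan stands or falls on Step 1, and Step 1 is a genuine gap. Via Lemma \ref{l:beurling}, Lemma \ref{l:otherside} and the mutual absolute continuity (which only gives $\omega^-(B(\xi,r))/\omega^+(B(\xi,r))\to h(\xi)\in(0,\infty)$), the positivity of $\gamma(\xi)=\lim_{r\to0}\gamma(\xi,r)$ is essentially equivalent to the positivity of the lower $n$-density $\liminf_{r\to0}\omega^+(B(\xi,r))/r^n$. That positivity is not known a priori; there is no ``standard fact'' that upper and lower densities of harmonic measure are comparable, and the set of points of zero density is exactly the hard case (it is the analogue of $\Gamma_b$ in Theorem B, the set that Kenig--Preiss--Toro could not rule out, and it is treated in Section 6 of this paper as $G_m^{zd}$ via Theorem \ref{teo1}). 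You cannot ``exhaust $E$'' by pieces where $\gamma(\xi)>0$: a priori the zero-density set could carry all of $\omega^+|_E$, and there your argument is silent, because at such points $\gamma(\xi)=0$, the blow-up ACF quantity (after your normalization by $r^{n-1}/\omega^\pm(B(\xi,Cr))$) is a $0/0$ expression whose limit is only monotone, not constant, and the equality case of the Alt--Caffarelli--Friedman formula gives no rigidity when the limit vanishes. So Steps 2--3 collapse precisely at the points the theorem most needs to handle; in effect Step 1 assumes a substantial part of what the whole paper is proving (positive density is only obtained \emph{a posteriori}, see the footnote at the end of the proof of Theorem \ref{t:main}).

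For contrast, the paper's proof of this theorem uses no ACF monotonicity and no density information at all. It runs through Preiss's tangent-measure machinery with arbitrary normalizing constants $c_j$: Lemma \ref{l:samew} shows the two phases blow up compatibly with ratio $h(\xi)$; Lemma \ref{l:blowup1} shows the rescaled Green functions converge and that $u_\infty=h(\xi)u_\infty^+-u_\infty^-$ is an \emph{entire} harmonic function (the distributional Laplacians cancel), so $\Sigma=\{u_\infty=0\}$ is an $n$-dimensional real-analytic variety carrying $\omega_\infty^+$ with density $-\partial_\nu u_\infty$; Preiss's theorem on tangent measures of tangent measures then produces at least one flat measure in $\Tan(\omega^+,\xi)$ (blow up at a smooth point of $\Sigma$); and the connectivity Lemma \ref{l:tanconnect} -- whose growth estimates on $F_r$ and Cauchy estimates force $u_\infty$ to be linear -- upgrades this to \emph{all} tangent measures being flat. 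This route is exactly designed to work in the possibly non-doubling, possibly zero-density setting where your ACF-rigidity approach cannot get started. If you want to keep an ACF-based argument, you would still need an independent mechanism for the zero-density points, which is the heart of the matter.
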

\vv

\subsection{Tangent Measures}

For $a\in\R^{n+1}$ and $r>0$, we consider the map
$$T_{a,r}(x) = \frac{x-a}{r}.$$
Note that $T_{a,r}(B(a,r))=B(0,1)$. Recall also that, given a Radon measure $\mu$, the notation $T_{a,r}[\mu]$ stands for the image measure of $\mu$ by $T_{a,r}$.
That is,
$$T_{a,r}[\mu](A) = \mu(rA+a),\qquad A\subset\R^{n+1}.$$

\begin{definition}  
Let $\mu$ be a Radon measure in $\R^{n+1}$. We say that $\nu$ is a {\it tangent measure} of $\mu$ at a point $a\in\R^{n+1}$ if
$\nu$ is a non-zero Radon measure on $\R^{n+1}$ and there are sequences $\{r_i\}_i$ and $\{c_i\}_i$ of positive numbers, with $r_i\to0$, so that $c_i\,T_{a,r_{i}}[\mu]$ converges weakly to $\nu$ as $i\to\infty$.
\end{definition}

\begin{definition}  
Given two Radon measure $\mu$ and $\sigma$, we set
$$F_{B}(\mu,\sigma) = \sup_f \int f\,d(\mu-\sigma),$$
where the supremum is taken over all the $1$-Lipschitz functions supported on $B$. 
For $r>0$, we write
\[
F_{r}(\mu,\nu)= F_{\overline B(0,r)},\qquad
F_{r}(\mu)=F_{r}(\mu,0)=\int (r-|z|)_{+} d\mu.\]
\end{definition}


\vv
\begin{lemma}\cite[Proposition 1.11]{Pr87}
Let $\{\mu_i\}$ be a sequence of Radon measures such that $\limsup \mu_{i}(B(0,r))<\infty$ for all $r>0$. Then $\mu_{i}$ converges weakly to a measure $\mu$ if and only if $F_{r}(\mu_{i},\mu)\rightarrow 0$ for every $r>0$.
\end{lemma}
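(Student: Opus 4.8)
The statement is an equivalence between weak convergence (tested against $C_c(\R^{n+1})$) and convergence of the functionals $F_r(\cdot,\mu)$ to $0$, and the plan is to prove the two implications separately; the only ingredients are the Lipschitz regularization of compactly supported continuous functions and the Arzel\`a--Ascoli theorem. Before either direction I would record that the standing hypothesis $\limsup_i\mu_i(B(0,r))<\infty$ --- equivalently, together with finiteness of $\mu$ on balls, $M_\rho:=\sup_i\mu_i(B(0,\rho))+\mu(B(0,\rho))<\infty$ for every $\rho>0$ --- is in fact automatic from either side of the equivalence: if $\mu_i\to\mu$ weakly, test against a function in $C_c(\R^{n+1})$ dominating $\chi_{B(0,r)}$; and if $F_{2r}(\mu_i,\mu)\to0$, test against the fixed $1$-Lipschitz function $z\mapsto(2r-|z|)_+$, which is $\geq r$ on $B(0,r)$ and supported on $\overline{B(0,2r)}$. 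This local uniform mass bound is exactly what makes the $L^\infty$-type error terms below harmless.

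For the implication ``$\mu_i\to\mu$ weakly $\Rightarrow F_r(\mu_i,\mu)\to0$ for all $r$'', I would argue by contradiction. If $F_r(\mu_i,\mu)\geq\ve>0$ along a subsequence, choose for each $i$ a $1$-Lipschitz function $f_i$ supported on $\overline{B(0,r)}$ with $\int f_i\,d(\mu_i-\mu)\geq\ve/2$. Since $f_i$ is continuous and vanishes outside $B(0,r)$, it vanishes on $\partial B(0,r)$, so $|f_i|\leq(r-|\cdot|)_+\leq r$; hence $\{f_i\}$ is uniformly bounded and equi-Lipschitz on $\overline{B(0,r)}$, and by Arzel\`a--Ascoli a further subsequence converges uniformly to a $1$-Lipschitz $f\in C_c(\R^{n+1})$ supported on $\overline{B(0,r)}$. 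Splitting
\[
\int f_i\,d(\mu_i-\mu)=\int f\,d(\mu_i-\mu)+\int(f_i-f)\,d\mu_i-\int(f_i-f)\,d\mu,
\]
the first term tends to $0$ because $f\in C_c(\R^{n+1})$, while the last two are bounded in absolute value by $\|f_i-f\|_\infty\,M_{r+1}\to0$. This contradicts $\int f_i\,d(\mu_i-\mu)\geq\ve/2$, so $F_r(\mu_i,\mu)\to0$.

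For the reverse implication, given $\vphi\in C_c(\R^{n+1})$ with $\supp\vphi\subset B(0,R)$ and $\eta>0$, I would (by mollification, or by inf-convolution) pick an $L$-Lipschitz $g$ with $\supp g\subset\overline{B(0,R+1)}$ and $\|g-\vphi\|_\infty<\eta$, and then write
\[
\int\vphi\,d(\mu_i-\mu)=\int(\vphi-g)\,d\mu_i+\int g\,d(\mu_i-\mu)+\int(g-\vphi)\,d\mu.
\]
The two outer terms are at most $\eta\,M_{R+1}$ in absolute value, and the middle one is at most $L\,F_{R+1}(\mu_i,\mu)\to0$, since $L^{-1}g$ is an admissible competitor in the definition of $F_{R+1}$. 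Letting $i\to\infty$ and then $\eta\to0$ yields $\int\vphi\,d\mu_i\to\int\vphi\,d\mu$; as $\vphi$ was arbitrary, $\mu_i\to\mu$ weakly.

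I do not expect any serious obstacle: the argument is elementary throughout. The only points that require a little care are the support bookkeeping --- making sure the regularized function in the reverse direction and the Arzel\`a--Ascoli limit in the forward direction stay compactly supported and $1$-Lipschitz up to a fixed constant --- and the observation, made at the outset, that the masses $\mu_i(B(0,\rho))$ are automatically locally uniformly bounded, which is precisely what controls the $L^\infty$-approximation errors in both directions.
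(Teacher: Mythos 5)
Your proof is correct. Note that the paper does not actually prove this lemma --- it is quoted verbatim from Preiss \cite[Proposition 1.11]{Pr87} --- so there is no in-paper argument to compare against; your two-step argument (Arzel\`a--Ascoli applied to near-optimal $1$-Lipschitz competitors for one direction, Lipschitz approximation of $C_c$ test functions plus the locally uniform mass bound for the other) is the standard proof of this equivalence. The only nitpick is cosmetic: since your approximant $g$ is supported in the closed ball $\overline B(0,R+1)$, the error terms should be bounded using $M_{R+2}$ (or the closed-ball masses) rather than $M_{R+1}$, which changes nothing in the argument.
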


\vv
\begin{definition} \cite[Section 2]{Pr87} 
\begin{enumerate}[(a)]
\item A set $\cM$ of non-zero Radon measures in $\bR^{n+1}$ is a {\it cone} if $c\mu\in \cM$ whenever $\mu\in \cM$ and $c>0$.
\item A cone $\cM$ is a {\it d-cone} if $T_{0,r}[\mu]\in \cM$ for all $\mu\in \cM$ and $r>0$.
\item For a $d$-cone $\cM$, $r>0$, and $\mu$ a Radon measure with $0<F_{r}(\mu)<\infty$, we define the {\it distance} between $\mu$ and $\cM$ as 
\[
d_{r}(\mu,\cM)=\inf\ck{F_{r}\ps{\frac{\mu}{F_{r}(\mu)},\nu}: \nu\in \cM, F_{r}(\nu)=1 }
\]
\end{enumerate}
\end{definition}
\vv

For example, the set of measures 
\begin{equation}\label{eqflat}
\cF = \bigl\{c\,\HH^n|_L: c>0,\,\mbox{$L$ is an $n$-plane in $\R^{n+1}$ through the origin}\bigr\}
\end{equation}
is a d-cone.

The only fact about distances to cones that we will require later is the following equality, see \cite[Remark 2.8]{KPT09}: for any Radon measure $\mu$, $d$-cone $\cM$, and $r>0$,

\begin{equation}\label{e:dr}
d_{r}(\mu,\cM)=d_{1}(T_{0,r}[\mu],\cM)
\end{equation}

\vv

%

\vv


\begin{theorem}\label{t:tme} \cite[Theorem 2.5]{Pr87} If $\mu$ is a Radon measure on $\bR^{n+1}$, then $\Tan(\mu,x)\neq\varnothing$ for $\mu$-almost every $x\in \bR^{n+1}$. 
\end{theorem}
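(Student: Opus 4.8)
The statement to be proved is Preiss's theorem that an arbitrary Radon measure $\mu$ on $\R^{n+1}$ has a nonempty tangent set $\Tan(\mu,x)$ at $\mu$-almost every point $x$. The plan is to show that at $\mu$-a.e.\ $x$ the rescaled measures $T_{x,r}[\mu]$, after a suitable normalization, do not escape to $0$ or to $\infty$, so that a subsequential weak limit exists and is a nonzero Radon measure. By the compactness criterion for weak convergence (the lemma of \cite[Proposition 1.11]{Pr87} recalled above, together with the standard fact that a sequence of Radon measures with locally uniformly bounded mass has a weakly convergent subsequence), the only thing that can fail is that every normalized blow-up sequence degenerates. So the heart of the matter is a density-type dichotomy ruled out on a null set.

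\textbf{Key steps.} First I would reduce to showing that for $\mu$-a.e.\ $x$ one has
\[
0<\limsup_{r\to 0}\frac{\mu(B(x,2r))}{\mu(B(x,r))}<\infty
\]
is \emph{not} required; instead the right normalization is $c_i = 1/F_{r_i}(T_{x,r_i}[\mu]) = 1/F_1(\cdot)$-type quantities, or more simply $c_i = 1/\mu(B(x,r_i))$. The measure $\nu_i := \frac{1}{\mu(B(x,r_i))}T_{x,r_i}[\mu]$ always satisfies $\nu_i(\overline B(0,1)) = 1$, so mass does not vanish near the origin. The danger is that $\nu_i(B(0,R))\to\infty$ for some $R>1$, i.e.\ the rescalings spread mass out to infinity with no uniform bound. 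To exclude this on a null set, I would fix $\lambda>1$ and consider, for each $x$,
\[
Q_\lambda(x) = \limsup_{r\to 0}\frac{\mu(B(x,\lambda r))}{\mu(B(x,r))}.
\]
The claim is that $Q_\lambda(x)<\infty$ for $\mu$-a.e.\ $x$; granting this for all $\lambda = 2^k$, $k\in\N$, a diagonal argument gives that $\nu_i(B(0,R))$ is bounded along a subsequence for every fixed $R$, hence by the compactness criterion $c_i T_{x,r_i}[\mu]\to\nu$ weakly for a nonzero $\nu$, which is the desired tangent measure.

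\textbf{Proving the doubling-type bound $\mu$-a.e.} Suppose for contradiction that for some fixed $\lambda$ the set $A = \{x : Q_\lambda(x) = \infty\}$ has $\mu(A)>0$. I would intersect with a ball where $\mu$ is finite and further restrict to the set where $x$ is a point of positive and finite upper density is \emph{not} what we want — rather, the standard trick: for $x\in A$ and any $M$, there are arbitrarily small $r$ with $\mu(B(x,\lambda r))>M\,\mu(B(x,r))$. Iterating along a sequence of such radii at a single $x$ forces $\mu(B(x,\lambda^N r))\geq M^N \mu(B(x,r))$ for a subsequence, which alone is harmless (measures can grow fast). The correct route is a Vitali/covering argument: on a portion of $A$ of positive measure, the collection of "bad" balls $B(x,\lambda r)$ with $\mu(B(x,\lambda r)) > M\mu(B(x,r))$ forms a fine cover; extracting an almost disjoint subfamily covering most of $A$ and summing the inequality $\mu(B(x_j,r_j)) < M^{-1}\mu(B(x_j,\lambda r_j))$ over a bounded-overlap family of the dilated balls, one bounds $\mu(A)$ by $C(\lambda)\,M^{-1}\mu(U)$ for a fixed open neighborhood $U$ of $A$; letting $M\to\infty$ gives $\mu(A)=0$. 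This is precisely Preiss's argument and is the \textbf{main obstacle}: getting the covering/overlap bookkeeping right so that the dilation factor $\lambda$ only costs a dimensional constant $C(\lambda)$ in the overlap, not an unbounded one. Once $\mu(A_\lambda)=0$ for every $\lambda\in\{2^k\}$, set $N = \bigcup_k A_{2^k}$, a $\mu$-null set, and for $x\notin N$ run the compactness argument above to produce $\nu\in\Tan(\mu,x)$.

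\textbf{Assembly.} For $x\notin N$: pick any $r_i\to 0$, set $c_i = \mu(B(x,r_i))^{-1}$, observe $c_i T_{x,r_i}[\mu](\overline B(0,1)) = 1$ and $c_i T_{x,r_i}[\mu](B(0,2^k))\leq \sup_i \mu(B(x,2^k r_i))/\mu(B(x,r_i)) =: C_k<\infty$ for each $k$ (by $x\notin A_{2^k}$, passing to a further subsequence if necessary and using a diagonal argument over $k$). By the weak-compactness of Radon measures with locally bounded mass, a subsequence of $c_i T_{x,r_i}[\mu]$ converges weakly to some Radon measure $\nu$; since $\nu(\overline B(0,1))\geq \limsup c_i T_{x,r_i}[\mu](\mathrm{int}\,B(0,1))$ and the normalization pins mass $1$ on the closed unit ball while lower-semicontinuity on open sets could in principle lose it, one checks (using that the radii $r$ with $\mu(\d B(x,r))/\mu(B(x,r/2))$ large are sparse, or simply by choosing the $r_i$ to be continuity radii) that $\nu\neq 0$. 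Hence $\Tan(\mu,x)\neq\varnothing$ for all $x\notin N$, which proves the theorem.
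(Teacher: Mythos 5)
The paper offers no proof of this statement (it is quoted from Preiss), so I am comparing your argument against the standard one. Your overall skeleton is right: normalize by $c_i=\mu(B(x,r_i))^{-1}$, obtain uniform mass bounds on each $B(0,2^k)$, extract a weak limit, and check nondegeneracy via upper semicontinuity on the closed unit ball. The problem is the input you feed into it.

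The central claim of your proof --- that $Q_\lambda(x)=\limsup_{r\to0}\mu(B(x,\lambda r))/\mu(B(x,r))<\infty$ for $\mu$-a.e.\ $x$ --- is \emph{false} for general Radon measures. There exist (non-atomic, Cantor-cascade type) measures for which $\limsup_{r\to 0}\mu(B(x,2r))/\mu(B(x,r))=\infty$ at $\mu$-a.e.\ point: split each generation-$k$ interval into a light child of mass fraction $\eta_k$ and a well-separated heavy sibling, with $\eta_k\to0$ and $\sum_k\eta_k=\infty$; by Borel--Cantelli a $\mu$-typical point lies in the light child at infinitely many generations, and at the corresponding scales the doubling ratio is $\gtrsim(1-\eta_k)/\eta_k\to\infty$. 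This is exactly why Preiss's Theorem 2.5 is not a routine consequence of a covering lemma, and why the definition of tangent measure allows arbitrary constants $c_i$ and an \emph{adaptive} choice of radii. Correspondingly, the covering argument you propose cannot be repaired: from a fine cover one can extract a Besicovitch subfamily with bounded overlap, or a disjoint subfamily whose $5r$-dilates cover, but in neither case do the dilated balls $B(x_j,\lambda r_j)$ have bounded overlap --- the dilate of one large ball can contain unboundedly many smaller balls of the family, and summing over dyadic scale bands costs a factor equal to the (unbounded) number of scales, not a constant $C(\lambda)$.

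What is actually true, and what suffices, is the weaker ``$\liminf$'' statement: for $\mu$-a.e.\ $x$ there exist \emph{arbitrarily small} radii $r$ such that
\begin{equation*}
\mu(B(x,2^{k}r))\le C^{k}\,\mu(B(x,r))\qquad\text{simultaneously for all }k\ge1,
\end{equation*}
with $C=2^{3(n+1)}$, say. One proves this by contradiction with a chaining argument rather than a covering argument: if every small radius is ``bad'' for $x$, iterate $s_{j+1}=2^{k_j}s_j$ along the failing scales until the radius exceeds a fixed $r_0$; multiplying the inequalities gives $\mu(B(x,r))\lesssim (r/r_0)^{3(n+1)}$ for all small $r$, and the set of such $x$ is $\mu$-null because $\mu(A)\lesssim \HH^{3(n+1)}_{\infty}(A)=0$ for $A\subset\R^{n+1}$ (after reducing to finite $\mu$ by localization, which is harmless since tangent measures are local). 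Feeding these good radii $r_i\downarrow0$ into your assembly step then yields $\nu_i(\overline{B(0,1)})=1$ and $\limsup_i\nu_i(B(0,2^k))\le C^k$ for each fixed $k$, hence a nonzero weak subsequential limit, i.e.\ a tangent measure. So the fix is not bookkeeping in the Vitali argument but replacing the false a.e.\ doubling claim by this sequential doubling lemma.
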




\vv

\begin{theorem} \label{t:ttt}
\cite[Theorem 14.16]{Mattila}
Let $\mu$ be a Radon measure on $\bR^{n+1}$. For $\mu$-almost every $x\in \bR^{n+1}$, if $\nu\in \Tan(\mu,x)$, the following hold:
\begin{enumerate}
\item $T_{y,r}[\nu]\in \Tan(\mu,x)$ for all $y\in \supp \nu$ and $r>0$.
\item $\Tan(\nu,y)\subset \Tan(\mu,x)$ for all $y\in \supp \nu$.
\end{enumerate}
\end{theorem}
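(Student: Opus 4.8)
The plan is to reproduce the classical argument of Preiss \cite{Pr87} and Mattila \cite[Chapter 14]{Mattila}, which rests on two soft structural facts about tangent cones and one genuine differentiation estimate. I would first record that, for \emph{every} $x$ with $\Tan(\mu,x)\neq\varnothing$ --- hence for $\mu$-a.e.\ $x$ by Theorem \ref{t:tme} --- the set $\Tan(\mu,x)$ is (i) a $d$-cone and (ii) closed under weak limits. For (i): if $c_iT_{x,r_i}[\mu]\to\nu$ with $r_i\to0$, then applying the homeomorphism $T_{0,t}$ and rescaling by $c>0$ shows $cT_{0,t}[\nu]$ is again a limit of the same form, now along the scales $tr_i\to0$. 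For (ii): given $\nu_j\in\Tan(\mu,x)$ with $\nu_j\to\nu\neq0$ weakly, one uses that the functionals $F_r$ metrize weak convergence on mass-bounded families (\cite[Proposition 1.11]{Pr87}) to choose, for each $j$, a scale $r_j<1/j$ and weight $c_j$ with $F_j(c_jT_{x,r_j}[\mu],\nu_j)<1/j$; then $F_R(c_jT_{x,r_j}[\mu],\nu)\to0$ for every $R$ and the masses stay bounded, so $c_jT_{x,r_j}[\mu]\to\nu$, i.e.\ $\nu\in\Tan(\mu,x)$.

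Next I would reduce the whole theorem to the single statement that, for $\mu$-a.e.\ $x$, every $\nu\in\Tan(\mu,x)$ and $\nu$-a.e.\ $a$, one has $T_{a,1}[\nu]\in\Tan(\mu,x)$. Assuming this: fixing such $x$ and $\nu$ and taking any $a\in\supp\nu$, the full-$\nu$-measure set of admissible $a'$ is dense in $\supp\nu$, so $T_{a',1}[\nu]\to T_{a,1}[\nu]\neq0$ weakly along some $a'\to a$; by (ii), $T_{a,1}[\nu]\in\Tan(\mu,x)$, and then $T_{a,r}[\nu]=T_{0,r}\bigl[T_{a,1}[\nu]\bigr]\in\Tan(\mu,x)$ for all $r>0$ by (i), which is part (1). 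For part (2): if $\lambda\in\Tan(\nu,a)$ with $a\in\supp\nu$ and $\lambda=\lim_ic_iT_{a,s_i}[\nu]$, $s_i\to0$, then each $c_iT_{a,s_i}[\nu]\in\Tan(\mu,x)$ by part (1) and (i), so $\lambda\in\Tan(\mu,x)$ by (ii).

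It remains to prove the reduced statement, and this is the only step where the $\mu$-null exceptional set genuinely arises. Fix $\nu\in\Tan(\mu,x)$ with $c_iT_{x,r_i}[\mu]\to\nu$, $r_i\to0$, and $a\in\supp\nu$. Since $\nu$ charges every ball around $a$, one can pick $z_i\in\supp\mu$ with $(z_i-x)/r_i\to a$, and then $T_{z_i,r_i}=T_{(z_i-x)/r_i,1}\circ T_{x,r_i}$ together with the continuity of translation on measures gives $c_iT_{z_i,r_i}[\mu]\to T_{a,1}[\nu]$. The obstacle --- and the crux of the whole proof --- is that the centers $z_i$ drift from $x$ at the very scale $r_i$ of the blow-up, so this does \emph{not} present $T_{a,1}[\nu]$ as a tangent measure \emph{at} $x$, and no a priori ``nearby centers'' estimate is available. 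One pays for the re-centering with the Besicovitch-type differentiation lemma behind \cite[Chapter 14]{Mattila}: for $\mu$-a.e.\ $x$ and every $R,\varepsilon>0$, for all small enough $\rho$ the normalized rescalings $\mu_{y,\rho}:=\mu(B(y,\rho))^{-1}T_{y,\rho}[\mu]$ satisfy $F_R(\mu_{y,\rho},\mu_{x,\rho})<\varepsilon$ for all $y\in B(x,R\rho)$ outside a set of $\mu$-measure at most $\varepsilon\,\mu(B(x,\rho))$. Because $\mu(B(z_i,\delta r_i))/\mu(B(x,r_i))\to\nu(B(a,\delta))>0$ pins a definite fraction of the mass near $z_i$ at scale $r_i$ into the good set, one can select good centers $z_i'$ within $\delta r_i$ of $z_i$; letting $\delta,\varepsilon\downarrow0$ along a diagonal and combining $\mu_{z_i',r_i}\approx\mu_{x,r_i}$ with $c_iT_{z_i,r_i}[\mu]\to T_{a,1}[\nu]$ produces a multiple of $T_{a,1}[\nu]$ as a weak limit of the $\mu_{x,s_i}$ for suitable $s_i\to0$, i.e.\ $T_{a,1}[\nu]\in\Tan(\mu,x)$. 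Some care with normalizations is needed in this last chain --- it is cleanest to argue with the unnormalized measures and the $F_r$ functionals directly, so that during the limiting process no mass escapes to infinity or concentrates on a sphere.
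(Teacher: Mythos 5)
First, a point of reference: the paper does not prove this statement at all --- it is quoted from \cite[Theorem 14.16]{Mattila} (the result goes back to Preiss \cite{Pr87}) --- so the only question is whether your reconstruction is sound. Your soft steps are: $\Tan(\mu,x)$ is a $d$-cone and is closed under weak limits of nonzero measures whenever it is nonempty, and (1) and (2) reduce to the single statement that $T_{a,1}[\nu]\in\Tan(\mu,x)$. These are correct and standard.

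The gap is the differentiation lemma you invoke at the crux; it is false as stated and is not what \cite{Mattila} uses. You claim that for $\mu$-a.e.\ $x$ and all small $\rho$ one has $F_R(\mu_{y,\rho},\mu_{x,\rho})<\ve$ for all $y\in B(x,R\rho)$ outside a set of measure $\ve\,\mu(B(x,\rho))$. For $y$ at distance comparable to $\rho$ from $x$, the measure $\mu_{y,\rho}$ is, up to normalization, the translate $T_{(y-x)/\rho,1}[\mu_{x,\rho}]$ by a vector of unit order, and there is no reason for a measure to be $F_R$-close to such a translate of itself: for the natural measure on a self-similar porous set (e.g.\ the four-corner Cantor set) and $\rho$ a generation scale, a fixed positive proportion (in $\mu$-measure) of the $y$ in the same generation cell as $x$ satisfy $F_R(\mu_{y,\rho},\mu_{x,\rho})\gtrsim 1$, so the bad set has measure comparable to $\mu(B(x,\rho))$ for \emph{every} $x$ and every such $\rho$. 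More decisively, your own final step exposes the problem: combining $\mu_{z_i',r_i}\approx\mu_{x,r_i}$ with $c_iT_{z_i',r_i}[\mu]\to T_{a,1}[\nu]$ and $c_iT_{x,r_i}[\mu]\to\nu$ forces $T_{a,1}[\nu]$ to be a constant multiple of $\nu$ for every $\nu\in\Tan(\mu,x)$ and every $a\in\supp\nu$, i.e.\ every tangent measure would be translation-quasi-invariant along its support --- far stronger than the theorem and false. The actual mechanism in Preiss/Mattila is different: one fixes a countable weakly dense family $\mathcal{D}$ of reference measures and, for $\sigma\in\mathcal{D}$ and positive rationals $p,q$, considers the set $A_{\sigma,p,q}$ of points $x$ with $F_p(cT_{x,r}[\mu],\sigma)\geq 1/q$ for all $c>0$ and $0<r<1/p$. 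If $T_{a,1}[\nu]\notin\Tan(\mu,x)$, then $x\in A_{\sigma,p,q}$ for some $\sigma$ with $F_p(\sigma,T_{a,1}[\nu])<1/(2q)$; at a $\mu$-density point of $A_{\sigma,p,q}$ one produces (exactly by your re-centering construction) points $z_i\in A_{\sigma,p,q}$ with $c_iT_{z_i,r_i}[\mu]\to T_{a,1}[\nu]$, and membership of the $z_i$ in $A_{\sigma,p,q}$ passes to the limit to give $F_p(T_{a,1}[\nu],\sigma)\geq 1/q$, a contradiction. The condition that survives the density argument is ``uniformly far from a \emph{fixed} $\sigma$,'' which is stable under re-centering; ``close to $\mu_{x,\rho}$ at the same scale'' is not.
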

\vv

\subsection{The Proof of \Theorem{alpha0}}

Assume the conditions of \Theorem{alpha0}. Set 
\[E^{*}=\ck{\xi\in E: \lim_{r\rightarrow 0} \frac{\omega^+(E\cap B(\xi,r))}{\omega^+(B(\xi,r))}= \lim_{r\rightarrow 0} \frac{\omega^-(E\cap B(\xi,r))}{\omega^-(B(\xi,r))}=1}.\]
By \cite[Corollary 2.14 (1)]{Mattila} and because $\omega^{+}$ and $\omega^{-}$ are mutually absolutely continuous on $E$,
\[\omega^+(E\backslash E^{*})= \omega^-(E\backslash E^{*})=0.\]
Also, set 
\begin{multline*}
\Lambda_{1} =\left\{\xi\in E^*\!\!: 0<h(\xi):=\frac{d\omega^-}{d\omega^+}(\xi)=\lim_{r\rightarrow 0} \frac{\omega^-( B(\xi,r))}{\omega^+( B(\xi,r))} \right. \\
 =\left. \lim_{r\rightarrow 0} \frac{\omega^-(E\cap B(\xi,r))}{\omega^+(E\cap B(\xi,r))}<\infty\right\}\end{multline*}
and 
\[
\Gamma = \ck{\xi\in \Lambda_{1}: \xi \mbox{ is a Lebesgue point for $h$ with respect to }\omega^+}.
\]
Again, by Lebesgue differentiation for measures (see \cite[Corollary 2.14 (2) and Remark 2.15 (3)]{Mattila}), $\Gamma$ has full measure in $E^{*}$ and hence in $E$.
\vv

%

The following is essentially taken from \cite{KPT09}, but we adjust it slightly so that we don't need to assume any doubling properties of harmonic measure.

\begin{lemma}\label{l:samew}
Let $\xi\in \Gamma$, $c_{j}\geq 0$,  and $r_{j}\rightarrow 0$ be such that $\omega_{j}^+=c_{j}T_{\xi,r_{j}}[\omega^+]\rightarrow \omega_{\infty}^+$. Then $\omega_{j}^-=c_{j}T_{\xi,r_{j}}[\omega^-]\rightarrow h(\xi)\omega_{\infty}^+$.
\end{lemma}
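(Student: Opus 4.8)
The plan is to exploit the fact that $\xi$ is a Lebesgue point for $h = d\omega^-/d\omega^+$ with respect to $\omega^+$, together with the weak convergence $\omega_j^+ \to \omega_\infty^+$, to identify the weak limit of $\omega_j^-$. First I would note that since $\limsup_j \omega_j^+(B(0,R)) < \infty$ for every $R > 0$ (this is automatic from the weak convergence), and since on $\Gamma \subset E^*$ we have comparability of $\omega^-(B(\xi,r))$ with $h(\xi)\,\omega^+(B(\xi,r))$ up to a factor tending to $1$, the sequence $\omega_j^-$ also has uniformly bounded mass on compact sets; hence by the compactness of Radon measures in the weak topology, every subsequence of $\{\omega_j^-\}$ has a further subsequence converging weakly to some Radon measure $\nu$. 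It therefore suffices to show every such subsequential limit equals $h(\xi)\,\omega_\infty^+$, which by a standard subsequence argument gives the full convergence $\omega_j^- \to h(\xi)\,\omega_\infty^+$.

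Next I would pin down $\nu$ by testing against a nonnegative $f \in C_c(\R^{n+1})$, say $\supp f \subset B(0,R)$. Write $\int f\,d\omega_j^- = c_j \int f(T_{\xi,r_j}(y))\,d\omega^-(y) = c_j \int f(T_{\xi,r_j}(y))\,h(y)\,d\omega^+(y)$, using $d\omega^- = h\,d\omega^+$ on $E$ and the fact that the relevant region is essentially contained in $E$ up to $\omega^\pm$-null sets (here is where $\xi \in E^*$ enters: the mass of $\omega^+$ and $\omega^-$ near $\xi$ is concentrated on $E$). Then I would split $h(y) = h(\xi) + (h(y) - h(\xi))$, so that
\[
\int f\,d\omega_j^- = h(\xi)\int f\,d\omega_j^+ + c_j \int f(T_{\xi,r_j}(y))\,\bigl(h(y) - h(\xi)\bigr)\,d\omega^+(y).
\]
The first term converges to $h(\xi)\int f\,d\omega_\infty^+$ by hypothesis. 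The error term is bounded by $\|f\|_\infty \, c_j \int_{B(\xi, R r_j)} |h(y) - h(\xi)|\,d\omega^+(y)$, and since $c_j \approx \omega^+(B(\xi,r_j))^{-1}$ up to a dimensional comparability built into the normalization (more precisely $c_j\,\omega^+(B(\xi, Rr_j))$ stays bounded because $\omega_j^+(B(0,R))$ is bounded), this is controlled by $\|f\|_\infty \cdot C_R \cdot \frac{1}{\omega^+(B(\xi, Rr_j))}\int_{B(\xi, Rr_j)} |h(y) - h(\xi)|\,d\omega^+(y)$, which tends to $0$ precisely because $\xi$ is a Lebesgue point of $h$ for $\omega^+$. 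Thus $\int f\,d\nu = h(\xi)\int f\,d\omega_\infty^+$ for all such $f$, whence $\nu = h(\xi)\,\omega_\infty^+$.

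The main obstacle I anticipate is the bookkeeping needed to justify replacing $d\omega^-$ by $h\,d\omega^+$ on the blown-up balls without any doubling assumption — one must be careful that the portions of $B(\xi, Rr_j)$ outside $E$ carry negligible $\omega^+$- and $\omega^-$-mass (using $\xi \in E^*$), and that the normalizing constants $c_j$ are controlled from above and below in terms of $\omega^+(B(\xi, r_j))$ only through the already-established boundedness of $\omega_j^+(B(0,R))$, rather than through any comparison like $\omega^+(B(\xi,Rr_j)) \approx \omega^+(B(\xi,r_j))$ which would require doubling. The phrase ``so that we don't need to assume any doubling properties'' in the lemma statement signals that this is exactly the point requiring care: I would handle it by working with the measures $\omega_j^\pm$ directly and only invoking uniform mass bounds on fixed balls $B(0,R)$, together with the Lebesgue point property stated with respect to the measure $\omega^+$ itself (not normalized Lebesgue measure), so that the averaging is intrinsically compatible with the lack of doubling.
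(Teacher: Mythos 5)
Your proposal is correct and follows essentially the same route as the paper: test against $C_c$ functions, use the weak convergence together with $\xi\in\Lambda_1$ to bound $c_j\,\omega^{\pm}(B(\xi,Rr_j))$, discard the mass off $E$ via the density property of $E^*$, replace $d\omega^-$ by $h\,d\omega^+$ on $E$, and kill the term with $h-h(\xi)$ by the Lebesgue point property; your closing paragraph identifies precisely the error terms (the paper's $I_j^2$, $I_j^3$, $I_j^4$) and the correct way to control them without doubling. The only cosmetic difference is your initial compactness/subsequence framing, which the paper bypasses by computing $\lim_j\int\phi\,d\omega_j^-$ directly for each test function.
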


\begin{proof}
Let $\phi\in C_{c}(\bR^{n+1})$ have support in $B(0,M)$ for some $M>0$. Let $\phi_{\xi,r_{j}}=\phi\circ T_{\xi,r_{j}}$. Then
\begin{align*}
\lim_{j\rightarrow\infty}  \int \phi\,d\omega^-_j & =
\lim_{j\rightarrow\infty}  c_{j} \int \phi_{\xi,r_{j}}d\omega^- \\
&=\lim_{j\rightarrow\infty}  c_{j}\int_E \phi_{\xi,r_{j}}d\omega^-+\lim_{j\rightarrow\infty} c_{j}\int_{\d\Omega^-\backslash E} \phi_{\xi,r_{j}}d\omega^-
 =\lim_{j\rightarrow\infty} I^{1}_{j}+\lim_{j\rightarrow\infty} I^{2}_{j}
\end{align*}
Observe that $\supp \phi_{\xi,r_{j}}\subset T_{\xi,r_{j}}^{-1}(B(0,M))=B(\xi,Mr_{j})$, and since $\xi\in \Gamma\subset \Lambda_1$,
\begin{align}
\limsup_{j\rightarrow\infty} c_{j} \,\omega^-(B(\xi,Mr_{j})) &= h(\xi)\,\limsup_{j\rightarrow\infty} c_j\,\omega^+(B(\xi,Mr_{j}))\notag \\ 
&= h(\xi)\,\limsup_{j\rightarrow\infty} \omega^+_{j}(B(0,M))\leq h(\xi)\,\omega^+_{\infty}(\overline B(0,M))<\infty. \label{e:lmsup}
\end{align}
Thus, using the condition that $\xi\in E^{*}$,
\begin{align*}
\limsup_{j\rightarrow\infty} &  I_{j}^{2}
 \leq \|\phi\|_{\infty} \limsup_{j\rightarrow\infty} c_{j}{\omega^-(B(\xi,Mr_{j})\setminus E)}\\
& \leq   \|\phi\|_{\infty} \left(\limsup_{j\rightarrow\infty} \frac{\omega^-(B(\xi,Mr_{j})\setminus E)}{\omega^-(B(\xi,Mr_{j}))}\right) \,\Bigl(\limsup_{j\rightarrow\infty} c_{j}\, \omega^-(B(\xi,Mr_{j})) \Bigr)\stackrel{\eqn{lmsup}}{=}0.
\end{align*}
On the other hand,
\begin{align*}
\lim_{j\rightarrow\infty} I_{j}^{1}
& =\lim_{j\rightarrow\infty} c_{j}\int_{E}h \,\phi_{\xi,r_{j}}d\omega^+\\
& = h(\xi)\, \lim_{j\rightarrow\infty}  c_{j}\int_{\d\Omega^+} \phi_{\xi,r_{j}}d\omega^+
-h(\xi)\,\lim_{j\rightarrow\infty} c_{j}\int_{\d\Omega^+\backslash E} \phi_{\xi,r_{j}}d\omega^+\\
&\quad+\lim_{j\rightarrow\infty}  c_{j}\int_{E} \ps{ h-h(\xi)} \,\phi_{\xi,r_{j}}d\omega^+\\
& = h(\xi)\lim_{j\rightarrow\infty} \int \phi\, d\omega^+_{j}-\lim_{j\rightarrow\infty} I_{j}^{3}+\lim_{j\rightarrow\infty} I_{j}^{4}.
\end{align*}
Since the first term on right hand side equals $h(\xi)\int \phi \,d\omega^+_{\infty}$, all that remains to show is that $\lim_{j\rightarrow\infty} I_{j}^{3}=
\lim_{j\rightarrow\infty} I_{j}^{4}=0$. This follows easily using that $\xi\in \Gamma$:
\[
\lim_{j\rightarrow\infty} I_{j}^{3}
 \leq \|\phi\|_{\infty}\, \limsup_{j\rightarrow\infty}  c_{j}\omega^+(B(\xi,Mr_{j}))\;\frac{\omega^+(B(\xi,Mr_{j})\setminus E)}{\omega^+(B(\xi,Mr_{j}))} \stackrel{\eqn{lmsup}}{=}0
\]
and analogously,
\[
\lim_{j\rightarrow\infty} I_{j}^{4}
 \leq \|\phi\|_{\infty}\, \limsup_{j\rightarrow\infty}  c_{j}\omega^+(B(\xi,Mr_{j}))\;\avint_{B(\xi,Mr_{j})} |h-h(\xi)|\, d\omega^+\stackrel{\eqn{lmsup}}{=}0
\]
\end{proof}

\vv
{ Next  we prove an analogue of some of the tools in \cite{KT99} and \cite{KT06}. We show that blow ups of harmonic measure and Green function converge to quantities similar to the harmonic measure and Green function with pole at infinity introduced by Kenig and Toro. 
}
\vv

\begin{lemma}
Let $\Omega^+\subset \bR^{n+1}$ be a $\Delta$-regular domain and $\Omega^-={\rm ext}(\Omega^+)$,
so that $\partial\Omega^+=\partial\Omega^-$. 
Let $\omega^{\pm}$ be the harmonic measures for $\Omega^\pm$. Let $\xi\in \d\Omega^{+}$ and
$\omega_\infty^+\in \Tan(\omega^+,\xi)$, with
$c_{j}\geq 0$, and $r_{j}\rightarrow 0$ such that $\omega_{j}^+=c_{j}T_{\xi,r_{j}}[\omega^+]\rightarrow \omega_{\infty}^+$. Let $\Omega_{j}^\pm=T_{\xi,r_{j}}(\Omega^\pm)$. Then there is a subsequence and a closed set $\Sigma\subset \R^{n+1}$ such that 
\begin{enumerate}[(a)]
\item $\d\Omega_{j}^+\cap K\rightarrow \Sigma\cap K$ in the Hausdorff metric for any compact set $K$.
\item $\Sigma^{c}=\Omega_{\infty}^+\cup \Omega_{\infty}^-$ where $\Omega_{\infty}^+$ is a nonempty open set and $\Omega_{\infty}^-$ is also open but possibly empty. Further, they
satisfy that for any ball $B$ with $\cnj{B}\subset \Omega_{\infty}^\pm$, a neighborhood of $\cnj{B}$ is contained in $\Omega_{j}^\pm$ for all $j$ large enough.
\item $\supp \omega_{\infty}^+\subset \Sigma$. 
\item Let $u^+(x)=G_{\Omega^+}(x,x^+)$ on $\Omega^+$ and $u^+(x)=0$ on $(\Omega^+)^{c}$. Set
\[u_{j}^+(x)=c_{j}\,u^+(xr_{j}+\xi)\,r_{j}^{n-1}.\]
Then $u_{j}^+$ converges uniformly on compact subsets of $\bR^{n+1}$ to a nonzero function $u_{\infty}^+$ that is harmonic on $\Omega_{\infty}^+$ and satisfies
\begin{equation}
u_{\infty}^+(y)\lec  \omega^+_{\infty}({\overline B(x,2\delta_0^{-1}r)})\,r^{1-n}\quad\mbox{ for } x\in \Sigma, \;\; r>0,\;\; \mbox{ and }\;\;y\in B(x,r)\cap \Omega^+_{\infty}.
\label{e:u<wr}
 \end{equation}
{and for any smooth compactly supported function $\phi$,
 \begin{equation}\label{e:ibp}
 \int_{\d\Omega^{+}} \phi \,d\omega_{\infty}^{+}=\int_{\Omega^{+}} \Delta\phi \,u_{\infty}^{+}\,dx
 \end{equation}}
 \end{enumerate}
Suppose now that $\Omega^-$ is also connected and $\Delta$-regular. Define analogously $\omega_{j}^{-}$, $u^-$, $u_j^-$ and $u^-_\infty$ and suppose that $\omega_{j}^{-}$ converges weakly to $\omega_{\infty}^{-}=h(\xi)\omega_{\infty}^{+}$ for some number $h(\xi)\in (0,\infty)$ (which happens, for example, if $\xi\in \Gamma$ where $\Gamma$ is as in \Lemma{samew}).
Then $\Omega_{\infty}^-\neq\varnothing$ and for a suitable subsequence, (d) holds for $u_{j}^-$, $u_{\infty}^-$, and
 $\Omega^-_\infty$. Furthermore, if we set $u_{\infty}=h(\xi) u_{\infty}^+-u_{\infty}^-$, then:
 \begin{enumerate}[(a)]
 \setcounter{enumi}{4}
\item $u_{\infty}$ extends to a continuous harmonic function on $\bR^{n+1}$.
\item $\Sigma=\{u_{\infty}=0\}$, with $u_\infty>0$ on $\Omega_\infty^+$ and $u_\infty<0$ on $\Omega_\infty^-$. Further, $\Sigma$ is a real analytic variety of dimension $n$.
\item $d\omega_{\infty}^+=-\frac{\partial u_\infty}{\partial \nu}\,d\sigma_{\d\Omega_{\infty}^+}$, where $\sigma_{S}$ stands for the surface measure on a surface $S$ and $\frac{\partial}{\partial \nu}$ is the outward normal derivative.
\end{enumerate}
\label{l:blowup1}
\end{lemma}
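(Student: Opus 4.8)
The plan is to run the blow‑up machinery of Kenig--Preiss--Toro \cite{KPT09} and Kenig--Toro \cite{KT99,KT06}, but keeping careful track of what survives when only $\Delta$-regularity is available in place of the NTA hypothesis (in particular, without any doubling of harmonic measure), using the estimates of \Section{harmeasure}. First, since $\xi\in\d\Omega^{+}$ we have $0\in\d\Omega_{j}^{+}$ for every $j$, so by a routine diagonal/compactness argument the closed sets $\d\Omega_{j}^{+}$ have a subsequence converging, uniformly on compacta in the Hausdorff metric, to a nonempty closed set $\Sigma\ni 0$; this is (a). Since $\Sigma^{c}$ is open and, on each of its connected components, every point eventually (in $j$) lies on one fixed side of $\d\Omega_{j}^{+}$, I would define $\Omega_{\infty}^{+}$ (resp.\ $\Omega_{\infty}^{-}$) to be the union of the components whose points lie eventually in $\Omega_{j}^{+}$ (resp.\ $\Omega_{j}^{-}$); Hausdorff convergence then gives the neighborhood statement in (b). For (c) one uses that $\supp\omega_{j}^{+}\subset\d\Omega_{j}^{+}$: if $x\notin\Sigma$, by Hausdorff convergence $x$ has a neighborhood $U$ disjoint from $\d\Omega_{j}^{+}$ for all large $j$, whence $\omega_{\infty}^{+}(U)\le\liminf_{j}\omega_{j}^{+}(U)=0$, so $\supp\omega_{\infty}^{+}\subset\Sigma$.

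Next I would prove (d). Applying \eqn{w>g} to $\Omega^{+}$ with a ball centred at a boundary point near $\xi$ and pole $x^{+}$ (which for large $j$ is far from that ball), and rescaling by $T_{\xi,r_{j}}$, one gets on each $B(0,M)\cap\Omega_{j}^{+}$ a bound $u_{j}^{+}\lesssim\omega_{j}^{+}(B(0,CM))$; together with $u_{j}^{+}\equiv 0$ off $\Omega_{j}^{+}$ and interior elliptic estimates this makes $\{u_{j}^{+}\}$ locally uniformly bounded and equicontinuous, so a further subsequence converges locally uniformly to $u_{\infty}^{+}\ge0$, harmonic in $\Omega_{\infty}^{+}$ and vanishing on $\Omega_{\infty}^{-}\cup\Sigma$. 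Since the zero extension of a Green function is subharmonic off its pole with Riesz measure equal to harmonic measure, one has $\Delta u^{+}=\omega^{x^{+}}-\delta_{x^{+}}$ as distributions on $\R^{n+1}$; testing against $\phi\circ T_{\xi,r_{j}}$ and rescaling gives $\int\phi\,d\omega_{j}^{+}=\int\Delta\phi\,u_{j}^{+}\,dx$ for $j$ large (the rescaled pole $T_{\xi,r_{j}}(x^{+})\to\infty$ leaves $\supp\phi$), and passing to the limit yields \eqn{ibp}. A cheap but essential consequence is $u_{\infty}^{+}\not\equiv0$: otherwise $\int\Delta\phi\,u_{j}^{+}\,dx\to0$ would force $\omega_{\infty}^{+}=0$, impossible for a tangent measure; hence $\Omega_{\infty}^{+}\ne\varnothing$, completing (b). Finally \eqn{u<wr} is the rescaled limit of \eqn{w>g}, using Hausdorff convergence of the boundaries and weak convergence of $\omega_{j}^{+}$.

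For the two‑phase statements I would apply the above to $\Omega^{-}$ along a further subsequence: since $\d\Omega_{j}^{-}=\d\Omega_{j}^{+}$, the limit set is the same $\Sigma$, and we obtain $u_{\infty}^{-}\ge0$ harmonic in $\Omega_{\infty}^{-}$ with $\int\Delta\phi\,u_{j}^{-}\,dx=\int\phi\,d\omega_{j}^{-}\to h(\xi)\int\phi\,d\omega_{\infty}^{+}$, using $\omega_{\infty}^{-}=h(\xi)\omega_{\infty}^{+}$ (which holds by \Lemma{samew} when $\xi\in\Gamma$); as before $u_{\infty}^{-}\not\equiv0$, so $\Omega_{\infty}^{-}\ne\varnothing$. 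Set $u_{\infty}=h(\xi)u_{\infty}^{+}-u_{\infty}^{-}$; it is continuous (locally uniform limit of continuous functions, and equal to $h(\xi)u_{\infty}^{+}$ on $\overline{\Omega_{\infty}^{+}}$, to $-u_{\infty}^{-}$ on $\overline{\Omega_{\infty}^{-}}$, to $0$ on $\Sigma$), and for every $\phi\in C_{c}^{\infty}$, $\int\Delta\phi\,u_{\infty}\,dx=h(\xi)\int\phi\,d\omega_{\infty}^{+}-\int\phi\,d\omega_{\infty}^{-}=0$, so $\Delta u_{\infty}=0$ on $\R^{n+1}$ and $u_{\infty}$ is harmonic; this is (e). The rest is then ``soft''. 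Being harmonic, $u_{\infty}$ is real‑analytic, and it is nonconstant (equal to $h(\xi)u_{\infty}^{+}\not\equiv0$ on $\Omega_{\infty}^{+}$, and vanishing at $0$). The open set $\{u_{\infty}>0\}$ is disjoint from $\Sigma$ (where $u_{\infty}=0$) and from $\Omega_{\infty}^{-}$ (where $u_{\infty}\le0$), hence contained in $\Omega_{\infty}^{+}$; conversely if $u_{\infty}$ vanished at an interior point of $\Omega_{\infty}^{+}$, then by the minimum principle it would vanish on a ball, and by unique continuation identically—a contradiction; so $\{u_{\infty}>0\}=\Omega_{\infty}^{+}$, $\{u_{\infty}<0\}=\Omega_{\infty}^{-}$, and $\{u_{\infty}=0\}=\Sigma=\d\Omega_{\infty}^{+}=\d\Omega_{\infty}^{-}$. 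The singular set $\Sigma_{\mathrm{sing}}=\{u_{\infty}=|\nabla u_{\infty}|=0\}$ has dimension $\le n-1$ and cannot be all of $\Sigma$ (else $\Sigma$ would not disconnect $\R^{n+1}$, contradicting $\Sigma^{c}=\Omega_{\infty}^{+}\sqcup\Omega_{\infty}^{-}$ with both parts nonempty), so $\Sigma\setminus\Sigma_{\mathrm{sing}}$ is a nonempty real‑analytic $n$-manifold and $\dim\Sigma=n$; this is (f). For (g): on $\Sigma\setminus\Sigma_{\mathrm{sing}}$ the function $u_{\infty}^{+}$ is harmonic in $\Omega_{\infty}^{+}$ and vanishes on a smooth boundary piece, so Green's identity identifies $\omega_{\infty}^{+}$ there with a multiple of $-\frac{\partial u_{\infty}}{\partial\nu}\,d\sigma_{\d\Omega_{\infty}^{+}}$; and $\omega_{\infty}^{+}(\Sigma_{\mathrm{sing}})=0$ since $\Sigma_{\mathrm{sing}}$ is $\HH^{n}$-null while $\omega_{\infty}^{+}$ has an $n$-dimensional upper density bound—this last from the Alt--Caffarelli--Friedman monotonicity formula: $\Omega^{+}$ is admissible by \Lemma{CDCadmissible}, and $\xi\in\Gamma$ forces $\gamma(\xi,\cdot)$ bounded near $0$ (\Theorem{ACF}), whence $\omega^{+}(B(\xi,r))\lesssim_{\xi}r^{n}$ by \Lemma{beurling}, a bound that survives rescaling and, via \Theorem{ttt}, holds at all centres. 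Thus \eqn{ibp} for both sides combines with this to give (g) on all of $\Sigma$.

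The step I expect to be the main obstacle is twofold. Conceptually it is the cancellation $\int\Delta\phi\,u_{\infty}\,dx=0$: this is exactly where mutual absolute continuity of $\omega^{\pm}$ at $\xi$ enters (through $\omega_{\infty}^{-}=h(\xi)\omega_{\infty}^{+}$), and it is what turns the two one‑sided objects $u_{\infty}^{\pm}$ into a globally harmonic function, hence makes $\Sigma$ a genuine analytic variety. Technically, the delicate work is executing (a)--(d) under only $\Delta$-regularity: one must produce non‑degenerate blow‑ups—nonempty $\Omega_{\infty}^{\pm}$ and nonzero $u_{\infty}^{\pm}$—and pass to the limit in the Green functions using only the comparatively weak estimates of \Section{harmeasure}, which is precisely the point at which this argument must deviate from the doubling‑based approach of \cite{KPT09}.
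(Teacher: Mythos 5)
Your overall architecture is the paper's: (a)--(c) by Hausdorff-metric compactness and $\supp\omega_j^+\subset\d\Omega_j^+$, (d) from the bound $u_j^+\lesssim \omega_j^+$ coming from \eqn{w>g} together with the identity $\int\phi\,d\omega_j^\pm=\int\Delta\phi\,u_j^\pm\,dx$, (e) by Weyl's lemma, and (f) by analyticity and unique continuation. There are, however, two places where your justification does not hold as written. The first is in (d): you claim that local uniform boundedness, the vanishing of $u_j^+$ off $\Omega_j^+$, and \emph{interior} elliptic estimates give equicontinuity of $\{u_j^+\}$ on compact subsets of $\R^{n+1}$. Interior estimates only give equicontinuity on compact subsets of $\Omega_\infty^+$ (where, by (b), the $u_j^+$ are eventually harmonic); near $\Sigma$ the domains move with $j$, and a family of functions that are harmonic inside varying domains and zero outside need not be equicontinuous across the moving boundaries. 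What is needed is a modulus of continuity at $\d\Omega_j^+$ which is \emph{uniform in $j$}, and this is exactly where $\Delta$-regularity enters in the paper: one splits $B(0,M)$ into the points at distance $\geq\delta$ from $(\Omega_\infty^+)^c$ and the rest, and on the latter one combines the boundary H\"older estimate of \Corollary{holder} with the bound $u_j^+\lesssim r(B)^{1-n}\omega_j^+(2\delta_0^{-1}B)$ to get $u_j^+\lesssim_M \omega_j^+(B(0,2\delta_0^{-1}M))\,\delta^\alpha$ there. Note that your nondegeneracy step ($u_\infty^+\not\equiv0$, hence $\Omega_\infty^+\neq\varnothing$) also relies on passing to the limit in $\int\Delta\phi\,u_j^+\,dx$, so it rests on the same missing ingredient; the paper proves $\Omega_\infty^\pm\neq\varnothing$ by the same two estimates (if every point of $\supp\phi$ were within $\delta_j\to0$ of $(\Omega_j^+)^c$, then $\int\phi\,d\omega_j^+\lesssim\delta_j^\alpha\to0$, contradicting $\int\phi\,d\omega_\infty^+>0$). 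Also, your component-wise definition of $\Omega_\infty^\pm$ tacitly assumes each point of $\Sigma^c$ eventually lies on one fixed side; this requires a further diagonal extraction (the paper does it over rational balls).

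The second problem is in (g). The bound $\omega^+(B(\xi,r))\lesssim_\xi r^n$ at $\xi\in\Gamma$ (via \Theorem{ACF} and \Lemma{beurling}) does \emph{not} ``survive rescaling'': for $\omega_\infty^+=\lim c_jT_{\xi,r_j}[\omega^+]$ one would need $c_jr_j^n\lesssim1$, which amounts to a positive lower $n$-density of $\omega^+$ at $\xi$ --- precisely what may fail here (this is the zero-density regime the main argument must later handle), and \Theorem{ttt} does not supply density bounds at other centres either. So your argument that $\omega_\infty^+(\Sigma_{\mathrm{sing}})=0$ is unsupported. The fact itself is true, but should be derived from \eqn{ibp}: testing with a bump adapted to $B(x,2r)$, $x\in\Sigma$, and using that $u_\infty$ is smooth and vanishes on $\Sigma$ (so $u_\infty^+\lesssim_{x} r$ on $B(x,2r)$) gives $\omega_\infty^+(B(x,r))\lesssim_x r^{-2}\,r^{n+1}\,r= r^n$. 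Alternatively, follow the paper's route, which avoids any density bound: $\d\Omega_\infty^+$ is an analytic variety with locally finite $\cH^n$-measure, so $\Omega_\infty^+$ has locally finite perimeter by Theorem \ref{thm:BV-Criterion}; Lojasiewicz's structure theorem plus the separation property of the top-dimensional strata gives $\cH^n(\d\Omega_\infty^+\setminus\d^*\Omega_\infty^+)=0$; and then the generalized Gauss--Green theorem (Theorem \ref{thm:Gauss-Green}) applied to \eqn{ibp} yields the identity of measures in (g) for all test functions at once, with no separate treatment of the singular set.
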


\begin{proof}
First, we establish a few estimates. Note that if both $\Omega^+$ and $\Omega^-$ are connected, then for $j$ large enough,
\begin{align}\label{e:phiw}
\int \phi \,d\omega_{j}^\pm
= c_{j} \int \phi_{\xi,r_{j}} d\omega^\pm&
=c_{j} \int \Delta \phi_{\xi,r_{j}} u^\pm\,dx
=c_{j}\,r_{j}^{-2}\int \Delta \phi\ps{\frac{x-\xi}{r_{j}}} u^\pm(x)\,dx\\ &
=c_{j}\,r_{j}^{n-1} \int \Delta \phi(y)\,u^\pm(r_{j}y+\xi)\,dy
=\int \Delta \phi \,\,u_{j}^\pm\,dx,\nonumber
\end{align}
since the pole lies outside the $\supp\phi_{\xi,r_{j}}$ for sufficiently large $j$. Moreover, if $B$ is centered on $\d\Omega_{j}^\pm$, then for $x\in B\cap\Omega_j$ and $j$ large enough,
\begin{align}\label{e:uij<w}
u_{j}^\pm(x) &
=c_{j}\,r_{j}^{n-1}\, u(r_{j}x+\xi)\\
&
\stackrel{\eqn{w>g}}{\lec} c_{j}\,r_{j}^{n-1}(r_{j}r(B))^{1-n}\, \omega^\pm(2\delta_0^{-1}r_{j} B+\xi)
=r(B)^{1-n}\,\omega_{j}^\pm(2\delta_0^{-1}B).\nonumber
\end{align}

\vv
Next we prove the statements (a)-(g):
\vv

(a) This follows from a standard diagonalization argument, and so we omit its proof. \\

(b) First we show that there are balls $B^\pm$ so that, by passing to a subsequence, $B^{\pm}\subset \Omega_{j}^\pm$ for all $j$ large.

We will focus first on showing the existence of $B^+$. Suppose there is no such ball.  Let $\phi$ be any continuous compactly supported nonnegative function for which $\int \phi\, d\omega_{\infty}^+\neq 0$, and let $M>0$ be so that $\supp \phi \subset B(0,M)$. Thus, there must be $x_{0}\in B(0,M)\cap \supp \omega_{\infty}^+$. Let $\delta_{j}=\sup\{\dist(x,(\Omega_{j}^+)^{c}):x\in \supp \phi\}$, which goes to zero by assumption. For $x\in \supp\phi$, let $\zeta_{j}(x)\in (\Omega_{j}^+)^c$ be closest to $x$, so 
that $|x-\zeta_{j}(x)|\leq \delta_{j}$. 
Notice that for all $x\in\supp\phi$,  $|x-x_{0}|\leq |x|+|x_{0}|< 2M$ and also 
$|x-\zeta_j(x)|\leq \delta_j< 2M$ for $j$ big enough.
 Thus, for $j$ large enough, taking into account that $\zeta_j(x) \in\d\Omega_j^+$ if $x\in\Omega_j^+$, we get
\begin{align*}
0<\int \phi\, d\omega_{j}^+ 
& =\int_{\Omega_j^+} \Delta \phi \,u_{j}^+\,dx
\stackrel{\eqn{holder}}{\lec} \int_{\Omega_j^+} |\Delta \phi| \biggl(\,\sup_{B(\zeta_{j}(x),2M)} u_{j}^+\biggr) \ps{\frac{|x-\zeta_{j}(x)|}{2M}}^{\alpha}\,dx\\
& \leq \int |\Delta \phi|\,dx\, \biggl(\,\sup_{B(x_{0},4M)} u_{j}^+\biggr)\ps{\frac{\delta_{j}}{2M}}^{\alpha}\\
& \stackrel{\eqn{uij<w}}{\lec} \int |\Delta \phi| \,dx\;\omega_{j}^+(B(x_{0},8\delta_0^{-1}M)) \,(4M)^{1-n}\ps{\frac{\delta_{j}}{2M}}^{\alpha}
\end{align*}
and thus
\begin{align*}
0<\int \phi \,d\omega_\infty^+ 
& \lec \limsup_{j\rightarrow\infty}  \int |\Delta \phi|\,dx\; \omega_{j}^+(B(x_{0},8\delta_0^{-1}M)) (4M)^{1-n}\ps{\frac{\delta_{j}}{2M}}^\alpha\\
& \lec_{M,\phi} \ps{\limsup_{j\rightarrow\infty}  \omega_{j}^+(B(x_{0},8\delta_0^{-1}M))}  \lim_{j}\delta_{j}^{\alpha}\\
&\leq \omega^+_{\infty}(\overline B(x_{0},8\delta_0^{-1}M)) \cdot 0=0,
\end{align*}
which is a contradiction. Thus, there is $B^+\subset \Omega_{j}^+$ for all large $j$ (after passing to a subsequence). In case that $\Omega^-$ is a $\Delta$-regular domain, if $\xi\in \Gamma$, we run the same argument on $\Omega_{j}^-$, recalling from the previous lemma that $\omega^-_{j}\rightarrow h(\xi)\omega_{\infty}^+$.

Let $\mathscr{Q}$ be the collection of open balls with rational centers and rational radii whose closure is contained in $\Sigma^{c}$. By the previous claim, $\mathscr{Q}\neq\varnothing$. 
Let $B\in\mathscr{Q}$, so that for some $\alpha_{B}>1$, $\alpha_{B} B\subset \Omega^+_j\cup\Omega^-_j$ for all sufficiently large $j$. In particular, either $\alpha_{B} B\subset \Omega_{j}^+$ for infinitely many $j$, or $\alpha_{B} B\subset \Omega_{j}^-$ for infinitely many $j$. By a diagonalization argument, we can pass to a subsequence so that for all such balls $B$, $\alpha_{B}B\subset \Omega_{j}^+$ for all but finitely many $j$ or  $\alpha_{B}B\subset \Omega_{j}^-$ for all but finitely many $j$. Let $\mathscr{Q}^+$ be those balls in $\mathscr{Q}$ that are contained in all but finitely many $\Omega_{j}^+$ (after passing to this subsequence), $\mathscr{Q}^-=\mathscr{Q}\backslash \mathscr{Q}^+$, and set
\[
\Omega_\infty^\pm=\bigcup_{B\in \mathscr{Q}^\pm}B.
\]
By the previous claim, $\Omega_\infty^+\neq\varnothing$, and also $\Omega_\infty^-\neq\varnothing$ if $\Omega^-$ is a connected $\Delta$-regular domain. It is easy to check that $\Omega_\infty^+$ and
$\Omega_\infty^-$ satisfy the properties stated in (b).\\

(c) To prove this we consider a ball $B\subset \cnj{B} \subset \Sigma^{c}$. Then 
\[
\omega^+_{\infty}(B)\leq \liminf_{j\rightarrow\infty} \omega_{j}^+(B)\leq \liminf_{j\rightarrow\infty}\omega_{j}^+((\d\Omega_{j}^+)^{c})=0.\]
Thus, $\supp \omega^+\subset \Sigma$. \\

(d) 
Let $B\subset \Omega_{\infty}^+$ be a ball centered at $x_B$ such that $r(B)=\dist(x_{B},\d\Omega_{\infty}^+)/2$. For $j$ large enough, there is $y_{j}\in 3B\cap \d\Omega_{j}^+$. Then
\begin{align*} \limsup_{j\rightarrow\infty} \sup_{B} u_{j}^+ 
& \leq \limsup_{j\rightarrow\infty}  \sup_{B(y_{j},6r(B))} u_{j}^+
\stackrel{\eqn{uij<w}}{\lec} \limsup_{j\rightarrow\infty}  r(B)^{1-n} \omega_{j}^+(B(y_{j},12\delta_0^{-1}r(B)))\\
& \leq \limsup_{j\rightarrow\infty}  r(B)^{1-n} \omega_{j}^+(24\delta_0^{-1}B)
\leq r(B)^{1-n} \omega_\infty^+ \bigl(\cnj{24\delta_0^{-1}B}\bigr)<\infty.\end{align*}
Thus, $u_{j}^+$ is uniformly bounded on compact subsets of $\Omega_{\infty}^+$ and thus we may pass to a subsequence so that it converges uniformly on compact subsets of $\Omega_{\infty}^+$ to a function $u_{\infty}^+$ harmonic in $\Omega_\infty^+$. Define $u_{\infty}^+=0$ on $(\Omega_{\infty}^+)^{c}$. We now claim that $u_{j}^+\rightarrow u_{\infty}^+$ uniformly on compact subsets of $\bR^{n+1}$. 

To prove our claim let $M,\ve>0$ and consider the sets 
\[F=\{x\in B(0,M): \dist(x,(\Omega_{\infty}^+)^{c})\geq \delta \}\quad \mbox{ and }\quad G=B(0,M)\backslash F.\] 
For $x\in G \cap \Omega^+_\infty$, let $x'\in \d\Omega_{\infty}^+$ be closest to $x$, so that $|x-x'|<\delta$. There is $x_{j}\in \d\Omega_{j}^+$ converging to $x'$, and so, for $j$ big enough,
\begin{align*}
u_{j}^+(x) &
\stackrel{\eqn{holder}}{\lec} \sup_{B(x_{j},M/2)} u_{j}^+\ps{\frac{|x-x_{j}|}{M/2}}^{\alpha}\\
&
\stackrel{\eqn{uij<w}}{\lec}  \omega_{j}^+(B(x_{j},\delta_0^{-1}M))\, (M/2)^{1-n} \ps{\frac{\delta}{M}}^{\alpha}
\lec_{M} \omega_{j}^+(B(0,2M\delta_0^{-1})) \,\delta^{\alpha}.
\end{align*}
The same estimate holds trivially in the case $x\in G \backslash \Omega^+_\infty$. Thus, for every $x \in G$,
$$u_{\infty}^+(x)\lec_{M}\omega_\infty^+(\overline B(0,2\delta_0^{-1}M)) \delta^{\alpha},$$ 
and so
\[
\limsup_{j\rightarrow\infty} \sup_{G} |u_{j}^+-u_{\infty}^+|
\lec \omega_\infty^+(\overline B(0,2\delta_0^{-1}M)) \,\delta^{\alpha}.\]
On the other hand, since $F$ has compact closure in $\Omega_{\infty}^+$, 
\[
\limsup_{j\rightarrow\infty} \sup_{F} |u_{j}^+-u_{\infty}^+|=0.\]
Hence, for any $\delta>0$, since $B(0,M)=F\cup G$, the last two inequalities imply
\[
\limsup_{j\rightarrow\infty} \sup_{B(0,M)} |u_{j}^+-u_{\infty}^+|\lec \omega_\infty^+(\overline B(0,2\delta_0^{-1}M))\, \delta^{\alpha},\]
which implies $u_{j}^+\rightarrow u_{\infty}^+$ uniformly on $B(0,M)$. Since this holds for each $M>0$, the claim follows. In particular, $u_{\infty}^+$ is continuous on all of $\bR^{n+1}$. 

The estimate \rf{e:u<wr} follows by arguments analogous to the ones above. { Equation \eqn{ibp} now follows from uniform convergence and \eqn{phiw}.}
\\

(e) Let $u_{\infty}=u_{\infty}^+-h(\xi)^{-1} u_{\infty}^-$. To show that $u_{\infty}$ is harmonic, let $\phi\in C_{c}^{\infty}(\bR^{n+1})$. Then, since $\omega_{j}^{-}\rightarrow \omega_{\infty}^{-}=h(\xi)\omega_{\infty}^{+}$ by assumption, 

\begin{align*} \int \Delta \phi \,u_{\infty}\,dx
& =\lim_{j\rightarrow\infty} \int \Delta \phi \, (u_{j}^+-h(\xi)^{-1}u_{j}^-)\,dx\\
& =\lim_{j\rightarrow\infty} \left(\int \phi \,d\omega_{j}^+-h(\xi)^{-1}\int \phi \,d\omega_{j}^-\right)\\
& =\int\phi\, d\omega_{\infty}^+-h(\xi)^{-1}\int \phi \,d\omega_{\infty}^-
=\int\phi \,d \omega_{\infty}^+-\int\phi \,d \omega_{\infty}^+=0.\end{align*}
and so $u_{\infty}$ is a harmonic function on $\bR^{n+1}$. \\

(f) By construction it is clear that $u_\infty=0$ in $\Sigma$. To show that $u_\infty$ does not vanish out of $\Sigma$ first we
check that $u_\infty$ is not identically $0$. To see this, we take a non-negative and smooth compactly supported function $\phi$
such that $\int \phi\,d\omega_\infty^+>0$. By \rf{e:phiw} we have
$$\int \phi\,d\omega_j^+ = \int\Delta\phi\,u_j^+\,dx,$$
and so letting $j\to\infty$, we get
$$0<\int \phi\,d\omega_\infty^+ = \int\Delta\phi\,u_\infty^+\,dx.$$
This implies that $u_\infty^+$ is not identically zero, and thus neither is $u_\infty$.

By the definition of $u_\infty$, it is clear that $u_\infty\geq0$ on $\Omega_\infty^+$ and $u_\infty\leq0$ on $\Omega_\infty^-$.
Suppose there is $z\in \Omega_{\infty}^+$ such that $u_{\infty}^+(z)=0$, say. Then by the mean value property, $u_\infty^+$
should vanish in some ball $B\subset\Omega^+$ centered at $z$.
 But since $u_{\infty}$ coincides with $u_\infty^+$ on $B$, and $u_{\infty}$ is harmonic in
 the whole $\bR^{n+1}$, 
 this should vanish identically in $\R^{n+1}$, which is a contradiction. An analogous argument shows that
$u_\infty^->0$ on $\Omega^-$, and completes the proof of $\Sigma=\{u_\infty=0\}.$

On the other hand, since $u_\infty$ is harmonic, it is also real analytic, and thus $\Sigma$ is a real analytic variety.
Its dimension is less that $n+1$ because $\Sigma\neq\R^{n+1}$. 
 To show that it has dimension equal to
$n$, consider two balls $B_1\subset \Omega^+$ and $B_2\subset\Omega^-$, so that $u_\infty>0$ on $B_1$ and $u_\infty<0$ on $B_2$.
By continuity, each segment $L$ joining $B_1$ and $B_2$ should contain a point where $u_\infty$ vanishes. That is, 
$L\cap\Sigma\neq\varnothing$. This shows that $\HH^n(\Sigma)>0$, and hence $\Sigma$ has dimension at least $n$.
\\


(g) This follows from Theorem \ref{thm:Gauss-Green} once we show that $\Omega_{\infty}^+$ is a set of locally finite perimeter and $\cH^n ({\d\Omega_{\infty}^+}\setminus {\d^{*}\Omega_{\infty}^+})=0$, where $\d^{*}\Omega_{\infty}^+\subset \d\Omega_{\infty}^+$ stands for the reduced boundary of $\Omega_{\infty}^+$. Note that $\partial\Omega_\infty^+$ is real analytic and by Theorem 3.4.8 in \cite{Federer} it has locally finite $\cH^n$ measure. Therefore, Theorem \ref{thm:BV-Criterion} implies that $\Omega_\infty^+$ has locally finite perimeter.

We claim that $\cH^n ({\d\Omega_{\infty}^+}\setminus {\d^{*}\Omega_{\infty}^+})=0$. By Lojasiewicz's structure theorem for real analytic varieties (see e.g. \cite[Theorem 6.3.3,  p. 168]{KrPa}), if $Q$ is a small enough neighborhood of a point $x_0 \in \d\Omega_{\infty}^+$, we have that 
$$Q \cap \d\Omega_{\infty}^+= V^n \cup V^{n-1} \cup \dots \cup V^0,$$
where $V^0$ is either the empty set or the singleton $\{x_0\}$ and for each $k \in \{1, \dots, n\}$, we may write $V^k$ as a finite, disjoint union $V^k = \bigcup_{j=1}^{N_k} \Gamma^k_j$, of $k$-dimensional real analytic submanifolds. Further, for each $1 \leq k \leq n$, 
$$Q \cap \overline {V^k} \supset V^{k-1} \cup \dots \cup V^0,$$
 which, in fact, says that the lower dimensional varieties cannot occur as isolated sets  (stratification). Moreover,  for $1 \leq k \leq n$ and $1 \leq j \leq N_k$, we have that $Q \cap \d \Gamma^k_j$ is a union of sets of the form $\Gamma^\ell_i$, for $1 \leq \ell <k$ and $1 \leq i \leq N_\ell$ and possibly $V^0$. Notice now that, by the mean value property, the $n$-dimensional varieties should separate the connected components of $\{u_\infty>0\}$ and $\{u_\infty<0\}$. Therefore, since the lower dimensional varieties have $\cH^n$-measure zero, it is clear that $\d\Omega_\infty^+=\d^*\Omega_\infty^+ \cup N$, where $\cH^n(N)=0$, which proves our claim.

In light of Theorem \ref{thm:Gauss-Green}, for  $\cH^n$-a.e. $x \in \d^{*}\Omega_{\infty}^+$ there exists a unique measure theoretic unit outer normal $\nu_{\Omega_{\infty}^+}(x)$ such that 
\[
\int_{\d\Omega_{\infty}^+} \phi \, d\omega_{\infty}^+=\int_{\Omega_{\infty}^+} \Delta \phi\, u_{\infty}^+\,dx=-c_n\int_{\d^*\Omega_{\infty}^+} \phi \, (\nu_{\Omega_{\infty}^+} \!\cdot\! \nabla u^+_{\infty})\, d\cH^n,\]
for all $\phi\in C_{c}^{\infty}(\bR^{n+1})$, with $c_n\,d\HH^n|_{\d_*\Omega_{\infty}^+} = d\sigma_{\d\Omega_{\infty}^+}$. The statement (g) follows from this fact and the identity above.
\end{proof}

\vv
A corollary of the previous lemma is the following.

\begin{lemma}
Let $\Omega^+$ and $\Omega^-$ be as in Lemma \ref{l:blowup1}.
Let $\xi\in \Gamma$. For every $\omega\in \Tan(\omega^+,\xi)$, there is a harmonic function $u$ on $\bR^{n+1}$ such that 
\begin{equation}\label{eq5.5}
d\omega = { - \nu_{\Omega} \!\cdot\! \nabla u}\; d\HH^n|_{\Sigma},
\end{equation}
\begin{equation}\label{eq5.6}
\supp \omega\subset \Sigma=\{u=0\}=\d\Omega, \;\; \Omega=\{u>0\}
\end{equation}
\begin{equation}
u(y)\lec  \omega({\overline B(x,2\delta_0^{-1}r)})\,r^{1-n}\quad\mbox{ for } x\in \Sigma, \;\; r>0,\;\; \mbox{and }\;y\in B(x,r)\cap \Omega,
\label{e:u2<wr}
 \end{equation}
 and
 \begin{equation}
|u(y)|\lec  h(\xi)\,\omega({\overline B(x,2\delta_0^{-1}r)})\,r^{1-n}\quad \mbox{ for } x\in \Sigma, \;\; r>0,\;\; \mbox{and }\;y\in B(x,r)\cap {\rm ext}(\Omega). \label{e:u3<wr}
\end{equation}
Moreover, there is a subsequence of $\{r_{j}\}$ so that $T_{\xi,r_{j}}(\d\Omega^+)\rightarrow \{u=0\}$ locally in the Hausdorff metric.
\label{l:blowup}
\end{lemma}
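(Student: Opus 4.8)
The plan is to obtain Lemma \ref{l:blowup} as a direct consequence of Lemma \ref{l:blowup1}. The key observation is that the hypothesis $\xi\in\Gamma$ forces every blow-up of $\omega^+$ at $\xi$ to be accompanied by a blow-up of $\omega^-$ equal to a fixed positive multiple of it, so that the two-phase part of Lemma \ref{l:blowup1} applies automatically.

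First I would unwind the definition of tangent measure: given $\omega\in\Tan(\omega^+,\xi)$, choose $c_j\ge 0$ and $r_j\to 0$ with $c_j T_{\xi,r_j}[\omega^+]\to\omega$ weakly. Since $\xi\in\Gamma$, Lemma \ref{l:samew} yields $c_j T_{\xi,r_j}[\omega^-]\to h(\xi)\,\omega$ weakly, with $h(\xi)\in(0,\infty)$. As $\Omega^-$ is connected and $\Delta$-regular by hypothesis, we are then exactly in the situation of the ``Suppose now that $\Omega^-$ is also connected and $\Delta$-regular'' part of Lemma \ref{l:blowup1}, with $\omega^+_\infty=\omega$ and $\omega^-_\infty=h(\xi)\,\omega$. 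Passing to the subsequence of $\{r_j\}$ produced by that lemma, I obtain: the closed set $\Sigma$; disjoint nonempty open sets $\Omega^\pm_\infty$ with $\Sigma^c=\Omega^+_\infty\cup\Omega^-_\infty$; functions $u^\pm_\infty$ that are harmonic and positive on $\Omega^\pm_\infty$, vanish off $\Omega^\pm_\infty$, and satisfy \eqref{e:u<wr} with $\omega^\pm_\infty$ on the right-hand side; and the globally harmonic function $u_\infty=u^+_\infty-h(\xi)^{-1}u^-_\infty$, for which $\Sigma=\{u_\infty=0\}$, $\Omega^+_\infty=\{u_\infty>0\}$, $\Omega^-_\infty=\{u_\infty<0\}$, $\Sigma$ is an $n$-dimensional real analytic variety with $\HH^n(\Sigma\setminus\partial^*\Omega^+_\infty)=0$, and $d\omega=-c_n\,(\nu_{\Omega^+_\infty}\!\cdot\!\nabla u^+_\infty)\,d\HH^n|_{\partial^*\Omega^+_\infty}$ (these are parts (a)--(g) of Lemma \ref{l:blowup1}).

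It then remains only to relabel and read off the five assertions. I would put $\Omega:=\Omega^+_\infty$ and $u:=c_n\,u_\infty$ (the constant absorbing the relation $d\sigma_{\partial\Omega}=c_n\,d\HH^n|_{\partial^*\Omega}$ between surface and Hausdorff measures), so that $\Omega=\{u>0\}$, $\ext(\Omega)=(\overline{\Omega^+_\infty})^c=\Omega^-_\infty$, and $\partial\Omega=\Sigma=\{u=0\}$; here the identity $\Sigma=\partial\Omega^+_\infty=\partial\Omega^-_\infty$ uses only that a nonconstant harmonic function changes sign at every point of its zero set (mean value property). Then \eqref{eq5.6} is the inclusion $\supp\omega\subset\Sigma$ from part (c) together with the above, and \eqref{eq5.5} is the density identity from part (g), after discarding the $\HH^n$-null, hence $\omega$-null, set $\Sigma\setminus\partial^*\Omega$. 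For \eqref{e:u2<wr}, on $\Omega=\Omega^+_\infty$ one has $u=c_n u^+_\infty$ since $u^-_\infty$ vanishes there, so \eqref{e:u<wr} with $\omega^+_\infty=\omega$ gives the bound at once; for \eqref{e:u3<wr}, on $\ext(\Omega)=\Omega^-_\infty$ one has $|u|=c_n h(\xi)^{-1}u^-_\infty$, and \eqref{e:u<wr} applied to the minus objects, combined with $\omega^-_\infty=h(\xi)\,\omega$, yields \eqref{e:u3<wr} (the factor $h(\xi)$ appearing precisely because $u^-_\infty$ is controlled by $\omega^-_\infty=h(\xi)\,\omega$). Finally, along the chosen subsequence $T_{\xi,r_j}(\partial\Omega^+)=\partial\Omega^+_j\to\Sigma=\{u=0\}$ locally in the Hausdorff metric, which is part (a).

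I do not anticipate a genuine obstacle: Lemma \ref{l:blowup} is essentially a repackaging of Lemma \ref{l:blowup1} and Lemma \ref{l:samew}. The one point that calls for care is fixing the normalization of $u$ --- choosing the combination $u^+_\infty-h(\xi)^{-1}u^-_\infty$ (rather than, say, $h(\xi)u^+_\infty-u^-_\infty$) so that $|\nabla u|$ on $\partial\Omega$ is exactly the density of $\omega$, consistently with \eqref{eq5.5} --- and, correspondingly, tracking the $h(\xi)$ factors in the two sided estimates \eqref{e:u2<wr} and \eqref{e:u3<wr}. Everything else needed (nonemptiness of $\Omega^-_\infty$, the analyticity and dimension of $\Sigma$, the negligibility of $\Sigma\setminus\partial^*\Omega$ and the fact that $\omega$ charges no subset of it) is already contained in Lemma \ref{l:blowup1}.
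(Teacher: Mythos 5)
Your proposal is correct and is exactly the paper's route: the paper states Lemma \ref{l:blowup} as an immediate corollary of Lemma \ref{l:samew} together with Lemma \ref{l:blowup1}, which is precisely the derivation you spell out. One bookkeeping remark: with your normalization $u=c_n\bigl(u^+_\infty-h(\xi)^{-1}u^-_\infty\bigr)$ the $h(\xi)$ factors cancel on $\ext(\Omega)$ (since $u^-_\infty$ is controlled by $\omega^-_\infty=h(\xi)\,\omega$ but you divide by $h(\xi)$), so you obtain \eqref{e:u3<wr} with a constant independent of $h(\xi)$ rather than with the stated factor $h(\xi)$ --- a harmless discrepancy that mirrors the paper's own wavering between $h(\xi)u^+_\infty-u^-_\infty$ (statement of Lemma \ref{l:blowup1}) and $u^+_\infty-h(\xi)^{-1}u^-_\infty$ (its proof), and is immaterial for the later use in Lemma \ref{l:tanconnect}, where $\xi$ is fixed.
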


\vv

\begin{lemma}\label{l:tanconnect}
Let $\Omega^+$ and $\Omega^-$ be as in Lemma \ref{l:blowup1} and let $\xi\in\Gamma$.
Let $\cF$ be given by \rf{eqflat}.
If $\Tan(\omega^+,\xi)\cap \cF\neq\varnothing$, then $$\lim_{r\rightarrow 0} d_{1}(T_{\xi,r}[\omega^+],\cF)= 0.$$ In particular, $\Tan(\omega^+,\xi)\subset \cF$. 
\end{lemma}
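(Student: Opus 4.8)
The plan is to reduce everything to proving that $g(r):=d_{1}(T_{\xi,r}[\omega^{+}],\cF)$ tends to $0$ as $r\to0$. Granting this, the stated inclusion $\Tan(\omega^{+},\xi)\subset\cF$ is immediate: any $\omega\in\Tan(\omega^{+},\xi)$ is a weak limit $c_{j}T_{\xi,r_{j}}[\omega^{+}]\to\omega$ with $r_{j}\to0$ and $c_{j}>0$, and since $d_{1}(\cdot,\cF)$ is unaffected by multiplying the measure by a positive constant and is weakly continuous at every measure charging $B(0,1)$, we get $d_{1}(\omega,\cF)=\lim_{j}g(r_{j})=0$; as $\cF$ is weakly closed and the infimum defining $d_{1}(\cdot,\cF)$ is attained, this forces $\omega\in\cF$.

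First I would record the soft inputs. Since $\xi\in\Gamma$ and $\d\Omega^{+}$ is Wiener regular, $\xi\in\supp\omega^{+}$, so the curve $r\mapsto T_{\xi,r}[\omega^{+}]$ is weakly continuous and always charges $B(0,1)$; hence $g$ is continuous on $(0,\infty)$, and by \eqn{dr} it obeys the scaling relation $g(rs)=d_{s}(T_{\xi,r}[\omega^{+}],\cF)$, which is what lets one transfer information between scales. Normalising by $c_{r}=\omega^{+}(B(\xi,r))^{-1}$, the family $\{c_{r}T_{\xi,r}[\omega^{+}]\}_{0<r<r_{0}}$ is precompact in the weak topology --- the needed local mass bounds come from the estimates of \Section{harmeasure} (for instance \eqn{beurling0} controls $\omega^{\pm}(B(\xi,r))/r^{n}$ by local $L^{2}$-averages of the continuous functions $u^{\pm}$), though this step requires some care because harmonic measure need not be doubling. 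Consequently (and using \Theorem{tme} and \Theorem{ttt}, whose full-measure hypotheses we may assume $\xi$ meets, $0$ lying in the support of any tangent measure) $\Tan(\omega^{+},\xi)$ is a nonempty, weakly closed $d$-cone.

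Next I would run the connectedness dichotomy. The limit set as $r\to0$ of the continuous, connected, relatively compact orbit $r\mapsto c_{r}T_{\xi,r}[\omega^{+}]$ is connected, and it coincides with $\Tan(\omega^{+},\xi)$; so $\Tan(\omega^{+},\xi)$ is connected. It then suffices to prove that $\cF\cap\Tan(\omega^{+},\xi)$ is relatively open in $\Tan(\omega^{+},\xi)$, for it is also relatively closed ($\cF$ being weakly closed) and nonempty by hypothesis, hence all of $\Tan(\omega^{+},\xi)$, giving $\Tan(\omega^{+},\xi)\subset\cF$ and thus $g(r)\to0$. For the openness, take $\omega\in\Tan(\omega^{+},\xi)$ with $d_{1}(\omega,\cF)$ small. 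By \Lemma{blowup} there is a harmonic $u$ on $\bR^{n+1}$ with $\supp\omega\subset\{u=0\}=\d\Omega$, $\Omega=\{u>0\}$ and $d\omega=-\nu_{\Omega}\!\cdot\!\nabla u\,d\HH^{n}|_{\{u=0\}}$; by \Lemma{blowup1} the set $\{u=0\}$ is a real-analytic $n$-dimensional variety separating the nonempty open sets $\{u>0\}$ and $\{u<0\}$. Smallness of $d_{1}(\omega,\cF)$ forces $\{u=0\}\cap B(0,1)$ into a thin neighbourhood of some $n$-plane $V$ through the origin; combining this with the real analyticity, the bounds \eqref{e:u2<wr}--\eqref{e:u3<wr}, and --- the key point --- the monotonicity and sharp equality case of the Alt--Caffarelli--Friedman functional $\gamma$ of \Theorem{ACF} (near-flatness of $\d\Omega$ at one scale self-improving across a whole band of scales, over which the two factors of $\gamma$ are nearly their flat values), one is pushed into the ACF equality case, whence $u$ is affine-linear, $\d\Omega=V$, and $\omega=c\,\HH^{n}|_{V}\in\cF$.

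The formal parts --- continuity and scaling of $g$, connectedness of the limit set, and ``relatively clopen implies everything'' --- are routine; the precompactness in the second step is routine modulo the care needed because harmonic measure is not doubling. The real obstacle is the rigidity making $\cF\cap\Tan(\omega^{+},\xi)$ open: upgrading ``$\supp\omega$ lies near a hyperplane in $B(0,1)$'' to ``$\supp\omega$ is a hyperplane''. This is a genuine two-phase PDE statement; it must use that $u$ is harmonic on all of $\bR^{n+1}$, that the configuration is nondegenerate because $\Omega^{-}$ is connected and $\Delta$-regular (so that neither $\{u>0\}$ nor $\{u<0\}$ is empty), and the ACF monotonicity formula together with its equality case --- the delicate ingredient being the self-improvement of near-flatness across scales that traps the blow-up in that equality case and forces $u$ to be linear.
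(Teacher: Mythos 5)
Your overall strategy has a genuine gap precisely at the point the paper flags as the main difficulty: compactness. Both of your key steps --- the connectedness of $\Tan(\omega^{+},\xi)$ as the limit set of the orbit $r\mapsto \omega^{+}(B(\xi,r))^{-1}T_{\xi,r}[\omega^{+}]$, and the deduction of $d_{1}(T_{\xi,r}[\omega^{+}],\cF)\to 0$ from $\Tan(\omega^{+},\xi)\subset\cF$ --- require that this normalized family be weakly precompact, i.e.\ that $\sup_{0<r<r_{0}}\omega^{+}(B(\xi,Rr))/\omega^{+}(B(\xi,r))<\infty$ for each $R$. That is a doubling-type condition at $\xi$ which is exactly what is not available for harmonic measure in $\Delta$-regular domains; the estimate \eqn{beurling0} bounds a single density, not ratios of masses at comparable scales, so it does not supply these bounds. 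This is why the paper states that, unlike \cite{KPT09}, it cannot rely on compactness of the cone of tangent measures and must argue differently. Your second key step, the ``openness'' rigidity ($d_{1}(\omega,\cF)$ small for $\omega\in\Tan(\omega^{+},\xi)$ forces $\omega\in\cF$), is also not proved: flatness of $\supp\omega$ at the single scale $B(0,1)$ cannot by itself force $u$ to be linear, and the appeal to the equality case of the Alt--Caffarelli--Friedman functional together with an unspecified ``self-improvement across scales'' is exactly the missing argument, not a proof. (The ACF formula is in fact not used at all in this lemma.)

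The paper's actual mechanism is different and worth contrasting. It argues by contradiction with a maximal intermediate scale: if $d_{1}(T_{\xi,s_{j}}[\omega^{+}],\cF)>\ve$ along $s_{j}\downarrow 0$ while $d_{1}(T_{\xi,r_{j}}[\omega^{+}],\cF)<\ve$, one picks the maximal $\rho_{j}\in(s_{j},r_{j})$ with distance exactly $\ve$, shows $r_{j}/\rho_{j}\to\infty$, and then uses \eqn{dr} to get $d_{\alpha}(\omega_{j},\cF)\leq\ve$ for all $1\leq\alpha<r_{j}/\rho_{j}$, where $\omega_{j}=T_{\xi,\rho_{j}}[\omega^{+}]$. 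With the specific choice $\ve=2^{-(n+4)}$ this yields the two-sided growth bounds $2^{-(n+1+\beta)\ell}\leq F_{1}(\omega_{j})/F_{2^{\ell}}(\omega_{j})\leq 2^{-(n+1-\beta)\ell}$ with $\beta<1$, which is what replaces compactness: it gives uniform mass bounds for $\nu_{j}=\omega_{j}/F_{1}(\omega_{j})$ on compact sets, hence a weak limit $\omega\in\Tan(\omega^{+},\xi)$ with $F_{2^{\ell}}(\omega)\leq 2^{(n+1+\beta)\ell}$. The rigidity then comes not from ACF but from \Lemma{blowup}: the bound $|u(y)|\lesssim\omega(\overline B(x,2\delta_0^{-1}r))\,r^{1-n}$ converts the sub-quadratic growth of $\omega$ into sub-quadratic growth of the entire harmonic function $u$, and Cauchy estimates kill all derivatives of order at least two, so $u$ is linear and $\omega\in\cF$ by \rf{eq5.5}--\rf{eq5.6}, contradicting $d_{1}(\omega_{j},\cF)=\ve$. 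So the band of scales on which flatness holds is produced by the maximality of $\rho_{j}$ inside the contradiction argument, not by a self-improvement principle; without that device, your openness claim and your compactness claim both remain unsupported.
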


 The proof combines ideas from Theorem 2.15 and Lemma 4.1 in \cite{KPT09}. In this work the proof relies
on the compactness of the cone of tangent measures. In our situation we cannot assume compactness and we 
overcome this difficulty by working specifically with the
 flat measures $\cF$ {\it and} by using the additional information on the tangent measures described by the previous lemma.

\begin{proof}
Let $c_{j}>0$ and $r_{j}\downarrow 0$ be such that $c_{j}T_{\xi,r_{j}}[\omega^+]\rightarrow \mu\in \cF$. Then, given an arbitrary $\ve>0$,
\begin{equation}\label{e:distrj}
d_1(T_{\xi,r_{j}}[\omega^+],\cF)=d_1(c_{j}T_{\xi,r_{j}}[\omega^+],\cF)<\ve
\end{equation}
if $j$ is big enough.
Assume for the sake of a contradiction that there is $s_{j}\downarrow 0$ so that
\begin{equation}\label{e:distsj}
d_1(T_{\xi,s_{j}}[\omega^+],\cF)>\ve
\end{equation}
We can assume $s_{j}<r_{j}$. Then by \eqn{distrj} and \eqn{distsj}, let $\rho_{j}\in (s_{j},r_{j})$ be the maximal number such that
\begin{equation}\label{e:distrhoj}
d_1(T_{\xi,\rho_{j}}[\omega^+],\cF)=\ve.
\end{equation}
Then by the maximality of $\rho_{j}$,
\begin{equation}\label{e:t>rho}
\sup_{t\in [\rho_{j},r_{j}]} d_1(T_{\xi,t}[\omega^+],\cF)\leq \ve.
\end{equation}
We claim $\rho_{j}/r_{j}\rightarrow 0$. If not, then we may pass to a subsequence so that $\rho_{j}/r_{j}\rightarrow t\in (0,1)$, and so 
\[
c_{j} T_{\xi,\rho_{j}}[\omega^+]=T_{0,\rho_{j}/r_{j}}\bk{c_{j}T_{\xi,r_{j}}[\omega^+]}\rightarrow T_{0,t}[\mu]\in \cF\]
which contradicts \eqn{distrhoj}. Thus, $\rho_{j}/r_{j}\rightarrow 0$, and so \eqn{t>rho} implies that for $1\leq  \alpha<r_{j}/\rho_{j}$ and $j$ large, if we set $\omega_{j}=T_{\xi,\rho_{j}}[\omega^+]$, then
\begin{equation}\label{e:ojF}
d_{\alpha}(\omega_{j},\cF)=d_{\alpha}(T_{\xi,\rho_{j}}[\omega^+],\cF)
\stackrel{\eqn{dr}}{=}d_{1}(T_{\xi,\alpha \rho_{j}}[\omega^+],\cF)\leq \ve.
\end{equation}

Let $r\geq 1$ be such that $2r<r_{j}/\rho_{j}$. Let $\mu_{j}\in \cF$ be such that $F_{2 r}(\mu_{j})=1$ and
\begin{equation}
F_{r}\ps{\frac{\omega_{j}}{F_{2 r}(\omega_{j})},\mu_{j}}
\leq F_{2 r}\ps{\frac{\omega_{j}}{F_{2 r}(\omega_{j})},\mu_{j}}
\stackrel{\eqn{ojF}}{<} 2\ve.
\end{equation}
Thus,
\begin{equation}\label{e:F-e}
F_{r}(\mu_{j})-2\ve
\leq \frac{F_{r}(\omega_{j})}{F_{2 r}(\omega_{j})}\leq  F_{r}(\mu_{j})+2\ve.
\end{equation}
Since $\mu_{j}= b_{j}\cH^{n}|_{V_{j}}$ for some $b_{j}>0$ and an $n$-plane $V_{j}$, for any $s>0$,
\[
F_s(\mu_{j})=b_{j} \frac{c_{n}}{n} s^{n+1}\]
and so $F_{r}(\mu_{j})=
2^{-(n+1)}F_{2r}(\mu_{j})$ and thus 
\[
2^{-(n+1)}\,F_{2r}(\mu_{j})-2\ve
\leq \frac{F_{r}(\omega_{j})}{F_{2 r}(\omega_{j})}\leq  2^{-(n+1)}\,F_{2r}(\mu_{j})+2\ve.
\]
Recalling that $ F_{2 r}(\mu_{j})= 1$, we deduce
$$2^{-(n+1)}-2\ve
\leq \frac{F_{r}(\omega_{j})}{F_{2 r}(\omega_{j})}\leq  2^{-(n+1)}+2\ve.
$$
We choose now $\ve = \frac18\,2^{-(n+1)} = 2^{-(n+4)}$. With this particular choice we get
$$\frac34 \,2^{-(n+1)} \leq 2^{-(n+1)}\Bigl(1- \frac14\Bigr)
\leq \frac{F_{r}(\omega_{j})}{F_{2 r}(\omega_{j})}\leq  2^{-(n+1)} \Bigl(1+\frac14\Bigr) \leq 
\frac43\,2^{-(n+1)}.
$$
Let $\beta=\log_2\frac43$, so that $\beta\in(0,1)$ and 
$$2^{-(n+1+\beta)}
\leq \frac{F_{r}(\omega_{j})}{F_{2 r}(\omega_{j})}\leq  2^{-(n+1-\beta)}.
$$

Iterating, we see that for any $\ell\in \bN$ for which $2^{\ell}<r_{j}/\rho_{j}$,
\[
2^{-(n+1+\beta)\ell}
\leq \frac{F_{1}(\omega_{j})}{F_{2^{\ell}}(\omega_{j})}\leq  2^{-(n+1-\beta)\ell}.\]
Since $r_{j}/\rho_{j}\rightarrow\infty$, we have that 
\begin{equation}\label{e:f1/fj}
2^{-(n+1+\beta)\ell}
\leq \liminf_{j} \frac{F_{1}(\omega_{j})}{F_{2^{\ell}}(\omega_{j})} 
\leq \limsup_{j\rightarrow\infty} \frac{F_{1}(\omega_{j})}{F_{2^{\ell}}(\omega_{j})}
  \leq  2^{-(n+1-\beta)\ell}.\end{equation}
Thus, if we set $\nu_{j}=\omega_{j}/F_{1}(\omega_{j})$ and let $\rho>0$, and pick $\ell$ so that $2^{\ell}>2\rho$, then for $j$ large,
\begin{align*}
\limsup_{j\rightarrow\infty} \nu_{j}(B(0,\rho)) &
\leq \limsup_{j\rightarrow\infty} \nu_{j}(B(0,2^{\ell}/2))\\
& \leq \limsup_{j\rightarrow\infty} 2F_{2^{\ell}}(\nu_{j})=2\frac{F_{2^{\ell}}(\omega_{j})}{F_{1}(\omega_{j})}
\stackrel{\eqn{f1/fj}}{\leq} 2^{(n+1+\beta)\ell + 1}.
\end{align*}
Therefore, $\nu_{j}$ has a subsequence that converges weakly to some measure $\omega\in \Tan(\omega^+,\xi)$. Further, $\omega$ satisfies
\[
2^{(n+1-\beta)\ell}
\leq F_{2^{\ell}}(\omega)\leq  2^{(n+1+\beta)\ell} \;\; \mbox{ and }\;\; F_{1}(\omega)=1.\]

Let $u$ be the harmonic function on $\bR^{n+1}$ satisfying the conclusions of \Lemma{blowup}. For a multiindex $\alpha$ with $|\alpha|=m\geq 2$, we have by the Cauchy estimates that, for $\ell\in \bN$, 
\begin{align*}
|\d_{\alpha} u(0)|(2^{\ell} )^{m}
& \lec \sup_{B(0,2^{\ell})} |u|
\lec \omega(B(0,2\delta_0^{-1}2^{\ell}))(2^{\ell} )^{1-n}
\lec F_{\delta_0^{-1}2^{\ell+2}}(\omega) (2^{\ell} )^{-n}\\
&\lesssim 2^{(n+1+\beta)\ell} (2^{\ell})^{-n}
=2^{\ell+\beta \ell}
\end{align*}
Since $\beta<1$, letting $\ell\rightarrow\infty$, we get
\[
|\d_{\alpha} u(0)|\lec \liminf_{\ell\rightarrow \infty} 2^{(\beta+1-m)\ell}=0.\]
Thus, the second order Taylor coefficients and higher are all zero. Hence, since $u$ is real analytic, $u$ is linear, and in particular, $\omega\in \cF$, by \rf{eq5.5} and \rf{eq5.6}. Therefore, 
\[\ve \stackrel{\eqn{distrhoj}}{=} d_{1}(T_{\xi,\rho_{j}}[\omega^+],\cF)=d_{1}(\omega_{j},\cF)\rightarrow 0,\]
which gives a contradiction.
\end{proof}

\vvv
We now finish the proof of \Theorem{alpha0}. Let 
\[
G_{1}=\bigl\{\xi\in \Gamma: \mbox{ for every } \nu\in \Tan(\omega^+,\xi) \;\mbox{ with }\;  \zeta\in \supp \nu, \;\; \Tan(\nu,\zeta)\subset\Tan(\omega^+,\xi)\bigr\}\]
and 
\[G_{2}=\bigl\{\xi\in \Gamma: \Tan(\omega^+,\xi)\neq\varnothing\bigr\}.\]
Then $\omega^+(\Gamma\backslash (G_{1}\cap G_{2}))=0$ by \Theorem{tme} and \Theorem{ttt}. Let $\xi\in G_{1}\cap G_{2}$. Since $\xi\in G_{2}$, there is $\omega_{\infty}\in \Tan(\omega^+,\xi)$ and its support is an $n$-dimensional analytic variety. Hence, there is an open set in $\supp\omega_{\infty}$ on which $\omega_{\infty}=g\,\cH^{n}|_{M}$ where $M$ is a smooth $n$-dimensional surface. In particular, for any $x\in M$, $\Tan(\omega_{\infty},x)\subset \cF$. Since $\xi\in G_{1}$, this implies $\Tan(\omega^+,\xi)\cap \cF\neq\varnothing$. Thus, by \Lemma{tanconnect}
$\Tan(\omega^+,\xi)\subset \cF$ and
$\Sigma$ is an $n$-plane.

Suppose that there is a sequence $r_{j}\to0$ so that $\beta_{\d\Omega^+,\infty}(\xi,r_{j})\geq \ve>0$ for some $\ve>0$. 
By \Lemma{blowup1}, there is a subsequence
such that $T_{\xi,r_{j}}(\d\Omega^+)$ converges in the Hausdorff metric to $\Sigma$, this implies $\beta_{\d\Omega^+,\infty}(\xi,r_{j})\rightarrow 0$, and we get a contradiction. Thus $\beta_{\d\Omega^+,\infty}(\xi,r)\rightarrow 0$ as $r\rightarrow 0$ for each $\xi\in G_1\cap G_2$, with
$G_1\cap G_2\subset \Gamma\subset E$ having full harmonic measure in $E$.

We claim now that if $\Tan(\omega^+,\xi)\subset \cF$, then
\begin{equation*}
\lim_{r\rightarrow 0}\wt\beta_{\overline{\Omega^+},\infty}(\xi,r)=0.
\end{equation*}
If this fails, then there is an $\ve>0$ and a 
sequence $r_j\to0$ such that
\begin{equation}\label{eqclaim*594}
\inf_S\,\frac{\dist_H\bigl(\overline{\Omega^+}\cap B(\xi,r_j),S\cap B(\xi,r_{j}))}{r_j}\geq \ve,
\end{equation}
where the infimum is taken over all halfspaces $S$ whose boundaries contain $\xi$. We consider 
now a subsequence $r_{j_k}$ such that the measures $\omega_{j_k}^+$ converge weakly to some measure
$\omega\in\cF$.
The arguments for the proof of Lemma \ref{l:tanconnect} show that if 
$\omega_{j_k}^+=T_{\xi,r_{j_k}}[\omega^+]$ and $\omega_{j_k}^+$ converges weakly to some measure
$\omega^+_\infty\in \cF$, then the associated function $u_\infty$ from Lemma 
\ref{l:blowup1} must be linear. Then the statement (f) from the same lemma asserts that 
$\Omega^+_\infty$ and $\Omega^-_\infty$ are disjoint half-spaces with boundary $\Sigma=\{u_\infty=0\}$. Taking into account that $\overline{\Omega^+_{j_k}} = \{u_{j_k} \geq0\}$, where $u_{j_k}=u_{j_k}^+-u_{j_k}^-$, and that
$u_{j_k}$ converges uniformly on 
compact subsets to $u_\infty$, it easy to check that
$$\frac{\dist_H\bigl(\overline{\Omega^+}\cap B(\xi,r_{j_k}),S\cap B(\xi,r_{j_{k}})\bigr)}{r_{j_k}} \to 0
\quad \mbox{ for $S=\xi + \overline{\Omega^+_\infty},$}$$
which contradicts \rf{eqclaim*594} (because $ \overline{\Omega^+_\infty}$ is a half-space
whose boundary contains $\xi$). This proves our claim and concludes the proof of \Theorem{alpha0}.

\vv

\section{The proof of \Theorem{main}}

Under the assumptions of the theorem, we will prove first that if
 $E\subset \d\Omega^+$ and $\omega^{+}\ll\omega^{-}\ll\omega^{+}$ on $E$, then $E$ contains an $n$-rectifiable subset $F$ on which $\omega^\pm$ are mutually absolutely continuous with respect to $\cH^{n}$. 
So for the moment, unless otherwise stated, we assume that $\Omega^+$ and $\Omega^-$ are as in
  \Theorem{main} and that $E\subset \d\Omega^+$ is a Borel set such that $\omega^{+}\ll\omega^{-}\ll\omega^{+}$ on $E$.
 \vv
 
 Given $\gamma>0$, a Borel measure $\mu$ and a ball $B\subset\R^d$, we denote
$$P_{\gamma,\mu}(B) = \sum_{j\geq0} 2^{-j\gamma}\,\Theta_\mu(2^jB),$$
where $\Theta_\mu(B) = \frac{\mu(B)}{r(B)^n}$, so that $P_{1,\mu}(B) = P_{\mu}(B)$.
Note that $P_{\gamma,\mu}(B) \leq P_{\Gamma,\mu}(B)$ if $\gamma>\Gamma$. It is immediate to check
that if $\|\mu\|<\infty$, then $P_{\gamma,\mu}(B)<\infty$ for any ball $B$. Indeed, we just take into
account that
\begin{equation}
P_{\gamma,\mu}(B)
=\sum_{j\geq0} 2^{-j\gamma}\,\Theta_\mu(2^jB)\leq 
\sum_{j\geq0} 2^{-j\gamma}\,\frac{\|\mu\|}{(2^j\,r(B))^n}<\infty.
\end{equation}

\vv

Given $a,\gamma>0$, we say that a ball $B$ is {\it $a$-$P_{\gamma,\mu}$-doubling} if
$$P_{\gamma,\mu}(B) \leq a \,\Theta_\mu(B).$$

\begin{lemma}\label{l:adoubling}
There is $\gamma_{0}\in (0,1)$ so that the following holds. Let $\Omega\subset \bR^{n+1}$ be any domain and $\omega$ its harmonic measure. For all $\gamma>\gamma_{0}$, there exists some big enough constant $a=a(\gamma,n)>0$ such that  for $\omega$-a.e.
$x\in\R^{n+1}$ there exists a sequence of $a$-$P_{\gamma,\omega}$-doubling balls $B(x,r_i)$, with
$r_i\to0$ as $i\to\infty$.
\end{lemma}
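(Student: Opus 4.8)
The plan is to argue by contradiction: assuming the conclusion fails on a set of positive harmonic measure, I will derive a density bound $\omega(B(x,r))\lesssim_x r^{n+\beta}$ for some $\beta>0$, valid at every point $x$ of that set and all small $r$, and for $\beta$ chosen large enough this contradicts a theorem of Bourgain on the Hausdorff dimension of harmonic measure in higher dimensions.

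Fix $x\in\R^{n+1}$ and abbreviate $t_k=\Theta_\omega(B(x,2^{-k}))$ and $p_k=P_{\gamma,\omega}(B(x,2^{-k}))=\sum_{j\ge 0}2^{-j\gamma}t_{k-j}$, which is finite since $\|\omega\|<\infty$. The elementary observation that drives everything is the telescoping identity $p_k=t_k+2^{-\gamma}p_{k-1}$, valid for all $k\in\Z$. Consequently, if $B(x,2^{-k})$ is \emph{not} $a$-$P_{\gamma,\omega}$-doubling, i.e.\ $a\,t_k<p_k$, then $(a-1)t_k<2^{-\gamma}p_{k-1}$, and hence $p_k<\bigl(\tfrac{a}{a-1}\bigr)2^{-\gamma}\,p_{k-1}=:q\,p_{k-1}$. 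Writing $q=2^{-\beta}$ with $\beta=\gamma-\log_2\tfrac{a}{a-1}$, note that $\beta\uparrow\gamma$ as $a\to\infty$, so for $a=a(\gamma,n)$ large we can arrange simultaneously that $q<1$ and that $\beta$ is as close to $\gamma$ as we wish.

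Now suppose the conclusion of the lemma fails on a Borel set $A$ with $\omega(A)>0$, and write $A=\bigcup_{N\in\N}A_N$, where $A_N$ is the set of $x$ for which $B(x,2^{-k})$ is not $a$-$P_{\gamma,\omega}$-doubling for every $k\ge N$; then $\omega(A_N)>0$ for some $N$. For $x\in A_N$ the previous inequality iterates to $p_k\le q^{k-N}p_N(x)$ for all $k\ge N$, so $t_k\le q^{k-N}p_N(x)$ and therefore $\omega(B(x,2^{-k}))=2^{-kn}t_k\lesssim_{x,N}2^{-k(n+\beta)}$; comparing arbitrary radii with dyadic ones this gives $\Theta^{*,n+\beta}(\omega,x):=\limsup_{r\to 0}\omega(B(x,r))/r^{n+\beta}<\infty$ for \emph{every} $x\in A_N$. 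On the other hand, by Bourgain's theorem there is $\tau_n\in(0,1)$, depending only on $n$, with $\dim_H\omega\le n+1-\tau_n$ for the harmonic measure of any domain in $\R^{n+1}$. Set $\gamma_0:=1-\tau_n\in(0,1)$; given $\gamma>\gamma_0$, choose $a=a(\gamma,n)$ so large that the exponent $\beta$ above satisfies $\beta>1-\tau_n$, whence $n+\beta>n+1-\tau_n\ge\dim_H\omega$. Then $\omega$ is carried by a Borel set of zero $\mathcal H^{n+\beta}$-measure, so by the standard density estimate for Radon measures (see \cite[Theorem 6.9]{Mattila}) $\Theta^{*,n+\beta}(\omega,x)=\infty$ for $\omega$-a.e.\ $x$. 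Since $\omega(A_N)>0$, this contradicts the bound just obtained; hence $\omega(A)=0$, which is what we wanted.

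The step I expect to be the crux is the appeal to Bourgain's dimension bound: the recursion, being essentially sharp, only yields a decay exponent $\beta<\gamma$, so without the a priori gap $\dim_H\omega\le n+1-\tau_n<n+1$ one could only obtain $\gamma_0=1$ rather than the required $\gamma_0\in(0,1)$. The remaining ingredients---the telescoping identity, the geometric iteration, and the conversion of a Hausdorff-dimension bound into a pointwise upper-density statement---are routine.
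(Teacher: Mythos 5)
Your proof is correct and follows essentially the same strategy as the paper: decompose the bad set according to the starting scale, show that persistent non-doubling forces the pointwise decay $\omega(B(x,r))\lesssim r^{n+\beta}$ with an exponent close to $\gamma$, and contradict Bourgain's dimension bound $\dim\omega\le n+1-\ve(n)$. Your telescoping identity $p_k=t_k+2^{-\gamma}p_{k-1}$ is a cleaner variant of the paper's absorption argument with the auxiliary exponent $\alpha\gamma$, and closing via Mattila's density theorem is equivalent to the paper's comparison $\omega(A)\lesssim\HH^{n+\alpha\gamma}_\infty(A)$, so these are only cosmetic differences.
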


\begin{proof}
For $m\geq1$, let 
\begin{equation}
Z_m:= \{x\in \d\Omega: \mbox{ for all }j\geq m, \;\; B(x,2^{-j})\mbox{ is not $a$-$P_{\gamma,\omega}$-doubling}\}.
\end{equation} 
So it is enough to show that $\omega(Z_m)=0$ for all $m\geq1$.

Fix $m\geq 1$ and take $x\in Z_m$, so that 
\begin{equation}\label{e:t<ap}
\Theta_{\omega}(B(x,2^{-j}))\leq a^{-1} \,P_{\gamma,\omega}(B(x,2^{-j})) \quad \mbox{ for all $j\geq m$.}
\end{equation}
Let $\alpha\in (0,1)$ to be chosen below. For $j\geq m$, 
\begin{align*}
& P_{\alpha\gamma,\omega}(B(x,2^{-j})) = \sum_{k\leq j} 2^{-\alpha\gamma(j-k)}
\Theta_{\omega}(B(x,2^{-k})) \\
& \leq a^{-1} \sum_{k:m\leq k\leq j} 2^{-\alpha\gamma(j-k)}
P_{\gamma,\omega}(B(x,2^{-k}))
+ \sum_{k\leq m} 2^{-\alpha\gamma(j-k)}
\Theta_{\omega}(B(x,2^{-k}))
\\
& = a^{-1} \sum_{k:m\leq k\leq j} 2^{-\alpha\gamma(j-k)} \sum_{h\leq k} 2^{-\gamma(k-h)}\Theta_{\omega}(B(x,2^{-h}))
+ 2^{-\alpha\gamma(j-m)} P_{\alpha\gamma,\omega}(B(x,2^{-m}))
\\
& \stackrel{\eqn{t<ap}}{\leq }
a^{-1} \sum_{h\leq j} \Theta_{\omega}(B(x,2^{-h})) \sum_{k:h\leq k\leq j}
  2^{-\gamma(k-h)-\alpha\gamma(j-k)} + 2^{-\alpha\gamma(j-m)} P_{\alpha\gamma,\omega}(B(x,2^{-m})).
\end{align*}
Observe now that
\begin{align*}
\sum_{k:h\leq k\leq j}
  2^{-\gamma(k-h)-\alpha\gamma(j-k)} & = 2^{\gamma h -\alpha\gamma j} \sum_{k:h\leq k\leq j}
  2^{-\gamma(1-\alpha) k} \\
  & \leq C(\gamma,\alpha) \,2^{\gamma h - \alpha \gamma  j}\, 2^{-(1-\alpha)\gamma h} = C(\gamma,\alpha)\, 2^{-\alpha\gamma(j-h)}.
\end{align*}
Thus we obtain
$$P_{\alpha\gamma,\omega}(B(x,2^{-j})) \leq C(\gamma,\alpha)\,a^{-1}P_{\alpha\gamma,\omega}(B(x,2^{-j}))
+ 2^{-\alpha\gamma(j-m)} P_{\alpha\gamma,\omega}(B(x,2^{-m})).$$
Hence, choosing $a\geq 2C(\gamma,\alpha)$ and recalling that $P_{\alpha\gamma,\omega}(B(x,2^{-j}))<\infty$, we  infer that
$$\Theta_{\omega}(B(x,2^{-j}))\leq P_{\alpha\gamma,\omega}(B(x,2^{-j})) \leq 2^{1-\alpha\gamma(j-m)} P_{\alpha\gamma,\omega}(B(x,2^{-m})).$$

Observe now that for all $x\in Z_m$,
$$P_{\alpha\gamma,\omega}(B(x,2^{-m}))\leq 
\sum_{k\geq0} 2^{-k\gamma}\,\frac{\|\omega\|}{(2^k\,2^{-m})^n}\leq C(m).$$
Then we get
$$\Theta_{\omega}(B(x,2^{-j}))\leq C(m)2^{-\alpha\gamma j}\quad \mbox{ for all $j\geq m$,}$$
which implies that
$$\omega(B(x,r))\leq C(m) r^{n+\alpha\gamma}\quad \mbox{ for all $x\in Z_m$ and all $r\leq 2^{-m}$.}$$
Thus, $\omega(A)\leq C(m)\,\HH_{\infty}^{n+\alpha\gamma}(A)$ for any $A\subset Z_{m}$. 

Recall that, for a measure $\mu$, 
\begin{multline*}
\dim \mu=\inf\{s: \mbox{ there is }F\subset \d\Omega \mbox{ so that }\HH^{s}(F)=0\mbox{ and } \\
\mu(F\cap K)=\mu(\d\Omega\cap K) \mbox{ for all compact sets }K\subset \R^{n+1}.
\end{multline*}
Let $s=n+\alpha\gamma$ and $F\subset \d\Omega$ be such that $\HH^{s}(F)=0$. Let $K$ be any compact subset of $Z_{m}$ with $\omega(K)>0$. Then $\omega(F\cap K)\leq \HH^{s}_{\infty}(F\cap K)=0$. Thus, $\dim \omega\geq s$. 

A well known theorem of Bourgain's asserts that there is $\ve(n)>0$ (not depending on $\Omega$) so that $\dim\omega<n+1-\ve(n)$ \cite{Bou87}. In particular, $s=n+\alpha \gamma<n+1-\ve(n)$, which is a contradiction if $\alpha\gamma\geq 1-\ve(n)$. So it just remains to notice that if $\gamma>1-\ve(n)$, we can now pick $\alpha\in (0,1)$ so that still $\alpha\gamma > 1-\ve(n)$. 
\end{proof}

\vv

From now on we assume that $a$ and $\gamma$ are fixed constants such that for
$\omega$-a.e.
$x\in\R^{n+1}$ there exists a sequence of $a$-$P_{\gamma,\omega^+}$-doubling balls $B(x,r_i)$, with
$r_i\to0$ as $i\to\infty$.

\vv
Recall that the harmonic measures $\omega^+$ and $\omega^-$ are mutually absolutely continuous on $E\subset\d\Omega^+ = \d\Omega^-$, and that $h$ denotes the density function $h(\xi)=\frac{d\omega^{-}}{d\omega^{+}}(\xi)$.\vv
 
For technical reasons we need now to introduce sets $E_m\subset E$ where $\wt\beta_{\overline{\Omega^{+}},\infty}(x,r)$
is uniformly small.
Given $m\geq1$, we denote by $E_m$ the subset of those $x\in E$ such that
$\wt\beta_{\overline{\Omega^{+}},\infty}(x,r)\leq 1/{100}$ for $0<r\leq 1/m$. By Theorem \ref{t:alpha0}, it turns
out that
\begin{equation}\label{eq:EMM}
\omega^+\biggl(E\setminus \bigcup_{m\geq1} E_m\biggr)=0.
\end{equation}

\vv

\begin{lemma}\label{lem:ver1}
Let $m\geq1$ and $\delta>0$. For $\omega^+$-a.e.\ $x\in E_m$, there is $r_{x}>0$ so that 
for any $a$-$P_{\gamma,\omega^+}$-doubling ball $B(x,r)$ with radius $r\leq r_x$ there exists a subset $G_m(x,r)\subset E_m\cap B(x,r)$ such that 
\begin{equation}\label{e:mainl1}
\Theta_{\omega^+}(B(z,t)) \lec \Theta_{\omega^+}(B(x,r)) \quad\mbox{for all $z\in G_m(x,r)$, $0<t\leq 2r$,}
\end{equation}
and so that $\omega^+ (B(x,r) \setminus G_m(x,r)) \leq \delta \,\omega^+(B(x,r))$.
\end{lemma}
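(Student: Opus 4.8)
The plan is to combine the Alt--Caffarelli--Friedman monotonicity formula with the $a$-$P_{\gamma,\omega^+}$-doubling hypothesis on $B(x,r)$, using the finiteness and positivity of $h$ on $\Gamma$ to promote the a priori unknown local dimension of $\omega^+$ to effective $n$-dimensional behaviour, and then passing from an $\omega^+$-a.e.\ statement to a $(1-\delta)$-fraction of $B(x,r)$ by Lebesgue differentiation. The key point is a pointwise density comparison along $\Gamma$. By Lemma~\ref{l:CDCadmissible} the domains $\Omega^\pm$ are admissible, so by Theorem~\ref{t:ACF} the functional $\gamma(z,\cdot)$ of \eqref{eq*123} is non-decreasing in a range $(0,R_0)$ with $R_0>0$ uniform, for every $z\in\partial\Omega$. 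Hence, for $z\in\Gamma$, $0<t\le r$ and $r$ below a uniform threshold, the bound \eqref{e:beurling}, the monotonicity of $\gamma$, and the bound \eqref{e:otherside} give
\[
\Theta_{\omega^+}(B(z,t))\,\Theta_{\omega^-}(B(z,t))
\lesssim \gamma(z,2t)^{1/2}\le \gamma(z,2r)^{1/2}
\lesssim \Theta_{\omega^+}(B(z,C_1r))\,\Theta_{\omega^-}(B(z,C_1r)),
\]
with $C_1=8\delta_0^{-1}$. Since $z\in\Gamma$, $\Theta_{\omega^-}(B(z,s))/\Theta_{\omega^+}(B(z,s))\to h(z)\in(0,\infty)$ as $s\to0$, so one can pick $\rho(z)>0$, below the uniform threshold, with $\tfrac12 h(z)\le \Theta_{\omega^-}(B(z,s))/\Theta_{\omega^+}(B(z,s))\le 2h(z)$ for $0<s\le\rho(z)$; inserting this in the display at $s=t$ and $s=C_1r$ and cancelling $h(z)$ yields
\[
\Theta_{\omega^+}(B(z,t))\lesssim\Theta_{\omega^+}(B(z,C_1r))\qquad\text{for }0<t\le r,
\]
provided $C_1r\le\rho(z)$, with an \emph{absolute} implicit constant.

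Now fix $m$ and $\delta$. For $\eta>0$ set $A_\eta=\{z\in\Gamma\cap E_m:\rho(z)\ge\eta\}$; since $\rho>0$ on $\Gamma$ and $\omega^+(E_m\setminus\Gamma)=0$ (because $\Gamma$ has full $\omega^+$-measure in $E\supset E_m$), the sets $A_\eta$ exhaust $E_m$ in $\omega^+$-measure as $\eta\downarrow0$. For $\omega^+$-a.e.\ $x\in E_m$ we have $x\in A_{\rho(x)}$ and, by the Lebesgue density theorem for $\omega^+$ (\cite[Corollary 2.14]{Mattila}), $x$ is an $\omega^+$-density point of $A_{\rho(x)}$; choose $r_x>0$ small enough that $\omega^+(B(x,r)\setminus A_{\rho(x)})\le\delta\,\omega^+(B(x,r))$ for all $r\le r_x$, that $C_1r\le\rho(x)$ for $r\le r_x$, and that $r_x$ is below the uniform threshold above. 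Given an $a$-$P_{\gamma,\omega^+}$-doubling ball $B(x,r)$ with $r\le r_x$, define $G_m(x,r)=A_{\rho(x)}\cap B(x,r)$, so that $G_m(x,r)\subset E_m\cap B(x,r)$ and $\omega^+(B(x,r)\setminus G_m(x,r))\le\delta\,\omega^+(B(x,r))$ by the choice of $r_x$. To verify \eqref{e:mainl1}, let $z\in G_m(x,r)$; then $z\in\Gamma$ and $\rho(z)\ge\rho(x)\ge C_1r$. For $0<t\le r$ the comparison above gives $\Theta_{\omega^+}(B(z,t))\lesssim\Theta_{\omega^+}(B(z,C_1r))$, and since $z\in B(x,r)$ we have $B(z,C_1r)\subset 2^{j_0}B(x,r)$ for $j_0=\lceil\log_2(C_1+1)\rceil$, so the $P_{\gamma,\omega^+}$-doubling of $B(x,r)$ yields $\Theta_{\omega^+}(2^{j_0}B(x,r))\le 2^{j_0\gamma}P_{\gamma,\omega^+}(B(x,r))\le 2^{j_0\gamma}a\,\Theta_{\omega^+}(B(x,r))$, whence $\Theta_{\omega^+}(B(z,C_1r))\lesssim_{a,\gamma,n,\delta_0}\Theta_{\omega^+}(B(x,r))$. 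For $r<t\le 2r$ one uses instead $\Theta_{\omega^+}(B(z,t))\le \omega^+(B(x,3r))/r^n\lesssim_{a,\gamma,n}\Theta_{\omega^+}(B(x,r))$, again by the doubling of $B(x,r)$. This proves \eqref{e:mainl1}.

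The main obstacle is the density comparison in the first paragraph: one has to feed the one-sided Beurling bound \eqref{e:beurling}, the reverse bound \eqref{e:otherside}, the ACF monotonicity, and the relation $\Theta_{\omega^-}(B(z,s))\approx h(z)\,\Theta_{\omega^+}(B(z,s))$ into one another at exactly the right scales so that the final comparison of $\omega^+$-densities carries a constant independent of $h(z)$ and of $z$. This is the step where mutual absolute continuity of $\omega^+$ and $\omega^-$ on $E$ genuinely enters, forcing $\omega^+$ to behave like an $n$-dimensional (doubling-type) measure along $\Gamma$. Once this is in place, everything else is soft: the reduction to a $(1-\delta)$-fraction of $B(x,r)$ is a Lebesgue-density (or Egorov) argument, and the $P_{\gamma,\omega^+}$-doubling hypothesis is only used to transfer density bounds between concentric balls of comparable radii.
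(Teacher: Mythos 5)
Your argument is essentially the paper's: both proofs hinge on the chain
$\Theta_{\omega^+}(B(z,t))\,\Theta_{\omega^-}(B(z,t))\lesssim\gamma(z,2t)^{1/2}\leq\gamma(z,Cr)^{1/2}\lesssim\Theta_{\omega^+}(B(z,C\delta_0^{-1}r))\,\Theta_{\omega^-}(B(z,C\delta_0^{-1}r))$
coming from \eqref{e:beurling}, the ACF monotonicity and \eqref{e:otherside}, followed by cancelling the $\omega^-$-densities through the comparability $\omega^-(B(z,s))\approx h\,\omega^+(B(z,s))$ at the two scales, a Lebesgue-density argument to discard a $\delta$-fraction of $B(x,r)$, and the $a$-$P_{\gamma,\omega^+}$-doubling of $B(x,r)$ to pass from $B(z,C\delta_0^{-1}r)$ to $B(x,r)$. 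The only real difference is bookkeeping: the paper obtains the two-scale comparability uniformly on sets $A_{\delta,k}$ via Lebesgue points of the density $h$ (as in \eqref{eq6.99}) together with a Chebyshev truncation $|h(z)-h(x)|\leq h(x)/2$, whereas you use the pointwise convergence of the ball ratios at $z\in\Gamma$ to define a stopping radius $\rho(z)$ and cancel $h(z)$ directly, which lets you skip the Chebyshev step. One small slip: for uncountably many sets $A_\eta$ the Lebesgue density theorem does not give that $\omega^+$-a.e.\ $x$ is a density point of $A_{\rho(x)}$ (the set $\{\rho\geq\rho(x)\}$ can be thin near $x$ if $\rho$ has a strict ``local maximum'' at $x$); this is fixed in one line by working with the countable exhaustion $A_{1/k}$ (or rational $\eta\leq\rho(x)$), choosing $r_x$ with $C_1 r_x\leq 1/k$, and setting $G_m(x,r)=A_{1/k}\cap B(x,r)$, since all you need of points $z\in G_m(x,r)$ is $\rho(z)\geq C_1 r$. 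With that adjustment the proof is correct and matches the paper's in substance.
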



\begin{proof} 
For $0<\delta<1$ and $k\in \N$, let $A_{\delta,k}$ be the set of $z\in E_m$ such that for $0<r<1/k$ we have 
\begin{equation}\label{eq6.5}
\avint_{B(z,r)}|h(y)-h(z)|\,d\omega^{+}(y)<\frac\delta4\, h(z).
\end{equation}
Since $h(z)>0$ for $\omega^+$-a.e.\ $z\in E_m$, by the Lebesgue differentiation theorem (see \cite[Corollary 2.14 (2) and Remark 2.15 (3)]{Mattila})
$$E_m=\bigcup_{k\geq1} A_{\delta,k}\cup Z,$$
with $\omega^+(Z)=0$. 
Then, for all $z\in A_{\delta,k}$ and $t<1/k$, we have
\begin{equation}
\left| \frac{\omega^{-}(B(z,t))}{\omega^{+}(B(z,t))}-h(z)\right|
=\left| \avint_{B(z,t)}(h(y)-h(z))\,d\omega^{+}(y)\right|<\frac\delta4\, h(z)<\frac14\, h(z).
\end{equation}
and so
\begin{equation}\label{eq6.99}
\frac{3}{4} \,h(z) \leq \frac{\omega^{-}(B(z,t))}{\omega^{+}(B(z,t))}\leq \frac{5}{4}\,h(z) \quad\mbox{ for }z\in A_{\delta,k},\, 0<t<1/k.
\end{equation}

Let $x\in A_{\delta,k}$ be a point of $\omega^{+}$-density for $A_{\delta,k}$ and let $r_{x}<1/k$ be such that 
\begin{equation}\label{eq6.8}
\omega^{+}(A_{\delta,k}\cap B(x,r))\geq \biggl(1-\frac\delta2\biggr)\,\omega^{+}(B(x,r))\quad \mbox{ for }0<r\leq r_{x}.
\end{equation}
Now set
\[G_m(x,r)=\{z\in B(x,r)\cap A_{\delta,k}: |h(z)-h(x)|\leq h(x)/2\}.\]
Then by Chebychev's inequality and \rf{eq6.5},
\[
\omega^{+}(B(x,r)\cap A_{\delta,k}\backslash G_m(x,r))
\leq \frac{2}{h(x)} \int_{B(x,r)} |h(z)-h(x)|d\omega^{+}(z)
\leq \frac\delta2\, \omega^{+}(B(x,r)),\]
and thus, together with \rf{eq6.8}, for $r\leq r_{x}$,
\begin{align*}
\omega^{+}(B(x,r)\setminus G_m(x,r)) & \leq \omega^{+}(B(x,r)\cap A_{\delta,k}\setminus G_m(x,r))
+ \omega^{+}(B(x,r)\setminus A_{\delta,k}) \\
& \leq \biggl(\frac\delta2 + \frac\delta2\biggr)\, \omega^{+}(B(x,r))
= \delta \,\omega^{+}(B(x,r)).
\end{align*}

We intend to show now that \rf{e:mainl1} holds for all $z\in G_m(x,r)$. 
Observe first that, for $z\in G_m(x,r)$ and $r\leq r_{x}$, 
\[
\frac12\,h(x)\leq h(z)\leq \frac32\,h(x)\]
and then, by \rf{eq6.99},
\begin{equation}\label{e:frach}
\frac{3}{8} h(x)\leq \frac{\omega^{-}(B(z,t))}{\omega^{+}(B(z,t))}\leq \frac{15}{8} h(x)\quad\mbox{ for all $z\in G_m(x,r)$ with $r,t\leq r_x$.}
\end{equation}
Recall that by Lemmas  \ref{l:beurling} and \ref{l:otherside}, for $0<t<2r$,
\begin{align*}
\frac{\omega^{+}(B(z,t))}{t^{n}}\,\frac{\omega^{-}(B(z,t))}{t^{n}}& \lec \gamma(z,2t)^{\frac{1}{2}}\\
&\leq 
\gamma(z,4r)^{\frac{1}{2}}
\lec \frac{\omega^{+}(B(z,16\delta_0^{-1}r))}{r^{n}}\,\frac{\omega^{-}(B(z,16\delta_0^{-1}r))}{r^{n}}.
\end{align*}
Take $0<r\leq \frac1{100}\delta_0r_x$ and $0<t\leq 2r$.
Applying \rf{e:frach} twice, we derive
$$\left(\frac{\omega^{+}(B(z,t))}{t^{n}}\right)^2 
\lesssim
\left(\frac{\omega^{+}(B(z,16\delta_0^{-1}r))}{r^{n}}\right)^2.$$
Since $z\in B(x,r)$, we have $B(z,16\delta_0^{-1}r)\subset B(x,32\delta_0^{-1}r)$, and then taking into account that
$B(x,r)$ is $a$-$P_{\gamma,\omega^+}$-doubling,
$$\omega^+(B(z,16\delta_0^{-1}r))\leq \omega^+(B(x,32\delta_0^{-1}r))\lesssim \omega^+(B(x,r)).$$
Therefore,
$$\frac{\omega^{+}(B(z,t))}{t^{n}}\lesssim
\frac{\omega^{+}(B(z,16\delta_0^{-1}r))}{r^{n}} \lesssim \frac{\omega^+(B(x,r))}{r^n},$$
which shows that \rf{e:mainl1} holds for all $z\in G_m(x,r)$, $t\leq 2r$, with $r$ such that $0<r\leq \frac1{100}\delta_0r_x$.
\end{proof}

\vv

Given $m\geq1$ and $\delta>0$, we denote by $\wt E_{m,\delta}$ the subset of the points $x\in E_m$ for which there exists $r_x>0$ as in Lemma \ref{lem:ver1}, so that $\omega^+\bigl(E_m\setminus \wt E_{m,\delta}\bigr)=0$. 

\vv

\begin{lemma}\label{lem:ver2}
Let $m\geq1$ and $\delta>0$. Let $x_0\in \wt E_{m,\delta}$ and $$0<r_0\leq \min(r_{x_0},1/m,c_1\dist(x^+,
\d\Omega^+)),$$ for some $c_1>0$ small enough (recall that $\omega^{\pm}=\omega^{x_{\pm}}_{\Omega^{\pm}}$ and $x^{\pm}\in \Omega^{\pm}$ are as in Definition \ref{def:adm}).
Suppose that the ball $B_{0}=B(x_{0},r_{0})$ is $a$-$P_{\gamma,\omega^+}$-doubling. Then for 
all $x \in G_m(x_{0},r_{0})$ it holds that
\begin{equation}\label{e:mainl2}
\RR_{*}(\chi_{2B_{0}}\omega^{+})(x)\leq C_1\,\Theta_{\mu}(B_{0}).
\end{equation}
\end{lemma}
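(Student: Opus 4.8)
Fix $x\in G_m(x_{0},r_{0})$ and $\varepsilon>0$; write $\mu=\omega^{+}$ (so $\Theta_{\mu}(B_{0})=\Theta_{\omega^{+}}(B_{0})$). If $\varepsilon\geq 3r_{0}$ then $\RR_{\varepsilon}(\chi_{2B_{0}}\omega^{+})(x)=0$, and if $r_{0}\leq\varepsilon<3r_{0}$ then $|\RR_{\varepsilon}(\chi_{2B_{0}}\omega^{+})(x)|\leq r_{0}^{-n}\,\omega^{+}(2B_{0})\lesssim\Theta_{\omega^{+}}(B_{0})$, where the last estimate uses that $B_{0}$ is $a$-$P_{\gamma,\omega^{+}}$-doubling (hence $\Theta_{\omega^{+}}(2^{j}B_{0})\lesssim_{a}2^{j\gamma}\Theta_{\omega^{+}}(B_{0})$; recall we may take $\gamma<1$, so $P_{\omega^{+}}(B_{0})\leq P_{\gamma,\omega^{+}}(B_{0})\lesssim\Theta_{\omega^{+}}(B_{0})$). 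So from now on assume $\varepsilon\leq r_{0}$, and split $\RR_{\varepsilon}(\chi_{2B_{0}}\omega^{+})(x)=\RR_{\varepsilon}\omega^{+}(x)-\RR(\chi_{(2B_{0})^{c}}\omega^{+})(x)$, using that $\dist(x,(2B_{0})^{c})\geq r_{0}\geq\varepsilon$ so no truncation is needed in the second term.

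\textbf{Exterior corkscrew from flatness.} Since $x\in E_{m}$ and $\varepsilon\leq r_{0}\leq 1/m$, we have $\wt\beta_{\overline{\Omega^{+}},\infty}(x,\varepsilon)\leq 1/100$, i.e.\ $\overline{\Omega^{+}}\cap B(x,\varepsilon)$ is $(\varepsilon/100)$-close in Hausdorff distance to a half-space $S$ with $x\in\partial S$. Because $\Omega^{-}=\R^{n+1}\setminus\overline{\Omega^{+}}$, this forces the point $\hat x:=x-\tfrac{\varepsilon}{2}\,\nu$ (with $\nu$ the inner unit normal of $S$) to satisfy $B(\hat x,\varepsilon/4)\subset\Omega^{-}\cap B(x,\varepsilon)$; in particular $\dist(\hat x,\partial\Omega^{+})\geq\varepsilon/4$ and $|\hat x-x|=\varepsilon/2$. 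Since $\hat x\notin\overline{\Omega^{+}}$ and $K(\hat x-\cdot)$ is harmonic in $\Omega^{+}$, \eqref{eqclau2} gives $\RR\omega^{+}(\hat x)=K(\hat x-x^{+})$. The plan is to transfer the estimate from $x$ to $\hat x$ and exploit this identity.

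\textbf{Transferring to $\hat x$ and cancelling the far field.} First, $\RR_{\varepsilon}\omega^{+}(x)=\RR(\chi_{B(x,\varepsilon)^{c}}\omega^{+})(x)$, and comparing with $\RR(\chi_{B(x,\varepsilon)^{c}}\omega^{+})(\hat x)$ via the smoothness of $K$ off the diagonal, $|x-\hat x|\leq\varepsilon/2$, the density bound $\Theta_{\omega^{+}}(B(x,t))\lesssim\Theta_{\omega^{+}}(B_{0})$ for $t\leq 2r_{0}$ (Lemma~\ref{lem:ver1}), and $P_{\omega^{+}}(B_{0})\lesssim\Theta_{\omega^{+}}(B_{0})$, one gets $\RR_{\varepsilon}\omega^{+}(x)=\RR\omega^{+}(\hat x)-\RR(\chi_{B(x,\varepsilon)}\omega^{+})(\hat x)+O(\Theta_{\omega^{+}}(B_{0}))$; moreover $|\RR(\chi_{B(x,\varepsilon)}\omega^{+})(\hat x)|\leq(\varepsilon/4)^{-n}\omega^{+}(B(x,\varepsilon))\lesssim\Theta_{\omega^{+}}(B_{0})$ because $B(\hat x,\varepsilon/4)\subset\Omega^{-}$ keeps $\supp\omega^{+}$ at distance $\gtrsim\varepsilon$ from $\hat x$. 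Hence $\RR_{\varepsilon}\omega^{+}(x)=K(\hat x-x^{+})+O(\Theta_{\omega^{+}}(B_{0}))$. Similarly, comparing $\RR(\chi_{(2B_{0})^{c}}\omega^{+})$ at $x$ and $\hat x$ costs only $\lesssim\tfrac{|x-\hat x|}{r_{0}}\,\Theta_{\omega^{+}}(B_{0})$, so $\RR(\chi_{(2B_{0})^{c}}\omega^{+})(x)=\RR\omega^{+}(\hat x)-\RR(\chi_{2B_{0}}\omega^{+})(\hat x)+O(\Theta_{\omega^{+}}(B_{0}))=K(\hat x-x^{+})-\RR(\chi_{2B_{0}}\omega^{+})(\hat x)+O(\Theta_{\omega^{+}}(B_{0}))$. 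Subtracting, the terms $K(\hat x-x^{+})$ cancel and we are reduced to proving $|\RR(\chi_{2B_{0}}\omega^{+})(\hat x)|\lesssim\Theta_{\omega^{+}}(B_{0})$.

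\textbf{The core estimate and the main obstacle.} To bound $\RR(\chi_{2B_{0}}\omega^{+})(\hat x)$ we note this function is harmonic in $\Omega^{-}$, and we slide $\hat x$ outward to the scale-$r_{0}$ exterior corkscrew point $\hat x_{r_{0}}$ (which exists since $\wt\beta_{\overline{\Omega^{+}},\infty}(x,r)\leq 1/100$ for all $r\in[\varepsilon,r_{0}]$, so the corkscrew points at dyadic scales lie in a cone-like region of $\Omega^{-}$); the increments across dyadic scales are estimated using the cancellation of the Riesz kernel against the approximating half-spaces supplied by the $\wt\beta$-flatness, together with Lemma~\ref{lem:ver1} and the doubling of $B_{0}$. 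At scale $r_{0}$ we bound $|\RR(\chi_{2B_{0}}\omega^{+})(\hat x_{r_{0}})|\leq|K(\hat x_{r_{0}}-x^{+})|+|\RR(\chi_{(2B_{0})^{c}}\omega^{+})(\hat x_{r_{0}})|$, and for the residual far-field terms we use the lower bound $\Theta_{\omega^{+}}(B_{0})\gtrsim r_{0}\,|x^{+}-x_{0}|^{-(n+1)}$: this follows by combining Bourgain's lemma (Lemma~\ref{l:bourgain}), which gives $\omega^{+}(B(x_{0},C|x^{+}-x_{0}|/\delta_{0}))\gtrsim 1$, with the $a$-$P_{\gamma,\omega^{+}}$-doubling of $B_{0}$ applied across the scales from $r_{0}$ up to $|x^{+}-x_{0}|$, and it holds precisely because $c_{1}$ (hence $r_{0}/|x^{+}-x_{0}|$) is taken small enough in terms of $a,\gamma,\delta_{0},n$. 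The main difficulty is the scale-by-scale step just described: naively estimating the singular integral over $2B_{0}$ at a boundary-adjacent point produces a logarithmic loss $\log(r_{0}/\varepsilon)\,\Theta_{\omega^{+}}(B_{0})$, and removing it is exactly where the flatness of $\partial\Omega^{+}$ at \emph{all} intermediate scales (i.e.\ membership of $x$ in $E_{m}$, via Theorem~\ref{t:alpha0}) must be used in an essential way; everything else is kernel bookkeeping with the density and doubling bounds.
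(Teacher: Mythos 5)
Your first three paragraphs reproduce the paper's ingredients correctly: the trivial range of truncations, the exterior corkscrew ball obtained from $\wt\beta_{\overline{\Omega^{+}},\infty}(x,\varepsilon)\le 1/100$, the identity $\RR\omega^{+}(\hat x)=K(\hat x-x^{+})$ from \eqref{eqclau2}, the $P_{\omega^{+}}$-type transfer errors, and the estimate $r_{0}\,|x^{+}-x_{0}|^{-(n+1)}\lesssim P_{\omega^{+}}(B_{0})\lesssim\Theta_{\omega^{+}}(B_{0})$ via Bourgain's lemma and the $a$-$P_{\gamma,\omega^{+}}$-doubling. But the proof is not complete, and the gap is exactly where you place it. The reduction to $|\RR(\chi_{2B_{0}}\omega^{+})(\hat x)|\lesssim\Theta_{\omega^{+}}(B_{0})$ is a null move: $\hat x$ lies at distance $\approx\varepsilon$ from $\supp\omega^{+}$, so this is the same singular quantity you started with. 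The step you then sketch --- sliding from $\hat x$ to a scale-$r_{0}$ corkscrew and summing dyadic increments without a $\log(r_{0}/\varepsilon)$ loss ``using cancellation of the Riesz kernel against the approximating half-spaces'' --- is not carried out, and there is no reason it should work: $\wt\beta_{\overline{\Omega^{+}},\infty}(x,r)\le 1/100$ is purely geometric information about $\overline{\Omega^{+}}$ (with a constant that is small but not tending to $0$), and it says nothing about how $\omega^{+}$ is distributed on that nearly flat boundary, so no cancellation for $\RR\omega^{+}$ can be extracted from it; a naive increment per dyadic scale costs $\approx\Theta_{\omega^{+}}(B_{0})$ and the logarithm survives. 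Moreover the endgame is misaccounted: after your triangle inequality the single term $|K(\hat x_{r_{0}}-x^{+})|\approx|x^{+}-x_{0}|^{-n}$ appears, and this is \emph{not} bounded by $C\,\Theta_{\omega^{+}}(B_{0})$; Bourgain plus $a$-$P_{\gamma,\omega^{+}}$-doubling only give $\Theta_{\omega^{+}}(B_{0})\gtrsim r_{0}^{\gamma}\,|x^{+}-x_{0}|^{-n-\gamma}$, which falls short by the unbounded factor $(|x^{+}-x_{0}|/r_{0})^{\gamma}$. The lower bound $\Theta_{\omega^{+}}(B_{0})\gtrsim r_{0}\,|x^{+}-x_{0}|^{-(n+1)}$ you invoke suffices only for a \emph{difference} of two kernel values, not for a single one. (In fact at a corkscrew point of scale $r_{0}$ the bound $|\RR(\chi_{2B_{0}}\omega^{+})(\hat x_{r_{0}})|\lesssim\Theta_{\omega^{+}}(2B_{0})$ is trivial by distance, so that decomposition is also unnecessary.)

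The missing idea, which is how the paper proceeds, is to use the exterior identity \eqref{eqclau2} \emph{twice}, once at each of the two extreme scales, so that no intermediate scale is ever touched. One writes $|\RR_{r}(\chi_{2B_{0}}\omega^{+})(x)|\le|\RR_{r}\omega^{+}(x)-\RR_{r_{0}}\omega^{+}(x)|+C\,\Theta_{\omega^{+}}(2B_{0})$, then compares $\RR_{r}\omega^{+}(x)$ with $\RR\omega^{+}(x_{B})=K(x_{B}-x^{+})$ at a corkscrew point $x_{B}$ of scale $r$, and $\RR_{r_{0}}\omega^{+}(x)$ with $\RR\omega^{+}(x_{B_{0}})=K(x_{B_{0}}-x^{+})$ at a corkscrew point of scale $r_{0}$; each comparison costs $\lesssim P_{\omega^{+}}(B(x,r))$, respectively $P_{\omega^{+}}(B_{0})$, and both are $\lesssim\Theta_{\omega^{+}}(B_{0})$ by \eqref{e:mainl1} and the doubling hypothesis. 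What remains after the two kernel terms are subtracted is precisely $|K(x_{B}-x^{+})-K(x_{B_{0}}-x^{+})|\lesssim r_{0}\,|x^{+}-x_{0}|^{-(n+1)}$, which your Bourgain-plus-doubling estimate does control. This second application of \eqref{eqclau2} replaces the dyadic sliding entirely and removes the logarithm; with it, your setup can be reorganized into a complete proof, but as written the core estimate is unproved and the proposed route to it would fail.
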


\begin{proof}
To estimate  $|\RR_r(\chi_{2B_0}\omega^+)(x)|$ for $x \in G_m(x_{0},r_{0})$ we may assume that $r\leq r_0/4$ because 
 $|\RR_r(\chi_{2B_0}\omega^+)(x)|=0$ if $r\geq 4r_0$ and \rf{e:mainl2} is trivial
 in the case $r_0/4<r<4r_0$.

So we take  $x \in G_m(x_{0},r_{0})$ and $0<r\leq r_0/4$. First we turn our attention to $\RR_r\omega^+(x)$.
Since $\wt\beta_{\overline{\Omega^+},\infty}(x,r)
\leq 1/{100}$ (by the definition of $E_{m,\delta}$ and 
the fact that $r_0\leq 1/m$), 
 there is $x_{B}\in B:= B(x,r)$ with $B(x_{B},r/4)\subset B(x,r)\cap \Omega^-$. 
 Then, by \rf{eqclau2}, we have
\begin{equation}\label{e:r=k}
\RR\omega^{+}(x_{B})=K(x_{B}-x^{+}).
\end{equation}
By standard estimates, and because $B(x_{B},r/4)\subset B(x,r)\setminus\d \Omega^+$,
\begin{align}\label{e:lem2Rieszdiff}
|\RR\omega^{+}(x_{B})  &-\RR_{r}\omega^{+}(x)|\\
& = \av{\int_{B(x_{B},r/4)^{c}}\frac{x_{B}-y}{|x_{B}-y|^{n+1}}\,d\omega^{+}(y) -\int_{\overline B(x,r)^{c}}\frac{x-y}{|x-y|^{n+1}}\,d\omega^{+}(y)} \notag \\
& \lec \int_{\overline B(x,r)^{c}} \frac{|x-x_B|}{|x-y|^{n+1}}  d\omega^{+}(y)  \notag\\
&\quad + \int_{\overline B(x,r)\Delta B(x_{B},r/4)} \left(\frac1{|x_{B}-y|^n} +\frac{1}{|x-y|^n}\right)
\,d\omega^{+}(y) 
 \notag \\
& \lec P_{\omega^{+}}(B(x,r)).\nonumber
\end{align}

Using that $\overline B(x,r)\subset 2B_0$, we deduce that
 \begin{align*}
 |\RR_{r}(\chi_{2B_0}\omega^{+})(x)|
 & = \av{\int_{2B_{0}\backslash \overline B(x,r)}\frac{x-y}{|x-y|^{n+1}}d\omega^{+}(y)}\\
 & = \av{\int_{\overline B(x,r)^{c}}\frac{x-y}{|x-y|^{n+1}}d\omega^{+}(y)-\int_{2B_{0}^{c}}\frac{x-y}{|x-y|^{n+1}}d\omega^{+}(y)}\\
 &\leq |\RR_{r}\omega^{+}(x)-\RR_{r_0}\omega^{+}(x)| + C\,\Theta_{\omega^+}(2 B_0)\\
 & \lesssim |\RR\omega^{+}(x_{B})  -\RR\omega^{+}(x_{B_0})| +
 P_{\omega^{+}}(B_0)  +P_{\omega^{+}}(B(x,r)).
 \end{align*}
where $x_{B_0}$ is a point such that $B(x_{B_0},r/4)\subset B_0\cap \Omega^-$.
By \rf{e:r=k}, we have
$$|\RR\omega^{+}(x_{B})  -\RR\omega^{+}(x_{B_0})| = 
|K(x_{B}-x^{+}) - K(x_{B_0}-x^{+})|\lesssim \frac{r_0}{|x^+-x_0|^{n+1}}.$$
On the other hand, letting $N$ be the largest natural number such that $2^Nr\leq 2r_0$, by \rf{e:mainl1} we get
\begin{align}\label{eqkk232}
P_{\omega^{+}}(B(x,r)) 
& = \sum_{j\geq0}2^{-j} \Theta_{\omega^{+}}(B(x,2^{j}r))\\
& \!\!\stackrel{\eqn{mainl1}}{\lec} \sum_{0\leq j\leq N} 2^{-j}\Theta_{\omega^{+}}(B(x_0,r_0))+\sum_{j > N}2^{-j}\Theta_{\omega^{+}}(B(x_0,2^{j}r)) \nonumber\\
& \lesssim P_{\omega^{+}}(B_0).\nonumber
\end{align}
From the last estimates we infer that
$$|\RR_{r}(\chi_{2B_0}\omega^{+})(x)| \lesssim \frac{r_0}{|x^+-x_0|^{n+1}} + P_{\omega^{+}}(B_0).$$

Note now that
\begin{align}\label{eqnotnow1}
\frac{r_0}{|x^{+}-x_0|^{n+1}} &\stackrel{\eqn{bourgain}}{\lesssim} \frac{\omega^{+}(B(x_0,2\delta_0^{-1}|x^{+}-x_0|))}{|x^+-x_0|^{n}}\,\frac{r_0}{|x^+-x_0|}\\ 
& =\Theta_{\omega^+}(B(x_0,2\delta_0^{-1}|x^+-x_0|))\, \frac{r_0}{|x^+-x_0|} \lesssim P_{\omega^{+}}(B_0).\nonumber
\end{align}
 Therefore, recalling that $B_0$ is $a$-$P_{\omega^+}$-doubling,
  \begin{align*}
 |\RR_{r}(\chi_{2B_0}\omega^{+})(x)|&\lesssim P_{\omega^{+}}(B_0) \lesssim \Theta_{\omega^{+}}(B_0),
  \end{align*}
 which concludes \eqn{mainl2}.
\end{proof}
\vv

Let $m\geq1$, $\delta>0$, and $x_0\in \wt E_{m,\delta}$, and denote
$$G_m^{zd}(x_0,r_0) = \{x\in G_m(x_0,r_0): \lim_{r\to0}\Theta_{\omega^+}(B(x,r))=0\},$$
and
$$G_m^{pd}(x_0,r_0) = \{x\in G_m(x_0,r_0): \limsup_{r\to0}\Theta_{\omega^+}(B(x,r))>0\}.$$
The notation ``$zd$'' stands for ``zero density'', and ``$pd$'' stands for ``positive density''.

\vv

\begin{lemma}\label{lem:rectifac}
Let $m\geq1$ and $\delta>0$. Let $x_0\in \wt E_{m,\delta}$ and $$0<r_0\leq \min(r_{x_0},1/m,c_1\dist(x^+,
\d\Omega^+)),$$ for some $c_1>0$ small enough.
Suppose that the ball $B_{0}=B(x_{0},r_{0})$ is $a$-$P_{\gamma,\omega^+}$-doubling. Then there is 
an $n$-rectifiable
set $F(x_0,r_0)\subset G_m^{pd}(x_0,r_0)$ such that 
\[\omega^+(G_m^{pd}(x_0,r_0)\setminus F(x_0,r_0))=0\] 
and so that
$\omega^+|_{F(x_0,r_0)}$ and $\HH^n|_{F(x_0,r_0)}$ are mutually absolutely continuous.
\end{lemma}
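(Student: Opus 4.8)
The plan is to derive Lemma \ref{lem:rectifac} by applying Theorem \ref{teo1} to $\mu=\omega^+$ at a well-chosen family of small doubling balls centred at points of $G_m^{pd}(x_0,r_0)$, and then running an exhaustion argument inside $B_0$. First I would fix a full-$\omega^+$-measure subset $\GG\subset G_m^{pd}(x_0,r_0)$ of points $x$ at which, simultaneously: $x\in\wt E_{m,\delta}$ (so that Lemmas \ref{lem:ver1} and \ref{lem:ver2} apply and produce the good sets $G_m(x,\cdot)$); $\lim_{r\to0}\beta_{\d\Omega^+,\infty}(x,r)=0$ and every tangent measure of $\omega^+$ at $x$ belongs to $\cF$ (both established in the course of the proof of Theorem \ref{t:alpha0}, the second being the property of the points of $G_1\cap G_2$ there); $x$ is an $\omega^+$-density point of $G_m^{pd}(x_0,r_0)$; and $\limsup_{r\to0}\Theta_{\omega^+}(B(x,r))>0$. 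Using \rf{e:mainl1}, which gives $\Theta_{\omega^+}(B(x,r))\lesssim\Theta_{\omega^+}(B_0)$ and hence $P_{\gamma,\omega^+}(B(x,r))\lesssim\Theta_{\omega^+}(B_0)$ for all $x\in G_m(x_0,r_0)$ and $r\le r_0$, the positivity of the upper density of $\omega^+$ at $x$ then forces a sequence $t_k\downarrow0$ for which $B(x,t_k)$ is $a$-$P_{\gamma,\omega^+}$-doubling and $\Theta_{\omega^+}(B(x,t_k))\approx c(x)>0$.

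For such $x\in\GG$ and $B=B(x,t_k)$ with $k$ large, I would check the hypotheses of Theorem \ref{teo1} with $G_B=G_m(x,t_k)$. Hypothesis (a) is immediate from $\gamma\le1$ and the doubling of $B$. Hypothesis (b) holds because $\supp\omega^+\subset\d\Omega^+$ forces $\beta_{\omega^+,1}^L(B)\le2\,\Theta_{\omega^+}(B)\,\beta_{\d\Omega^+,\infty}(B)$ (taking $L$ parallel to a near-optimal plane and passing through the centre of $B$), and $\beta_{\d\Omega^+,\infty}(B(x,t_k))\to0$ by Theorem \ref{t:alpha0}. Hypothesis (c) is exactly the combination of the density bound \rf{e:mainl1}, the Riesz bound \rf{e:mainl2} of Lemma \ref{lem:ver2}, and the estimate $\omega^+(B\setminus G_m(x,t_k))\le\delta\,\omega^+(B)$ of Lemma \ref{lem:ver1}; here I would assume, as one may, that $\delta$ (the parameter defining $\wt E_{m,\delta}$ and the sets $G_m(\cdot,\cdot)$) lies below the smallness thresholds of Theorem \ref{teo1}.

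The delicate point is hypothesis (d): the oscillation of $\RR\omega^+$ on $G_m(x,t_k)$ in $L^2(\omega^+)$ must be small compared with $\Theta_{\omega^+}(B)^2\,\omega^+(B)$. By \rf{eqclau2} one has $\RR\omega^+\equiv K(\,\cdot\,-x^+)$ on $\Omega^-$, and the computation behind \rf{e:lem2Rieszdiff} shows that, for $z\in G_m(x,t_k)$, $\RR\omega^+(z)$ differs from the essentially constant quantity $K(z-x^+)$ only by a term of size $\lesssim\Theta_{\omega^+}(B)$ --- bounded but not small, so this is not yet enough, and supplying the missing smallness is what I expect to be the main obstacle. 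I would obtain it by contradiction along the doubling sequence: assuming (d) fails for infinitely many $t_k$, rescale by $T_{x,t_k}$ and normalise so that, along a subsequence, $c_k\,T_{x,t_k}[\omega^+]\to\omega^+_\infty=c\,\HH^n|_L\in\cF$ (possible since all tangent measures at $x$ are flat); use that the rescaled operators $\RR_{\omega^+_k|_{\wt G_k}}$ are uniformly bounded in $L^2$ (by the suppressed-operator/$Tb$ argument already carried out in the proof of Theorem \ref{teo1}) to pass to convergence of the truncated Riesz transforms in $L^2_{\loc}(\omega^+_\infty)$; and conclude from the fact that $\RR(\HH^n|_L)$ is constant along $L$ that the rescaled oscillations tend to $0$, contradicting the assumed failure of (d). An alternative route, which is where the two-phase hypothesis would enter directly, is to identify $\RR\omega^+(z)$ on $\d\Omega^+$ with $K(z-x^+)$ minus a multiple of the normal derivative of $G_{\Omega^+}$, and to use $\omega^-=h\,\omega^+$ on $E$ together with the exterior formula for $\RR\omega^-$ to see that this correction term is nearly constant on $G_m(x,t_k)$.

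With (a)--(d) in hand, Theorem \ref{teo1} yields a uniformly $n$-rectifiable set $\Gamma(x,t_k)$ with $\omega^+(G_m(x,t_k)\cap\Gamma(x,t_k))\ge\theta\,\omega^+(B(x,t_k))$, and the remark following Theorem \ref{teo0} provides a subset $\Gamma'(x,t_k)$ of this intersection, of $\omega^+$-measure comparable to $\omega^+(B(x,t_k))$, on which $\omega^+$ and $\HH^n|_{\Gamma(x,t_k)}$ are mutually absolutely continuous. To conclude, let $\cR$ be the collection of $n$-rectifiable Borel sets $F\subset G_m^{pd}(x_0,r_0)$ for which $\omega^+|_F$ and $\HH^n|_F$ are mutually absolutely continuous; countable unions remain in $\cR$, so there is $F=F(x_0,r_0)\in\cR$ of maximal $\omega^+$-measure. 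If $\omega^+(G_m^{pd}(x_0,r_0)\setminus F)>0$, choose $x\in\GG$ that is also an $\omega^+$-density point of $N:=G_m^{pd}(x_0,r_0)\setminus F$; applying the above at a sufficiently small doubling scale $t_k$ and intersecting $\Gamma'(x,t_k)$ with $N$ (the density-point property absorbs the error $\omega^+(B(x,t_k)\setminus N)$) yields a member of $\cR$ of positive $\omega^+$-measure contained in $N$, contradicting maximality. Hence $\omega^+(G_m^{pd}(x_0,r_0)\setminus F)=0$, which is the claim.
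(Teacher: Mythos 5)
Your strategy diverges from the paper's, and at the crucial point it has a genuine gap. The paper handles the positive-density set in two lines: by \rf{e:mainl1}, every $x\in G_m^{pd}(x_0,r_0)$ satisfies $0<\limsup_{r\to0}\Theta_{\omega^+}(B(x,r))<\infty$, and then the main theorem of \cite{AHMMMTV15} (alternatively, the finiteness of $\RR_*\omega^+$ on this set, which follows from Lemma \ref{lem:ver2}, combined with the Nazarov--Tolsa--Volberg theorem \cite{NToV-pubmat}) directly gives that $\omega^+|_{G_m^{pd}(x_0,r_0)}$ is $n$-rectifiable and mutually absolutely continuous with $\HH^n$ on a subset of full measure. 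Theorem \ref{teo1} is not used (and not needed) here; in the paper's architecture it is reserved for the zero-density set $G_m^{zd}$, because that is the only place where its hypothesis (d) can be verified.

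Your proposal instead tries to verify hypothesis (d) of Theorem \ref{teo1} at positive-density points, and that is exactly the step you leave unproved. The mechanism that makes the oscillation small in Lemma \ref{lem:mainl3} is the identity $\RR\omega^+(x)=K(x-x^+)$, which holds only at points of vanishing density: the error in \rf{e:lem2Rieszdiff} is of size $P_{\omega^+}(B)\approx\Theta_{\omega^+}(B)$ and tends to $0$ only along zero-density sequences, and the paper's remark after Lemma \ref{lem:mainl3} states explicitly that restricting to $G_m^{zd}$ is essential for the estimate. Your substitute argument --- blow up along scales $t_k$ with flat tangent measure and deduce that the rescaled $L^2$-oscillation of $\RR\omega^+$ tends to $0$ --- does not work as stated: weak convergence of $c_k\,T_{x,t_k}[\omega^+]$ to $c\,\HH^n|_L$ controls the measure only at scales comparable to $t_k$, whereas the principal value $\RR\omega^+(z)$ for $z\in B(x,t_k)$ contains near-diagonal contributions from scales $o(t_k)$ that a single weak limit does not see; the uniform $L^2(\omega^+)$ boundedness you invoke yields oscillation bounds of order $\Theta_{\omega^+}(B)$, not $\tau\,\Theta_{\omega^+}(B)$ with $\tau$ small, so the required smallness never materializes (your alternative ``two-phase'' identification of the correction term is likewise only a suggestion). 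The fix is not to repair this blow-up step but to abandon Theorem \ref{teo1} on the positive-density set and quote \cite{AHMMMTV15} or \cite{NToV-pubmat}, as the paper does; your closing exhaustion argument is fine but then unnecessary.
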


\begin{proof}
From \rf{e:mainl1} we know that $\Theta_{\omega^+}(B(x,r))\lesssim \Theta_{\omega^+}(B(x_0,r_0))$ for all $x\in  G_m(x_0,r_0)$ and all $r\leq 2r_0$. Thus,
$$0<\limsup_{r\to0}\Theta_{\omega^+}(B(x,r))<\infty\quad \mbox{ for all $x\in  G_m^{pd}(x_0,r_0)$.}$$
Now the main the result from \cite{AHMMMTV15} asserts that $\omega^+|_{G_m^{pd}(x_0,r_0)}$ is 
$n$-rectifiable and proves the lemma.

An alternative argument consists in using the fact that $\RR_*\omega^+(x)<\infty$ for all such $x$ (by Lemma \ref{lem:ver2}) and then applying the 
Nazarov-Tolsa-Volberg theorem \cite{NToV-pubmat}.
\end{proof}

\vv
To deal with the set $G_m^{zd}(x_0,r_0)$ we intend to apply Theorem \ref{teo1}. The next lemma will be necessary to show that one
the key assumptions of that theorem is satisfied.
\vv

\begin{lemma}\label{lem:mainl3}
Let $m\geq1$ and $\delta>0$. Let $x_0\in \wt E_{m,\delta}$ and $$0<r_0\leq \min(r_{x_0},1/m,c_1\dist(x^+,
\d\Omega^+)),$$ for some $c_1>0$ small enough.
Suppose that the ball $B_{0}=B(x_{0},r_{0})$ is $a$-$P_{\gamma,\omega^+}$-doubling, then 
\begin{multline}\label{e:mainl3}
\int_{G_m^{zd}(x_0,r_0)}|\RR\omega^{+}(x)-m_{\mu,G_m^{zd}(x_0,r_0)}(\RR\omega^{+})|^2\,d\omega^{+}(x)\\
\lesssim \left(\frac{r_0}{|x^+-x_0|}\right)^{2-2\gamma}\, \Theta_{\omega^{+}}(B_0)^2\, \omega^{+}(B_0).
\end{multline}
\end{lemma}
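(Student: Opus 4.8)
The plan is to reduce everything to the pointwise identity
\[
\RR\omega^+(x)=K(x-x^+)\qquad\text{for every }x\in G_m^{zd}(x_0,r_0),
\]
and then to combine the smoothness of the Riesz kernel $K$ away from $x^+$ with a lower bound on $\Theta_{\omega^+}(B_0)$ extracted from the $a$-$P_{\gamma,\omega^+}$-doubling hypothesis on $B_0$.

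First I would prove the identity, which is the step I expect to be the main obstacle, since it is where the exterior geometry and the zero-density hypothesis are used. Fix $x\in G_m^{zd}(x_0,r_0)\subset G_m(x_0,r_0)\subset E_m$ and $0<r\le r_0/4$. Exactly as in the proof of Lemma~\ref{lem:ver2}, the bound $\wt\beta_{\overline{\Omega^+},\infty}(x,r)\le\frac1{100}$ (valid because $r\le r_0\le 1/m$) furnishes an exterior corkscrew point $x_B\in B(x,r)$ with $B(x_B,r/4)\subset B(x,r)\cap\Omega^-$; as $K(x_B-\cdot)$ is harmonic on $\Omega^+$, \eqref{eqclau2} gives $\RR\omega^+(x_B)=K(x_B-x^+)$, and \eqref{e:lem2Rieszdiff} gives $|\RR\omega^+(x_B)-\RR_r\omega^+(x)|\lesssim P_{\omega^+}(B(x,r))$. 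Together with the elementary bound $|K(x_B-x^+)-K(x-x^+)|\lesssim r\,\dist(x^+,\d\Omega^+)^{-(n+1)}$ (recall $|x-x_B|<r$ and $x^+$ is at distance $\gtrsim\dist(x^+,\d\Omega^+)$ from $B(x,r)$), this yields
\[
|\RR_r\omega^+(x)-K(x-x^+)|\lesssim P_{\omega^+}(B(x,r))+\frac{r}{\dist(x^+,\d\Omega^+)^{n+1}}.
\]
The second term tends to $0$, and so does the first: given $\eta>0$, since $x$ has zero density I would pick $\rho\in(0,r_0]$ with $\Theta_{\omega^+}(B(x,t))<\eta$ for $0<t\le\rho$, split the series defining $P_{\omega^+}(B(x,r))$ at scale $\rho$, bound the inner part by $O(\eta)$ and the outer part by $O\big((r/\rho)\,\Theta_{\omega^+}(B_0)\big)$ using \eqref{e:mainl1} together with $P_{\omega^+}(B_0)\le P_{\gamma,\omega^+}(B_0)\lesssim\Theta_{\omega^+}(B_0)$ (here $\gamma<1$) as in \eqref{eqkk232}; letting $r\to0$ and then $\eta\to0$ gives $P_{\omega^+}(B(x,r))\to0$. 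Hence $\RR_r\omega^+(x)\to K(x-x^+)$, proving both that the principal value exists at $x$ (consistently with Remark~\ref{rempv}) and the claimed identity.

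Granting the identity, I would estimate the oscillation by comparing $\RR\omega^+$ with the constant $K(x_0-x^+)$, using that the average $m_{\mu,G_m^{zd}(x_0,r_0)}(\RR\omega^+)$ minimizes $c\mapsto\int_{G_m^{zd}(x_0,r_0)}|\RR\omega^+-c|^2\,d\omega^+$. Since $r_0\le c_1\dist(x^+,\d\Omega^+)\le c_1|x^+-x_0|$ with $c_1$ small, every point $z$ on the segment from $x\in B_0$ to $x_0$ satisfies $|z-x^+|\ge|x^+-x_0|-r_0\gtrsim|x^+-x_0|$, hence $|\nabla K(z-x^+)|\lesssim|x^+-x_0|^{-(n+1)}$, and the mean value theorem gives $|K(x-x^+)-K(x_0-x^+)|\lesssim r_0\,|x^+-x_0|^{-(n+1)}$ for $x\in B_0$. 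Therefore
\[
\int_{G_m^{zd}(x_0,r_0)}\big|\RR\omega^+(x)-m_{\mu,G_m^{zd}(x_0,r_0)}(\RR\omega^+)\big|^2\,d\omega^+(x)\lesssim\frac{r_0^2}{|x^+-x_0|^{2n+2}}\,\omega^+(B_0).
\]

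Finally I would prove the density lower bound $\Theta_{\omega^+}(B_0)\gtrsim r_0^{\gamma}\,|x^+-x_0|^{-(n+\gamma)}$, which is exactly what is needed since
\[
\frac{r_0^2}{|x^+-x_0|^{2n+2}}=\Big(\frac{r_0}{|x^+-x_0|}\Big)^{2-2\gamma}\frac{r_0^{2\gamma}}{|x^+-x_0|^{2n+2\gamma}}\lesssim\Big(\frac{r_0}{|x^+-x_0|}\Big)^{2-2\gamma}\Theta_{\omega^+}(B_0)^2,
\]
so that multiplying the previous display by $\omega^+(B_0)$ gives \eqref{e:mainl3}. For the lower bound I would take the smallest integer $J\ge1$ with $2^Jr_0\ge\delta_0^{-1}|x^+-x_0|$ (so $2^Jr_0\approx_{\delta_0}|x^+-x_0|$ and $2^{-J\gamma}\approx_{\delta_0}(r_0/|x^+-x_0|)^{\gamma}$); since $x^+\in B(x_0,\delta_0\,2^Jr_0)\cap\Omega^+$, Bourgain's estimate \eqref{e:bourgain} (applied as in the derivation of \eqref{eqnotnow1}) gives $\omega^+(2^JB_0)\ge\frac12$, hence $\Theta_{\omega^+}(2^JB_0)\gtrsim|x^+-x_0|^{-n}$; then, estimating $P_{\gamma,\omega^+}(B_0)$ from below by its $j=J$ term and using that $B_0$ is $a$-$P_{\gamma,\omega^+}$-doubling,
\[
\Theta_{\omega^+}(B_0)\ge a^{-1}P_{\gamma,\omega^+}(B_0)\ge a^{-1}2^{-J\gamma}\Theta_{\omega^+}(2^JB_0)\gtrsim_{a,\delta_0,\gamma}\frac{r_0^{\gamma}}{|x^+-x_0|^{n+\gamma}},
\]
as desired. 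This completes the plan; the routine points to be filled in are the splitting of $P_{\omega^+}(B(x,r))$ in the second step and the kernel estimates, both of which are standard.
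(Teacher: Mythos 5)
Your proposal is correct and follows essentially the same route as the paper's own proof: you establish the identity $\RR\omega^{+}(x)=K(x-x^{+})$ on the zero-density set $G_m^{zd}(x_0,r_0)$, bound the resulting oscillation over $B_0$ by $r_0\,|x^{+}-x_0|^{-(n+1)}$, and convert this into the factor $\Theta_{\omega^{+}}(B_0)\,(r_0/|x^{+}-x_0|)^{1-\gamma}$ using Bourgain's estimate \eqref{e:bourgain} at scale $\approx|x^{+}-x_0|$ together with the $a$-$P_{\gamma,\omega^{+}}$-doubling of $B_0$. The only (harmless) variations are that you prove $P_{\omega^{+}}(B(x,r))\to0$ by splitting the defining series with the help of \eqref{e:mainl1} rather than along a sequence of $P$-doubling balls centered at $x$, and that you package the final step as a lower bound $\Theta_{\omega^{+}}(B_0)\gtrsim r_0^{\gamma}|x^{+}-x_0|^{-(n+\gamma)}$ instead of an upper bound on $r_0\,|x^{+}-x_0|^{-(n+1)}$, which is the same computation rearranged.
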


\vv
Note that the integral in the left hand side of \rf{e:mainl3} is over $G_m^{zd}(x_0,r_0)$, the subset of zero density points
of $G_m(x_0,r_0)$. This is essential for the
validity of the estimate.

In \rf{e:mainl3}, $\RR\omega^{+}(x)$ should be understood in the principal value sense. The existence of this principal value
for $\omega^+$-a.e.\ $x\in G_m^{zd}(x_0,r_0)$ is guaranteed by the fact that, by Lemmas \ref{lem:ver1} and \ref{lem:ver2},
$$\sup_{0<r\leq 2 r_0} \frac{\omega^+(B(x,r))}{r^n} + \RR_*(\chi_{2B}\,\omega^+)(x)\lesssim 
\Theta_\mu(B(x_0,r_0)),$$
and then using Remark \ref{rempv}.

An alternative argument to prove the existence of the principal values is the following:
For $\omega^{+}$-a.e.\ $x\in G_m^{zd}(x_0,r_0)$, there is a sequence $r_{j}\to0$ so that each ball $B(x,r_{j})$ is $a$-$P_{\omega^+}$-doubling. By arguments analogous to the ones in the proof of Lemma \ref{lem:ver2}, one can show that 
for  $0<r<r'<r_{j}<r_0$, 
$$|\RR_{r}\omega^{+}(x) -\RR_{r'}\omega^{+}(x)|
\lec  P_{\omega^{+}}(B(x,r'))  + \frac{r'}{|x-x^{+}|^{n+1}}.$$
As in \rf{eqnotnow1} with $r'$ instead of $r_0$, it follows that
$$\frac{r'}{|x-x^{+}|^{n+1}}\lesssim P_{\omega^{+}}(B(x,r')).$$
Then, arguing as in \rf{eqkk232}, we have $P_{\omega^{+}}(B(x,r'))\lesssim P_{\omega^{+}}(B(x,r_j))$ and
using that $B(x,r_j)$ is $a$-$P_{\omega^+}$-doubling, we derive
$$|\RR_{r}\omega^{+}(x) -\RR_{r'}\omega^{+}(x)| \lesssim P_{\omega^{+}}(B(x,r_j)) \lec \Theta_{\omega^{+}}(B(x,r_{j}))
\to 0\quad\mbox{ as\; $j\to\infty$,}
$$
since $\lim_{r\to0}\Theta_{\omega^+}(B(x,r))=0$. By the Cauchy criterion, we infer that $\lim_{r\to0}\RR_{r}\omega^{+}(x)$ exists.
\vv

\begin{proof}[Proof of Lemma \ref{lem:mainl3}]
We claim that for $\omega^+$-a.e.\, $x\in G_m^{zd}(x_0,r_0)$, 
\begin{equation}
\RR\omega^+(x) = K(x-x^+).
\label{e:r=k2}
\end{equation}
Indeed, consider a sequence $r_{j}\rightarrow 0$ so that $B_{j}=B(x,r_{j})$ is $a$-$P_{\omega^+}$-doubling for every $j$. For $j$ large enough, we may find an $x_{B_{j}}\in B_{j}\backslash \Omega^{+}$ just as in Lemma \ref{lem:ver2}. Then 
\[ |K(x_{B_{j}}-x^{+}) -\RR_{r_{j}}\omega^{+}(x)| \stackrel{\eqn{r=k}}{=} |\RR\omega^{+}(x_{B_{j}})  -\RR_{r_{j}}\omega^{+}(x)|
 \stackrel{\eqn{lem2Rieszdiff}}{\lec} P_{\omega^{+}}(B_{j})\lec \Theta_{\omega^{+}}(B_{j}),\]
 and since $\lim_{r\rightarrow 0}\Theta_{\omega^{+}}(B(x,r))=0$, this implies \eqn{r=k2}. 
 
We deduce that
 \begin{equation}\label{eq**101}
|\RR\omega^+(x) - m_{\omega^+,G_m^{zd}(x_0,r_0)}(\RR\omega^+)| \leq \!\!\sup_{y\in G_m^{zd}(x_0,r_0)} \!|K(x-x^+) - K(y-x^+)|
\lesssim \frac{r_0}{|x_0-x^+|^{n+1}}.
\end{equation}
We will estimate the last term in the equation above by arguments analogous to the ones in \rf{eqnotnow1}, but now taking 
advantage of the fact that $B_0$ is $a$-$P_{\gamma,\omega^+}$-doubling with $\gamma<1$. So we write
\begin{align*}
\frac{r_0}{|x_0-x^{+}|^{n+1}} &\stackrel{\eqn{bourgain}}{\lesssim} \frac{\omega^{+}(B(x_0,2\delta_0^{-1}|x_0-x^{+}|))}{|x_0-x^{+}|^{n}}\,\frac{r_0}{|x_0-x^{+}|}\\ 
& =\Theta_{\omega^+}(B(x_0,2\delta_0^{-1}|x_0-x^{+}|))\, \left(\frac{r_0}{|x_0-x^{+}|}\right)^\gamma \left(\frac{r_0}{|x_0-x^{+}|}\right)^{1-\gamma}\\
& \lesssim P_{\gamma,\omega^{+}}(B_0)\left(\frac{r_0}{|x_0-x^{+}|}\right)^{1-\gamma}\\
& \lesssim \Theta_{\omega^{+}}(B_0)\left(\frac{r_0}{|x_0-x^{+}|}\right)^{1-\gamma}.
\end{align*}
The estimate \rf{e:mainl3} follows from \rf{eq**101} and the preceding inequality.
\end{proof}

\vvv
We can now finish the proof of \Theorem{main}. 
Recall that first we are assuming that $\omega^{+}\ll\omega^{-}\ll\omega^{+}$ on $E\subset \d\Omega^+$
and we wish to show that $E$ contains an $n$-rectifiable subset $F$ of full measure 
$\omega^+$ on $E$ 
on which $\omega^\pm$ are mutually absolutely continuous with respect to $\cH^{n}$. By standard arguments, it is enough
to show that for any subset $F_0\subset E$ with $\omega^+(F_0)>0$ there exists some
$n$-rectifiable subset $G_0\subset E_0$ with $\omega^+(G_0)>0$ on which $\omega^\pm$ are mutually absolutely continuous with respect to $\cH^{n}$.

Let $\delta>0$ be some small constant to be fixed below. 
By \rf{eq:EMM}, there exists some $m$ such that $\omega(F_0\cap E_m)>0$, which implies that
$\omega(F_0\cap \wt E_{m,\delta})>0$. Let $x_0$ be a point of $\omega^+$-density of 
$F_0\cap \wt E_{m,\delta}$ for which there exists a sequence of $a$-$P_{\omega^+}$-doubling
balls $B(x_0,r_j)$ with $r_j\to0$  and such that
$$\lim_{r\rightarrow 0}\beta_{\d\Omega^+,\infty}(x_0,r)=0$$
(by Lemma \ref{l:adoubling} and Theorem \ref{t:alpha0}
 such point $x_0$ exists).
 
 Let 
 $$0<r_j\leq \min(r_{x_0},1/m,c_1\dist(x^+,
\d\Omega^+))$$ 
be such that 
\begin{equation}\label{eqqq:1}
\beta_{\d\Omega^+,\infty}(x_0,r_j)\leq\delta
\end{equation}
and 
\begin{equation}\label{eqqq:2}
\omega^+(F_0\cap \wt E_{m,\delta}\cap B(x_0,r_j))\geq (1-\delta)\,\omega^+(B(x_0,r_j)).
\end{equation}
By Lemmas \ref{lem:ver1} and \ref{lem:ver2},
we have 
\begin{equation}\label{eqqq:3}
\omega^+ (B(x_0,r_j) \setminus G_m(x_0,r_j)) \leq \delta \,\omega^+(B(x_0,r_j))
\end{equation}
 and
\begin{equation}\label{eqqq:4}
\sup_{0<r<2r_j} \Theta_{\omega^+}(B(x,r)) +
\RR_{*}(\chi_{B(x_j,2r_j)}\omega^{+})(x)\leq C_2\,\Theta_{\omega^+}(B(x_j,r_j))
\end{equation}
for all $x\in G_m(x_0,r_j)$.
Clearly, from \rf{eqqq:2}
it follows that $\omega^+(F_0\cap B(x_0,r_j))\geq (1-\delta)\,\omega^+(B(x_0,r_j))$, or equivalently,
$\omega^+(B(x_0,r_j)\setminus F_0)\leq \delta\,\omega^+(B(x_0,r_j))$. Together with \rf{eqqq:3}, this yields
\begin{align}\label{eqqq:5}
\omega^+(G_m(x_0,r_j)\cap F_0) & = \omega^+(G_m(x_0,r_j))- 
\omega^+(G_m(x_0,r_j)\setminus F_0)\\
&
\geq (1-\delta) \,\omega^+(B(x_0,r_j)) - \omega^+(B(x_0,r_j)\setminus F_0)\nonumber\\
& \geq (1-2\delta)\, \omega^+(B(x_0,r_j)).\nonumber
\end{align}

By Lemma \ref{lem:rectifac}, in the case that $\omega^+(G_m^{pd}(x_0,r_j)\cap F_0)>0$, we are done because
the measure $\omega^+|_{G_m^{pd}(x_0,r_j)}$ is $n$-rectifiable, and then we can choose
the set $G_0$ to be equal to $G_m^{pd}(x_0,r_j)\cap F_0$ minus a set zero measure $\omega^+$.
In the case that
 $\omega^+(G_m^{pd}(x_0,r_j)\cap F_0)=0$, \rf{eqqq:5} tells us that
\begin{equation}\label{eqqq:6}
\omega^+(G_m^{zd}(x_0,r_j)\cap F_0) \geq (1-2\delta)\, \omega^+(B(x_0,r_j)).
\end{equation}
Further, by Lemma \ref{lem:mainl3}, given any arbitrary constant $\tau>0$, if $r_j=r_j(\tau)$
is small enough, we have
\begin{align}\label{eqqq:7}
\int_{G_m^{zd}(x_0,r_j)\cap F_0}|\RR\omega^{+}&(x)-m_{\mu,G_m^{zd}(x_0,r_j)\cap F_0}(\RR\omega^{+})|^2\,d\omega^{+}(x)\\
&\leq 
\int_{G_m^{zd}(x_0,r_j)}|\RR\omega^{+}(x)-m_{\mu,G_m^{zd}(x_0,r_j)}(\RR\omega^{+})|^2\,d\omega^{+}(x)\nonumber\\
&\leq C\,\left(\frac{r_j}{|x^+-x_0|}\right)^{2-2\gamma}\, \Theta_{\omega^{+}}(B(x_0,r_j))^2\, \omega^{+}(B(x_0,r_j))\nonumber\\
& \leq \tau\, \Theta_{\omega^{+}}(B(x_0,r_j))^2\, \omega^{+}(B(x_0,r_j)).\nonumber
\end{align}

For $r_j$ small enough, from \rf{eqqq:1}, \rf{eqqq:4}, \rf{eqqq:6} and \rf{eqqq:7} and the fact that $B(x_0,r_j)$ is 
$a$-$P_{\omega^+}$-doubling, one easily checks that the assumptions of Theorem \ref{teo1}
hold with $\mu = \omega^+$, $B=B(x_0,r_j)$, and $G_B=G_m^{zd}(x_0,r_j)\cap F_0$, with
$\delta$ replaced by $2\delta$. An immediate consequence of the theorem is that there exists
an $n$-rectifiable subset $G_0\subset G_m^{zd}(x_0,r_j)\cap F_0$ such that $\omega^+(G_0)>0$, as wished 
\footnote{In fact, it easily follows that for $\omega^+$-a.e.\ $x\in G_0$, $\lim_{r\to0}
\omega^+(B(x,r))r^{-n} >0$. So the case when $\omega^+(G_m^{pd}(x_0,r_j)\cap F_0)=0$ does not occur.}.

\vvv
To conclude the proof of \Theorem{main} it remains to show that, given a Borel set $E\subset \d\Omega^+$, 
$$\omega^+|_E\perp \omega^-|_E \qquad\Longleftrightarrow \qquad \cH^{n}(E\cap T)=0.$$
The fact that $\omega|_E^+\perp \omega^-|_E$ implies that $\cH^{n}(E\cap T)=0$ follows by standard
arguments. Indeed, the points in the set $T$ satisfy the cone property and thus
$\omega^+$ and $\omega^-$ are both mutually absolutely continuous with $\cH^{n}$ on a subset $T'\subset T$ with $\cH^{n}(T\backslash T')=0$. So
$$\omega^+|_{E\cap T'} \approx \HH^n|_{E\cap T'}\approx\omega^-|_{E\cap T'}$$
(here ``$\approx$'' denotes mutual absolute continuity), and so the statement
$$\omega^+|_E\perp \omega^-|_E$$
is false if $\HH^n(E\cap T)>0$.

Conversely, if $\omega^+|_E\perp \omega^-|_E$ does not hold, then there is some subset $F\subset E$
with $\omega^+(F)>0$ such that $\omega^+|_F$ and $\omega^-|_F$ are mutually absolutely continuous.
By the part of \Theorem{main} that we have already proved, there exists an $n$-rectifiable subset $G\subset F$ with 
$\omega^+(F\setminus G)=\omega^-(F\setminus G)=0$ such that $\omega^+$ and $\omega^-$ are both mutually
absolutely continuous with $\HH^n|_G$. Let $G_0\subset G$ be some subset with $0<\HH^n(G_0)<\infty$.
It is not hard to show that $\cH^{n}$-a.e. $x\in G_0$ is a tangent point for $\d\Omega^+$ since $\beta_{\d\Omega^{+},\infty}(B(x,r))\rightarrow 0$ and $\sup_{y\in B(x,r)\cap E}\dist(y,V)/r\rightarrow 0$ where $V$ is the approximate tangent $n$-plane for $G_0$ (see \cite[Chapter 15]{Mattila}. Hence, $\HH^n(E\cap T)\geq \HH^n(G_0\cap T)>0$.
This completes the proof of \Theorem{main}.

\vv

\section{Proof of Corollaries \ref{coro1} and \ref{coro2}}

\begin{proof}[Proof of Corollary \ref{coro1}]
Denote $\Omega^+=\Omega^1$ and
$\Omega^-=\bigl(\,\cnj{\Omega^{+}}\,\bigr)^{c}$. Let $\omega^+=\omega^1$ and let $\omega^-$ be the harmonic measure 
for $\Omega^-$ with pole $x^-\in \Omega^-$.

By the maximum principle we have $\omega^2 \ll\omega^-$ on $E$. So there exists some function 
$g\in L^1(\omega^-)$ such that
$\omega^2|_E = g\, \omega^-$. Hence if we set $G=\{x\in E:g(x)>0\}$, it turns out that
 $\omega^2(E\setminus G) =0$ and $\omega^2|_G$ 
and $\omega^-|_G$
are mutually absolutely continuous. 

Since $\omega^1|_E$ and $\omega^2|_E$ are mutually absolutely continuous, we infer that
$\omega^1(E\setminus G) =0$, too, and thus
$$\omega^+|_E = \omega^+|_G \approx \omega^-|_G,$$
where ``$\approx$' denotes mutual absolute continuity.
Hence, as $\Omega^+$ satisfies the assumptions of Theorem \ref{t:main}, it follows that
 there exists
some  $n$-rectifiable subset $F\subset G$ with $\omega^+(G\setminus F)=0$ on which $\omega^{+}|_F$ is are mutually absolutely continuous with respect to $\cH^{n}|_F$.
\end{proof}

\vvv

\begin{proof}[Proof of Corollary \ref{coro2}]
As in the previous proof, 
we denote $\Omega^+=\Omega^1$ and
$\Omega^-=\bigl(\,\cnj{\Omega^{+}}\,\bigr)^{c}$. Also, we let $\omega^\pm$ be the respective harmonic measures of $\Omega^\pm$.
We take $G$ as above, so that $\omega^2(E\setminus G) =0$ and $\omega^2|_G$ 
and $\omega^-|_G$
are mutually absolutely continuous. 

We deduce that $\omega^1|_E\perp\omega^2|_E$ if and only if  $\omega^+|_G\perp\omega^-|_G$.
By Theorem \ref{t:main} applied to $\Omega^+$ and $G$, this is equivalent to $\cH^{n}(G\cap T^1)=0$, where $T^1$ is the
set of tangents for $\partial\Omega^1 = \partial\Omega_+$.

Since $\omega^2(E\setminus G)=0$, using the cone property it is easy to check that 
$\cH^n((E\setminus G)\cap T^2)=0$, where $T^2$ is the
set of tangents for $\partial\Omega^2$.
Since $E$ is relative open in $\partial\Omega^1$ and $\partial\Omega^2$, we have
$T^1 \cap E = T^2\cap E =T\cap E,$
and thus
 $$\cH^n(E\cap T) =  \cH^{n}(G\cap T^1)+ \cH^n((E\setminus G)\cap T^2) =0.$$

Conversely, if $\cH^n(E\cap T)=0$, then
$$\cH^{n}(G\cap T^1) = \cH^{n}(G\cap T) \leq \cH^n(E\cap T)=0.$$
Thus $\omega^+|_G\perp\omega^-|_G$ by Theorem \ref{t:main}.
\end{proof}

\vv

\def\cprime{$'$}
\providecommand{\bysame}{\leavevmode\hbox to3em{\hrulefill}\thinspace}
\providecommand{\MR}{\relax\ifhmode\unskip\space\fi MR }
\providecommand{\MRhref}[2]{%
  \href{http://www.ams.org/mathscinet-getitem?mr=#1}{#2}
}
\providecommand{\href}[2]{#2}

\end{document}